\newcommand{\Href}[2]{\hyperref[#2]{#1~\ref{#2}}}
\newtheorem{thm}{Theorem}[section]
\newtheorem{cor}[thm]{Corollary}
\newtheorem{lem}[thm]{Lemma}
\newtheorem{prp}[thm]{Proposition}
\newtheorem{clm}[thm]{Claim}
\theoremstyle{definition}
\newtheorem{dfn}[thm]{Definition}
\theoremstyle{remark}
\newtheorem{rem}[thm]{Remark}
\newtheorem*{sketch}{Sketch of the proof}
\newcommand{\st}{:\;}
\newcommand{\norm}[1]{\left\|#1\right\|}
\newcommand{\enorm}[1]{\left|#1\right|}
\newcommand{\iprod}[2]{\left\langle#1,#2\right\rangle}%
\newcommand{\N}{\mathbb N}
\newcommand{\R}{\mathbb R}
\renewcommand{\phi}{\varphi}
\renewcommand{\epsilon}{\varepsilon}
\newcommand{\dist}[2]{\operatorname{dist}\!\left( #1, #2 \right)}%
\newcommand{\e}{\varepsilon}
\newcommand{\littleo}[1]{o \! \parenth{#1}}%
\providecommand{\parenth}[1]{\left(#1\right)}%
\newcommand{\partvar}[2][1]{\mathrm{S}_{#1} \! \parenth{#2}}
\newcommand{\variation}[2][1]{\mathrm{V}_{#1} \! \parenth{#2}}
\newcommand{\variationsq}[1]{\mathrm{V}_{2}^2 \! \parenth{#1}}
\newcommand{\partvarsq}[1]{\mathrm{S}_{2}^2 \! \parenth{#1}}
\providecommand{\abs}[1]{\lvert#1\rvert}%
\newcommand{\length}[1]{\mathrm{length}\,  #1 }
\newcommand{\anglef}[2]{\measuredangle \! \parenth{#1, #2}}%
\newcommand{\strconvsegment}[2]{D_R \! \left[#1, #2 \right]}%
\DeclareMathOperator{\mesh}{mesh}
\DeclareMathOperator{\hilbert}{\mathcal{H}}
\DeclareMathOperator{\sphere}{\mathbb{S}}
\newcommand{\di}{\,\mathrm{d}}%integration d
\title{Shortest curves in  proximally smooth sets: existence and uniqueness}
\author{Grigory M. Ivanov\address{Grigory M. Ivanov: 
Institute of Science and Technology Austria (IST Austria), 
Klosterneuburg, 3400, Austria}
\email{grimivanov@gmail.com}%
\and
 Mariana S. Lopushanski\address{ Mariana S. Lopushanski:
 Steklov Mathematical Institute of the Russian Academy of Sciences,  Moscow,
119991, Russia}
\email{lopushanski@phystech.edu}%
\and
Grigorii E. Ivanov\address{Grigorii E. Ivanov:
Moscow Institute of Physics and Technology, Dolgoprudny,  141701, Russia}
\email{g.e.ivanov@gmail.com}
}
\thanks{Section 4 was obtained at Steklov Mathematical Institute RAS and supported by the Russian Science Foundation, grant N 19-11-00087,   \url{https://rscf.ru/en/project/19-11-00087/}}
\subjclass[2020]{49J52  (primary), 	49J27  ,	49J53}
\keywords{proximally smooth set, weak convexity, intrinsic metric, curvature}
\begin{document}
\begin{abstract}
 We study  shortest curves in proximally smooth subsets of a Hilbert space.  We consider an $R$-proximally smooth set $A$ in a Hilbert space with points  $a$ and $b$ satisfying $\enorm{a-b} < 2R.$ 
We provide a simple geometric algorithm of constructing a curve inside  $A$ connecting $a$ and $b$ whose length is at most $2R \arcsin\frac{\enorm{a-b}}{2R},$ which corresponds to the shortest curve inside the model set -- a Euclidean sphere of radius $R$ passing through $a$ and $b.$  Using this construction, we  show that there exists a unique shortest curve inside $A$ connecting $a$ and $b.$  This result is tight since two points of $A$ at distance $2R$ are not necessarily connected in $A;$   the bound on the length cannot be improved since the equality is attained on the Euclidean sphere of radius $R.$
\end{abstract}
\maketitle
\section{Introduction}

In non-smooth analysis, one usually considers a function with a certain generalized sub-differentiability property. In turn, such classes of functions can be equivalently described in geometric terms via the geometric properties of  their epigraphs. A notable example is the class of \emph{lower-$C^2$} functions studied by R.~T. Rockafellar \cite{rockafellar1981favorable} and the class of proximally smooth sets \cite{Clarke1995}. 
We recall that a closed set $A$ of a normed space is \emph{$R$-proximally smooth} if the distance function $\dist{\cdot}{A}$  is
continuously differentiable on the \emph{open  $R$-neighborhood} of the form
$\{x \st 0  < \dist{x}{A} < R\}.$
Proximally smooth sets in Hilbert and Banach spaces have been studied extensively; some  historical notes  can be found in \cite{Pol_Rock_Thi1} and in \cite{Colombo_thibault}. An extensive study of proximally smooth sets and their properties in a Hilbert space can be found in \cite{IvMonEng}. 
We note that there are a dozen notions equivalent to proximal smoothness in a Hilbert space, often referred to as \emph{weakly convex sets}. Unfortunately, most of them are not equivalent in general Banach spaces (even uniformly smooth and uniformly convex), as shown in \cite{IvBal2009,Ivanov:233858}.

In this paper, we consider only the Euclidean case. Our main result is as follows.  

\begin{thm}\label{thm:existence of the shortest_curve}
 Let $a$ and $b$  be two distinct points satisfying $\enorm{a-b} < 2R$ of $A$, where $A$ is an $R$-proximally smooth set.  Then the shortest curve in $A$ between $a$ and $b$ exists and is unique. Moreover, the shortest curve is an $R$-proximally smooth set, whose length is at most the length of the shortest arc of a circle of radius $R$ passing through points $a$ and $b,$ i.e.
$2R \arcsin \frac{\enorm{a-b}}{2R}.$
\end{thm}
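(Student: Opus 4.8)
\emph{Overall strategy.} The plan is to separate the statement into four pieces: existence of a shortest curve, an a priori regularity (bounded curvature) property of every shortest curve, uniqueness, and finally proximal smoothness of the curve itself. The length estimate will be essentially free: the curve $\gamma_0\subseteq A$ joining $a$ and $b$ produced by the geometric algorithm already satisfies $\length{\gamma_0}\le 2R\arcsin\frac{\enorm{a-b}}{2R}=:L_0$, so $d_A(a,b)\le L_0<\pi R$, and any shortest curve has length $\le L_0$. For existence I would take a minimizing sequence of curves in $A$ from $a$ to $b$, reparametrize each by constant speed on $[0,1]$ (so the speeds are $\length{\gamma_n}\to d_A(a,b)$ and the curves are equi-Lipschitz and stay in a fixed ball around $a$), extract a subsequence converging uniformly to a curve $\gamma$, note that $\gamma$ lies in the closed set $A$ and joins $a$ to $b$, and conclude $\length{\gamma}\le d_A(a,b)$ by lower semicontinuity of length, hence $\length{\gamma}=d_A(a,b)$. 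One subtle point here, specific to the Hilbert (non-locally-compact) setting, is to verify the limit actually stays in $A$; this is where the a priori bound $L_0<\pi R$ helps by confining all competitors to the region where $A$ behaves well, and alternatively one can show directly that the algorithm's output $\gamma_0$ itself realizes $d_A(a,b)$.

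\emph{Regularity of shortest curves.} The heart of the argument is the lemma: if $\gamma$ is a shortest curve, parametrized by arc length on $[0,L]$ with $L=d_A(a,b)$, then $\gamma$ is $C^{1,1}$ and for a.e.\ $s$ one has $\gamma''(s)=\lambda(s)\nu(s)$ with $\nu(s)$ a unit proximal normal to $A$ at $\gamma(s)$ and $0\le\lambda(s)\le 1/R$. The normality of $\gamma''$ is first-order optimality: a tangential perturbation supported away from the endpoints, pushed back onto $A$ by the metric projection (which is well defined and smooth on the $R$-tube), would strictly shorten $\gamma$. The curvature bound $\lambda\le 1/R$ is exactly the content of $R$-proximal smoothness: $A$ is supported at $\gamma(s)$ from outside by a ball of radius $R$, and a curve lying outside such a ball and tangent to it cannot bend towards the center faster than $1/R$; concretely it follows from the proximal-normal inequality $\iprod{\nu(s)}{y-\gamma(s)}\le\frac{1}{2R}\enorm{y-\gamma(s)}^2$ for all $y\in A$ by differentiating twice along $\gamma$. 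I expect this to be the main obstacle: $A$ is not a manifold (so "the normal direction" is a cone and one must check $\gamma''$ is a nonnegative multiple of a single unit proximal normal), and in infinite dimensions the usual ODE/Jacobi-field machinery is unavailable, so one has to argue purely with the projection and the proximal-normal inequality; the requisite facts should, however, be obtainable from the theory of proximally smooth sets referenced above.

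\emph{Uniqueness.} Given two shortest curves $\gamma_1,\gamma_2$, parametrize both by arc length on $[0,L]$ with $L=d_A(a,b)\le L_0<\pi R$, and set $w=\gamma_1-\gamma_2$, so $w(0)=w(L)=0$. Applying the regularity lemma to each $\gamma_i$ together with the proximal-normal inequality with test point $\gamma_{3-i}(s)\in A$ gives, for a.e.\ $s$,
\[
\iprod{\gamma_1''(s)}{\gamma_1(s)-\gamma_2(s)}\le\frac{\enorm{w(s)}^2}{2R^2},\qquad \iprod{\gamma_2''(s)}{\gamma_2(s)-\gamma_1(s)}\le\frac{\enorm{w(s)}^2}{2R^2},
\]
hence $\iprod{w''(s)}{w(s)}\ge-\enorm{w(s)}^2/R^2$. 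Integrating over $[0,L]$ and integrating by parts (the boundary terms vanish since $w$ vanishes at the endpoints) yields $\int_0^L\enorm{w'}^2\le\frac{1}{R^2}\int_0^L\enorm{w}^2$, whereas the Poincaré–Wirtinger inequality for vector-valued functions vanishing at both endpoints of $[0,L]$ gives $\int_0^L\enorm{w'}^2\ge\frac{\pi^2}{L^2}\int_0^L\enorm{w}^2$. If $w\not\equiv0$ these two combine into $\pi^2/L^2\le 1/R^2$, i.e.\ $L\ge\pi R$, contradicting $L<\pi R$. Hence $w\equiv0$ and $\gamma_1=\gamma_2$. This step is exactly where the strict inequality $\enorm{a-b}<2R$ is used (it is what forces $L\le L_0<\pi R$), which also explains the claimed tightness at distance $2R$.

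\emph{Proximal smoothness of the shortest curve.} Finally, the unique shortest curve $\gamma$ is, by the regularity lemma, a $C^{1,1}$ arc of length $<\pi R$ whose curvature is bounded by $1/R$. Such an arc has no self-intersections (its total turning is below $\pi$) and, within distance $R$, every point has a unique nearest point on it; consequently $\dist{\cdot}{\gamma}$ is continuously differentiable on $\{0<\dist{x}{\gamma}<R\}$, i.e.\ $\gamma$ is $R$-proximally smooth. This is a short self-contained estimate once the bounded-curvature property of $\gamma$ is established, so it does not add a new difficulty beyond the regularity lemma.
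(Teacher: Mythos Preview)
Your uniqueness argument is essentially the continuous version of the paper's proof---both pivot on the Wirtinger/Poincar\'e inequality, and both feed it with the proximal-normal inequality (the paper routes this through the Lipschitz constant of the metric projection and a discrete energy functional, but the mechanism is the same). The displayed intermediate inequalities have a sign slip (you derive $\iprod{\gamma_1''}{\gamma_2-\gamma_1}\le\enorm{w}^2/(2R^2)$, i.e.\ $\iprod{\gamma_1''}{w}\ge-\enorm{w}^2/(2R^2)$, not the inequality you wrote), though your conclusion $\iprod{w''}{w}\ge-\enorm{w}^2/R^2$ and the rest of the step are correct.

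The genuine gap is existence. In an infinite-dimensional Hilbert space the Arzel\`a--Ascoli extraction you propose does not go through: an equi-Lipschitz family of curves in a bounded set need not have a uniformly convergent subsequence, and neither of your suggested fixes works (confinement to a nice region does not restore compactness, and the algorithm's output has length $\le L_0$ but is not claimed---and need not be---shortest). This gap is structural, because your regularity lemma and hence your uniqueness step both presuppose that a shortest curve exists. The paper resolves this by \emph{merging} existence and uniqueness: it takes any minimizing sequence $\{\gamma_i\}$, forms the projected average $\Gamma_\lambda$ of $\gamma_i$ and $\gamma_j$, and uses the same Wirtinger-type estimate (applied to near-minimizers rather than to actual geodesics) to show $\variationsq{\gamma_i-\gamma_j}\le C\epsilon$. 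Thus the minimizing sequence is Cauchy in the curve metric and converges to a curve in the closed set $A$; existence, the length bound, and uniqueness fall out simultaneously, with no appeal to compactness. If you want to salvage your outline in infinite dimensions, you would have to adapt your second-variation inequality so it applies to \emph{approximate} minimizers (without the pointwise identity $\gamma''=\lambda\nu$), which is exactly what the paper's averaging lemma accomplishes. For the proximal smoothness of the shortest curve, note also that the paper gets this more cheaply than via your $C^{1,1}$ route: once the shortest curve is known to lie in $\strconvsegment{a}{b}$, every sub-arc lies in the corresponding smaller strongly convex segment, and proximal smoothness follows directly from the segment characterization.
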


We will present a geometric approach to this result. We note that earlier it was obtained by G.~E. Ivanov \cite[Theorem 1.14.1]{IvMonEng} in Russian, but has never been published in a peer-reviewed journal. However, our proof differs from the initial one in several important details. First, we provide a simple and explicit algorithm for constructing a curve inside a proximally smooth set connecting two of its points $a$ and $b$, whose length is at most $2R \arcsin \frac{\enorm{a-b}}{2R}.$ In particular, we will provide a purely geometric proof of the following lemma.

\begin{lem}
\label{lem:short_curve_intro} 
Consider two distinct points  $a$ and $b$  satisfying $\enorm{a-b} < 2R$ of  an $R$-proximally smooth set $A$. Then there exists a curve $\gamma$ in $A$ connecting $a$ and $b$, whose length is at most
 $2R \arcsin \frac{\enorm{a-b}}{2R}.$
\end{lem}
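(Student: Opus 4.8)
The plan is to construct $\gamma$ as the uniform limit of a sequence of inscribed polygonal chains produced by repeated \emph{midpoint bisection}: each edge of the current chain is replaced by the two edges joining its endpoints to the metric projection onto $A$ of its midpoint. Write $d=\enorm{a-b}$ and $\theta=\arcsin\frac{d}{2R}\in(0,\pi/2)$, and set $f(t)=2R\sin\!\bigl(\tfrac12\arcsin\tfrac{t}{2R}\bigr)$ for $t\in[0,2R]$; this is precisely the ``chord after one bisection'' profile of the model sphere. Two facts will be used repeatedly. First, every point of a segment $[x,y]$ with $x,y\in A$ and $\enorm{x-y}<2R$ lies within $\enorm{x-y}/2<R$ of $A$, so (proximal smoothness making $P_A$ single-valued and continuous on the open $R$-neighbourhood of $A$) the projection $P_A$ is well defined on such segments. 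Second, if $\zeta$ is a unit proximal normal to $A$ at $c\in A$, then $\iprod{\zeta}{x-c}\le\frac{1}{2R}\enorm{x-c}^2$ for all $x\in A$ — the external-ball characterisation of $R$-proximal smoothness; in particular $m-c$ is a proximal normal at $c:=P_A(m)$ whenever $\dist{m}{A}<R$.

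The heart of the matter is a one-step estimate. \textbf{Claim.} If $x,y\in A$, $\delta:=\enorm{x-y}<2R$, $m=\tfrac12(x+y)$ and $c=P_A(m)$, then $\dist{m}{A}=\enorm{m-c}\le\delta/2$ and $\max\{\enorm{x-c},\enorm{y-c}\}\le f(\delta)$. To see this, put $\rho=\enorm{m-c}\le\delta/2<R$; if $\rho=0$ then $m\in A$, $c=m$ and $\enorm{x-c}=\delta/2\le f(\delta)$ because $\sin\theta=2\sin\tfrac\theta2\cos\tfrac\theta2$, so assume $\rho>0$. Applying the proximal-normal inequality at $c$ with $\zeta=(m-c)/\rho$ to the point $x\in A$ and expanding $x-c=(x-m)+(m-c)$ (the only cross term being $\iprod{m-c}{x-m}$), one gets after rearranging
\[
\enorm{x-c}^2\Bigl(1-\tfrac{\rho}{R}\Bigr)\ \le\ \tfrac{\delta^2}{4}-\rho^2,
\qquad\text{hence}\qquad
\enorm{x-c}^2\ \le\ \frac{R\bigl(\tfrac{\delta^2}{4}-\rho^2\bigr)}{R-\rho}.
\]
Since $f(\delta)^2=2R^2-R\sqrt{4R^2-\delta^2}$, it remains to check $\frac{R(\delta^2/4-\rho^2)}{R-\rho}\le 2R^2-R\sqrt{4R^2-\delta^2}$; clearing the positive denominator $R-\rho$ and writing $s=\sqrt{4R^2-\delta^2}\in(0,2R]$, this reduces to the identity $\bigl((R-\rho)-\tfrac{s}{2}\bigr)^2\ge0$. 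The same computation with $y$ in place of $x$ completes the Claim, and equality holds throughout when $A$ is a sphere of radius $R$, so the estimate is sharp.

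With the Claim we iterate. Let $\gamma^{(0)}$ have vertices $x^{(0)}_0=a$, $x^{(0)}_1=b$. Given a chain $\gamma^{(n)}$ with vertices $a=x^{(n)}_0,\dots,x^{(n)}_{2^n}=b$ in $A$ and $\max_i\enorm{x^{(n)}_{i+1}-x^{(n)}_i}\le\delta_n$, form $\gamma^{(n+1)}$ by inserting into each edge the $P_A$-image of its midpoint. By the Claim the new mesh is $\le f(\delta_n)$, so with $\delta_0=d$ we get $\delta_n=2R\sin(\theta/2^n)\to0$; in particular $\delta_n<2R$ at every stage, so $P_A$ is always applicable. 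Parametrise each $\gamma^{(n)}\colon[0,1]\to H$ affinely with $\gamma^{(n)}(i/2^n)=x^{(n)}_i$; its length equals $\sum_i\enorm{x^{(n)}_{i+1}-x^{(n)}_i}\le 2^n\delta_n=2^{n+1}R\sin(\theta/2^n)\le 2R\theta=2R\arcsin\tfrac{d}{2R}$. The vertex inserted between $x^{(n)}_i$ and $x^{(n)}_{i+1}$ lies within $\delta_n/2$ of the midpoint of that edge, so $\norm{\gamma^{(n+1)}-\gamma^{(n)}}_\infty\le 3\delta_n$, and $\sum_n\delta_n\le\sum_n 2R\theta\,2^{-n}<\infty$; hence $(\gamma^{(n)})$ is uniformly Cauchy and converges uniformly to a continuous path $\gamma\colon[0,1]\to H$ with $\gamma(0)=a$, $\gamma(1)=b$. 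For each dyadic $k/2^n$ the value $\gamma(k/2^n)=\lim_{m}\gamma^{(m)}(k/2^n)$ is a limit of chain vertices, hence lies in the closed set $A$; by density of the dyadics and continuity of $\gamma$ we conclude $\gamma([0,1])\subseteq A$. Finally length is lower semicontinuous under pointwise convergence, so $\operatorname{length}\gamma\le\liminf_n\operatorname{length}\gamma^{(n)}\le 2R\arcsin\tfrac{\enorm{a-b}}{2R}$, which is the assertion.

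The only non-routine step is the one-step Claim: everything hinges on identifying the correct comparison function $f$ (the bisection profile of the model sphere) and then observing that the resulting inequality collapses to a perfect square — the geometric content is that $A$ bends no more than the sphere of radius $R$. The remaining work (summability of the meshes, hence uniform Cauchy-ness; stability of the closed set $A$ under limits; lower semicontinuity of length) is standard.
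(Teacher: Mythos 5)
Your proof is correct, and its skeleton --- dyadic midpoint bisection, projection onto $A$, passage to the uniform limit, lower semicontinuity of length --- matches the paper's \Href{Algorithm}{algo2}. The key one-step estimate, however, is genuinely different. The paper inserts the \emph{slice-projection} of each midpoint (the nearest point of $A$ within the hyperplane through the midpoint orthogonal to the edge), shows via \Href{Lemma}{lem:uniqueness_slice_projection} that this point lies in the strongly convex segment of the edge, and then uses the purely geometric \Href{Claim}{clm:slice_projection} (proved with the folding map) to conclude that the \emph{$R$-arclength} of the polyline is non-increasing under refinement; the length bound follows because $R$-arclength dominates length and the mesh tends to zero. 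You instead insert the plain metric projection $P_A(m)$ of the midpoint and prove the sharp chord bound $\max\{\enorm{x-c},\enorm{y-c}\}\le 2R\sin\bigl(\tfrac12\arcsin\tfrac{\delta}{2R}\bigr)$ analytically from the proximal-normal inequality $\iprod{\zeta}{x-c}\le\frac{1}{2R}\enorm{x-c}^2$, which is exactly the supporting condition of $R$-weak convexity in \Href{Proposition}{prp:equivalence}; I checked the computation, and the rearrangement to $\enorm{x-c}^2\bigl(1-\tfrac{\rho}{R}\bigr)\le\tfrac{\delta^2}{4}-\rho^2$ and the reduction to $\bigl((R-\rho)-\tfrac{s}{2}\bigr)^2\ge 0$ are both right, as is the summation $2^n\delta_n=2^{n+1}R\sin(\theta/2^n)\le 2R\theta$. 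Your version is more self-contained and computational; what the paper's version buys in exchange is the additional conclusion that the constructed curve lies inside $\strconvsegment{a}{b}$ (as in \Href{Lemma}{lem:simple_short_curve_in_prox_smooth_set}), which is used later for the stability and uniqueness arguments but is not needed for the statement of the present lemma.
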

Our key observation here is that to get the upper bound on the length of the polyline, one shall follow the length of arcs contended by the segments of a polyline rather than the length of the polyline itself.

Note that the assumptions in \Href{Theorem}{thm:existence of the shortest_curve} cannot be improved. Let us discuss the following simple examples. Consider the union of two parallel hyperplanes on distance $2R$. It is an $R$-proximally smooth set. However, the points from the hyperplanes are not connected. Another trivial example is a sphere of radius $R$. It is also proximally smooth, but many shortest curves connect two diametrical points of the sphere.

Second, we will show a slightly stronger statement than \Href{Theorem}{thm:existence of the shortest_curve}. Namely, we will show
that any length-minimizing sequence of curves in $A$ converges to the shortest path. Again, our geometric construction of curves allows us to do so without cumbersome technical details.

Third, we will explain  how to switch from the length functional to a certain discrete energy functional and backward. 
Roughly speaking, we propose an equivalent of finding the shortest path discrete problem that one may hope to resolve computationally.

For the sake of completeness, we will provide  a geometric proof of the
following somewhat known to specialists in the differential geometry result (also, see \cite[Theorem 1.14.2]{IvMonEng}).
We derive the following theorem from \Href{Lemma}{lem:short_curve_intro}.
\begin{thm}\label{thm:characterization_of_prox_reg_curves}
Let $\Gamma\colon [0,\length{\Gamma}] \to \hilbert$ be a rectifiable curve such that $\length\Gamma\leq\pi R$ and without self-intersections given by its natural parametrization connecting points $a$ and $b$ such 
that $0 < \enorm{a-b} < 2R.$ Then the following assertion are equivalent:
\begin{enumerate}
    \item\label{i1:thm_curves_char} $\Gamma$ is an $R$-proximally smooth   set;
    \item\label{i2:thm_curves_char} the natural parametrization of $\Gamma$ is differentiable and its derivative is Lipschitz  with constant $\frac{1}{R}.$
\end{enumerate}
\end{thm}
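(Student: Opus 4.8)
The plan is to prove the two implications separately; in both directions the key intermediate fact is the sharp \emph{reverse chord--arc inequality}
\begin{equation}\label{eq:rev-chord-arc}
 \enorm{\Gamma(s)-\Gamma(t)}\ \ge\ 2R\sin\frac{\enorm{s-t}}{2R},\qquad s,t\in[0,\length\Gamma],
\end{equation}
which says that every subarc is at least as straight as the model arc of a circle of radius $R$ of the same length. Once \eqref{eq:rev-chord-arc} is available, $R$-proximal smoothness is read off from the standard proximal-normal characterization: since $\Gamma$ will be of class $C^1$, every proximal normal at $\Gamma(t_0)$ is orthogonal to $\Gamma'(t_0)$, and $R$-proximal smoothness of $\Gamma$ amounts to the \emph{supporting ball condition}
\begin{equation}\label{eq:ball-cond}
 \iprod{v}{\Gamma(s)-\Gamma(t_0)}\ \le\ \frac{1}{2R}\,\enorm{\Gamma(s)-\Gamma(t_0)}^2,\qquad s\in[0,\length\Gamma],
\end{equation}
for every $t_0$ and every unit $v\perp\Gamma'(t_0)$, i.e.\ that the open ball $\BF{R}{\Gamma(t_0)+Rv}$ misses $\Gamma$. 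Equality holds throughout \eqref{eq:rev-chord-arc} and \eqref{eq:ball-cond} on an arc of a circle of radius $R$, which is why $\tfrac1R$ is the only possible constant in \ref{i2:thm_curves_char} and why the circle is the comparison object everywhere below.

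\emph{The implication \ref{i1:thm_curves_char}$\Rightarrow$\ref{i2:thm_curves_char}.} Assuming $\Gamma$ is an $R$-proximally smooth set, I would first get \eqref{eq:rev-chord-arc} from \Href{Lemma}{lem:short_curve_intro}: since $\Gamma$ has no self-intersections, the only curve inside the set $\Gamma$ from $\Gamma(s)$ to $\Gamma(t)$ is the subarc $\Gamma|_{[s,t]}$, of length $\enorm{s-t}$, so when $\enorm{\Gamma(s)-\Gamma(t)}<2R$ the lemma gives $\enorm{s-t}\le 2R\arcsin\frac{\enorm{\Gamma(s)-\Gamma(t)}}{2R}$, which rearranges to \eqref{eq:rev-chord-arc} because $\enorm{s-t}\le\length\Gamma\le\pi R$ keeps both sides in $[0,\tfrac\pi2]$ (and \eqref{eq:rev-chord-arc} is trivial when $\enorm{\Gamma(s)-\Gamma(t)}\ge 2R$). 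Then I would upgrade \eqref{eq:rev-chord-arc}, together with $\enorm{\Gamma(s)-\Gamma(t)}\le\enorm{s-t}$ from the natural parametrisation, to differentiability of $\Gamma$ with $\tfrac1R$-Lipschitz derivative. A secant argument does the soft part: for a triple $\Gamma(t-h),\Gamma(t),\Gamma(t+h)$ the law of cosines and the two-sided chord bounds pin the angle at $\Gamma(t)$ to within $O(h)$ of $\pi$ and the adjacent chords to length $h-O(h^3)$, so the unit secant directions at scale $h$ turn by $O(h)$; passing to the limit gives $\Gamma\in C^1$ and a Lipschitz bound for $\Gamma'$. The delicate point is the \emph{sharp} constant: a crude secant estimate loses it, and to reach $\tfrac1R$ one should instead extract $\iprod{\Gamma'(s)}{\Gamma'(t)}\ge\cos\frac{\enorm{s-t}}{R}$ from \eqref{eq:rev-chord-arc} by integrating along $\Gamma$, whence $\enorm{\Gamma'(s)-\Gamma'(t)}^2=2-2\iprod{\Gamma'(s)}{\Gamma'(t)}\le 4\sin^2\frac{\enorm{s-t}}{2R}\le\frac{\enorm{s-t}^2}{R^2}$.

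\emph{The implication \ref{i2:thm_curves_char}$\Rightarrow$\ref{i1:thm_curves_char}.} Now $\Gamma$ is a simple arc of length at most $\pi R$ with $\tfrac1R$-Lipschitz unit tangent. I would re-derive \eqref{eq:rev-chord-arc} by a Schur-type comparison: on a subarc of length $\ell\le\pi R$, reparametrised by arclength on an interval centred at its midpoint $m$, the spherical distance between $\Gamma'(\tau)$ and $\Gamma'(m)$ is at most $\enorm{\tau-m}/R$, hence $\iprod{\Gamma'(\tau)}{\Gamma'(m)}\ge\cos\frac{\enorm{\tau-m}}{R}$, and the chord is at least $\int \iprod{\Gamma'(\tau)}{\Gamma'(m)}\di\tau\ge 2R\sin\frac{\ell}{2R}$. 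Next, fixing $t_0$ and a unit $v\perp\Gamma'(t_0)$, the same spherical bound gives $\iprod{v}{\Gamma'(\tau)}\le\sin\frac{\enorm{\tau-t_0}}{R}$ whenever $\enorm{\tau-t_0}\le\pi R/2$, so for $\enorm{s-t_0}\le\pi R/2$
\[
 \iprod{v}{\Gamma(s)-\Gamma(t_0)}\ \le\ R\Bigl(1-\cos\tfrac{\enorm{s-t_0}}{R}\Bigr)\ =\ 2R\sin^2\tfrac{\enorm{s-t_0}}{2R}\ \le\ \tfrac1{2R}\enorm{\Gamma(s)-\Gamma(t_0)}^2
\]
by \eqref{eq:rev-chord-arc}, which is \eqref{eq:ball-cond} on that range. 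For the remaining range $\enorm{s-t_0}\in(\pi R/2,\pi R]$ --- which already forces $\Gamma$ to be long and $\Gamma(s)$ near an endpoint --- I would use the auxiliary function $\rho(\tau)=\enorm{\Gamma(\tau)-c}^2$ with $c=\Gamma(t_0)+Rv$: it satisfies $\rho(t_0)=R^2$, $\rho'(t_0)=0$, and $\rho''(\tau)=2+2\iprod{\Gamma''(\tau)}{\Gamma(\tau)-c}\ge 2-\tfrac2R\sqrt{\rho(\tau)}\ge 0$ wherever $\rho(\tau)\le R^2$, so $\rho$ is convex on each component of $\{\rho\le R^2\}$; combining this convexity with $\length\Gamma\le\pi R$ and the simplicity of $\Gamma$ --- which prevents $\Gamma$ from leaving and re-entering $\ovl{\BF{R}{c}}$ in a way that would make $\rho<R^2$ --- yields \eqref{eq:ball-cond} in full. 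Since $\Gamma\in C^1$, every proximal normal at $\Gamma(t_0)$ lies in $\Gamma'(t_0)^\perp$, so \eqref{eq:ball-cond} is precisely the proximal-normal inequality characterising $R$-proximal smoothness.

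The two hard points I anticipate are: in \ref{i1:thm_curves_char}$\Rightarrow$\ref{i2:thm_curves_char}, squeezing the \emph{sharp} Lipschitz constant $\tfrac1R$ out of \eqref{eq:rev-chord-arc} (a soft argument only gives some constant); and in \ref{i2:thm_curves_char}$\Rightarrow$\ref{i1:thm_curves_char}, the global part of the supporting ball condition for far-apart parameters --- exactly the step that needs $\length\Gamma\le\pi R$, the absence of self-intersections, and $0<\enorm{a-b}<2R$, all of which serve to forbid the curve from bending back and poking into $\BF{R}{\Gamma(t_0)+Rv}$.
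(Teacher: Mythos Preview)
Your plan correctly singles out the reverse chord--arc inequality \eqref{eq:rev-chord-arc} as the bridge between the two conditions, but both directions contain gaps that the paper's proof avoids by using different, more structural tools.

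\textbf{In \ref{i1:thm_curves_char}$\Rightarrow$\ref{i2:thm_curves_char}:} From \Href{Lemma}{lem:short_curve_intro} you extract only the length bound, hence only \eqref{eq:rev-chord-arc}. But \eqref{eq:rev-chord-arc} alone does not obviously yield $\iprod{\Gamma'(s)}{\Gamma'(t)}\ge\cos\frac{|s-t|}{R}$ ``by integrating along $\Gamma$'': the double-integral identity $\enorm{\Gamma(s)-\Gamma(t)}^2=\iint\iprod{\Gamma'(u)}{\Gamma'(v)}\di u\di v$ gives an averaged inequality, and there is no Lebesgue-differentiation step that recovers the off-diagonal pointwise bound from averages over \emph{diagonal} squares $[t,s]^2$. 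The paper instead exploits a stronger consequence of proximal smoothness than the length bound: by \Href{Lemma}{lem:simple_short_curve_in_prox_smooth_set} the subarc actually lies in $\strconvsegment{\Gamma(t_1)}{\Gamma(t_2)}$, and then \Href{Lemma}{lem:angle_inside_strongly_convex_segment} gives directly $\anglef{\Gamma(\tau)-\Gamma(t_1)}{\Gamma(t_2)-\Gamma(t_1)}\le\arcsin\frac{t_2-t_1}{2R}$. Passing to the limit and using the triangle inequality for angles yields $\anglef{\Gamma'(t_1)}{\Gamma'(t_2)}\le 2\arcsin\frac{|t_2-t_1|}{2R}$, hence $\enorm{\Gamma'(t_1)-\Gamma'(t_2)}\le|t_1-t_2|/R$ with the sharp constant in one line. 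The missing idea in your sketch is precisely this $D_R$-containment.

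\textbf{In \ref{i2:thm_curves_char}$\Rightarrow$\ref{i1:thm_curves_char}:} Your Schur-type derivation of \eqref{eq:rev-chord-arc} is fine (and in fact slightly slicker than the paper's double integral in \Href{Lemma}{lem:natural_parametrization_Lipschitz_bound}). The gap is the ``far range'' $|s-t_0|\in(\pi R/2,\pi R]$. The convexity of $\rho$ on $\{\rho\le R^2\}$ does \emph{not} preclude $\rho<R^2$ there: a convex function on $[s_1,s_2]$ with $\rho(s_1)=\rho(s_2)=R^2$ may well dip strictly below $R^2$ in between, and if $s_2=\length\Gamma$ there is not even a second boundary constraint. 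One would need a Sturm-type comparison using the full inequality $\rho''\ge 2-\tfrac{2}{R}\sqrt{\rho}$, not merely $\rho''\ge0$; your sketch does not do this. The paper sidesteps the supporting-ball computation entirely: once \eqref{eq:rev-chord-arc} gives that every subarc has length at most the $R$-arclength of its chord, \Href{Corollary}{cor:main_arc} forces the subarc to meet $\strconvsegment{\Gamma(t_1)}{\Gamma(t_2)}$, and \Href{Proposition}{prp:equivalence2} then gives $R$-proximal smoothness immediately. This three-line route is the one to take.
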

Here, as usual, we understand by the \emph{natural parametrization} of a rectifiable curve its arc-length parametrization.

The rest of the paper is organized as follows. 
We settle all our notations at the beginning of the next section except for the definition of ``folding mapping'' in \Href{Section}{sec:strongly_convex_segment}. 
In \Href{Subsection}{subsec:curves}, we give all the necessary definitions related to curves and their length and prove several simple standard lemmas related to variations. Next, in \Href{Subsection}{subsec:proximally_smooth_set}, we mention several properties equivalent to proximal smoothness. They are purely geometric, and we will work with those equivalent descriptions of proximal smoothness. One such description is via strongly convex segments (or ``spindles''). All these properties of proximally smooth sets are not new; nevertheless, we will provide the proofs for some of the facts for completeness.  We will proceed with a few metric properties of strongly convex segments in 
\Href{Section}{sec:strongly_convex_segment}; these are simple technical tools we will use later in the paper. After that, we will be ready to prove \Href{Lemma}{lem:short_curve_intro} together with some corollaries and its stability version in \Href{Section}{sec:short_curves}. We note that up to this part, all our proofs are simple. 
In \Href{Section}{sec:integration_along_circles}, we will obtain a certain discrete version of the classical Wirtinger inequality, which is used in the proof of \Href{Theorem}{thm:existence of the shortest_curve}.
\Href{Section}{sec:discrete_energy_functional} is dedicated to the proof of \Href{Theorem}{thm:existence of the shortest_curve}.
 We will introduce a certain ``discrete energy functional''  and argue that its minimizers are sufficiently fine approximations of shortest curves in a proximally smooth set. In the core, the main idea is an ``averaging'' of two curves. We will show that a certain geometric construction (actually, it will be the metric projection of the pointwise average of two curves) returns a curve shorter than the two initial ones. 
Finally, we will prove \Href{Theorem}{thm:characterization_of_prox_reg_curves} in the last 
\Href{Section}{sec:prox_smooth_curve_characterization}. We note that the proof of 
\Href{Theorem}{thm:characterization_of_prox_reg_curves} requires only results from the first four sections of the paper.

\section{Definitions and preliminaries}
\label{sec:definitions}
We use $[n]$ to denote the set $\{1, \dots, n\}$ for some natural $n.$ The closed line segment between two points $a$ and $b$ of a linear space is denoted by $[a,b].$ We use $B_R(x)$ to denote a ball of radius $R$ and center $x$ in a normed space.
Throughout the paper, we use $\hilbert$  to denote an at least two-dimensional Hilbert space, $\enorm{x}$ to denote the Euclidean norm of $x \in \hilbert,$ $\iprod{x}{y}$ to denote the inner product of $x,y\in\hilbert$, 
$\sphere_{\hilbert}$ to denote the unit sphere of $\hilbert.$ The boundary of a set $A$ is denoted by $\partial A.$     The {\it distance} from a point $x$ of a normed space $\parenth{X, \norm{\cdot}}$ to a set $A $ is defined  by 
$\dist{x}{A} = \inf\limits_{a \in A} \norm{x - a}.
$ 
The \emph{metric projection} of a point $x$ onto a set $A$ is the set of all points $a$ of $A$ that the identity $\norm{a - x} = \dist{x}{A}$ holds.
We call the set 
$
\left\{ x \in X \st 0 < \dist{x}{A} < R \right\}
$ 
the \emph{open $R$-neighborhood} of a subset $A$ of a normed space $\parenth{X, \norm{\cdot}}.$
Here and throughout the paper, we will assume that a strictly positive constant 
$R$ is fixed.
We also will extensively use the following geometric object.
\begin{dfn}
 For any two points $a$ and  $b$ of a normed space 
 $\parenth{X, \norm{\cdot}}$ with 
 $\norm{a - b} \leq 2R,$ we use  
$\strconvsegment{a}{b}$ to denote  the \emph{ strongly convex segment $[a,b]$ of radius}
$R,$ i.e. the set
\[
\strconvsegment{a}{b}=\bigcap\limits_{x\in X \colon \, a,b \in B_R(x)} B_R(x).
\]
% We also use $D_R(a,b)$ to denote the set 
% $\strconvsegment{a}{b}\setminus \{a,b\}.$
\end{dfn}
\begin{rem}\label{rem:str_convex_segment}
    We notice that for any $c,d\in D_R[a,b]$ the strongly convex segment $D_R[c,d]$ is contained in $D_R[a,b]$. We call this {\it the hereditary property of the strongly convex segment}.
\end{rem}
We shall note that the ``strongly convex segment'' disguises under the name ``spindle'' \cite{bezdek2007ball} in discrete geometry. It is used to define strongly convex sets with many different names: spindle convex, hyper-convex, etc.
As we will see in this section, there is a natural definition of proximally smooth subsets of a Euclidean space via strongly convex segments as well.

\begin{dfn}
We define the \emph{angle}, denoted $\anglef{a}{b},$ between two vectors $a$ and $b$ of $\hilbert$ as follows.
If at least one of the vectors is zero, then $\anglef{a}{b} = 0.$ Otherwise, 
$\anglef{a}{b}$ equals the length of the shortest arc of the great circle passing through  unit vectors $\frac{a}{\enorm{a}}$ and 
$\frac{b}{\enorm{b}}.$ 
\end{dfn}

Since we will use the set $R \sphere_{\hilbert}$ as our model set, it is convenient to refer to its metric properties directly.
So, we introduce the following definitions.
\begin{dfn}
For two points $a$ and $b$ of $\hilbert$ with $\enorm{a-b} \leq 2R,$ we call any shortest arc of a circle  of radius $R$ passing through $a$ and $b$ a \emph{main $R$-arc} of segment $[a,b].$ We will say that the length of a main $R$-arc
of segment $[a,b]$ is the \emph{$R$-arclength} of the segment $[a,b]$, that is,
the $R$-arclength of the segment $[a,b]$ is equal to $2R\arcsin{\frac{\enorm{a-b}}{2R}}.$
The $R$-arclength of a polyline $\Gamma$ is the sum of the $R$-arclengths of its segments provided that all of them are shorter than $2R.$ 
%In addition, the $R$-arclength of a segment whose length is strictly greater than $2R$ set to be $+\infty.$
\end{dfn}

\subsection{Curves, variations, and length}
\label{subsec:curves}
In the current subsection, we collect standard definitions related to curves and their length. To stress that the definitions are independent of the Euclidean structure, we provide definitions for Banach spaces.

% \begin{dfn}
We will say that a polyline $\Gamma$ is \emph{inscribed} in a set $A$ if all vertices of $\Gamma$ belong to $A.$ Additionally, if $A$ is a curve, we will assume the order of the vertices of $\Gamma$ is consistent with the parametrization of the curve. 
% \end{dfn}
% \begin{dfn}
For a polyline $\Gamma = x_0 \dots x_{n}$ in a normed space $(X, \norm{\cdot}),$ the \emph{mesh} of $\Gamma$  is equal to   $\max\limits_{i \in [n]}\norm{x_i-{x_{i-1}}}.$ We use $\mesh \Gamma$ to denote the mesh of $\Gamma.$ We will say that a polyline 
$x_0 \dots x_n$ is a \emph{refinement} of $y_0 \dots y_m$ if
$\{y_i\}_{i \in [m]}$ is a subsequence of $\{x_i\}_{i \in [n]}.$

We will say that a finite sequence $T= \{t_i\}_0^n$ with $ t_0 <  \dots < t_n $ is a \emph{partition} of the segment $[t_0, t_n] \subset \R.$
In the case $t_0 = t_n,$ we will say that the segment  $[t_0, t_n]$ is \emph{degenerate} and the singleton set $T = \{t_0\}$ is its partition.
Abusing the notation, we will use $\mesh T$ to denote $\max\limits_{i \in [n]} \abs{t_{i} - t_{i-1}}$ for a  partition  
 $T= \{t_i\}_0^n$  of a segment $[t_0, t_n]$ of the real line $\R$. We say that  a partition $T^\prime$ of a segment is a \emph{refinement} of partition $T$ if $T \subset T^\prime.$
 
By a curve we understand a continuous function from a non-degenerate segment of the real line to a normed space.
% \end{dfn}
 \begin{dfn}
     Fix a partition $T= \{t_i\}_0^n$  of a segment $[t_0,t_n].$
Let $f$ be a function on $[t_0, t_n]$ with values in a normed space $(X, \norm{\cdot}).$
      Denote
      \[
      \partvar[q]{f, T} = \parenth{\sum\limits_{k \in [n]} \norm{f(t_k) - f(t_{k-1})}^q (t_k - t_{k-1})^{1-q}  }^{1/q}.
      \]
 \end{dfn}
 In particular,
       \[
       \partvar[1]{f, T} = \sum\limits_{k \in [n]} \norm{f(t_k) - f(t_{k-1})}
       \quad \text{and} \quad 
      \partvar[2]{f, T} = \parenth{\sum\limits_{k \in [n]} \frac{\norm{f(t_k) - f(t_{k-1})}^2}
      {t_k - t_{k-1}} }^{1/2}.
      \]
  The following two statements are direct consequences of the 
  H\"older inequality.
     \begin{lem}\label{lem:summing_operators_add_points}
     Fix $q \geq 1.$
   Let  a partition $T^\prime $ of a segment $[t_0,t_1]$ be a refinement of another partition $T$  of the same segment.
  Let $f$ be a function on $[t_0,t_1]$ with values  in a normed space.
   Then 
   \[
   \partvar[q]{f, T} \leq \partvar[q]{f, T^\prime}.
   \]
  \end{lem}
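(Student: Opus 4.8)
The plan is to reduce the statement, first to the insertion of a single extra point, and then to an elementary scalar inequality that follows from H\"older's inequality. Since $t \mapsto t^{1/q}$ is nondecreasing on $[0,\infty)$, it suffices to compare the $q$-th powers, i.e.\ to show $\partvar[q]{f,T}^q \leq \partvar[q]{f,T'}^q$, where $\partvar[q]{f,T}^q = \sum_{k} \norm{f(t_k) - f(t_{k-1})}^q (t_k - t_{k-1})^{1-q}$. Writing $T' \setminus T = \{c_1, \dots, c_\ell\}$ and inserting these points one at a time, it is enough by induction on $\ell$ to treat $\ell = 1$.

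So I would take $T = \{s_0 < \dots < s_m\}$ and $c \in (s_{j-1}, s_j)$ for some $j$, with $T' = T \cup \{c\}$. All summands of $\partvar[q]{f,T}^q$ and $\partvar[q]{f,T'}^q$ indexed by subintervals other than $[s_{j-1}, s_j]$ coincide; the term for $[s_{j-1}, s_j]$ on the left is replaced on the right by the two terms for $[s_{j-1}, c]$ and $[c, s_j]$. Hence the lemma reduces to: for all vectors $u,v$ of the normed space and all reals $\alpha, \beta > 0$,
\[
\frac{\norm{u+v}^q}{(\alpha+\beta)^{q-1}} \;\leq\; \frac{\norm{u}^q}{\alpha^{q-1}} + \frac{\norm{v}^q}{\beta^{q-1}},
\]
applied with $u = f(c) - f(s_{j-1})$, $v = f(s_j) - f(c)$, $\alpha = c - s_{j-1}$, $\beta = s_j - c$ (all strictly positive because the $t_i$ are strictly increasing and $s_{j-1} < c < s_j$). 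By the triangle inequality $\norm{u+v} \leq \norm{u} + \norm{v}$ and the monotonicity of $t \mapsto t^q$ on $[0,\infty)$, it is enough to prove the purely scalar inequality $(a+b)^q(\alpha+\beta)^{1-q} \leq a^q \alpha^{1-q} + b^q \beta^{1-q}$ for $a = \norm{u} \geq 0$ and $b = \norm{v} \geq 0$. For $q = 1$ this is an equality; for $q > 1$ I would apply H\"older's inequality with conjugate exponents $\tfrac{q}{q-1}$ and $q$ to the pairs $\bigl(\alpha^{(q-1)/q}, \beta^{(q-1)/q}\bigr)$ and $\bigl(a\,\alpha^{-(q-1)/q}, b\,\beta^{-(q-1)/q}\bigr)$, obtaining $a + b \leq (\alpha+\beta)^{(q-1)/q}\bigl(a^q\alpha^{1-q} + b^q\beta^{1-q}\bigr)^{1/q}$; raising both sides to the power $q$ gives the claim.

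Since the whole argument uses only the triangle inequality and H\"older's inequality, there is no genuine obstacle. The only points that need care are the bookkeeping in the one-point reduction (verifying that every summand not touching the inserted point is literally unchanged) and choosing the correct exponents and auxiliary pairs in the H\"older step; the positivity of the lengths $\alpha, \beta$ is automatic from the definition of a partition. The same computation specialized to $q = 2$ also yields the displayed $\partvar[2]{\cdot}$ version.
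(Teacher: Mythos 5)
Your argument is correct and matches the paper's intent: the paper states this lemma without proof as a ``direct consequence of the H\"older inequality,'' and your reduction to a single inserted point followed by the scalar inequality $(a+b)^q(\alpha+\beta)^{1-q} \leq a^q\alpha^{1-q} + b^q\beta^{1-q}$ (via H\"older with exponents $\tfrac{q}{q-1}$ and $q$) is exactly the standard way to make that precise. The bookkeeping and the exponent choices in your H\"older step all check out.
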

   \begin{lem}\label{lem:summing_operators_comparison}
   Let $T= \{t_i\}_0^n$  with $0 = t_0 <  \dots < t_n = 1$ be a partition of $[0,1].$
  Let $f$ be a function on $[0,1]$ with values  in a normed space.
   Then 
   \[
   \partvar[q]{f, T} \leq \partvar[p]{f, T} 
   \]
   for all $1 \leq q \leq p.$
  \end{lem}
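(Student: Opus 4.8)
The plan is to recognize $\partvar[q]{f,T}$ as the $L^q$-norm of a step function against a \emph{probability} measure, so that the claim is just the monotonicity of $L^q$-norms in $q$ on a probability space, which reduces to a single application of H\"older's inequality. Set $a_k = \norm{f(t_k)-f(t_{k-1})}$ and $\Delta_k = t_k - t_{k-1}$ for $k\in[n]$, and record the normalization $\sum_{k\in[n]}\Delta_k = t_n - t_0 = 1$ — this is precisely why the statement is phrased for a partition of $[0,1]$. Then $\partvar[q]{f,T}^q = \sum_{k\in[n]} a_k^q \Delta_k^{1-q} = \sum_{k\in[n]} (a_k/\Delta_k)^q\,\Delta_k$, i.e. the $q$-th power of the $L^q$-norm of the function equal to $a_k/\Delta_k$ on the $k$-th subinterval, with respect to the measure assigning weight $\Delta_k$ to index $k$, which is a probability measure by the normalization.

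First I would dispose of the trivial case $q=p$ and assume $q<p$. Then for each $k$ I would use the identity
\[
a_k^q \Delta_k^{1-q} = \parenth{a_k^p \Delta_k^{1-p}}^{q/p}\cdot \Delta_k^{\,1-q/p},
\]
which holds after the elementary exponent bookkeeping $(1-p)\tfrac{q}{p} + \bigl(1-\tfrac{q}{p}\bigr) = 1-q$ on the power of $\Delta_k$. Summing over $k\in[n]$ and applying H\"older's inequality with the conjugate exponents $\tfrac{p}{q}$ and $\tfrac{p}{p-q}$ bounds the sum by the product of $\bigl(\sum_{k} a_k^p \Delta_k^{1-p}\bigr)^{q/p}$ and $\bigl(\sum_{k}\Delta_k\bigr)^{(p-q)/p}$; the second factor equals $1$ by the normalization. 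Taking $q$-th roots gives $\partvar[q]{f,T}\le\partvar[p]{f,T}$.

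There is essentially no obstacle: the only thing to get right is the arithmetic of the exponents in the factorization of $a_k^q\Delta_k^{1-q}$ and the check that the H\"older exponents are conjugate, $\tfrac{q}{p} + \tfrac{p-q}{p} = 1$ (the hypothesis $q\ge 1$ plays no role beyond $q\le p$, which is what makes $\tfrac{p}{q}\ge 1$). For cleanliness I would also note the degenerate conventions: a strictly increasing partition has all $\Delta_k>0$, so no division by zero occurs, and indices with $a_k=0$ contribute nothing to either side and may be dropped.
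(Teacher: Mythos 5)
Your proof is correct and is exactly the argument the paper has in mind: the paper dispenses with this lemma by simply declaring it "a direct consequence of the H\"older inequality," and your factorization $a_k^q\Delta_k^{1-q}=\parenth{a_k^p\Delta_k^{1-p}}^{q/p}\Delta_k^{1-q/p}$ followed by H\"older with exponents $\tfrac{p}{q}$ and $\tfrac{p}{p-q}$, using $\sum_k\Delta_k=1$, is the standard way to carry that out. The exponent bookkeeping checks out, so nothing further is needed.
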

%   \begin{proof}
%      \[
%    \parenth{\partvar[q]{f, T}}^q = 
%   \sum\limits_{k \in [n]} \frac{\norm{f(t_k) - f(t_{k-1})}^q}{ (t_k - t_{k-1})^{q-1}} =
% \sum\limits_{k \in [n]} \frac{\norm{f(t_k) - f(t_{k-1})}^q}{ (t_k - t_{k-1})^{\frac{q}{p}(p-1)}} \cdot
%    (t_k - t_{k-1})^{1 - \frac{q}{p}}.
%    \]  
%  Using here the H\"older inequality for $ \frac{p}{q} \geq 1,$
%       \[
%    \parenth{\partvar[q]{f, T}}^q   \leq 
%     \parenth{\sum\limits_{k \in [n]} \frac{\norm{f(t_k) - f(t_{k-1})}^p}{ (t_k - t_{k-1})^{p-1}}}^{q/p}
%     \cdot \parenth{\sum\limits_{k \in [n]} (t_k - t_{k-1})}^{1 - q/p}  = \parenth{\partvar[p]{f, T}}^q.
%    \]
%   \end{proof}     
\begin{dfn}
  Let $f$ be a function on $[t_0,t_1]$ with values  in a normed space.
\emph{Variation of order} $p$ of $f$ is defined by
\[
\variation[p]{f}=\sup\limits_{T}\partvar[p]{f, T},
\]
where the supremum is taken over all partitions $T$  of $[t_0, t_1].$
\end{dfn}

% \begin{dfn}
For a continuous function $\gamma$  from  $[t_0,t_1]$ to a normed space, by the {\it length} of the curve $\gamma$ we understand  its first variation, that is, $\length{\gamma}=\variation{\gamma}.$
We say that a curve is \emph{rectifiable} if it has a finite length. 
\begin{dfn}
We say that a rectifiable curve $\gamma$ in a normed space is \emph{represented by its standard parametrization}, if $\gamma$ is represented as a function from the segment $[0,1]$  such that the length of the part of the curve between $\gamma(0)$ and $\gamma(t)$ is equal to $ t \cdot \length{\gamma}.$
\end{dfn}
Clearly, a rectifiable curve $\gamma$ represented by its standard parametrization is a  Lipschitz function with constant $\length{\gamma}.$ 
\begin{lem}\label{lem:variations_mesh_to_zero}
Fix $p \geq 1;$ and let $\gamma $ be a continuous curve from $[t_0, t_1]$ into a normed space.
For a sequence $\left\{T_i\right\}_{i \in \N}$ of partitions of the segment $[t_0, t_1]$   with
$\mesh{T_i} \to 0$ as $i \to \infty,$ 
\[
 \partvar[p]{\gamma, T_i} \to  \variation[p]{\gamma} \quad \text{as} \quad i \to \infty.
\]
\end{lem}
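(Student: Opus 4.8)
\begin{sketch}
The plan is to prove the two inequalities $\limsup_{i\to\infty}\partvar[p]{\gamma, T_i}\le\variation[p]{\gamma}$ and $\liminf_{i\to\infty}\partvar[p]{\gamma, T_i}\ge\variation[p]{\gamma}$ separately. The first one is immediate: each $T_i$ is a partition of $[t_0,t_1]$, so $\partvar[p]{\gamma, T_i}\le\variation[p]{\gamma}$ by the very definition of $\variation[p]{\gamma}$ as a supremum over partitions. In particular, if $\variation[p]{\gamma}=0$ there is nothing more to prove, so from now on assume $\variation[p]{\gamma}>0$.

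For the lower bound, fix any $\lambda$ with $0<\lambda<\variation[p]{\gamma}$ (any real $\lambda$ if the variation is infinite) and, by definition of the supremum, choose a partition $S=\{s_0<\dots<s_m\}$ of $[t_0,t_1]$ with $\partvar[p]{\gamma, S}>\lambda$. The key idea is to approximate $S$ \emph{from the inside} by a subset of $T_i$. Put $\delta_0=\min_{j\in[m]}(s_j-s_{j-1})>0$; once $\mesh T_i$ is small enough, for every interior index $j$ one can pick the point $u^{(i)}_j\in T_i$ nearest to $s_j$, and together with the common endpoints $t_0,t_1\in T_i$ these points form a partition $\tilde S_i\subseteq T_i$ with $u^{(i)}_j\to s_j$ as $i\to\infty$ for each $j$. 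Now $\partvar[p]{\gamma,\tilde S_i}$ is a sum of just $m$ terms $\norm{\gamma(u^{(i)}_j)-\gamma(u^{(i)}_{j-1})}^p\,(u^{(i)}_j-u^{(i)}_{j-1})^{1-p}$, and each of them converges to $\norm{\gamma(s_j)-\gamma(s_{j-1})}^p\,(s_j-s_{j-1})^{1-p}$ by continuity of $\gamma$ and of $x\mapsto x^{1-p}$ on $(0,\infty)$; hence $\partvar[p]{\gamma,\tilde S_i}\to\partvar[p]{\gamma, S}$, so $\partvar[p]{\gamma,\tilde S_i}>\lambda$ for all large $i$. Since $T_i$ is a refinement of $\tilde S_i$, \Href{Lemma}{lem:summing_operators_add_points} gives $\partvar[p]{\gamma, T_i}\ge\partvar[p]{\gamma,\tilde S_i}>\lambda$ for all large $i$, and letting $\lambda$ tend to $\variation[p]{\gamma}$ finishes the argument.

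The only delicate point, and the main obstacle if one tries the naive route, is the weight $(u^{(i)}_j-u^{(i)}_{j-1})^{1-p}$, which for $p>1$ explodes as its base tends to $0$: inserting the points of $S$ directly into $T_i$ may create intervals that are arbitrarily short compared with the inserted point's position, so the corresponding terms are not controlled by a modulus-of-continuity estimate. Approximating $S$ from inside $T_i$ sidesteps this entirely, because then the relevant gaps $u^{(i)}_j-u^{(i)}_{j-1}$ stay close to the fixed positive numbers $s_j-s_{j-1}$, and only continuity of $\gamma$ at the finitely many points $s_0,\dots,s_m$ is used.
\end{sketch}
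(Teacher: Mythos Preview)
The paper states this lemma without proof, treating it as a standard fact about $p$-variations of continuous functions; there is nothing to compare your argument against in the text itself.

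Your proof is correct. The upper bound is immediate from the definition, and for the lower bound you use the right idea: given a partition $S$ achieving $\partvar[p]{\gamma,S}>\lambda$, you approximate each node of $S$ by the nearest node of $T_i$, obtaining a sub-partition $\tilde S_i\subset T_i$ whose nodes converge to those of $S$. Continuity of $\gamma$ and of $x\mapsto x^{1-p}$ on $(0,\infty)$ then gives $\partvar[p]{\gamma,\tilde S_i}\to\partvar[p]{\gamma,S}$, and \Href{Lemma}{lem:summing_operators_add_points} passes this up to $\partvar[p]{\gamma,T_i}$. Your remark that one must approximate $S$ from \emph{inside} $T_i$, rather than merging $S$ into $T_i$, is exactly the point: for $p>1$ the weights $(t_k-t_{k-1})^{1-p}$ blow up on short intervals, so inserting foreign points is not obviously harmless, whereas your construction keeps the gaps $u^{(i)}_j-u^{(i)}_{j-1}$ bounded below by roughly $\delta_0-2\,\mesh T_i$. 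The only minor detail you leave implicit is that once $\mesh T_i<\delta_0/2$ the chosen $u^{(i)}_j$ are automatically distinct and in increasing order, but this is clear.
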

\begin{lem}\label{lem:second_variation_for_standard_parametrization}
Consider a rectifiable curve $\gamma$ from $[0,1]$ to a  normed space.  Then 
$
    \variation{\gamma} \leq \variation[2]{\gamma}.
$
If, in addition, $\gamma$ is represented by the standard parametrization, then 
$
    \variation{\gamma}= \variation[2]{\gamma}.
$
\end{lem}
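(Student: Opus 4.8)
The plan is to establish the two assertions separately, using only Hölder-type comparisons from Lemmas \ref{lem:summing_operators_comparison} and \ref{lem:variations_mesh_to_zero}. For the inequality $\variation{\gamma} \le \variation[2]{\gamma}$, I would fix an arbitrary partition $T$ of $[0,1]$ and apply Lemma \ref{lem:summing_operators_comparison} with $q=1$, $p=2$ to get $\partvar[1]{\gamma,T} \le \partvar[2]{\gamma,T} \le \variation[2]{\gamma}$; taking the supremum over all partitions $T$ yields $\variation{\gamma} \le \variation[2]{\gamma}$. This direction holds for any rectifiable curve and requires no assumption on the parametrization.

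For the reverse inequality under the standard parametrization, the key observation is that if $\gamma$ is represented by its standard parametrization then the length of the arc between $\gamma(t_{k-1})$ and $\gamma(t_k)$ equals $(t_k - t_{k-1})\length{\gamma}$, and in particular $\norm{\gamma(t_k)-\gamma(t_{k-1})} \le (t_k - t_{k-1})\length{\gamma}$. Hence for any partition $T = \{t_k\}_0^n$,
\[
\partvarsq{\gamma, T} = \sum_{k\in[n]} \frac{\norm{\gamma(t_k)-\gamma(t_{k-1})}^2}{t_k - t_{k-1}} \le \length{\gamma}\sum_{k\in[n]} \norm{\gamma(t_k)-\gamma(t_{k-1})} = \length{\gamma}\cdot\partvar[1]{\gamma,T} \le \length{\gamma}^2 = \variation{\gamma}^2.
\]
Taking the supremum over $T$ gives $\variation[2]{\gamma} \le \variation{\gamma}$, which together with the first part yields equality.

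The argument is essentially routine; the only point requiring care is justifying the bound $\norm{\gamma(t_k)-\gamma(t_{k-1})} \le (t_k - t_{k-1})\length{\gamma}$, i.e. that the standard parametrization is Lipschitz with constant $\length{\gamma}$ — but this is already recorded in the excerpt immediately after the definition of the standard parametrization, so no real obstacle remains. One could alternatively invoke Lemma \ref{lem:variations_mesh_to_zero} to pass from partition sums to the variations along a sequence with $\mesh T_i \to 0$, but for this lemma the supremum formulation suffices and is cleaner.
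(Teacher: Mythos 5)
Your proof is correct and follows essentially the same route as the paper: the inequality $\variation{\gamma}\le\variation[2]{\gamma}$ via the H\"older comparison of Lemma~\ref{lem:summing_operators_comparison}, and the reverse inequality from the fact that the standard parametrization is Lipschitz with constant $\length{\gamma}$, giving $\partvar[2]{\gamma,T}\le\variation{\gamma}$ for every partition $T$. Your intermediate step through $\length{\gamma}\cdot\partvar[1]{\gamma,T}$ is a harmless elaboration of the same bound.
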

\begin{proof}
    By \Href{Lemma}{lem:summing_operators_comparison} and 
    \Href{Lemma}{lem:variations_mesh_to_zero}, 
    $ \variation{\gamma} \leq \variation[2]{\gamma}.$ On the other hand, $\gamma$ is Lipschitz with constant $\variation{\gamma}$ if it is represented by the standard parametrization. Hence, $ \partvar[2]{\gamma, T} \leq \variation{\gamma}$ for any partition $T$ of $[0,1].$ The result follows.  
\end{proof}

\begin{dfn}
We define the distance $\operatorname{curve-dist} \! \parenth{\gamma_1, \gamma_2}$ between two rectifiable curves $\gamma_1$ and $\gamma_2$ given by their standard parametrizaion as $\variation{\gamma_1 - \gamma_2}.$
\end{dfn}

One can easily see that curve-dist obeys the axioms of metric for rectifiable curves starting at a fixed point. The following statement is a standard exercise on curves of bounded variation. However, we think it is better to sketch its proof since a function of bounded variation might be discontinuous. 
\begin{lem}\label{lem:convergences_in_variation}
The space of rectifiable curves in a Banach space endowed with metric $\operatorname{curve-dist}$ is complete in the following sense.
Consider  a fundamental in the sense of $\operatorname{curve-dist}$  sequence $\{\gamma_i\}_{i \in \N}$ of rectifiable curves  starting at a point $p$. Then, there exists a rectifiable curve 
$\gamma$ starting at $p$ of finite length to which the sequence $\{\gamma_i\}_{i \in \N}$ converges  in the sense of $\operatorname{curve-dist}$. 
\end{lem}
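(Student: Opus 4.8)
The plan is to derive the whole statement from one elementary observation: the metric $\operatorname{curve-dist}$ dominates the uniform distance on curves sharing the initial point $p$. Indeed, for rectifiable curves $\gamma_1,\gamma_2$ with $\gamma_1(0)=\gamma_2(0)=p$ and any $t\in[0,1]$, the variation bounds the norm of the increment of $\gamma_1-\gamma_2$ over $[0,t]$, so
\[
\norm{\gamma_1(t)-\gamma_2(t)}=\norm{\bigl(\gamma_1-\gamma_2\bigr)(t)-\bigl(\gamma_1-\gamma_2\bigr)(0)}\le\variation{\gamma_1-\gamma_2}=\operatorname{curve-dist}\!\parenth{\gamma_1,\gamma_2}.
\]
Hence a $\operatorname{curve-dist}$-fundamental sequence $\{\gamma_i\}_{i\in\N}$ is uniformly Cauchy; as the target space is Banach, it converges uniformly on $[0,1]$ to a continuous function $\gamma$ with $\gamma(0)=p$. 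This is precisely the subtlety the remark before the lemma points to: a purely variational argument would only yield a possibly discontinuous bounded-variation limit, whereas anchoring the curves at $p$ upgrades $\operatorname{curve-dist}$-convergence to uniform convergence and therefore produces a genuine curve.

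Next I would show that $\gamma$ is rectifiable. A fundamental sequence is bounded, so by subadditivity of the variation (immediate from the triangle inequality applied termwise in the definition of $\partvar{\cdot,T}$) the lengths satisfy $\variation{\gamma_i}\le\variation{\gamma_i-\gamma_1}+\variation{\gamma_1}\le M$ for some finite $M$ independent of $i$. Since $\gamma_i\to\gamma$ pointwise, for every partition $T$ of $[0,1]$ the finite sum defining $\partvar{\gamma_i,T}$ converges, so $\partvar{\gamma,T}=\lim_{i\to\infty}\partvar{\gamma_i,T}\le M$; taking the supremum over $T$ gives $\length{\gamma}=\variation{\gamma}\le M<\infty$.

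It remains to check $\operatorname{curve-dist}(\gamma_i,\gamma)\to0$. Given $\epsilon>0$, pick $N$ with $\variation{\gamma_i-\gamma_j}<\epsilon$ for all $i,j\ge N$. Fix $i\ge N$ and an arbitrary partition $T=\{t_k\}_0^n$ of $[0,1]$; since $\gamma_j\to\gamma$ pointwise, the finite sum defining $\partvar{\gamma_i-\gamma_j,T}$ converges, as $j\to\infty$, to $\partvar{\gamma_i-\gamma,T}$, so $\partvar{\gamma_i-\gamma,T}\le\epsilon$. Taking the supremum over all $T$ yields $\operatorname{curve-dist}(\gamma_i,\gamma)=\variation{\gamma_i-\gamma}\le\epsilon$ for $i\ge N$, as desired. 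Finally, $\gamma$ does carry the standard parametrization: from $\variation{(\gamma_i-\gamma)|_{[0,t]}}\le\variation{\gamma_i-\gamma}\to0$ one gets $\length{\gamma|_{[0,t]}}=\lim_i\length{\gamma_i|_{[0,t]}}=\lim_i t\,\length{\gamma_i}=t\,\length{\gamma}$ for every $t\in[0,1]$. There is no genuine obstacle in this argument; the only step that deserves care — and the reason for including a proof at all — is exactly the transition from variation bounds to continuity of the limit, where the common base point $p$ is essential.
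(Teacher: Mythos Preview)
Your argument is correct and follows essentially the same route as the paper's proof: the key observation that $\operatorname{curve-dist}$ dominates the sup-norm via the common base point $p$, pointwise (uniform) construction of the limit $\gamma$, and then passing to the limit in the finite sums $\partvar{\cdot,T}$ to recover both rectifiability and $\operatorname{curve-dist}$-convergence. Your explicit verification that the pointwise limit is already in standard parametrization is a welcome addition; the paper obtains the same conclusion implicitly by showing that $\gamma$ is $\ell$-Lipschitz with $\variation{\gamma}=\ell$.
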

\begin{proof}
As $\{\gamma_i\}_{i \in \N}$ is fundamental in the sense of $\operatorname{curve-dist}$, its lengths converge to a fixed number $\ell$.
Fix a positive $\epsilon.$  There is a positive number $N_\epsilon$ such that for an arbitrary partition $T = \{t_q\}_0^n$ of the segment $[0,1]$ 
and all $i,j$ greater than $N_\epsilon$, inequalities
\[
 \partvar[1]{\gamma_i - \gamma_j, T} 
 \leq  \variation{\gamma_i - \gamma_j}  \leq\epsilon 
\quad \text{and} \quad 
\length{\gamma_i} \leq  \ell + \epsilon
\]
hold.   In particular, for all $t \in (0,1),$
\[
\norm{\gamma_i(t) - \gamma_j (t)} = 
\norm{\gamma_i(t) - \gamma_j (t) - \gamma_i(0) + \gamma_j (0)} \leq
 \partvar[1]{\gamma_i - \gamma_j, \{0, t, 1\}} \leq  \variation{\gamma_i - \gamma_j} \leq \epsilon.
\]
Similarly, $\norm{\gamma_i(1) - \gamma_j (1)} = 
 \partvar[1]{\gamma_i - \gamma_j, \{0,  1\}} \leq  \variation{\gamma_i - \gamma_j} \leq \epsilon.$
Thus, one can define $\gamma (t) $ as the limit of $\gamma_i(t)$ for all $t \in [0,1].$ 
 The rest of the proof is an appropriately chosen ``passing to the limit'' argument. 
Indeed, it follows that $\variation{\gamma - \gamma_j} \to 0$ as $j \to \infty.$
This  and the triangle inequality imply that $\gamma$ is Lipschitz with constant $\ell:$
\[
\norm{\gamma(t_1) - \gamma(t_2)} \leq 
\norm{\gamma(t_1) - \gamma_j(t_1)}  +
\norm{\gamma_j(t_1) - \gamma_j(t_2)} + 
\norm{\gamma_j(t_2) - \gamma(t_2)},
\]
which is less than 
$  ( \ell + \epsilon) \abs{t_1 -t_2} + 2\variation{\gamma - \gamma_j}.
$
Using the triangle inequality again, 
$
 \partvar[1]{\gamma, T}  \leq 
  \partvar[1]{\gamma_j, T} +  \partvar[1]{\gamma - \gamma_j, T} \leq
 \variation{\gamma_j} + \variation{\gamma - \gamma_j},
$
and 
$
\partvar[1]{\gamma, T} \geq   
\partvar[1]{\gamma_j, T} -  \partvar[1]{\gamma - \gamma_j, T}.
$
Passing to the limit as $\mesh T$ tends to zero and using 
\Href{Lemma}{lem:variations_mesh_to_zero}, we get that 
$ \variation{\gamma}  = \ell.$
\end{proof}

Again, by routine limiting procedure, one gets the following result.
\begin{lem}\label{lem:limit_of_polylines}
Let $a$ and $b$ be two distinct points of  a closed subset $A$ of a Banach space.
Let  $\left\{\gamma_i \right\}_{i \in \N}$ be a sequence  of polylines with endpoints $a$ and $b$ inscribed in $A$  such that $\mesh \gamma_i \to 0$ as $i \to \infty;$ for all natural $i,$ $\gamma_{i+1}$ is a refinement of $\gamma_i;$ and $\length{\gamma_i} \to \ell$ as $i \to \infty.$ Then there is a continuous curve $\gamma$ in $A$ connecting $a$ and $b$ of length $\ell$ in which all $\gamma_i$ are inscribed.
\end{lem}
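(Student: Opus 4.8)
The plan is to build $\gamma$ directly as a $1$-Lipschitz curve on the segment $[0,\ell]$, obtained by passing to the limit in the arc-length parametrizations of the $\gamma_i$ restricted to their common vertices. Since inserting a vertex into a polyline cannot decrease its length (triangle inequality) and $\gamma_{i+1}$ refines $\gamma_i$, the numbers $\length{\gamma_i}$ are nondecreasing, so $\length{\gamma_i}\le\ell$ for all $i$. Let $V=\bigcup_i V(\gamma_i)$ be the countable set of all vertices of all the $\gamma_i$, and for a vertex $v$ let $s_j(v)$ be the length of the sub-polyline of $\gamma_j$ from $a$ to $v$, defined once $v\in V(\gamma_j)$. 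For fixed $v$ this is nondecreasing in $j$ (again because $\gamma_{j+1}$ refines $\gamma_j$) and bounded by $\ell$, so $s(v):=\lim_{j\to\infty}s_j(v)$ exists in $[0,\ell]$; moreover $s(a)=0$ and $s(b)=\lim_j\length{\gamma_j}=\ell$. If $v$ precedes $w$ along some $\gamma_i$, then for every $j$ the portion of $\gamma_j$ between $v$ and $w$ is a polyline joining them, so $s_j(w)-s_j(v)\ge\norm{v-w}$, and in the limit $s(w)-s(v)\ge\norm{v-w}$; in particular $s$ is strictly increasing along each $\gamma_i$, hence injective on $V$.

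Thus I would define $\gamma$ on $S:=s(V)\subseteq[0,\ell]$ by $\gamma(s(v)):=v$; the displayed inequality rewrites as $\norm{\gamma(s(v))-\gamma(s(w))}\le\abs{s(v)-s(w)}$, so $\gamma$ is $1$-Lipschitz on $S$. The crucial point — the only place where both hypotheses are genuinely used — is that $S$ is dense in $[0,\ell]$. Given $\epsilon>0$, choose $i$ with $\length{\gamma_i}>\ell-\epsilon$ and $\mesh{\gamma_i}<\epsilon$; writing $\gamma_i=x_0\dots x_n$, from $\sum_k\bigl(s(x_k)-s(x_{k-1})\bigr)=s(b)-s(a)=\ell$, $\sum_k\norm{x_k-x_{k-1}}=\length{\gamma_i}$, and $s(x_k)-s(x_{k-1})\ge\norm{x_k-x_{k-1}}$ one gets $s(x_k)-s(x_{k-1})\le\norm{x_k-x_{k-1}}+\bigl(\ell-\length{\gamma_i}\bigr)<2\epsilon$ for every $k$, so $\{s(x_k)\}_{k=0}^{n}$ is a $2\epsilon$-net in $[0,\ell]$.

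Consequently $\gamma$ extends by uniform continuity to a $1$-Lipschitz map $\gamma\colon[0,\ell]\to X$ with $\gamma(0)=a$ and $\gamma(\ell)=b$; since $A$ is closed and $\gamma(S)\subseteq A$, also $\gamma([0,\ell])\subseteq A$. Each $\gamma_i$ is then inscribed in $\gamma$: its vertices are the points $\gamma(s(v))$, $v\in V(\gamma_i)$, listed in increasing order of the parameter, which is consistent with the parametrization of $\gamma$. Finally $\length{\gamma}\le\ell$ because $\gamma$ is $1$-Lipschitz on a segment of length $\ell$, while $\length{\gamma}=\variation{\gamma}\ge\partvar[1]{\gamma,\,\{s(v):v\in V(\gamma_i)\}}=\length{\gamma_i}\to\ell$; hence $\length{\gamma}=\ell$. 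If the statement is wanted with the standard parametrization on $[0,1]$, one simply rescales the parameter by $\ell$, which is legitimate since $\ell\ge\norm{a-b}>0$. The only mildly delicate step is the density of $S$; everything else is routine bookkeeping with the tools of \Href{Subsection}{subsec:curves}.
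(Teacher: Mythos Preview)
Your argument is correct and supplies exactly the ``routine limiting procedure'' the paper invokes without proof: the paper states \Href{Lemma}{lem:limit_of_polylines} but gives no argument, so there is nothing to compare against beyond confirming that your construction is sound. The arc-length labels $s(v)=\lim_j s_j(v)$, the $1$-Lipschitz extension from the dense set $S$, and the two-sided length estimate are all handled cleanly; the density computation via $\sum_k\bigl((s(x_k)-s(x_{k-1}))-\norm{x_k-x_{k-1}}\bigr)=\ell-\length{\gamma_i}$ is the one nontrivial step and it is right.
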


\subsection{Intrinsic metric}

\begin{dfn}
 Let  $A$ be a subset of a normed space. 
The {\it  intrinsic metric} on $A$ is a semi-metric $\rho_A$ defined as follows: 
$\rho_A(x,y)$ is the infimum of the lengths of all curves  in $A$ connecting $x$  to  $y$ (setting $\rho_A(x,y) = \infty$ if there is no rectifiable curve in $A$ connecting the points).
\end{dfn}
In particular, \Href{Lemma}{lem:short_curve_intro} implies that the intrinsic distance between any sufficiently close points $a$
and $b$ of an $R$-proximally smooth subset of $\hilbert$ is less than   the intrinsic distance between $a$ and $b$ on the Euclidean sphere of radius $R$ passing through them. We believe it is a more natural formulation of the problem, which clearly relates the problem to the Alexandrov geometry \cite{gromov1999metric, burago2022course}.

The following observation shows that the intrinsic metric on a boundary of a convex set in a normed space coincides with the intrinsic metric on the complement of its interior. To put it differently,
 the infimum of lengths of curves connecting points $a$ and $b$ from a boundary of a convex set $K$ and lying outside of the interior of $K$ is at least the infimum of lengths of curves connecting points $a$ and $b$ and belonging to the boundary of $K.$ For the complete proof see \cite[Theorem 5B]{schaffer1976geometry}.
 We recall that a normed space is strictly convex if its unit sphere doesn't contain line segments.

\begin{lem}\label{lem:curve_separation}
Let $K$ be a convex set in a normed space  and let $a,b\in \partial K$. Consider a curve $\gamma$ from  $[0,1]$ to the complement of the interior of $K$ such that $\gamma(0)=a, $ $ \gamma(1)=b$. Then there is a curve $\Gamma:[0,1]\to\partial K$ such that $\Gamma(0)=a,$ $\Gamma(1)=b$ and $\length{\Gamma}\leq \length{\gamma}$. If the space is strictly convex and $\gamma$ contains a point outside $K$ the  inequality is strict.
\end{lem}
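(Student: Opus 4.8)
The plan is to construct the boundary curve $\Gamma$ by projecting $\gamma$ radially onto $\partial K$ from an interior point, and to argue that this radial projection is length-nonincreasing because $K$ is convex. First I would fix a point $p \in \inter K$ (assuming $\inter K \neq \emptyset$; the degenerate case where $K$ has empty interior is either trivial or handled by a supporting-hyperplane reduction). For each $t$, if $\gamma(t) \in \partial K$ leave it fixed; otherwise $\gamma(t)$ lies outside $K$, and the segment $[p, \gamma(t)]$ crosses $\partial K$ at a unique point, which I take to be $\Gamma(t)$. Continuity of $\Gamma$ follows from continuity of $\gamma$ together with the fact that the radial function $r_p(u) = \sup\{\lambda > 0 : p + \lambda u \in K\}$ is continuous on the unit sphere when $\inter K \neq \emptyset$. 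The endpoints are preserved since $a, b \in \partial K$ already.

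The core estimate is that $\length{\Gamma} \leq \length{\gamma}$. I would prove this at the level of inscribed polylines: take any partition $0 = t_0 < \dots < t_n = 1$ and compare $\sum \norm{\Gamma(t_k) - \Gamma(t_{k-1})}$ with $\sum \norm{\gamma(t_k) - \gamma(t_{k-1})}$. It suffices to show that for each consecutive pair, $\norm{\Gamma(t_{k-1}) - \Gamma(t_k)} \leq \norm{\gamma(t_{k-1}) - \gamma(t_k)}$. Writing $\Gamma(t_{k-1}) = p + \lambda_{k-1}(\gamma(t_{k-1}) - p)$ and $\Gamma(t_k) = p + \lambda_k(\gamma(t_k) - p)$ with $0 < \lambda_{k-1}, \lambda_k \leq 1$ (since the $\gamma$-points lie outside or on $\partial K$ and the $\Gamma$-points are their radial pullbacks onto $\partial K$), the inequality $\norm{\lambda_{k-1}x - \lambda_k y} \leq \norm{x - y}$ for vectors $x = \gamma(t_{k-1}) - p$, $y = \gamma(t_k) - p$ and scalars in $(0,1]$ is a consequence of the convexity of $K$: the chord of $\partial K$ between two radial projections is a ``contraction'' of the chord between the outer points. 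Concretely, one shows that the point $\lambda_k y$ lies in the convex hull of $\{p', \lambda_{k-1} x, \text{rescaled copies}\}$ — more cleanly, one notes that for $0 < \mu \le 1$ the map $z \mapsto p + \mu(z-p)$ sends $K$ into $K$, and a short convexity argument shows $\norm{(p+\lambda_{k-1}(x)) - (p+\lambda_k(y))} \le \norm{(p+x)-(p+y)} = \norm{x-y}$ whenever both scaled points remain on $\partial K$ and $K$ is convex containing $p$ in its interior. Passing to the supremum over partitions and using Lemma~\ref{lem:variations_mesh_to_zero} gives $\length{\Gamma} = \variation{\Gamma} \leq \variation{\gamma} = \length{\gamma}$.

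For the strict inequality under strict convexity: if $\gamma$ contains a point $\gamma(t^*) \notin K$, then on a whole subinterval around $t^*$ the curve stays strictly outside $K$, so there are partition points $t_{k-1} < t_k$ with both $\gamma(t_{k-1}), \gamma(t_k)$ strictly outside $K$ and with the corresponding radial scalars $\lambda_{k-1}, \lambda_k < 1$. In a strictly convex space the contraction inequality $\norm{\lambda_{k-1}x - \lambda_k y} \leq \norm{x-y}$ becomes strict unless $x, y$ are positively proportional — and if they were proportional, both $\gamma$-points and both $\Gamma$-points would lie on a single ray from $p$, forcing the segment between them to meet $\partial K$ in more than one point, contradicting strict convexity. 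Hence a fixed positive gap appears in one summand for all sufficiently fine partitions, yielding $\length{\Gamma} < \length{\gamma}$.

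The main obstacle I anticipate is making the per-segment contraction inequality fully rigorous and checking it in the right generality — in particular handling the case where one of $\gamma(t_{k-1}), \gamma(t_k)$ is already on $\partial K$ (so its scalar equals $1$), and confirming that strict convexity is exactly what forces the strict inequality rather than merely convexity. An alternative, cleaner route that avoids some of this casework is to invoke the cited result \cite[Theorem~5B]{schaffer1976geometry} directly, as the excerpt already signals; I would present the radial-projection argument as the self-contained proof and remark that the statement also follows from Schäffer's work.
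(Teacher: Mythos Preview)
Your approach via radial projection from an interior point has a genuine gap: the per-segment contraction inequality
\[
\norm{\Gamma(t_{k-1})-\Gamma(t_k)}\le\norm{\gamma(t_{k-1})-\gamma(t_k)}
\]
is \emph{false} in general, and in fact radial projection onto $\partial K$ is not length-nonincreasing on the complement of $\inter K$. Here is an explicit counterexample in $\R^2$ with the Euclidean norm. Let $K$ be the triangle with vertices $(10,-\tfrac{1}{200})$, $(-10,-\tfrac{1}{200})$, $(0,\tfrac{1}{200})$, and take $p=(0,0)\in\inter K$. The vertex $u=(10,-\tfrac{1}{200})$ and the point $v=(9,-\tfrac{1}{250})$ both lie on $\partial K$ (the latter on the right edge). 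Set $x=u$ and $y=1.1\,v=(9.9,-0.0044)$; then $x\in\partial K$, $y\notin K$, and one checks that the entire segment $[x,y]$ lies outside $\inter K$. The radial projections are $\pi(x)=u$ and $\pi(y)=v$, so
\[
\enorm{\pi(x)-\pi(y)}=\enorm{u-v}\approx 1,\qquad\text{while}\qquad \enorm{x-y}\approx 0.1.
\]
Thus your ``short convexity argument'' cannot exist: the map expands this chord by a factor of ten. Worse, integrating along the segment $[x,y]$ one finds that its radial image on $\partial K$ has length $\approx 1$, so the projection increases the length of this curve tenfold. The underlying reason is that the Minkowski gauge $\mu_K$ can vary rapidly along a short segment when $\partial K$ is nearly tangent to a ray from $p$, and nothing in convexity prevents this.

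The paper's argument (only sketched, with the full proof deferred to Sch\"affer) proceeds completely differently: it never projects. Instead, for a point $c\in\gamma\setminus K$ one strictly separates $c$ from $K$ by a hyperplane $H$, finds two crossing points $e,f\in\gamma\cap H$ bracketing $c$, and replaces the sub-arc by the chord $[e,f]$. This single step already yields the strict inequality in the strictly convex case, and iterating it pushes the curve down to $\partial K$. If you want a self-contained proof along correct lines, follow this hyperplane-replacement scheme rather than radial projection; alternatively, in the Hilbert setting you could use the \emph{nearest-point} projection onto $K$ (which \emph{is} $1$-Lipschitz), though one then has to argue separately that the image lands on $\partial K$.
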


\begin{sketch} Here we just show a key observation of the proof of this result, which also implies the desired inequality in the case of a strictly convex space. 

Assume $\gamma$ contains point $c$ outside $K.$ Then $c$ and $K$ can be strictly separated by a hyperplane $H.$ By continuity, the intersection $\gamma \bigcap H$  contains at least two points such that $c$ belongs to the part of the curve between those points. 
Substituting that part of the curve by the segment $[e,f],$ we do not increase the length of the curve,
moreover, in the strictly convex case, we strictly decrease the length of the curve. \qed \end{sketch}

Note that for any two unit vectors $a$ and $b$ of $\hilbert,$ 
the intrinsic distance between them on the unit sphere 
$\sphere_{\hilbert}$ equals $\anglef{a}{b},$ which, in turn, is 
$
2 \arcsin{\frac{\enorm{a-b}}{2}}.
$
It follows that the angle satisfies the triangle inequality for non-zero vectors.

%We denote the intristic metric on its unit sphere  as $d_1$ and the i%ntrinsic metric on 
%$S \subset X$ as $d_S.$ 

\subsection{Proximally smooth sets and their properties}
\label{subsec:proximally_smooth_set}

% \begin{dfn}
%     The {\it metric projection} of a point $x $ onto a set $A$ is defined as any element of the set 
% $$
% 	P_A(x) = \{a \in A \st \enorm{a-x} = \dist{x}{A}\}.
% $$

% \end{dfn}

A  set $A$ in a Banach space is called $R$-proximally smooth if it is closed and  the distance function
 $x \mapsto \dist{x}{A}$ is continuously differentiable on 
 the  open $R$-neighborhood of $A.$

We say that a set $A$ in a Banach space satisfies the {\it supporting condition of $R$-weak convexity} if for any $x$ in the open $R$-neighborhood of $A$ and  any metric projection $p$ of $x$ onto $A,$
i.e. $\norm{x - p}= \dist{x}{A},$ inequality
$$
\dist{p + R\frac{x - p}{\norm{x - p}}}{A}\geq R
$$
holds.

% The following properties of the strongly convex segment were proven in paragraph 1.2 of \cite{IvMonEng}.

% \begin{prp}
%     Let $(X,\norm{\cdot})$ be a normed space, $x_0,x_1, y_0, y_1\in X$ such that $\norm{x_0-x_1}\leq 2R$ and $y_0, y_1 \in \strconvsegment{x_0}{x_1}$. Then $\strconvsegment{y_0}{y_1}\subset\strconvsegment{x_0}{x_1}$.
% \end{prp}

\begin{prp}\label{prp:equivalence}
Let $A$ be a closed set in $\hilbert.$ 
The following conditions are equivalent:
\begin{enumerate}
\item The set  $A$ is $R$-proximally smooth. 
\item The set $A$ satisfies the supporting condition of $R$-weak convexity.
\item The metric projection is singleton and continuous on the open $R$-neighborhood  of $A.$
\end{enumerate}
\end{prp}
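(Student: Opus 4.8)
The plan is to prove the equivalences $(1)\Leftrightarrow(3)$, $(2)\Rightarrow(3)$ and $(3)\Rightarrow(2)$, the last of which carries the real content. Throughout write $f(x)=\dist{x}{A}$; recall $f$ is $1$-Lipschitz and that
\[
g(x)=\enorm{x}^{2}-f(x)^{2}=\sup_{a\in A}\bigl(2\iprod{x}{a}-\enorm{a}^{2}\bigr)
\]
is convex on $\hilbert$, being a supremum of affine functions. We will repeatedly use the following \emph{segment property}: if $x$ is in the open $R$-neighborhood of $A$ and $p$ belongs to the metric projection $P_A(x)$ of $x$, then every point $z\in[p,x]$ also has $p$ in its metric projection and $f(z)=\enorm{z-p}$ — because an $a\in A$ with $\enorm{z-a}<\enorm{z-p}$ would give $\enorm{x-a}\le\enorm{x-z}+\enorm{z-a}<\enorm{x-z}+\enorm{z-p}=\enorm{x-p}$.

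$(1)\Leftrightarrow(3)$. Note first that $2a\in\partial g(x)$ for every $a\in P_A(x)$, because then $g(x)=2\iprod{x}{a}-\enorm{a}^{2}$ exactly, so $g(y)-g(x)\ge 2\iprod{y-x}{a}$ for all $y$. Hence, if $f\in C^{1}$ on the open $R$-neighborhood, then $f^{2}$ and $g$ are $C^{1}$ there, so $\partial g(x)=\{\nabla g(x)\}$ is a singleton and $P_A(x)$ reduces to at most one point $\pi(x)=\tfrac12\nabla g(x)=x-f(x)\nabla f(x)$, which depends continuously on $x$; this is $(3)$ (nonemptiness of $P_A(x)$ on the tube being routine). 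Conversely, assume $P_A$ is single-valued and continuous on the open $R$-neighborhood, written $\pi$. A direct estimate — for small $h$, $f^{2}(x+h)\le\enorm{x+h-\pi(x)}^{2}$ and $f^{2}(x)\le\enorm{x-\pi(x+h)}^{2}$ — shows that $f^{2}$ is differentiable with $\nabla(f^{2})(x)=2\bigl(x-\pi(x)\bigr)$, which is continuous by continuity of $\pi$; hence $f^{2}\in C^{1}$, and since $f>0$ there, $f\in C^{1}$, i.e.\ $(1)$.

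$(2)\Rightarrow(3)$. Fix $x$ in the open $R$-neighborhood, $p\in P_A(x)$, and put $u=\tfrac{x-p}{f(x)}$, $c=p+Ru$. The supporting condition gives $\enorm{a-c}\ge R$ for all $a\in A$; expanding $\enorm{a-c}^{2}\ge R^{2}$ and using $c-p=\tfrac{R}{f(x)}(x-p)$ yields the \emph{proximal inequality}
\[
\iprod{x-p}{a-p}\le\frac{f(x)}{2R}\,\enorm{a-p}^{2}\qquad(a\in A).
\]
If $p'\in P_A(x)$ too, then $\enorm{p'-c}\le\enorm{p'-x}+\enorm{x-c}=f(x)+(R-f(x))=R$, while $p'\in A$ forces $\enorm{p'-c}\ge R$; equality in the triangle inequality puts $x\in[p',c]$, and since $c-x$ is a positive multiple of $u$ this forces $p'=x-f(x)u=p$. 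With single-valuedness in hand, adding the proximal inequality at $x$ (with $a=\pi(y)$) to that at $y$ (with $a=\pi(x)$) gives, for $f(x),f(y)\le\rho<R$,
\[
\Bigl(1-\tfrac{\rho}{R}\Bigr)\enorm{\pi(x)-\pi(y)}^{2}\le\iprod{y-x}{\pi(y)-\pi(x)}\le\enorm{x-y}\,\enorm{\pi(x)-\pi(y)},
\]
so $\pi$ is Lipschitz with constant $\tfrac{R}{R-\rho}$ on $\{f\le\rho\}$; as $\rho<R$ is arbitrary, $\pi$ is continuous on the whole open $R$-neighborhood.

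$(3)\Rightarrow(2)$. This is the heart of the matter. Assume $P_A$ is a singleton $\pi$ and continuous on the open $R$-neighborhood. Fix $x_{0}$ there, $p_{0}=\pi(x_{0})$, $u=\tfrac{x_{0}-p_{0}}{f(x_{0})}$ (a unit vector). By the segment property $\pi(p_{0}+tu)=p_{0}$ and $f(p_{0}+tu)=t$ for $t\in[0,f(x_{0})]$. Put
\[
\tau=\sup\{\,t\in[0,R]:\ \pi(p_{0}+su)=p_{0}\ \text{for all}\ s\in[0,t]\,\}.
\]
By the segment property the set here is an interval containing $[0,f(x_{0})]$, and it is closed in $[0,R]$ (if $t_{n}\uparrow t\le R$ lie in it, then $f(p_{0}+tu)=\lim f(p_{0}+t_{n}u)=t$, so for $t<R$ the point $p_{0}+tu$ is in the open $R$-neighborhood and $\pi(p_{0}+tu)=\lim\pi(p_{0}+t_{n}u)=p_{0}$). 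If $\tau=R$ we are done, because then $\dist{p_{0}+Ru}{A}=f(p_{0}+Ru)=\lim_{t\to R^{-}}f(p_{0}+tu)=\lim_{t\to R^{-}}t=R$, which is exactly the supporting condition. So everything reduces to excluding $\tau<R$, and this is the step I expect to be the main obstacle: one must show the normal segment issuing from $p_{0}$ in direction $u$ cannot stop before length $R$ — equivalently, that a point $q=p_{0}+\tau u$ with $\tau<R$, $f(q)=\tau$, $\pi(q)=p_{0}$ always admits a strictly longer prolongation along $u$ still projecting onto $p_{0}$. I would attack this by a local analysis near $q$ — tracking the metric projections of points slightly beyond $q$ along the ray (and of points slightly off the ray), invoking the segment property for those points together with the continuity of $\pi$ — aiming to manufacture a point of the open $R$-neighborhood with two distinct metric projections and thus contradict single-valuedness. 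The hypothesis enters here in an essential, non-local way: it is single-valuedness on the entire tube of radius $R$ that forbids premature termination, which is also why the constant $R$ is sharp. Should a clean self-contained argument prove elusive, this implication is exactly the point settled by the classical theory; see \cite{Clarke1995} or \cite[Ch.~1]{IvMonEng}. With $(3)\Rightarrow(2)$ established, the three conditions are equivalent.
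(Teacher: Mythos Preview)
The paper does not prove this proposition: immediately after stating it, the authors write that it ``was proven already in \cite[Theorem~4.1]{Clarke1995}'' and cite further extensions, giving no argument of their own. So there is nothing in the paper to compare your proof against.

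Your write-up therefore already goes well beyond what the paper supplies. The arguments for $(1)\Leftrightarrow(3)$ and $(2)\Rightarrow(3)$ are the standard ones and are correct. One small point: in $(1)\Rightarrow(3)$, calling nonemptiness of $P_A(x)$ ``routine'' is a little quick in an infinite-dimensional Hilbert space --- closedness of $A$ alone does not guarantee nearest points. It does follow once $f\in C^{1}$ (e.g.\ by showing $\enorm{\nabla f}\equiv 1$ on the tube and that any minimizing sequence for $\dist{x}{A}$ must converge to $x-f(x)\nabla f(x)\in A$), but this deserves a line rather than a parenthesis.

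On $(3)\Rightarrow(2)$ you correctly isolate the crux --- the openness step in your open--closed argument along the normal ray --- and candidly defer to exactly the references the paper cites. That is an honest assessment: this implication is the substantive one, and continuity of $\pi$ alone does not obviously force the normal segment to extend to full length $R$. Your instinct to seek a double-projection contradiction is in the right spirit, but the sketch as written does not pin down where the second projection would arise; some further quantitative input (a hypomonotonicity or local Lipschitz estimate for $\pi$) is what makes the argument go through in \cite{Clarke1995}. Since the paper itself simply cites that reference here, your falling back on it is entirely in keeping with the paper's own treatment.
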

In the  Hilbert space case, this proposition was proven already in \cite[Theorem 4.1]{Clarke1995}. We note that the same result holds in a more general setting of so-called uniformly convex and uniformly smooth Banach spaces \cite[Theorem 2.4]{IvBal2009}. In essence, this result is about computing the proximal gradient of a distance function in a Banach space \cite{borwein1987proximal}.

As a warm-up, we sketch the proof of the following simple statement that we will need in some technical estimates. It is simple and 
 its different versions are scattered among the papers on the weakly convex sets.

\begin{lem}\label{lem:projection_point_of_inscribed_segment}
   Let $a$ and $b$ be two points of an $R$-proximally smooth set $A$  with $\enorm{a-b} < 2R$. Then for any 
   $\lambda \in (0,1),$ the metric projection of the point $\lambda a + (1 - \lambda)b$ onto $A$ belongs to $\strconvsegment{a}{b}$ and
\[
\dist{\lambda a + (1 - \lambda)b}{A}\leq R-\sqrt{R^2-\lambda(1-\lambda)\enorm{a-b}^2}.
\]
\end{lem}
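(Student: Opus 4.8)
The plan is to derive both assertions from a single algebraic identity — the Leibniz (Stewart) relation: for $c := \la a + (1-\la)b$ and \emph{any} $w\in\hilbert$,
\[
\enorm{w-c}^2=\la\,\enorm{w-a}^2+(1-\la)\,\enorm{w-b}^2-\la(1-\la)\,\enorm{a-b}^2,
\]
obtained by expanding the squared norms through the inner product; it remains valid verbatim when $c$ is written as an arbitrary affine (not necessarily convex) combination of two points. Write $g(\la):=R-\sqrt{R^2-\la(1-\la)\enorm{a-b}^2}$; since $\la(1-\la)\enorm{a-b}^2\le\frac14\enorm{a-b}^2<R^2$, we have $0<g(\la)<R$ for $\la\in(0,1)$, and $(R-g(\la))^2=R^2-\la(1-\la)\enorm{a-b}^2$.

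First I would record the basic estimate. Suppose $c$ lies in the open $R$-neighbourhood of $A$. Let $p$ be its metric projection, which is a singleton by \Href{Proposition}{prp:equivalence}, put $d:=\enorm{c-p}=\dist{c}{A}\in(0,R)$ and $z:=p+R\tfrac{c-p}{d}$. The supporting condition of $R$-weak convexity gives $\dist{z}{A}\ge R$, hence $\enorm{a-z}\ge R$ and $\enorm{b-z}\ge R$; a direct computation gives $\enorm{c-z}=R-d$ and $c=\tfrac{R-d}{R}p+\tfrac{d}{R}z$. Applying the identity with $w=z$,
\[
(R-d)^2=\enorm{c-z}^2\ge\la R^2+(1-\la)R^2-\la(1-\la)\enorm{a-b}^2=R^2-\la(1-\la)\enorm{a-b}^2,
\]
which rearranges to $\dist{c}{A}=d\le g(\la)$.

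To be allowed to invoke the supporting condition I must first know that every point of $[a,b]$ lies in $A$ or in the open $R$-neighbourhood of $A$; this is the only genuinely delicate point and I would extract it from the estimate itself. The set of $\la\in[0,1]$ with $\dist{\la a+(1-\la)b}{A}\le g(\la)$ is closed by continuity of $\dist{\cdot}{A}$ and of $g$, contains the endpoints (where the left side is $0=g$), and is open: if $\la_0$ belongs to it then $\dist{\gamma(\la_0)}{A}\le g(\la_0)<R$, so by continuity nearby points $\gamma(\la)$ lie in $A$ (giving $0\le g(\la)$) or in the open $R$-neighbourhood (giving $\dist{\gamma(\la)}{A}\le g(\la)$ by the previous paragraph). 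By connectedness of $[0,1]$ this set is all of $[0,1]$, and in particular $\dist{\la a+(1-\la)b}{A}\le g(\la)<R$ for every $\la$; this proves the length bound and shows $c$ is in $A$ (trivial case: $p=c\in[a,b]\subseteq\strconvsegment{a}{b}$) or in the open $R$-neighbourhood, so that $p$, $d$, $z$ above are well defined.

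It remains to show $p\in\strconvsegment{a}{b}$, i.e.\ $\enorm{x-p}\le R$ for every $x$ with $a,b\in B_R(x)$. For such $x$ the identity with $w=x$ gives $\enorm{x-c}^2\le R^2-\la(1-\la)\enorm{a-b}^2=(R-g(\la))^2\le(R-d)^2$, so $\enorm{x-c}\le R-d$. Since $c=\tfrac{R-d}{R}p+\tfrac{d}{R}z$ we have $p=\tfrac{1}{R-d}(Rc-dz)$, an affine combination of $c$ and $z$; plugging it into the identity (with $w=x$ and $\enorm{c-z}=R-d$) and clearing the denominator gives
\[
(R-d)\,\enorm{x-p}^2=R\,\enorm{x-c}^2-d\,\enorm{x-z}^2+Rd(R-d)\le R(R-d)^2+Rd(R-d)=R^2(R-d),
\]
whence $\enorm{x-p}\le R$. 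As $x$ was an arbitrary centre of a ball of radius $R$ containing $a$ and $b$, we conclude $p\in\bigcap_{x:\,a,b\in B_R(x)}B_R(x)=\strconvsegment{a}{b}$. The main obstacle is the bootstrap of the previous paragraph (so that the supporting condition and single-valuedness of the projection are available); once that is in place, everything follows from the one identity used with $w=z$ and with $w=x$.
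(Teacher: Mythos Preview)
Your proof is correct, and it is genuinely different from the paper's.

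The paper argues geometrically: it observes that the right-hand side is the distance from $c$ to $\partial\strconvsegment{a}{b}$ (via the power of the point $c$ with respect to any circle of radius $R$ through $a,b$), then proves $p\in\strconvsegment{a}{b}$ by contradiction in a two-dimensional section (if $p\notin\strconvsegment{a}{b}$, the ball $B_R(o)$ with $o=p+R(c-p)/\enorm{c-p}$ would swallow $a$ or $b$, violating the supporting condition). The distance bound then falls out from $p\in\strconvsegment{a}{b}$ and the power-of-a-point computation.

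You instead extract everything algebraically from the single Stewart/Leibniz identity applied twice---once with $w=z$ (giving the distance bound), once with $w=x$ an arbitrary centre (giving $p\in\strconvsegment{a}{b}$). This is slick and entirely coordinate-free; it avoids the planar picture and the power-of-a-point reference, and the final chain $(R-d)\enorm{x-p}^2\le R^2(R-d)$ is clean. The price is that your proof runs in the reverse logical order (distance bound first, membership second) and therefore needs the estimate $d\le g(\la)$ before you can exploit it.

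One small simplification: your connectedness bootstrap is correct but unnecessary. Since $a,b\in A$, for any $c\in[a,b]$ one has directly $\dist{c}{A}\le\min\{\enorm{c-a},\enorm{c-b}\}\le\tfrac12\enorm{a-b}<R$, so every point of the segment already lies in $A$ or in the open $R$-neighbourhood; this is how the paper dispatches the issue in one line. Your bootstrap does give the sharp bound $\dist{c}{A}\le g(\la)$ along the way, but you re-derive that anyway once $p,z$ are in hand.
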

\begin{sketch}
    Denote $x = \lambda a + (1 - \lambda)b.$ The quantity in the right-hand side of the desired inequality is exactly the distance from $x$ to the boundary of $\strconvsegment{a}{b}.$ To see this, consider the power of $x$ with respect to any circle of  radius $R$ passing through 
$a$ and $b$ (the definition of the power of a point as well as a comprehensive treatment of this notion can be found in  \cite{coxeter1967geometry}). As the metric projection is continuous, the desired inequality will follow from the fact that the metric projection of $x$ onto $A$ exists and belongs to $\strconvsegment{a}{b}.$ 
The existence and uniqueness of metric projections follow from \Href{Proposition}{prp:equivalence} and the inequality 
$\min \left\{\enorm{a - x}, \enorm{b -x}\right\} \leq 
\frac{\enorm{a-b}}{2} < R.$ Let $p$ be the metric projection of $x$ onto $A.$ Suppose that $p$ is not in $\strconvsegment{a}{b}.$ 
Denote $o = p + R\frac{x-p}{\enorm{x-p}}.$ Consider the two-dimensional subspace passing through $a,b$ and $o$. Our assumption yields that $p$ and $o$ belong to two different half-planes with the boundary $ab.$ Thus, the disk of radius $R$ centered at $o$ contains at least one of the points $a$ and $b$
in its interior, which contradicts the supporting condition of $R$-weak convexity. Thus, 
$p$ belongs to $\strconvsegment{a}{b}.$ The proof is complete.
\qed \end{sketch}

\Href{Lemma}{lem:projection_point_of_inscribed_segment} says that a proximally smooth set $A$ contains a curve  connecting two of it sufficiently close points $a$ and $b;$ moreover,  such a  curve  exists in the set $A \cap \strconvsegment{a}{b}.$ Luckily, in the Euclidean case, the converse is also true \cite[Theorem 2.2]{IvBal2009}. 
\begin{prp}\label{prp:equivalence2}
Let $A$ be a closed set in $\hilbert.$ 
The following conditions are equivalent:
\begin{enumerate}
\item The set  $A$ is $R$-proximally smooth. 
\item For all  $a$ and $b$ in $A$ with
$0 < \enorm{a-b} < 2R,$ the intersection of $A$ and $\strconvsegment{a}{b} \setminus\{a,b\}$ is non-empty. 
\item For all  $a$ and $b$ in $A$ with
$0 < \enorm{a-b} < 2R,$ there  is a curve in the intersection of $A$ and $\strconvsegment{a}{b}$  connecting $a$ and $b.$ 
\end{enumerate}
\end{prp}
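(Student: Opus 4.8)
The plan is to prove the cycle $(1)\Rightarrow(3)\Rightarrow(2)\Rightarrow(1)$; the first two implications are short and the last carries all the content. For $(3)\Rightarrow(2)$: a curve $\gamma\colon[0,1]\to A\cap\strconvsegment{a}{b}$ with $\gamma(0)=a\neq b=\gamma(1)$ has connected image, which therefore cannot be the disconnected two-point set $\{a,b\}$, so it contains a point $c\neq a,b$, and any such $c$ lies in $A\cap\bigl(\strconvsegment{a}{b}\setminus\{a,b\}\bigr)$.

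For $(1)\Rightarrow(3)$ I would build the curve by iterated midpoint projection. Starting from the segment $[a,b]$, refine the current polyline by inserting into each edge $[w,w']$ the metric projection of $\tfrac{w+w'}{2}$ onto $A$. By \Href{Lemma}{lem:projection_point_of_inscribed_segment} the inserted point lies in $A\cap\strconvsegment{w}{w'}$, hence, by the hereditary property of strongly convex segments (\Href{Remark}{rem:str_convex_segment}), in $A\cap\strconvsegment{a}{b}$; the distance estimate of that lemma shows the new edges remain shorter than $2R$, that the mesh of the polylines decays geometrically to $0$, and that the length increment at each refinement is at most $\tfrac{1}{2R}\cdot(\text{mesh})\cdot(\text{length})$, so the lengths increase and stay bounded. \Href{Lemma}{lem:limit_of_polylines} then yields a rectifiable curve in $A$ joining $a$ and $b$; as the vertices of all polylines lie in the closed set $\strconvsegment{a}{b}$ and are dense along the limit curve, the curve lies in $A\cap\strconvsegment{a}{b}$.

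The substantial direction is $(2)\Rightarrow(1)$, for which, by \Href{Proposition}{prp:equivalence}, it suffices to deduce from (2) that the metric projection onto $A$ is a single-valued continuous map on the open $R$-neighborhood of $A$ (equivalently, to verify the supporting condition of $R$-weak convexity). The mechanism that rules out two distinct nearest points is this: if $x$ satisfies $0<\dist{x}{A}=d<R$ and has two distinct nearest points $p_1,p_2\in A$, then $0<\enorm{p_1-p_2}\le 2d<2R$, so (2) produces $c\in A\cap\bigl(\strconvsegment{p_1}{p_2}\setminus\{p_1,p_2\}\bigr)$; I would then prove the geometric claim that, since $x$ lies on the perpendicular bisector of $[p_1,p_2]$ with $\enorm{x-p_i}=d<R$, the spindle $\strconvsegment{p_1}{p_2}$ is contained in the closed ball $\overline{B_d(x)}$ and meets the sphere $\partial B_d(x)$ only in $\{p_1,p_2\}$, whence $\enorm{c-x}<d$, contradicting $d=\dist{x}{A}$. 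The claim itself I would establish by using that $\strconvsegment{p_1}{p_2}$ is rotationally symmetric about the line $p_1p_2$ to reduce to the planar section through $p_1,p_2,x$, and then checking on the two bounding circular arcs of that planar spindle that the distance to $x$ is maximal — equal to $d$ — precisely at $p_1$ and $p_2$; this is a short computation whose strictness uses $d<R$ (for $d=R$ an entire arc lies on the sphere, which is exactly why diametral points of a round sphere need not be joined within it).

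I expect $(2)\Rightarrow(1)$ to be the only real obstacle. Within it two points need care: the planar distance estimate for the spindle is elementary but is exactly where the standing hypothesis $\enorm{a-b}<2R$ enters essentially; and passing from single-valuedness of the metric projection on the open $R$-neighborhood to genuine proximal smoothness requires a little care in infinite dimensions — continuity of the projection follows from the standard description of $x\mapsto\tfrac12\enorm{x}^2-\tfrac12\dist{x}{A}^2$ as a convex function whose subdifferential is the metric projection (so differentiability on an open set upgrades to class $C^1$), but one must still guarantee that nearest points exist throughout the $R$-neighborhood, which is cleanest to obtain by re-running the same spindle-in-a-ball argument along a maximal segment of a proximal normal to $A$, thereby verifying the supporting condition of $R$-weak convexity directly.
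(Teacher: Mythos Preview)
The paper does not give its own proof of \Href{Proposition}{prp:equivalence2}; it is quoted from \cite[Theorem~2.2]{IvBal2009}. Your scheme $(1)\Rightarrow(3)\Rightarrow(2)\Rightarrow(1)$ is the natural one, and your argument for $(1)\Rightarrow(3)$ by iterated midpoint projection is exactly the slice-bisection that the paper later formalises as \Href{Algorithm}{algo2} (correctness in \Href{Lemma}{lem:algo2_correctness}); the paper itself motivates $(1)\Rightarrow(3)$ more cheaply via \Href{Lemma}{lem:projection_point_of_inscribed_segment} together with continuity of the metric projection along $[a,b]$ (\Href{Lemma}{lem:metric_projection_onto_weakly_convex_set}). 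Your $(3)\Rightarrow(2)$ is fine.

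For $(2)\Rightarrow(1)$, your spindle-in-ball computation is correct and is the geometric core: when $\enorm{x-p_1}=\enorm{x-p_2}=d<R$ one indeed has $\strconvsegment{p_1}{p_2}\subset\overline{B_d(x)}$ with boundary contact only at $p_1,p_2$, so two distinct nearest points are impossible. The gap is where you already locate it---upgrading ``at most one nearest point'' to the supporting condition in an infinite-dimensional $\hilbert$---and neither of your sketches closes it as written. In route~(b), at the critical parameter $t^\ast=\sup\{t<R:\dist{p+tu}{A}=t\}$ you obtain only a sequence $q_n\in A$ with $\enorm{q_n-x_{t^\ast}}\to t^\ast$; lacking compactness these need not converge, and applying (2) to $(p,q_n)$ yields a point $c_n\in\strconvsegment{p}{q_n}$ with no control on its position, so you cannot conclude $\enorm{c_n-x_{t^\ast}}<t^\ast$. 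In route~(a), a singleton subdifferential of the convex function $x\mapsto\tfrac12\enorm{x}^2-\tfrac12\dist{x}{A}^2$ gives only G\^ateaux differentiability; norm continuity of the projection (equivalently Fr\'echet differentiability) needs the stronger fact that every minimizing sequence for $\dist{x}{A}$ is Cauchy, which is again the missing piece. What actually closes the argument is a \emph{quantitative} version of your spindle estimate---for $p,q\in A$ with $\enorm{p-x}=d$ and $q$ in the open ball $B_R(p+Ru)$, every $c\in\strconvsegment{p}{q}\setminus\{p,q\}$ lies in a strictly smaller ball $B_t(p+tu)$ than $q$ does---iterated so that the parameter $t$ is forced below $d$; this is presumably how \cite{IvBal2009} proceeds, but it is more than ``re-running the same argument.''
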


The following statement says that the metric projection onto an $R$-proximally smooth set is Lipschitz with constant
$\frac{R}{R- r}$ in the $r$-neighborhood of the set for some $r < R.$ In this particular form, the statement can be traced back to \cite[Theorem 4.8]{Clarke1995}. However, we need a more subtle argument to get  better estimates later. 
In the case of Banach spaces, the tight asymptotic  was obtained in \cite{Ivanov_2015}.

\begin{lem}\label{lem:metric_projection_onto_weakly_convex_set}
    Let $A$ be an $R$-proximally smooth subset of $\hilbert,$ 
    let $x_1, x_2$  be points such that
    $\dist{x_1}{A} < R$ and $\dist{x_2}{A} < R.$ 
Denote by $p_1$ and $p_2$ the metric projections of $x_1$ and $x_2$ onto $A,$ correspondingly.
Then
\[
\enorm{p_1 - p_2} \leq \frac{2R}{2R-(\dist{x_1}{A} + \dist{x_2}{A})} \enorm{x_1 - x_2}.
\]
\end{lem}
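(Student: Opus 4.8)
The plan is to reduce the two-point Lipschitz estimate to a one-variable comparison by using the supporting condition of $R$-weak convexity at both projection points. Write $r_1 = \dist{x_1}{A}$, $r_2 = \dist{x_2}{A}$, and set $u_i = \frac{x_i - p_i}{\enorm{x_i - p_i}}$ for $i = 1,2$ (these are well defined and unique by \Href{Proposition}{prp:equivalence}, unless $r_i = 0$, in which case we take $u_i$ to be any unit outer proximal normal, or simply drop that term). The supporting condition says that the ball $B_R(p_i + R u_i)$ does not contain $p_{3-i}$ in its interior; equivalently,
\[
\enorm{p_{3-i} - p_i - R u_i}^2 \geq R^2,
\]
which expands to $\enorm{p_1 - p_2}^2 \geq 2R\, \iprod{u_i}{p_{3-i} - p_i}$ (with the sign chosen so that for $i=1$ we get $\enorm{p_1-p_2}^2 \geq 2R\iprod{u_1}{p_2 - p_1}$ and for $i=2$ we get $\enorm{p_1-p_2}^2 \geq 2R\iprod{u_2}{p_1 - p_2}$). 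Adding these two inequalities gives
\[
2\enorm{p_1 - p_2}^2 \geq 2R\,\iprod{u_1 - u_2}{p_2 - p_1}.
\]

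Next I would bring in the points $x_i$. Since $x_i = p_i + r_i u_i$, we have $p_2 - p_1 = (x_2 - x_1) - r_2 u_2 + r_1 u_1$, so that
\[
\iprod{u_1 - u_2}{p_2 - p_1} = \iprod{u_1 - u_2}{x_2 - x_1} - r_2\iprod{u_1 - u_2}{u_2} + r_1\iprod{u_1 - u_2}{u_1}.
\]
Using $\iprod{u_1 - u_2}{u_1} = 1 - \iprod{u_1}{u_2}$ and $\iprod{u_1 - u_2}{u_2} = \iprod{u_1}{u_2} - 1$, the last two terms combine to $(r_1 + r_2)(1 - \iprod{u_1}{u_2}) = \tfrac{r_1+r_2}{2}\enorm{u_1 - u_2}^2$. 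Therefore
\[
\enorm{p_1 - p_2}^2 \geq R\,\iprod{u_1 - u_2}{x_2 - x_1} + \tfrac{R(r_1+r_2)}{2}\enorm{u_1 - u_2}^2.
\]
On the other hand, from $p_1 - p_2 = (x_1 - x_2) + r_1(u_1)... $ — more cleanly, I would also write $\enorm{p_1 - p_2}^2$ directly: $p_1 - p_2 = (x_1 - x_2) - r_1 u_1 + r_2 u_2$ is not quite symmetric, so instead I would use the identity $\enorm{x_1 - x_2}^2 = \enorm{(p_1 - p_2) + r_1 u_1 - r_2 u_2}^2$ and expand, isolating $\iprod{p_1 - p_2}{r_1 u_1 - r_2 u_2}$ in terms of the quantities already bounded above.

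The endgame is a Cauchy–Schwarz-plus-AM–GM squeeze: we have $\enorm{p_1 - p_2}^2 - R\iprod{u_1 - u_2}{x_2 - x_1} \geq \tfrac{R(r_1+r_2)}{2}\enorm{u_1-u_2}^2 \geq 0$, and separately $\enorm{x_1 - x_2} \geq \enorm{p_1 - p_2} - r_1\enorm{u_1} ... $; the cleanest route is to note $\iprod{u_1 - u_2}{x_2 - x_1} \geq -\enorm{u_1 - u_2}\,\enorm{x_1 - x_2}$ and to control $\enorm{p_1 - p_2}$ from above by $\enorm{x_1 - x_2} + \tfrac{r_1 + r_2}{2}\enorm{u_1 - u_2}$ (triangle inequality applied to $p_1 - p_2 = (x_1 - x_2) + (r_2 u_2 - r_1 u_1)$ after symmetrizing, or just bound each of $r_1 u_1, r_2 u_2$ crudely), then substitute $t := \enorm{u_1 - u_2} \in [0,2]$ and $s := \enorm{p_1 - p_2}$, obtaining a quadratic inequality in $s$ whose discriminant analysis yields $s \leq \frac{2R}{2R - (r_1 + r_2)}\enorm{x_1 - x_2}$. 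I expect the main obstacle to be the bookkeeping in this final elimination step — keeping the inequalities pointing the right way while eliminating the auxiliary quantity $\enorm{u_1 - u_2}$, since one wants the estimate to degenerate exactly as $r_1 + r_2 \to 2R$. A secondary subtlety is the degenerate case $r_i = 0$ (point already on $A$): there $u_i$ need not be unique, but any choice of supporting normal still satisfies the supporting inequality, so the argument goes through verbatim, and if both $r_i = 0$ the claim is the trivial $\enorm{p_1 - p_2} \leq \enorm{x_1 - x_2}$.
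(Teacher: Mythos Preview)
Your opening is exactly right --- the two supporting-ball inequalities
\[
\enorm{p_1-p_2}^2 \ge 2R\iprod{u_1}{p_2-p_1},\qquad \enorm{p_1-p_2}^2 \ge 2R\iprod{u_2}{p_1-p_2}
\]
are the whole input. The gap is in how you combine them. You add with equal weights, which produces the auxiliary quantity $t=\enorm{u_1-u_2}$, and your sketched elimination of $t$ does not work. The bound ``$\enorm{p_1-p_2}\le\enorm{x_1-x_2}+\tfrac{r_1+r_2}{2}\enorm{u_1-u_2}$'' is false: take $u_1=u_2$ and $r_1\ne r_2$, where $\enorm{r_1u_1-r_2u_2}=|r_1-r_2|\ne 0$. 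More structurally, your displayed inequality $s^2\ge R\iprod{u_1-u_2}{x_2-x_1}+\tfrac{R(r_1+r_2)}{2}t^2$ is a \emph{lower} bound on $s^2$ whose right-hand side involves $t$, not $s$; no discriminant argument will squeeze an upper bound on $s$ out of it.

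The paper's trick is to weight the two supporting inequalities by $r_1$ and $r_2$ before adding (equivalently: never normalize to $u_i$, keep $x_i-p_i$). Then the right-hand side becomes
\[
2R\iprod{(x_1-p_1)-(x_2-p_2)}{p_2-p_1}=2R\enorm{p_1-p_2}^2+2R\iprod{x_2-x_1}{p_1-p_2},
\]
so the $2R\enorm{p_1-p_2}^2$ term appears on the right, and rearranging plus one Cauchy--Schwarz finishes in a line. The whole argument is three lines; no auxiliary variable, no quadratic analysis. Your detour through $u_1-u_2$ throws away exactly the cancellation that makes this collapse. (You can still recover from your position by taking the inner product of $p_2-p_1=(x_2-x_1)+r_1u_1-r_2u_2$ with $p_1-p_2$ and bounding $\iprod{p_1-p_2}{u_i}$ via the individual supporting inequalities --- but that is just the paper's weighted sum in disguise.) The degenerate case $r_i=0$ is handled in the paper simply by noting that the weighted inequality $r_i\enorm{p_1-p_2}^2\ge 2R\iprod{x_i-p_i}{p_{3-i}-p_i}$ is then $0\ge0$.
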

\begin{proof}
Assume $p_1 \neq x_1.$ Then, 
   by \Href{Proposition}{prp:equivalence},
$    \enorm{p_1 + \frac{R}{\enorm{x_1 - p_1} }(x_1 - p_1)  - p_2}^2 \geq R^2,
$
  or equivalently,
 $
 \enorm{p_1 - p_2}^2 \geq \frac{2R}{\enorm{x_1 - p_1} } \iprod{x_1 - p_1}{p_2 - p_1}.
 $ 
Hence,
  $
 \dist{x_1}{A} \enorm{p_1 - p_2}^2 \geq 2R \iprod{x_1 - p_1}{p_2 - p_1}.
 $ 
 In the case $x_1 = p_1,$ the latter inequality trivially holds.
The same arguments work for $x_2$ and $p_2,$ i.e. 
\[
   \dist{x_1}{A} \enorm{p_1 - p_2}^2 \geq 2R \iprod{x_1 - p_1}{p_2 - p_1} 
  \quad \text{and} \quad 
   \dist{x_2}{A} \enorm{p_1 - p_2}^2 \geq 2R \iprod{x_2 - p_2}{p_1 - p_2}.
\]
Summing up, we have
\[
(\dist{x_1}{A} + \dist{x_2}{A}) \enorm{p_1 - p_2}^2 \geq 
2R \enorm{p_1 - p_2}^2 + 2R \iprod{x_2 - x_1}{p_1 - p_2}.
\]
Hence, 
\[
(\dist{x_1}{A} + \dist{x_2}{A}) \enorm{p_1 - p_2}^2 \geq
2R \enorm{p_1 - p_2}^2 - 2R \enorm{x_2 - x_1} \cdot \enorm{p_1 - p_2}. 
\]
The desired inequality follows.
\end{proof}

\subsection{Slice-projection}
\begin{dfn}
Let $x_1$ and $x_2$ be two distinct points of $\hilbert,$ 
and let $H$ be a hyperplane orthogonal to $x_2 -x_1$ and intersecting the segment
$[x_1, x_2]$ at point $x.$ Any point of the metric projection of $x$ onto the set 
$
 A \cap H
$ we call a \textit{slice-projection}  of $x$ onto a set $A$ w.r.t.  $[x_1,x_2].$
  \end{dfn}

  \begin{lem}\label{lem:uniqueness_slice_projection}
      Let $A$ be  $R$-proximally smooth,
let $x_1$ and $x_2$  be two distinct points of $A$ with $ {\enorm{x_1 - x_2}} < 2R.$ Then for any 
$x_t=t x_1+(1-t)x_2,$ $t\in[0,1]$, there is a unique slice-projection of $x_t$ onto $A$ with respect to $[x_1, x_2];$ the slice-projection belongs to $D_R[x_1, x_2].$
\end{lem}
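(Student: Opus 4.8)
The plan is to reduce the claim about slice-projections to the already–established uniqueness of ordinary metric projections onto $R$-proximally smooth sets (\Href{Proposition}{prp:equivalence}), by exhibiting the relevant slice as a metric projection onto an auxiliary $R$-proximally smooth set, namely $A \cap H$ viewed inside the hyperplane $H$. First I would fix $t \in (0,1)$ (the cases $t = 0, 1$ being trivial since then $x_t \in \{x_1,x_2\} \subset A \cap H$ once we note $H$ passes through $x_t$), let $H$ be the hyperplane through $x_t$ orthogonal to $x_2 - x_1$, and set $A' = A \cap H$. The first key step is to check that $A'$ is nonempty: by \Href{Lemma}{lem:projection_point_of_inscribed_segment} the metric projection $p$ of $x_t$ onto $A$ lies in $D_R[x_1,x_2]$, but that does not put $p$ in $H$, so instead I would invoke \Href{Proposition}{prp:equivalence2}(3): there is a curve in $A \cap D_R[x_1,x_2]$ connecting $x_1$ and $x_2$, and since $x_1, x_2$ lie on opposite sides of $H$ (strictly, as $t \in (0,1)$), this curve must cross $H$, giving a point of $A \cap H \cap D_R[x_1,x_2]$.

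The second and central step is to show that $A' = A \cap H$ is an $R$-proximally smooth subset of the Hilbert space $H$ (which is itself a Hilbert space of dimension at least one; if it is one-dimensional the statement degenerates but remains true, and if $\hilbert$ is two-dimensional $H$ is a line — one should keep that edge case in mind). I would use the characterization via strongly convex segments, \Href{Proposition}{prp:equivalence2}(2): given $c, d \in A'$ with $0 < \enorm{c-d} < 2R$, I need a point of $A' \cap (D_R^{H}[c,d] \setminus \{c,d\})$, where $D_R^H[c,d]$ is the strongly convex segment computed inside $H$. The point is that $D_R^H[c,d] = D_R[c,d] \cap H$: a ball $B_R(y) \cap H$ is exactly the ball of radius $\sqrt{R^2 - \dist{y}{H}^2} \le R$ in $H$ centered at the foot of $y$, and conversely every ball in $H$ of radius $\le R$ arises this way; intersecting over all $R$-balls containing $c,d$ in $\hilbert$ versus all $\le R$-balls containing $c,d$ in $H$ gives the same set, because enlarging to radius exactly $R$ only shrinks the intersection. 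Then \Href{Proposition}{prp:equivalence2}(2) applied to $A$ in $\hilbert$ gives a point of $A \cap D_R[c,d] \setminus\{c,d\}$; this point need not lie in $H$. To fix this I would instead use \Href{Proposition}{prp:equivalence2}(3): there is a curve in $A \cap D_R[c,d]$ joining $c$ and $d$, both of which lie in $H$; but a curve joining two points of $H$ need not stay in $H$. This is the main obstacle, and I would resolve it by projecting the curve orthogonally onto $H$: the orthogonal projection $\pi_H$ onto the affine subspace $H$ is a $1$-Lipschitz retraction, it fixes $c$ and $d$, it maps $D_R[c,d]$ into $D_R[c,d] \cap H = D_R^H[c,d]$ (since $\pi_H$ maps each ball $B_R(y)$ into $B_R(\pi_H y) \cap H$, which is contained in the $H$-ball appearing in the intersection), and — crucially — it maps $A$ into $A$? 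No: that fails in general. So the honest route is different: apply \Href{Proposition}{prp:equivalence2}(3) to $A$ and the pair $(c,d)$ only after observing that uniqueness of the slice-projection does not actually require $A'$ to be proximally smooth as a standalone set; rather, I would show directly that the function $a \mapsto \enorm{a - x_t}$ has a unique minimizer on $A \cap H$.

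For that direct argument — which I now take as the real backbone — let $q_1, q_2$ be two slice-projections of $x_t$, i.e. two points of $A \cap H$ nearest to $x_t$ in $H$; write $r = \enorm{x_t - q_1} = \enorm{x_t - q_2} < R$ (the bound $r < R$ coming from the nonempty-slice step together with the estimate $\min\{\enorm{x_t - x_1}, \enorm{x_t - x_2}\} \le \tfrac12\enorm{x_1 - x_2} < R$, so the distance from $x_t$ to $A \cap H$ is at most that). Both $q_1, q_2$ lie in $H$, hence $x_t - q_1 \perp (q_1 - q_2)$ and $x_t - q_2 \perp (q_1 - q_2)$, so $x_t$ is equidistant from $q_1, q_2$ within the line through them and $\tfrac12(q_1+q_2)$ is the foot; standard geometry then gives $\enorm{x_t - \tfrac12(q_1+q_2)}^2 = r^2 - \tfrac14\enorm{q_1 - q_2}^2$. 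Now I bring in \Href{Lemma}{lem:projection_point_of_inscribed_segment} (hereditary property, \Href{Remark}{rem:str_convex_segment}) applied to $q_1, q_2 \in A$: the metric projection $p$ of the midpoint $\tfrac12(q_1 + q_2)$ onto $A$ lies in $D_R[q_1,q_2]$ and satisfies $\dist{\tfrac12(q_1+q_2)}{A} \le R - \sqrt{R^2 - \tfrac14\enorm{q_1-q_2}^2}$. Since $p \in D_R[q_1,q_2] \subset D_R[x_1,x_2]$ and, being in $D_R[q_1,q_2]$, its distance to the hyperplane $H$ (which passes through both $q_1$ and $q_2$) is at most $R - \sqrt{R^2 - \tfrac14\enorm{q_1-q_2}^2}$ as well — this is the power-of-a-point computation from \Href{Lemma}{lem:projection_point_of_inscribed_segment} applied to the disk in the plane spanned by $q_1 - q_2$ and the normal direction — I can estimate $\enorm{x_t - \pi_H(p)}$ and then $\dist{x_t}{A \cap H} \le \enorm{x_t - \pi_H(p)}$, contradicting minimality of $r$ unless $q_1 = q_2$. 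Chasing the inequalities: $\enorm{x_t - \pi_H p}^2 \le \enorm{x_t - \tfrac12(q_1+q_2)}^2 + (\text{horizontal wiggle})^2 = r^2 - \tfrac14\enorm{q_1-q_2}^2 + (\text{something controlled by } \dist{p}{H}\cdot\dist{p}{\tfrac12(q_1+q_2)})$, and the $-\tfrac14\enorm{q_1-q_2}^2$ term dominates for small separation; pushing this through forces $\enorm{q_1 - q_2} = 0$. I expect the bookkeeping of this last chain of inequalities — making the "horizontal wiggle" term genuinely lower-order than $\tfrac14\enorm{q_1-q_2}^2$ — to be the part that needs care, but it is exactly the same flavor of power-of-a-point estimate already used in the sketch of \Href{Lemma}{lem:projection_point_of_inscribed_segment}. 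Finally, $q_1 \in D_R[x_1,x_2]$ follows because $q_1$, being a metric projection onto $A$ from a point ($x_t$) of the segment $[x_1,x_2]$ restricted to $H$... more cleanly: $q_1 \in D_R[q_1, x_1]$? — here I would instead note $q_1$ is the metric projection of $x_t$ onto $A \cap H \subseteq A$, so $q_1 \in D_R[x_1, x_2]$ by \Href{Lemma}{lem:projection_point_of_inscribed_segment} provided we check $q_1$ is also the metric projection of $x_t$ onto $A$ itself along the relevant slice — which we verify by the supporting condition, arguing as in the sketch of \Href{Lemma}{lem:projection_point_of_inscribed_segment} that the center $q_1 + R\tfrac{x_t - q_1}{\enorm{x_t - q_1}}$ lies in $H$ and on the far side, so $D_R[x_1,x_2]$ cannot be exited.
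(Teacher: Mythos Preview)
Your uniqueness argument has a genuine gap. After projecting the midpoint $\tfrac12(q_1+q_2)$ onto $A$ to get $p \in A \cap D_R[q_1,q_2]$, you assert $\dist{x_t}{A\cap H} \le \enorm{x_t - \pi_H(p)}$. But $\pi_H(p)$ need not lie in $A$: orthogonal projection onto $H$ does not preserve membership in $A$. So you have no competitor in $A \cap H$, and the inequality is unjustified. The ``horizontal wiggle'' bookkeeping you flag as needing care is not the real obstacle; the real obstacle is that once you leave the slice $A \cap H$ (by taking the metric projection of the midpoint onto $A$) you have no mechanism to return to it. Your earlier abandoned route --- showing $A \cap H$ is itself $R$-proximally smooth in $H$ --- runs into exactly the same wall, as you noticed. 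Your final paragraph, arguing $q_1 \in D_R[x_1,x_2]$ via the supporting condition, is also incomplete: the slice-projection $q_1$ is not a priori the metric projection of $x_t$ onto $A$, so \Href{Lemma}{lem:projection_point_of_inscribed_segment} does not apply directly.

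The paper's argument is short and sidesteps all of this. Consider the map $s \mapsto P_A\!\big(s\,x_1 + (1-s)\,x_2\big)$, where $P_A$ denotes the (single-valued) metric projection onto $A$; by \Href{Lemma}{lem:metric_projection_onto_weakly_convex_set} this is a continuous curve running from $x_1$ to $x_2$ inside $A$, hence it meets $H$ at some point $c$. By construction $c = P_A(x)$ for some $x \in [x_1,x_2]$, and $c \in A \cap H$. Now for any $c' \in A \cap H$ with $\enorm{x_t - c'} \le \enorm{x_t - c}$, the orthogonality $x - x_t \perp H$ and Pythagoras give
\[
\enorm{x - c'}^2 = \enorm{x - x_t}^2 + \enorm{x_t - c'}^2 \le \enorm{x - x_t}^2 + \enorm{x_t - c}^2 = \enorm{x - c}^2,
\]
forcing $c' = c$ by uniqueness of $P_A(x)$. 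Thus $c$ is the unique slice-projection, and $c \in D_R[x_1,x_2]$ comes for free from \Href{Lemma}{lem:projection_point_of_inscribed_segment} applied to $x$. The idea you are missing is to manufacture a point of $A \cap H$ that is \emph{simultaneously} an ordinary metric projection from some segment point, so that uniqueness in the slice reduces to uniqueness of $P_A$.
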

\begin{proof}
 Fix any $t\in [0,1]$.  Let $H$ be the hyperplane passing through $x_t$ and orthogonal to $x_2-x_1$. For every point on the segment $[x_1, x_2],$ its metric projection  onto $A$ is singleton. 
By \Href{Lemma}{lem:metric_projection_onto_weakly_convex_set}, the metric projection of the segment 
$[x_1, x_2]$ is a continuous curve. It intersects the hyperplane $H$ at a point $c.$ Let $c$ be a metric projection of the point $x$ from the segment
$[x_1, x_2].$ We claim that $c$ is the unique slice-projection of $x_t.$
Indeed, for any point $c^\prime$ in the intersection of $H$ and the ball of radius $\enorm{c - x_t}$ centered at $x_t,$ one has
\[
\enorm{x - c^\prime}^2 = \enorm{x - x_t}^2 + \enorm{x_t - c^\prime}^2 \leq 
\enorm{x - x_t}^2 + \enorm{x_t - c}^2 = \enorm{x-c}^2.
\]
The lemma follows since the metric projection of $x$  onto $A$ is a singleton.
\end{proof}

\section{Strongly convex segment and its properties}
\label{sec:strongly_convex_segment}
The following two statements are trivial.
\begin{clm}
Let $a$ and $b$ be points of $\hilbert$ with $0 < \enorm{a-b} \leq 2R.$ Then the boundary of $\strconvsegment{a}{b}$ is the union of all $R$-arcs between $a$ and $b.$  
\end{clm}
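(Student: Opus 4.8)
\emph{Reduction to a meridian plane.} The plan is to exploit the rotational symmetry about the line $\ell:=\mathrm{aff}\{a,b\}$. Let $m$ be the midpoint of $[a,b]$, put $e=\frac{b-a}{\enorm{b-a}}$ and $d=\frac{\enorm{a-b}}{2}\in(0,R]$. Every isometry of $\hilbert$ fixing $\ell$ pointwise permutes the balls $B_R(x)$ with $a,b\in B_R(x)$, hence fixes $\strconvsegment{a}{b}$; and, since any affine $2$-plane through $a$ and $b$ contains $\ell$, every circle of radius $R$ through $a,b$ — hence every $R$-arc — lies in such a plane, so the union of all $R$-arcs is fixed as well. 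It therefore suffices to fix one $2$-plane $P\supseteq\ell$, prove the corresponding planar statement, and transfer it back.

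\emph{The slice is a lens.} Inside $P$ there are exactly two circles $C^{\pm}=\partial B_R(c^{\pm})$ of radius $R$ through $a$ and $b$, with $c^{\pm}=m\pm\sqrt{R^2-d^2}\,\nu$ for a unit vector $\nu\in P$, $\nu\perp\ell$ (for $d=R$ they degenerate to $\partial B_R(m)$). The discs $B_R(c^{\pm})$ are admissible in the definition of $\strconvsegment{a}{b}$, so $\strconvsegment{a}{b}\cap P\subseteq B_R(c^+)\cap B_R(c^-)=:L$, the lens bounded by the two minor arcs of $C^+,C^-$. The reverse inclusion $L\subseteq\strconvsegment{a}{b}$ is the only step requiring a computation: one must check that each point of $L$ lies in $B_R(x)$ for every admissible $x\in\hilbert$. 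Writing $x=m+u\,e+x^{\perp}$ with $x^{\perp}\perp e$, admissibility reads $(\enorm{u}+d)^2+\enorm{x^{\perp}}^2\le R^2$, and for $p\in L$ one has $\enorm{p-x}^2=\enorm{p-m}^2+\enorm{x-m}^2-2\iprod{p-m}{x-m}$; maximizing over admissible $x$ at a fixed value of $\enorm{x-m}$, the worst centre may be taken in $P$, which reduces the claim to the planar case, where it follows from the two constraints by an elementary estimate. Hence $\strconvsegment{a}{b}\cap P=L$.

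\emph{Boundary of the slice.} Since $L$ is an intersection of two discs with a common interior point, $\partial_P L\subseteq C^+\cup C^-$; for each sign the portion of $C^{\pm}$ lying in the complementary disc is precisely the minor arc from $a$ to $b$ (its endpoints lie on the other circle, and, for $d<R$, its point farthest from $\ell$ lies strictly inside the other disc), while each such minor arc clearly lies on $\partial_P L$. A minor arc of a radius-$R$ circle through $a,b$ is exactly a shortest $R$-arc of $[a,b]$; in the degenerate case $d=R$ we have $L=B_R(m)$ and $\partial_P L=\partial B_R(m)$ is the union of the two semicircles, which are again all the shortest $R$-arcs in $P$. Thus $\partial_P L$ is exactly the union of the $R$-arcs of $[a,b]$ lying in $P$.

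\emph{Transfer to $\hilbert$.} By the reduction above, $q\in\strconvsegment{a}{b}$ if and only if $\Phi(q)\in L$, where $\Phi(q):=\bigl(\iprod{q-m}{e},\,\dist{q}{\ell}\bigr)$ is continuous; i.e.\ $\strconvsegment{a}{b}$ is the solid obtained by revolving a half of $L$ about $\ell$. If $p\in\partial\strconvsegment{a}{b}$ with $p\notin\ell$, then $\Phi$ is an open map near $p$ (its two components have independent gradients there), so $\Phi(p)\in\partial_P L$ and $p$ lies on an $R$-arc. If $p\in\partial\strconvsegment{a}{b}\cap\ell$, then $p\in\{a,b\}$: interior points of $(a,b)$ are interior to $\strconvsegment{a}{b}$ because the half-width $R-\sqrt{R^2-d^2}$ of $L$ at its waist is positive, whereas $a$ and $b$ lie on every $R$-arc. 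Conversely, if $p$ lies on an $R$-arc, then $p\in\strconvsegment{a}{b}$ by the previous paragraph, and choosing the admissible ball $B_R(c)\supseteq\strconvsegment{a}{b}$ with $p\in\partial B_R(c)$, a small move from $p$ directly away from $c$ exits $B_R(c)$ and hence $\strconvsegment{a}{b}$, so $p\in\partial\strconvsegment{a}{b}$. The main — essentially the only — obstacle is the planar inclusion $L\subseteq\strconvsegment{a}{b}$; the rest is routine bookkeeping.
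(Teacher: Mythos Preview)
The paper does not prove this claim at all --- it is listed among ``the following two statements are trivial'' and left without argument. Your proof is correct and considerably more detailed than the paper's treatment; the strategy (exploit the rotational symmetry about $\ell=\mathrm{aff}\{a,b\}$, reduce to a meridian $2$-plane where the slice is the lens $L=B_R(c^+)\cap B_R(c^-)$, read off its boundary as the two minor arcs, and rotate back) is the natural one and goes through. Your own assessment that the only real obstacle is the inclusion $L\subseteq\strconvsegment{a}{b}$ is accurate.

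Two places where the sketch could be tightened. First, ``the worst centre may be taken in $P$'' is right but would benefit from one more line: writing $x-m=ue+v\nu+w$ with $w\perp\{e,\nu\}$, at fixed $u$ and $v^2+\enorm{w}^2$ the inner product $\iprod{p-m}{x-m}=\alpha u+\beta v$ is minimized (hence $\enorm{p-x}$ maximized) by taking $w=0$ and $v$ of the appropriate sign, which lands $x$ in $P$; the planar estimate then follows by expanding $\enorm{p-x}^2$ and using $|\alpha|\le d$, $|v|\le\sqrt{R^2-d^2}$. Second, ``half-width of $L$ at its waist'' is a slip --- the lens has no waist, and $R-\sqrt{R^2-d^2}$ is the half-width at the midpoint, where $L$ is \emph{widest}; what you actually need is that each $m+\alpha e$ with $|\alpha|<d$ lies in the open discs about $c^{\pm}$, i.e.\ $\alpha^2+(R^2-d^2)<R^2$, which is immediate. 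Neither of these is a genuine gap.
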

 \begin{lem}\label{lem:angle_inside_strongly_convex_segment}
 Let $a$ and $b$ be points of $\hilbert$ with $0 < \enorm{a-b} \leq 2R.$ Let  $\ell$ be a ray  emanating from $a.$ Then  the intersection of $\ell$ and $\strconvsegment{a}{b}\setminus\{a,b\}$ is not-empty if and only if  $\anglef{\ell}{b - a} < \arcsin{\frac{\enorm{b-a}}{2R}}.$ 
 % Equivalently, 
 % $c \in \\strconvsegment{a}{b}\setminus\{a,b\}$  if and only if $\anglef{a-c}{b-c} \geq \pi - \arcsin{\frac{\enorm{b-a}}{2R}}.$
 \end{lem}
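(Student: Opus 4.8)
The plan is to argue directly from the definition $D_R[a,b]=\bigcap B_R(x)$, the intersection running over all \emph{admissible centres} $x$, namely those with $\enorm{x-a}\le R$ and $\enorm{x-b}\le R$; the statement will then collapse to a one-variable trigonometric inequality. Throughout write $d=\enorm{a-b}\in(0,2R]$, $u=\frac{b-a}{d}$, $\alpha=\arcsin\frac{d}{2R}\in\bigl(0,\tfrac\pi2\bigr]$ (so $\sin\alpha=\frac{d}{2R}$), let $w$ be the unit direction of $\ell$ and set $\beta=\anglef{w}{u}=\anglef{\ell}{b-a}$, and let $V=\{x:\enorm{x-a}\le R,\ \enorm{x-b}\le R\}$, a compact set.

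First I would intersect $\ell$ with a single ball. For $x\in V$ and $t\ge 0$, $a+tw\in B_R(x)$ iff $t^2+2t\iprod{w}{a-x}+\enorm{a-x}^2-R^2\le 0$; since this quadratic in $t$ is $\le 0$ at $t=0$ and has leading coefficient $1$, its solution set is $\{0\le t\le f(x)\}$ with $f(x)=\iprod{w}{x-a}+\sqrt{\iprod{w}{x-a}^2+R^2-\enorm{a-x}^2}\ge 0$ the larger root. Intersecting over $x\in V$ gives $\ell\cap D_R[a,b]=\{a+tw:0\le t\le\inf_{x\in V}f(x)\}$, so $\ell$ meets $D_R[a,b]\setminus\{a\}$ exactly when $\inf_{x\in V}f(x)>0$. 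Now $f$ is continuous on the compact set $V$, hence attains its infimum, and $f(x)=0$ precisely when $\enorm{a-x}=R$ and $\iprod{w}{x-a}\le 0$; therefore $\inf_V f>0$ if and only if every admissible centre on the sphere $\enorm{x-a}=R$ satisfies $\iprod{w}{x-a}>0$.

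It remains to test this condition. An admissible centre on $\enorm{x-a}=R$ is $x=a+Ru'$ with $\enorm{u'}=1$, and (since $\enorm{a+Ru'-b}^2=R^2-2Rd\iprod{u'}{u}+d^2$) admissibility is exactly $\iprod{u'}{u}\ge\sin\alpha$; moreover $\iprod{w}{x-a}=R\iprod{w}{u'}$. So the condition reads $\iprod{w}{u'}>0$ for every unit $u'$ with $\iprod{u'}{u}\ge\sin\alpha$, i.e.\ $\inf\{\iprod{w}{u'}:\enorm{u'}=1,\ \iprod{u'}{u}\ge\sin\alpha\}>0$. I would compute this infimum by writing $u'=\cos\theta\,u+\sin\theta\,e$ with $e\perp u$, $\enorm e=1$, $\theta\in[0,\tfrac\pi2-\alpha]$, and $w=\cos\beta\,u+\sin\beta\,e_0$ ($e_0\perp u$): minimising first over $e$ (optimal $e=-e_0$) and then over $\theta$ yields $\min_{\theta\in[0,\pi/2-\alpha]}\cos(\theta+\beta)$, which is $>0$ iff $\beta+\bigl(\tfrac\pi2-\alpha\bigr)<\tfrac\pi2$, i.e.\ iff $\beta<\alpha$ (the cases $\beta\in\{0,\pi\}$, where $w=\pm u$, being immediate). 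Chaining the equivalences, $\ell$ meets $D_R[a,b]\setminus\{a\}$ iff $\beta<\alpha=\arcsin\frac{d}{2R}$. Finally, to replace $\{a\}$ by $\{a,b\}$: if $\beta<\alpha$, pick the intersection point $a+tw$ with $t=\min\{\inf_V f,\ d/2\}>0$, which is at distance $<d$ from $a$ and hence differs from both $a$ and $b$; the reverse implication is trivial since $D_R[a,b]\setminus\{a,b\}\subseteq D_R[a,b]\setminus\{a\}$.

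I do not expect a real obstacle --- which fits the paper's calling the statement trivial. The points needing a little care are the locus $\enorm{a-x}=R$, where $f$ vanishes and which is what produces the precise value $\arcsin\frac{d}{2R}$, and keeping track of the ranges of $\theta$ and $\beta$ in the spherical-cap optimisation, so as to conclude that the infimum there is positive exactly for $\beta<\alpha$ and not merely $\beta\le\alpha$. A more geometric alternative would invoke the preceding Claim to reduce, inside the plane $\mathrm{span}\{u,w\}$, to the lens $B_R(m+hv)\cap B_R(m-hv)$ (with $m=\frac{a+b}{2}$, $h=\sqrt{R^2-d^2/4}$, $v\perp u$ in that plane) and read off its opening angle at the vertex $a$; but the argument above avoids this reduction.
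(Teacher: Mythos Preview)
The paper gives no proof of this lemma (it is declared trivial together with the preceding Claim), so there is nothing to compare against; your argument is a legitimate way to flesh it out.

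Your computation is correct except at one point: you call $V=B_R(a)\cap B_R(b)$ ``a compact set'', but in an infinite-dimensional $\hilbert$ closed bounded sets are only weakly compact, and your $f$ is not weakly lower semicontinuous (the term $R^2-\enorm{x-a}^2$ is weakly \emph{upper} semicontinuous). You use compactness only for the implication ``$f>0$ on $V$ $\Rightarrow$ $\inf_V f>0$'', i.e.\ for the direction $\beta<\alpha\Rightarrow\ell\cap(D_R[a,b]\setminus\{a\})\neq\emptyset$; the converse direction and the spherical-cap optimisation are unaffected, since you reduce the latter to an explicit minimum over $\theta\in[0,\tfrac\pi2-\alpha]$, attained at a vector in $\operatorname{span}\{u,w\}$.

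The quickest repair is the one you already sketch at the end. Since $\ell$ lies in the plane $\Pi=a+\operatorname{span}\{u,w\}$, one has $\ell\cap D_R[a,b]=\ell\cap\bigl(D_R[a,b]\cap\Pi\bigr)$, and by the preceding Claim (the boundary of $D_R[a,b]$ is the union of main $R$-arcs) together with convexity, $D_R[a,b]\cap\Pi$ is exactly the planar lens bounded by the two $R$-arcs in $\Pi$. Now your argument runs verbatim with the admissible centres restricted to $\Pi$, where compactness is genuine. Alternatively, staying within your framework: write $x\in V$ as $x_0+z$ with $x_0$ the orthogonal projection of $x$ onto $\Pi$ and $z\perp\Pi$; then $\iprod{w}{x-a}=\iprod{w}{x_0-a}$ while $\enorm{x-a}^2=\enorm{x_0-a}^2+\enorm{z}^2$, so for fixed $x_0$ the function $f$ is decreasing in $\enorm{z}$, and $\inf_V f$ becomes the infimum of a continuous function over the compact set $V\cap\Pi$.

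Everything else---the quadratic in $t$ giving $f(x)$ as the larger root, the characterisation $f(x)=0\Leftrightarrow\enorm{x-a}=R$ and $\iprod{w}{x-a}\le 0$, the cap optimisation yielding $\sin(\alpha-\beta)$, and the final $\{a\}$-versus-$\{a,b\}$ bookkeeping---is correct.
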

 % \begin{proof}
 %     Trivially follows from the previous claim.
 % \end{proof}
% \begin{clm}
% Let $\gamma$ be a polyline  $abc$ with $\enorm{a-c} \leq 2R.$
% \begin{itemize}
%     \item If $b$ is in the interior of $\strconvsegment{a}{c},$ then the $R$-arclength of $\gamma$ is strictly less than the $R$-arclength of $[a,c];$
%      \item If $b$ belongs to the boundary of $\strconvsegment{a}{c},$ then the $R$-arclength of $\gamma$ is equal to the $R$-arclength of $[a,c];$
%        \item If $b$ lies outside of $\strconvsegment{a}{c},$ then the $R$-arclength of $\gamma$ is strictly greater than the $R$-arclength of $[a,c].$
% \end{itemize}
% \end{clm}
\begin{lem}\label{lem:radial_projection_not_extending}
Let $x_1$ and $x_2$ be two points of $\hilbert$ on the sphere of radius $R$ centered at a point $o.$
Let $y_1$ and $y_2$ be points such that $x_1$ belongs to the segment $[o, y_2]$ and  
 $x_2$ belongs to the segment $[o, y_1]$. Then $\enorm{y_1 - y_2} \geq \enorm{x_1 - x_2}$
 with equality if and only if $x_1 =y_1$ and $x_2 = y_2.$
\end{lem}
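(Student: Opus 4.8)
The plan is to translate the problem to the origin and turn it into a one-line inner-product estimate. Put $o=0$; then $\enorm{x_1}=\enorm{x_2}=R$, and the hypotheses $x_1\in[o,y_2]$, $x_2\in[o,y_1]$ say exactly that $y_2=\alpha x_1$ and $y_1=\beta x_2$ for some scalars $\alpha,\beta\geq 1$. (Note $y_1,y_2\neq o$: otherwise the segment $[o,y_i]$ degenerates to the single point $o$ and cannot contain $x_1$ or $x_2$, which have norm $R>0$; so $\alpha,\beta$ are well defined and $\geq 1$.)

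Next I would simply expand the two squared distances in terms of $\iprod{x_1}{x_2}$:
\[
\enorm{y_1-y_2}^2=\enorm{\beta x_2-\alpha x_1}^2=R^2(\alpha^2+\beta^2)-2\alpha\beta\iprod{x_1}{x_2},\qquad
\enorm{x_1-x_2}^2=2R^2-2\iprod{x_1}{x_2},
\]
so that
\[
\enorm{y_1-y_2}^2-\enorm{x_1-x_2}^2=R^2(\alpha^2+\beta^2-2)-2(\alpha\beta-1)\iprod{x_1}{x_2}.
\]
Since $\alpha\beta-1\geq 0$ and, by the Cauchy–Schwarz inequality, $\iprod{x_1}{x_2}\leq\enorm{x_1}\enorm{x_2}=R^2$, the right-hand side is at least
\[
R^2(\alpha^2+\beta^2-2)-2(\alpha\beta-1)R^2=R^2(\alpha-\beta)^2\geq 0,
\]
which is the desired inequality.

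For the equality statement, the chain above is tight only when both $(\alpha-\beta)^2=0$ and $(\alpha\beta-1)\bigl(R^2-\iprod{x_1}{x_2}\bigr)=0$. In the only nontrivial case $x_1\neq x_2$ the Cauchy–Schwarz inequality is strict (equal norms force equal vectors at equality), so $R^2-\iprod{x_1}{x_2}>0$ and hence $\alpha\beta=1$; together with $\alpha=\beta$ and $\alpha,\beta\geq 1$ this gives $\alpha=\beta=1$, i.e. $y_2=x_1$ and $y_1=x_2$, and conversely these values trivially yield equality. I do not expect a real obstacle here: the content is the estimate $\iprod{x_1}{x_2}\leq R^2$ combined with $(\alpha-\beta)^2\geq 0$, and the only thing requiring care is the bookkeeping in the degenerate configurations ($y_i=o$, or $x_1=x_2$), dealt with by the parenthetical remarks above.
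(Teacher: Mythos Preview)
Your argument is correct and takes a different route from the paper's. The paper reduces (by homogeneity and symmetry) to the case where one of the $y_i$ coincides with the corresponding $x_i$ and then argues geometrically: the angle $\angle\, x_1 x_2 y_2$ is obtuse, being the supplement of a base angle of the isosceles triangle $o x_1 x_2$, whence $\enorm{x_1 - y_2} > \enorm{x_1 - x_2}$. Your direct inner-product expansion sidesteps the reduction step entirely and makes the equality analysis more transparent, collapsing it to $(\alpha-\beta)^2 \geq 0$ together with the Cauchy--Schwarz bound $\iprod{x_1}{x_2}\leq R^2$; the paper's approach, by contrast, buys a picture one can draw.

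Two bookkeeping remarks. First, following the hypotheses as literally stated you correctly conclude equality iff $y_2 = x_1$ and $y_1 = x_2$, which does not match the claimed condition $x_1 = y_1$, $x_2 = y_2$. There is an evident index swap in the lemma's hypotheses: the paper's own proof sets ``$y_1$ coincides with $x_1$'', which is only consistent with $x_1 \in [o, y_1]$, not $x_1 \in [o, y_2]$. With the intended indexing your computation recovers the claimed equality case verbatim. Second, your last parenthetical does not actually dispose of the degenerate case $x_1 = x_2$: there any $\alpha = \beta \geq 1$ gives equality, so the ``only if'' direction of the equality claim fails. The paper's proof has the same gap (it simply calls the collinear case ``trivial''), and it is irrelevant for the applications, but it is worth flagging.
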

\begin{proof}
The statement is trivial in the cases  when  $x_1, o, x_2$ belong to one line.
Assume the points  $x_1, o, x_2$ form a triangle. By positive homogeneity and symmetry, we can assume that $y_1$ coincides with $x_1.$ In this case the angle $x_1 x_2 y_2$ is obtuse, since the triangle $x_1ox_2$ is isosceles.
The lemma follows.
\end{proof}
\begin{dfn}\label{dfn:folding mapping}
  Let $\ell$ be a line in a Hilbert space $\hilbert,$ and let $u$ be a unit vector orthogonal to $\ell.$
  Define $L_2$ as the two-dimensional half-plane with relative boundary $\ell$ and containing a vector collinear with $u.$ We define a \emph{folding mapping} $P_{L_2} \colon \hilbert \to L_2$ 
  as follows. Fix a unit vector $u$ parallel to  $L_2$ and orthogonal to $\ell.$
  For any $x$ of the form $l + p,$ where $l \in \ell$ and $p \in \ell^\perp,$
  we define $P_{L_2}(x) = l + \enorm{p}u.$
\end{dfn}
\begin{clm}\label{claim:folding_mapping_is_not_extending}
In the notations of \Href{Definition}{dfn:folding mapping},
$\enorm{P_{L_2}(x) - P_{L_2}(y)} \leq \enorm{x - y}$ for any $x,y \in \hilbert.$
The identity holds if and only if $x$ and $y$ belong to a half-plane whose relative boundary is the line $\ell.$
\end{clm}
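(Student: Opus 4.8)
The plan is to reduce the statement to the one dimensional reverse triangle inequality in the Hilbert space $\ell^{\perp}$. First I would make the orthogonal decomposition of \Href{Definition}{dfn:folding mapping} explicit: for $x\in\hilbert$ write $x=l_x+p_x$, where $l_x$ is the orthogonal projection of $x$ onto the line $\ell$ and $p_x=x-l_x\in\ell^{\perp}$, so that $P_{L_2}(x)=l_x+\enorm{p_x}\,u$. The one observation that makes everything work is that for any two points $x,y$ the vector $l_x-l_y$ is parallel to $\ell$, hence orthogonal to the whole subspace $\ell^{\perp}$; in particular it is orthogonal both to $p_x-p_y$ and to $\bigl(\enorm{p_x}-\enorm{p_y}\bigr)u$.

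By the Pythagorean theorem this yields
\[
\enorm{x-y}^2=\enorm{l_x-l_y}^2+\enorm{p_x-p_y}^2,\qquad
\enorm{P_{L_2}(x)-P_{L_2}(y)}^2=\enorm{l_x-l_y}^2+\bigl(\enorm{p_x}-\enorm{p_y}\bigr)^2 .
\]
Hence the asserted inequality is equivalent to $\bigl(\enorm{p_x}-\enorm{p_y}\bigr)^2\le\enorm{p_x-p_y}^2$, and the difference of the two sides equals $2\bigl(\enorm{p_x}\enorm{p_y}-\iprod{p_x}{p_y}\bigr)$, which is nonnegative by the Cauchy--Schwarz inequality. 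This proves that $P_{L_2}$ is non-expanding.

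For the equality case, equality holds exactly when $\iprod{p_x}{p_y}=\enorm{p_x}\enorm{p_y}$, i.e. when $p_x$ and $p_y$ are nonnegatively proportional: either one of them is zero, or there are a unit vector $v\in\ell^{\perp}$ and scalars $s,t\ge 0$ with $p_x=sv$ and $p_y=tv$. Writing $x=l_x+sv$ and $y=l_y+tv$, this says precisely that $x$ and $y$ lie in the half-plane $\{\,w+\tau v:\ w\in\ell,\ \tau\ge 0\,\}$, whose relative boundary is $\ell$. Conversely, if $x$ and $y$ belong to one half-plane with relative boundary $\ell$, then $p_x,p_y$ are nonnegative multiples of a common unit vector, so the Cauchy--Schwarz equality, and therefore the isometry equality, holds. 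The degenerate situations $p_x=0$ or $p_y=0$ (that is, $x$ or $y$ lies on $\ell$) are consistent with this: a point of $\ell$ lies in every half-plane with relative boundary $\ell$, and the Cauchy--Schwarz relation is then the trivial equality $0=0$.

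There is essentially no hard step here; the only thing requiring care is to translate the phrase ``half-plane with relative boundary $\ell$'' into the orthogonal coordinates $x=l_x+p_x$ and to check that the points lying on $\ell$ itself are handled correctly in the equality characterization.
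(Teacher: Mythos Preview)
Your proof is correct and follows essentially the same approach as the paper: orthogonally decompose along $\ell$ and $\ell^{\perp}$, apply the Pythagorean theorem to separate the two components, and reduce to the reverse triangle inequality in $\ell^{\perp}$ (which you phrase via Cauchy--Schwarz, while the paper simply invokes the triangle inequality). You are in fact more complete than the paper, which states the equality characterization but does not spell it out in the proof.
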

\begin{proof}
    Assume $x_1 = l_1 + p_1$ and   $x_2 = l_2 + p_2,
    $ where $l_1, l_2 \in \ell$ and $p_1, p_2 \in \ell^\perp.$
    Then by Pythagorean theorem and the triangle inequality,
    \[
    \enorm{x_1 - x_2}^2 = \enorm{l_1 - l_2}^2 + \enorm{p_1 - p_2}^2 \geq 
    \enorm{l_1 - l_2}^2 + \abs{\enorm{p_1} -\enorm{p_2}}^2 = \enorm{P_{L_2}(x_1) - P_{L_2}(x_2)}^2.
    \]
\end{proof}

\begin{lem}\label{lem:arclength}
    Let $a$ and $b$ be points of $\hilbert$ with $\enorm{a-b} < 2R.$
    Let $\gamma = a x_1 \dots x_n b$ be a polyline connecting $a$ and $b,$ and such that all its vertices 
    lie outside the interior of $\strconvsegment{a}{b}.$ Then the $R$-arclength of $\gamma$ is  greater than or equal to  the $R$-arclength of $[a,b].$ The identity holds if and only if $x_1, \dots, x_n$ are consecutive points of a main $R$-arc of     $[a,b].$
\end{lem}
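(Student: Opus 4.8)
The plan is to (i) reduce to the two-dimensional case by folding; (ii) set up an induction on the number $n$ of interior vertices; (iii) split off a prefix/suffix at any interior vertex that happens to lie on $\partial\strconvsegment{a}{b}$, using the hereditary property; and (iv) treat the genuinely delicate case in which \emph{all} interior vertices lie strictly outside $\strconvsegment{a}{b}$ by peeling off the pieces of the first and last segments that stick out of the spindle and invoking \Href{Lemma}{lem:curve_separation}. For brevity write $\phi(t)=2R\arcsin\frac{t}{2R}$ for $t\in[0,2R)$, so that the $R$-arclength of a segment of length $t$ is $\phi(t)$; note $\phi$ is increasing, concave, $\phi(0)=0$, and $\phi'(t)=(1-(t/2R)^2)^{-1/2}\ge 1$, hence $\phi(u+v)\ge\phi(u)+v$ and $\phi(t)\ge t$ whenever the arguments lie in $[0,2R)$.

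\textbf{Step 1: reduction to the plane.} Let $\ell$ be the line through $a$ and $b$, and apply the folding mapping $P_{L_2}$ of \Href{Definition}{dfn:folding mapping} for this $\ell$ to every vertex of $\gamma$. By \Href{Claim}{claim:folding_mapping_is_not_extending} no segment length increases (and all stay below $2R$), so the $R$-arclength does not increase since $\phi$ is increasing; moreover $P_{L_2}$ fixes $a$ and $b$, and since $\strconvsegment{a}{b}$ is a set of revolution about $\ell$ the image vertices still lie outside its interior. If $\gamma$ is not coplanar with $\ell$ the folding strictly decreases some segment, hence strictly decreases the $R$-arclength; on the other hand any main $R$-arc of $[a,b]$ is planar and contains $\ell$, so this costs nothing for the equality statement. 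Thus we may assume $\hilbert$ is a plane $\Pi$; then $\strconvsegment{a}{b}$ is a lens whose boundary is the union of exactly two main $R$-arcs $U,D$ of $[a,b]$, each of length $\phi(\enorm{a-b})$, meeting only at $a$ and $b$. Equipping $\partial\strconvsegment{a}{b}$ with its arclength (intrinsic) metric $\rho$ makes it a metric circle of length $2\phi(\enorm{a-b})$; for any boundary point $c$ the sub-arc of $U$ or $D$ from $a$ to $c$ is a minor arc of a radius-$R$ circle through $a,c$, so it has length $\phi(\enorm{a-c})$, and since $\enorm{a-c}\le\enorm{a-b}$ (the chord $\enorm{a-\cdot}$ is monotone along each arc) this is the shorter way around; hence $\rho(a,c)=\phi(\enorm{a-c})$ and in particular $\rho(a,b)=\phi(\enorm{a-b})$.

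\textbf{Step 2: induction; the easy case.} Induct on $n$; the case $n=0$ is the definition of $R$-arclength of $[a,b]$, with equality. Let $n\ge 1$. \emph{Case A: some interior vertex $x_j$ lies in $\strconvsegment{a}{b}$} (hence on its boundary, being outside the interior). By the hereditary property (\Href{Remark}{rem:str_convex_segment}) $D_R[a,x_j]$ and $D_R[x_j,b]$ are contained in $\strconvsegment{a}{b}$, so the sub-polylines $\gamma_1=ax_1\dots x_j$ and $\gamma_2=x_j\dots x_n b$ satisfy the hypotheses relative to their own endpoints; by the inductive hypothesis their $R$-arclengths are at least $\phi(\enorm{a-x_j})$ and $\phi(\enorm{x_j-b})$. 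Since $x_j$ lies on a main $R$-arc of $[a,b]$ we have $\phi(\enorm{a-x_j})+\phi(\enorm{x_j-b})=\phi(\enorm{a-b})$, and adding gives the inequality. Equality forces equality in both pieces; the inductive equality statement then places $x_1,\dots,x_{j-1}$ on a main $R$-arc of $[a,x_j]$ and $x_{j+1},\dots,x_n$ on a main $R$-arc of $[x_j,b]$, and a short argument — the ``wrong'' main sub-arc at an endpoint would dip into the interior of $\strconvsegment{a}{b}$ near $a$ or $b$, contradicting that the vertices avoid that interior — shows these are the sub-arcs of the main $R$-arc of $[a,b]$ through $x_j$, so all interior vertices are consecutive points of one main $R$-arc of $[a,b]$.

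\textbf{Step 3: the hard case and equality.} \emph{Case B: every interior vertex lies strictly outside $\strconvsegment{a}{b}$.} Let $a'$ be the point at which the segment $[a,x_1]$ leaves $\strconvsegment{a}{b}$ and $b'$ the point at which $[b,x_n]$ leaves it (both on $\partial\strconvsegment{a}{b}$, possibly $a'=a$ or $b'=b$). Since $a'\in[a,x_1]$ and $\phi'\ge 1$ we get $\phi(\enorm{a-x_1})\ge\phi(\enorm{a-a'})+\enorm{a'-x_1}$, and likewise $\phi(\enorm{x_n-b})\ge\enorm{x_n-b'}+\phi(\enorm{b'-b})$; using also $\phi(t)\ge t$ on the middle segments,
\[
R\text{-arclength}(\gamma)\ \ge\ \phi(\enorm{a-a'})\ +\ \length{\bigl(a'x_1\dots x_n b'\bigr)}\ +\ \phi(\enorm{b'-b}).
\]
The polyline $a'x_1\dots x_n b'$ lies in the complement of the interior of the convex lens $\strconvsegment{a}{b}$ and contains the point $x_1$ \emph{outside} it, so by \Href{Lemma}{lem:curve_separation} (the Euclidean plane being strictly convex) its length \emph{strictly} exceeds $\rho(a',b')$. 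Combining with $\phi(\enorm{a-a'})=\rho(a,a')$, $\phi(\enorm{b'-b})=\rho(b',b)$ from Step 1 and the triangle inequality in $(\partial\strconvsegment{a}{b},\rho)$,
\[
R\text{-arclength}(\gamma)\ >\ \rho(a,a')+\rho(a',b')+\rho(b',b)\ \ge\ \rho(a,b)\ =\ \phi(\enorm{a-b}).
\]
Thus in Case B the inequality is strict. Since Cases A and B are exhaustive, this proves the inequality in general; and equality can occur only in Case A, which by Step 2 forces the vertices onto a single main $R$-arc of $[a,b]$. Conversely, if $x_1,\dots,x_n$ are consecutive points of a main $R$-arc of $[a,b]$, then each $\phi(\enorm{x_{i-1}-x_i})$ equals the length of the corresponding sub-arc and these lengths telescope to $\phi(\enorm{a-b})$, giving equality.

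\textbf{Expected main obstacle.} The only non-routine point is Case B: one must recover enough of the argument to invoke a convexity/separation statement without throwing away the $R$-arclengths of the ``inner'' segments in the equality configuration. The device of cutting the first and last segments at the points where they exit the spindle — made possible precisely because $\phi'\ge 1$ — is what lets one downgrade the middle segments to ordinary length (this is harmless there, since we only need strict inequality in Case B) while keeping the endpoint contributions sharp; and the strictness supplied by \Href{Lemma}{lem:curve_separation} is exactly what delivers the ``only if'' half of the equality statement. A secondary, more bookkeeping-heavy point is the gluing in Case A's equality analysis, i.e.\ ruling out that an inner block sits on the ``wrong-side'' main sub-arc of $[a,x_j]$ or $[x_j,b]$.
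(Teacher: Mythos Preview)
Your reduction to the plane via folding (Step~1) is the same as the paper's first move, but after that the two arguments diverge: the paper does \emph{not} induct and does \emph{not} invoke \Href{Lemma}{lem:curve_separation}. Instead, once the vertices are in the half-plane $L_2$, it radially projects them from the center $o$ of the radius-$R$ circle $C$ through $a$ and $b$ lying in the \emph{opposite} half-plane. The point is that a vertex in $L_2$ lies outside the interior of $\strconvsegment{a}{b}$ exactly when it lies outside the open disk $B_R(o)$, so radial projection onto $C$ is well-defined; by \Href{Lemma}{lem:radial_projection_not_extending} it is segment-wise non-expansive, hence non-increasing on $R$-arclength. Once all vertices are on $C$, the $R$-arclength of each segment is the corresponding sub-arc of $C$, and these sum to at least the minor arc from $a$ to $b$. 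The equality case falls out of the equality cases of the folding and of the radial projection.

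Your Case~B has a genuine gap. You assert that the polyline $a'x_1\dots x_n b'$ lies in the complement of the interior of $\strconvsegment{a}{b}$, but the hypothesis of the lemma is only on the \emph{vertices}, not on the segments, and a segment joining two points strictly outside a convex body can certainly cross its interior. Concretely, take $R=3$, $a=(-1,0)$, $b=(1,0)$, $x_1=(-2,\tfrac{1}{10})$, $x_2=(2,\tfrac{1}{10})$: both $x_1,x_2$ are in the upper half-plane and outside the lens (their first coordinates lie outside $[-1,1]$), all edges have length below $2R$, yet the midpoint $(0,\tfrac{1}{10})$ of $[x_1,x_2]$ is in the interior of $\strconvsegment{a}{b}$ since the lens reaches height $3-2\sqrt{2}>\tfrac{1}{10}$ at $x=0$. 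In this example $a'=a$ and $b'=b$, so $a'x_1x_2b'$ truly passes through the interior and \Href{Lemma}{lem:curve_separation} is inapplicable. Because your equality analysis funnels everything through Case~A, this gap also undermines the ``only if'' direction. (Two small side remarks: $\phi$ is convex, not concave---though you only use $\phi'\ge 1$, which is correct---and the ``wrong sub-arc'' argument in Case~A deserves one more sentence: the other main $R$-arc of $[a,x_j]$ lies in $\strconvsegment{a}{x_j}\subset\strconvsegment{a}{b}$ but is not contained in $\partial\strconvsegment{a}{b}$, hence its interior points lie in the interior of $\strconvsegment{a}{b}$.) The cleanest repair is exactly the paper's device: after folding, radially project the vertices onto the fixed circle $C$ and work there.
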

\begin{proof}
Fix an arbitrary two-dimensional half-plane $L_2$ with the relative boundary
$ab$ spanned by  $b-a$ and a unit vector $u$ orthogonal to $b-a.$   Now we fold $\gamma$ into $L_2$ using the above-defined folding mapping $P_{L_2}.$ Denote $\gamma_2 = aP_{L_2}(x_1)...P_{L_2}{(x_n)}b.$ By \Href{Claim}{claim:folding_mapping_is_not_extending}, each segment of 
$\gamma_2$ is not longer than the corresponding segment of $\gamma.$ Consequently, the  
$R$-arclength of $\gamma_2$ is not greater than that of $\gamma,$ and the identity holds if and only if $\gamma$ belongs to a two-dimensional half-plane. However, $\gamma_2$ is contained in $L_2$.
Let $o$ be  the center of the circle $C$ of radius $R$ passing through $a$ and $b$ and lying in the half-plane spanned by $b-a$ and $-u$.  Denote  by $y_i$ the radial projection onto $C$ of the point $P_{L_2}(x_i),$ $i \in [n].$ By 
\Href{Lemma}{lem:radial_projection_not_extending}, the $R$-arclength of $a y_1 \dots y_n b$ is at most that of
$\gamma_2.$ Clearly, the $R$-arclength of $a y_1 \dots y_n b$ is at least the $R$-arclength of a main $R$-arc of segment $[a,b].$ The inequality and the identity case follow. 
\end{proof}

The following claim is trivial.
\begin{clm}\label{clm:slice_projection}
    Let $a,b\in \hilbert$ be such that $0<\enorm{a-b}<2R$. Let $H_{a - b}$ be a 
    hyperplane orthogonal to $[a,b]$ and 
    $H_{a - b}\bigcap [a,b]\neq\emptyset$. 
    Denote by $c$ a point from 
    $H_{a - b}\bigcap\partial \strconvsegment{a}{b}$. 
    Then for any $c'\in H_{a - b}\bigcap \strconvsegment{a}{b}$ 
    one has $\enorm{a-c'}\leq\enorm{a-c}$ and
    $\enorm{b-c'}\leq\enorm{b-c}$. 
    The identity is attained if and only if $c'\in\partial \strconvsegment{a}{b}$.
\end{clm}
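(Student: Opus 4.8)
The plan is to reduce the statement to planar trigonometry by passing to a two-dimensional section. First I would fix the line $\ell = ab$ and the point $x = H_{a-b} \cap [a,b]$, and observe that $\strconvsegment{a}{b}$ is rotationally symmetric about $\ell$ (it is an intersection of balls, and the family of balls $B_R(o)$ with $a,b \in B_R(o)$ is invariant under rotations fixing $\ell$). Consequently the slice $H_{a-b} \cap \strconvsegment{a}{b}$ is a round ball inside $H_{a-b}$ centered at $x$, whose radius equals $\enorm{x - c}$ for $c \in H_{a-b} \cap \partial\strconvsegment{a}{b}$. In particular every boundary point $c \in H_{a-b} \cap \partial\strconvsegment{a}{b}$ is equidistant from $x$, so $\enorm{a-c}^2 = \enorm{a-x}^2 + \enorm{x-c}^2$ is the same for all such $c$, and likewise for $b$; this already shows the right-hand sides of the claimed inequalities are well defined.

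Next, for an arbitrary $c' \in H_{a-b} \cap \strconvsegment{a}{b}$, the Pythagorean theorem in the plane through $a$, $c'$, $c$ gives
\[
\enorm{a - c'}^2 = \enorm{a-x}^2 + \enorm{x - c'}^2 \leq \enorm{a-x}^2 + \enorm{x-c}^2 = \enorm{a-c}^2,
\]
using $x - c' \perp \ell$ and $\enorm{x-c'} \leq \enorm{x-c}$ since $c'$ lies in the ball $H_{a-b}\cap\strconvsegment{a}{b}$ of radius $\enorm{x-c}$. The identical computation with $b$ in place of $a$ yields $\enorm{b-c'} \leq \enorm{b-c}$. Equality in either inequality forces $\enorm{x - c'} = \enorm{x-c}$, i.e.\ $c'$ lies on the boundary sphere of the ball $H_{a-b}\cap\strconvsegment{a}{b}$, which is precisely $H_{a-b} \cap \partial\strconvsegment{a}{b}$; conversely any such $c'$ attains equality by the first paragraph. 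This settles the equality case in both directions.

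The only point requiring a little care — and hence the main (minor) obstacle — is justifying that the slice $H_{a-b}\cap\strconvsegment{a}{b}$ is genuinely a round ball centered at $x$, rather than merely a convex body symmetric under rotations about $\ell$. For this I would argue that rotational symmetry about the one-dimensional axis $\ell$ forces any convex subset of the orthogonal affine hyperplane $H_{a-b}$ to be a (possibly degenerate or empty) ball centered at $\ell \cap H_{a-b} = x$: a convex set invariant under the full rotation group fixing $x$ in a space of dimension $\geq 2$ is a ball. Since the claim is asserted to be trivial in the paper, I would keep this remark brief, perhaps stating it as the observation that $\strconvsegment{a}{b}$ is a body of revolution about $ab$, after which everything reduces to the displayed one-line Pythagorean estimate above.
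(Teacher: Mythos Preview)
Your proof is correct. The paper provides no proof of this claim, labeling it trivial; your argument via rotational symmetry of $\strconvsegment{a}{b}$ about the line $ab$ and the Pythagorean theorem is exactly the natural verification one would supply.
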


\begin{lem}\label{lem:arc}
    Assume $0 < \enorm{a-b}< 2R$. 
    Then the length of any continuous curve $\gamma$ connecting $a$ and $b$ from the boundary of 
    $\strconvsegment{a}{b}$ is at least the $R$-arclength of segment $[a,b].$ Moreover, the identity holds if and only if 
    $\gamma$ is a main $R$-arc of $[a,b].$
%    Let $\gamma:[0,1]\to \partial \strconvsegment{a}{b}$ with $\gamma(0)=a$, $\gamma(1)=b$. Then  $\length{\gamma}\ge
%    2R\arcsin{\frac{\enorm{a-b}}{2R}}$.The identity is obtained if and only if
%    $\gamma$ is a main arc.
\end{lem}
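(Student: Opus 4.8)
The plan is to reduce the problem to a single model arc by folding $\gamma$ into a half-plane. We may assume $\gamma$ is rectifiable, the inequality being trivial otherwise. Let $\ell$ be the line through $a$ and $b$, fix an arbitrary two-dimensional half-plane $L_2$ with relative boundary $\ell$, and let $P=P_{L_2}$ be the folding mapping of \Href{Definition}{dfn:folding mapping}. The first point is that $\strconvsegment{a}{b}$ is invariant under every rotation $\varrho$ of $\hilbert$ fixing $\ell$ pointwise: such a $\varrho$ is an isometry with $\varrho(B_R(x))=B_R(\varrho x)$, it fixes $a$ and $b$, and hence it permutes the family $\{B_R(x)\st a,b\in B_R(x)\}$ and fixes its intersection. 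Thus $\partial\strconvsegment{a}{b}$ is a surface of revolution about $\ell$, so $P$ maps $\partial\strconvsegment{a}{b}$ into $\partial\strconvsegment{a}{b}\cap L_2$, which by the first claim of \Href{Section}{sec:strongly_convex_segment} is a single main $R$-arc $\alpha$ of $[a,b]$. Since $P$ fixes $a$ and $b$, the curve $P\circ\gamma$ is a continuous curve whose image lies in the arc $\alpha$ and which joins its two endpoints.

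The elementary fact I would isolate is that any continuous curve $\beta$ whose image lies in a simple arc $\alpha$ and which joins the two endpoints of $\alpha$ satisfies $\length{\beta}\ge\length{\alpha}$: the composition $\varphi=\alpha^{-1}\circ\beta$ is a continuous surjection of $[0,1]$ onto $[0,1]$ with $\varphi(0)=0$ and $\varphi(1)=1$, so for any partition $0=s_0<\dots<s_n=1$ of the parameter of $\alpha$ one chooses, by repeated use of the intermediate value theorem, parameters $0=t_0<t_1<\dots<t_n\le 1$ with $\varphi(t_j)=s_j$; then $\length{\beta}\ge\sum_j\enorm{\beta(t_j)-\beta(t_{j-1})}=\sum_j\enorm{\alpha(s_j)-\alpha(s_{j-1})}$, and taking the supremum over partitions proves the fact. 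Applied to $\beta=P\circ\gamma$ together with the $1$-Lipschitz property of $P$ (\Href{Claim}{claim:folding_mapping_is_not_extending}), this gives
\[
\length{\gamma}\ \ge\ \length{P\circ\gamma}\ \ge\ \length{\alpha}\ =\ 2R\arcsin\frac{\enorm{a-b}}{2R},
\]
the $R$-arclength of $[a,b]$.

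For the equality case, assume $\length{\gamma}=\length{\alpha}$; then both inequalities above are equalities. Length is additive over subintervals and cannot increase under $P$, so $\length{P\circ\gamma}=\length{\gamma}$ forces $P$ to preserve the length of $\gamma$ on every subinterval. Passing to the standard parametrization and writing $\gamma(s)=l(s)+p(s)$ with $l(s)\in\ell$, $p(s)\in\ell^{\perp}$, this means $\enorm{(P\circ\gamma)'(s)}=\enorm{\gamma'(s)}$ for a.e.\ $s$; since $\enorm{(P\circ\gamma)'(s)}^2=\enorm{l'(s)}^2+\bigl(\tfrac{d}{ds}\enorm{p(s)}\bigr)^2$, $\enorm{\gamma'(s)}^2=\enorm{l'(s)}^2+\enorm{p'(s)}^2$, and $\bigl|\tfrac{d}{ds}\enorm{p(s)}\bigr|\le\enorm{p'(s)}$ with equality only when $p'(s)$ is parallel to $p(s)$, we get $p'(s)\parallel p(s)$ for a.e.\ $s$ with $p(s)\ne0$. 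Here $p(s)=0$ exactly when $\gamma(s)\in\{a,b\}$; applying the inequality already proved to each of the two pieces of $\gamma$ adjacent to such a parameter shows $\gamma$ is constant there, so after trimming these constant pieces we may assume $p(s)\ne0$ on the open parameter interval. Then $p(s)/\enorm{p(s)}$ is locally Lipschitz with a.e.\ vanishing derivative, hence equals a constant unit vector $u_0$, so $\gamma$ lies in the half-plane $\ell+\R_{\ge0}u_0$; being a curve in $\partial\strconvsegment{a}{b}\cap(\ell+\R_{\ge0}u_0)$ — a single main $R$-arc — joining its endpoints, its image is all of that arc, i.e.\ $\gamma$ is a main $R$-arc of $[a,b]$.

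The main obstacle is the equality case, and inside it the step passing from ``$P$ preserves the length of $\gamma$'' to ``the perpendicular component of $\gamma$ keeps a constant direction''; the differential argument above is the cleanest route, but one must handle the parameters near $a$ and $b$, where $\enorm{p(s)}\to0$ and $p/\enorm{p}$ need not be Lipschitz — this is what the trimming step takes care of. For the inequality alone the folding map can be bypassed: inscribe in $\gamma$ polylines $\Gamma_i$ with $\mesh\Gamma_i\to0$, bound the $R$-arclength of each from below by $2R\arcsin\frac{\enorm{a-b}}{2R}$ via \Href{Lemma}{lem:arclength}, and note that the $R$-arclength of $\Gamma_i$ exceeds $\length{\Gamma_i}$ by a factor tending to $1$ as $\mesh\Gamma_i\to0$, while $\length{\Gamma_i}\to\length{\gamma}$; this variant, however, makes the equality case less transparent.
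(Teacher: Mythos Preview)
Your proof is correct. For the inequality you and the paper do essentially the same thing: fold $\gamma$ into a half-plane via $P_{L_2}$, observe that the image lands on a single main $R$-arc $\alpha$, and compare lengths. The paper phrases this via inscribed polylines and \Href{Lemma}{lem:variations_mesh_to_zero}; your ``curve in a simple arc joining its endpoints has length at least the arc length'' is a cleaner way to the same bound.

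The equality case is where the two proofs genuinely diverge. You analyse the equality condition of the folding map differentially: from $\length{P\circ\gamma}=\length{\gamma}$ you deduce $\enorm{p'(s)}=\bigl|\tfrac{d}{ds}\enorm{p(s)}\bigr|$ a.e., hence $p'(s)\parallel p(s)$, hence $p/\enorm{p}$ is constant on the trimmed interval, so $\gamma$ lies in a single half-plane and must be the arc. The paper instead argues geometrically: pick any interior point $c$ of $\gamma$, note that the sub-curve from $a$ to $c$ lies on $\partial\strconvsegment{a}{b}$ hence outside the interior of $\strconvsegment{a}{c}$ (hereditary property), and then the strict part of \Href{Lemma}{lem:curve_separation} together with the already proven inequality force this sub-curve to sit on $\omega_1=\partial\strconvsegment{a}{b}\cap\partial\strconvsegment{a}{c}$; likewise for the piece from $c$ to $b$. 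Your route is self-contained once the folding map is in hand but uses a.e.\ differentiability of Lipschitz curves and some care near the endpoints (your trimming step); the paper's route avoids any calculus and instead leans on the strict-convexity clause of \Href{Lemma}{lem:curve_separation} and the hereditary property of strongly convex segments. Either way the conclusion is the same, and your argument stands.
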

\begin{proof} Fix an arbitrary two-dimensional half-plane $L_2$ passing through the line
$ab$ and a point $c$ of $\gamma$ different from $a$ and $b$.  Let $u \parallel L_2$ be a unit vector orthogonal to $b-a$. Now we fold $\gamma$ into $L_2$ using the above-defined folding mapping $P_{L_2}.$ 
By \Href{Claim}{claim:folding_mapping_is_not_extending}, for any polyline $ax_1...x_nb,$ we have that the polyline 
$aP_{L_2}(x_1)...P_{L_2}{(x_n)}b$ is not longer, and its mesh
is less than or equal to that of $ax_1...x_nb$. 
The desired inequality follows from \Href{Lemma}{lem:variations_mesh_to_zero}. 

Assume that the length of $\gamma$ is equal to the $R$-arclength of segment $[a,b].$
Denote $\omega_1 =\partial \strconvsegment{a}{b} \cap \partial \strconvsegment{a}{c}$ and $\omega_2 = \partial \strconvsegment{a}{b} \cap \partial \strconvsegment{c}{b}.$ Clearly, $\omega_1$ and $\omega_2$ are main $R$-arcs.
Thus, by the already proven inequality of the lemma, the length of the part of $\gamma$ between $a$ and $c$ is at least the length of $\omega_1,$  and the length of the part of $\gamma$ between $c$ and $b$ is at least the length of $\omega_2.$ Since $\length{\omega_1} + \length{\omega_2} = 2R\arcsin{\frac{\enorm{a-b}}{2R}},$  we conclude that the identity holds in both previous inequalities. 
However, using the observation that the interior of $\strconvsegment{a}{c}$ belongs to the interior $\strconvsegment{a}{b}$ in \Href{Lemma}{lem:curve_separation} and the definition of $\omega_1$ and $\omega_2$, we conclude that $\gamma$ coincides with the union of $\omega_1$ and
$\omega_2.$ The proof is complete.
\end{proof}
\begin{rem}
   Note that \Href{Lemma}{lem:arc} is the limit version of \Href{Lemma}{lem:arclength}.
\end{rem}
\Href{Lemma}{lem:arc} together with \Href{Lemma}{lem:curve_separation} yield the following.
\begin{cor}\label{cor:main_arc}
        Assume $0 < \enorm{a-b}< 2R$. 
    Then the length of any continuous curve $\gamma$ connecting $a$ and $b$ lying outside of the interior of $\strconvsegment{a}{b}$ is at least the $R$-arclength of segment $[a,b].$ Moreover, the identity holds if and only if      $\gamma$ is a main $R$-arc of $[a,b].$
\end{cor}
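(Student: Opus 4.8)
The plan is to reduce \Href{Corollary}{cor:main_arc} to \Href{Lemma}{lem:arc} by replacing the given curve $\gamma$ — which lies outside the interior of the strongly convex segment $\strconvsegment{a}{b}$ — with a curve of no greater length that lies on the \emph{boundary} of $\strconvsegment{a}{b}$, after which the length bound and the equality case are exactly the content of \Href{Lemma}{lem:arc}. The tool for this reduction is \Href{Lemma}{lem:curve_separation}: the strongly convex segment $K = \strconvsegment{a}{b}$ is a convex set, and both $a$ and $b$ lie on its boundary $\partial K$. Since $\gamma$ connects $a$ to $b$ and stays in the complement of the interior of $K$, \Href{Lemma}{lem:curve_separation} produces a curve $\Gamma \colon [0,1] \to \partial K$ with $\Gamma(0) = a$, $\Gamma(1) = b$, and $\length{\Gamma} \leq \length{\gamma}$.

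From here I would conclude as follows. Applying \Href{Lemma}{lem:arc} to the curve $\Gamma$, which connects $a$ and $b$ inside $\partial \strconvsegment{a}{b}$, gives $\length{\Gamma} \geq 2R\arcsin\frac{\enorm{a-b}}{2R}$, the $R$-arclength of $[a,b]$; chaining the two inequalities yields $\length{\gamma} \geq 2R\arcsin\frac{\enorm{a-b}}{2R}$, which is the asserted bound. For the equality case, suppose $\length{\gamma}$ equals the $R$-arclength of $[a,b]$. Then both inequalities above are equalities. First I would argue that $\gamma$ cannot contain a point strictly outside $K$: the Hilbert space $\hilbert$ is strictly convex, so by the strict-convexity clause of \Href{Lemma}{lem:curve_separation}, a point of $\gamma$ outside $K$ would force $\length{\Gamma} < \length{\gamma}$, contradicting equality. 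Hence $\gamma$ lies entirely in $\partial K$ (it is in the complement of $\inter K$ and meets no point outside $K$), so \Href{Lemma}{lem:arc}'s equality case applies directly to $\gamma$ itself and forces $\gamma$ to be a main $R$-arc of $[a,b]$. Conversely, a main $R$-arc of $[a,b]$ lies on $\partial K$, hence outside $\inter K$, and has the stated length, so it attains equality; this gives the ``if and only if''.

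The only genuine subtlety is making sure the equality analysis is routed through $\gamma$ and not merely through the auxiliary curve $\Gamma$: knowing $\Gamma$ is a main $R$-arc is not in itself enough, since $\Gamma$ was constructed from $\gamma$ by a length-non-increasing surgery and a priori could be strictly shorter. The strict-convexity clause of \Href{Lemma}{lem:curve_separation} is exactly what rules this out, pinning $\gamma$ to $\partial K$ and letting \Href{Lemma}{lem:arc} act on $\gamma$ directly. I do not anticipate any serious obstacle beyond being careful here; the rest is a two-line concatenation of already-proven facts.
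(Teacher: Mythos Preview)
Your proposal is correct and follows exactly the route the paper indicates: the paper simply states that \Href{Corollary}{cor:main_arc} is yielded by \Href{Lemma}{lem:arc} together with \Href{Lemma}{lem:curve_separation}, and you have filled in precisely those details, including the careful handling of the equality case via the strict-convexity clause.
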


We will need a quantitative version of this corollary to obtain which we prove the following quantitative version of the triangle inequality.
\begin{clm}\label{clm:epsilon_aptitude_triangle_inequality}
Fix $\epsilon \in [0,1].$
    Let $x$ and $y$ two distinct points of $\hilbert.$ Let $z$ be such that the distance from $z$
    to the line $xy$ is at least $\epsilon.$ Then 
    \[
    \enorm{x-z} + \enorm{z - y} \geq \enorm{x-y} + 
    \min\left\{\frac{\epsilon^2}{2 \enorm{x-y}}, \ \frac{\epsilon}{2}\right\}.
    \]
\end{clm}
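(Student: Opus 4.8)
The plan is to reduce everything to an elementary two-dimensional estimate. If $\epsilon = 0$ the inequality is just the triangle inequality, so I would assume $\epsilon > 0$; then $z$ does not lie on the line through $x$ and $y$, and the three points span a two-dimensional affine plane in which we may work. Write $d = \enorm{x-y} > 0$ and let $h \ge \epsilon$ denote the distance from $z$ to the line $xy$. It suffices to bound $\enorm{x-z} + \enorm{z-y}$ from below in terms of $d$ and $h$ and then use $h \ge \epsilon$.

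For that lower bound I would use the classical reflection trick: let $x'$ be the reflection of $x$ across the line $\ell$ through $z$ parallel to the line $xy$. Since $z \in \ell$, we have $\enorm{x - z} = \enorm{x' - z}$, hence by the triangle inequality
\[
\enorm{x-z} + \enorm{z-y} = \enorm{x'-z} + \enorm{z-y} \ge \enorm{x'-y}.
\]
Because $x' - x$ is orthogonal to $x - y$ and has length $2h$, the Pythagorean theorem gives $\enorm{x'-y} = \sqrt{d^2 + 4h^2} \ge \sqrt{d^2 + 4\epsilon^2}$, using $h \ge \epsilon \ge 0$. This also makes clear why it was enough to treat the worst case $h = \epsilon$.

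It remains to verify the scalar inequality $\sqrt{d^2 + 4\epsilon^2} - d \ge \min\bigl\{\tfrac{\epsilon^2}{2d}, \tfrac{\epsilon}{2}\bigr\}$. Rationalizing and using $\sqrt{d^2 + 4\epsilon^2} \le d + 2\epsilon$ yields
\[
\sqrt{d^2 + 4\epsilon^2} - d = \frac{4\epsilon^2}{\sqrt{d^2+4\epsilon^2} + d} \ge \frac{2\epsilon^2}{d+\epsilon}.
\]
A final two-case split finishes the argument: if $\epsilon \le d$ then $d + \epsilon \le 2d$, so the right-hand side is at least $\epsilon^2/d \ge \epsilon^2/(2d)$; if $\epsilon \ge d$ then $d + \epsilon \le 2\epsilon$, so it is at least $\epsilon \ge \epsilon/2$. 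In either case $\min\{\epsilon^2/(2d), \epsilon/2\}$ is dominated, which proves the claim.

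I do not expect a genuine obstacle here — the estimate is elementary, and in fact the argument even gives the slightly stronger bound $\min\{\epsilon^2/d, \epsilon\}$. The only points that need a little care are the reduction to the non-degenerate planar picture (so that ``distance to the line $xy$'' is literally the height of a triangle) and remembering to reflect across the line through $z$ parallel to $xy$, not across $xy$ itself.
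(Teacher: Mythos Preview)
Your proof is correct and in fact cleaner than the paper's. The paper argues by projecting $z$ onto the line $xy$, then splits into the cases $p\notin[x,y]$ versus $p\in[x,y]$, and within the second case again splits according to whether $\enorm{x-y}\le\epsilon$ or not, bounding each of the differences $\enorm{x-z}-\enorm{x-p}$ and $\enorm{y-z}-\enorm{y-p}$ separately via $\frac{\enorm{z-p}^2}{\enorm{x-z}+\enorm{x-p}}$. Your reflection across the line through $z$ parallel to $xy$ collapses all of that into the single inequality $\enorm{x-z}+\enorm{z-y}\ge\sqrt{d^2+4h^2}$, after which the scalar estimate is a one-line rationalization plus a dichotomy on $\epsilon\lessgtr d$. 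The gain is brevity and, as you note, the sharper constant $\min\{\epsilon^2/d,\epsilon\}$; the paper's approach has the minor advantage that it never leaves the real line once $p$ is introduced, but at the cost of more bookkeeping. Both are elementary and self-contained, and neither needs the hypothesis $\epsilon\le 1$.
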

\begin{proof}
    Denote the metric projection of $z$ onto the line $xy$ as
    $p.$ If $p \notin [x,y],$ then 
    \[ 
    \min\{\enorm{x-z}, \enorm{z - y}\} \geq \epsilon \quad \text{and} \quad
  \max\{\enorm{x-z}, \enorm{z - y}\} \geq \sqrt{\enorm{x-y}^2 + \epsilon^2} \geq \enorm{x - y}.\]
   The claim follows in this case.
    Assume $p \in [x,y].$ Clearly, 
    $\enorm{x - z} + \enorm{z - y} - \enorm{x - y} =
      \parenth{ \enorm{x- z} - \enorm{x-p}} +
      \parenth{\enorm{y-z} - \enorm{y-p}}.$ Let us estimate the differences in the parentheses. Due to symmetry, it suffices to consider only one of them: 
    \[
    \enorm{x - z} - \enorm{x- p} = 
    \frac{\enorm{x - z}^2 -\enorm{x- p}^2}{\enorm{x - z} + \enorm{x- p}}=
    \frac{\enorm{z - p}^2}{\enorm{x - z} + \enorm{x- p}} \geq 
     \frac{\epsilon^2}{\enorm{x - z} + \enorm{x- p}}.
    \]
     Consider the case 
    $\enorm{x-y}  \leq \epsilon.$
    Then $\enorm{x- p} \leq \enorm{x-y} \leq \epsilon$ and
    $\enorm{x -z} = \sqrt{\enorm{x-p}^2 + \enorm{p-z}^2} \leq \epsilon \sqrt{2}.$ That is, 
\[
\enorm{x - z} - \enorm{x- p} \geq \frac{\epsilon}{\sqrt{2}+1}.
\]    
Using the same bound for the second difference, one gets 
    \[
    \enorm{x-z} + \enorm{z - y} \geq 
    \enorm{x-p}  + \frac{\epsilon}{\sqrt{2} + 1} + \enorm{y-p} + \frac{\epsilon}{\sqrt{2} + 1} \geq 
    \enorm{x-y}  + \frac{\epsilon}{2}.
    \]
   Consider the case $\enorm{x-y}  > \epsilon.$ 
  Since $\enorm{x-p} \leq \enorm{x- y},$ we have   
\[
    \enorm{x- z} = \sqrt{\enorm{x-p}^2 + \enorm{p-z}^2} \leq
    \enorm{x-y} \sqrt{1 + \frac{\epsilon^2}{\enorm{x-y}^2}} \leq \sqrt{2} \enorm{x- y}.
\]
That is, 
\[
\enorm{x - z} - \enorm{x- p}
 \geq \frac{\epsilon^2}{\parenth{\sqrt{2}+1}\enorm{x-y}}.
\]    
Using the same bound for the second difference, one gets 
\[
    \enorm{x-z} + \enorm{z - y} \geq 
    \enorm{x-p}   + \enorm{y-p} + 2 \cdot \frac{\epsilon^2}{\parenth{\sqrt{2}+1}\enorm{x-y}} \geq 
    \enorm{x-y}  + \frac{\epsilon^2}{2\enorm{x-y}}.
\]
\end{proof}

\begin{lem}\label{lem:curve_separation_stongly_convex_segment}
      Assume $0 < \enorm{a-b}< 2R$. 
   Assume  curve $\gamma$ connecting $a$ and $b$ lies  outside of the interior of $\strconvsegment{a}{b}$  and contains a point $z$ at a distance $\delta$ from $\strconvsegment{a}{b}.$
   Then
   \[
     \length{\gamma} \geq \min\left\{ 2R \arcsin \frac{\enorm{a-b}}{2R} +   \min\left\{\frac{\delta^2}{2 \pi R}, \ \frac{\delta}{2}\right\}, \ \pi R \right\}.
   \]
\end{lem}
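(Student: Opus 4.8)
The plan is to combine the quantitative triangle inequality of Claim~\ref{clm:epsilon_aptitude_triangle_inequality} with the folding-and-radial-projection technique from the proof of Lemma~\ref{lem:arclength}, localized near the point $z$. Let $\delta = \dist{z}{\strconvsegment{a}{b}}$ and assume $\length{\gamma}$ is finite and smaller than $\pi R$ (otherwise there is nothing to prove). First I would pass to an inscribed polyline $a x_1 \dots x_n b$ with one of the vertices equal to $z$ and with mesh small enough that its length approximates $\length{\gamma}$ from below within an arbitrarily small error; by Lemma~\ref{lem:curve_separation} applied to the convex body $\strconvsegment{a}{b}$, I may also assume all vertices lie on $\partial\strconvsegment{a}{b}$ except $z$ itself, which sits at distance $\delta$ outside. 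The goal is to show that this polyline has length at least $2R\arcsin\frac{\enorm{a-b}}{2R} + \min\{\frac{\delta^2}{2\pi R}, \frac{\delta}{2}\}$, and then take the limit over finer polylines.

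Next I would split the polyline at $z$ into the two sub-polylines $P_1 = a x_1 \dots z$ and $P_2 = z \dots x_n b$, with endpoints on $\partial\strconvsegment{a}{b}$. Let $z'$ be the metric projection of $z$ onto $\strconvsegment{a}{b}$, so $\enorm{z - z'} = \delta$ and $z' \in \partial\strconvsegment{a}{b}$ by Remark~\ref{rem:str_convex_segment} / Claim~\ref{clm:slice_projection}. Applying Corollary~\ref{cor:main_arc} to the strongly convex segments $\strconvsegment{a}{z'}$ and $\strconvsegment{z'}{b}$ (whose interiors are contained in that of $\strconvsegment{a}{b}$, hence all the relevant points lie outside their interiors) shows that replacing $z$ by $z'$ and then following main $R$-arcs costs at most the lengths of $P_1$ and $P_2$, giving $\length{P_1} \geq \length{[a,z]\text{-path through }z'}$ bounds; more precisely I want $\length{P_1} + \length{P_2} \geq \enorm{a - z} + \enorm{z - b}$ only as a crude starting point, so instead the cleaner route is: bound $\length{P_1} \geq$ ($R$-arclength is an overkill here) — actually the right move is to bound $\length{P_i}$ below by the \emph{straight-line} distance from its endpoints to $z$, i.e. $\length{\gamma} \geq \enorm{a' - z} + \enorm{z - b'}$ where $a', b'$ are suitable points, and then I have lost too much. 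Let me instead keep $\length{P_1} \geq \length{\omega_1}$ and $\length{P_2} \geq \length{\omega_2}$ where $\omega_1, \omega_2$ are the main $R$-arcs of $[a, z']$ and $[z', b]$ only when $z = z'$; for the genuine gain I apply Claim~\ref{clm:epsilon_aptitude_triangle_inequality} at the level of the folded-and-projected polyline as follows.

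Concretely: fold the whole polyline into a half-plane $L_2$ with boundary line $ab$ as in Lemma~\ref{lem:arclength}; folding does not increase segment lengths, and since $z$ lies at distance $\delta$ from the convex body $\strconvsegment{a}{b}$, its fold $P_{L_2}(z)$ still lies at distance $\geq \delta$ from $\strconvsegment{a}{b} \cap L_2$, hence outside the circular arc of radius $R$ through $a,b$ on that side by at least $\delta$. Then radially project every folded vertex onto the circle $C$ of radius $R$ through $a,b$; by Lemma~\ref{lem:radial_projection_not_extending} this does not increase $R$-arclengths of segments, but I must be careful because here I need a bound on \emph{length}, not $R$-arclength — so I keep the folded polyline and argue directly on it. On the folded polyline in $L_2$, all vertices except $P_{L_2}(z)$ lie on the circular arc of $\partial\strconvsegment{a}{b}$, and $P_{L_2}(z)$ lies at distance $\geq\delta$ from the chord-region; splitting at $P_{L_2}(z)$, each half is a polyline from $a$ (resp.\ $b$) to $P_{L_2}(z)$ whose vertices lie on the arc, so by Lemma~\ref{lem:arclength}/Corollary~\ref{cor:main_arc} each half has length at least the straight segment to $P_{L_2}(z)$; summing, $\length{\gamma} \geq \enorm{a - P_{L_2}(z)} + \enorm{P_{L_2}(z) - b}$, and by Claim~\ref{clm:epsilon_aptitude_triangle_inequality} with $\epsilon = \delta$ this is at least $\enorm{a-b} + \min\{\frac{\delta^2}{2\enorm{a-b}}, \frac{\delta}{2}\} \geq \enorm{a-b} + \min\{\frac{\delta^2}{2\cdot 2R}, \frac{\delta}{2}\}$. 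This recovers a bound over $\enorm{a-b}$, not over the $R$-arclength, so it is not yet what is wanted.

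The correct fix, and the step I expect to be the main obstacle, is to run the estimate \emph{arcwise} rather than \emph{chordwise}: instead of lower-bounding each half-polyline by a single chord to $P_{L_2}(z)$, I lower-bound it by its own $R$-arclength via Lemma~\ref{lem:arclength}, so that $\length{\gamma} \geq$ ($R$-arclength of $[a, z'']$) $+$ ($R$-arclength of $[z'', b]$) where $z''$ is the radial projection of $P_{L_2}(z)$ onto $C$, which is a polyline on $C$ of two segments through $z''$ with $z''$ pushed out by at least $\delta$; then applying Claim~\ref{clm:epsilon_aptitude_triangle_inequality} to the \emph{arclength function} $t\mapsto 2R\arcsin\frac{t}{2R}$, which is convex and has derivative $\geq 1$, lets me convert the chordal excess $\min\{\frac{\delta^2}{2\pi R}, \frac{\delta}{2}\}$ (note $\enorm{a-b}\leq 2R$ but the relevant chord here can be as long as the diameter, and the arclength derivative near the diameter blows up, which is why the bound degrades to $\pi R$ and why $2\pi R$ rather than $4R$ appears in the denominator) into an excess in $R$-arclength. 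I would make this precise by noting $2R\arcsin\frac{\enorm{x-z}}{2R} + 2R\arcsin\frac{\enorm{z-y}}{2R} \geq 2R\arcsin\frac{\enorm{x-y}}{2R} + c\,\delta^2/(2R)$ type inequalities on $C$, using that a point of $C$ at Euclidean distance $\geq\delta$ from the chord $xy$ is at arc-distance $\geq\delta$ from the shorter arc, and concavity of $\arcsin$ controls the loss; the outer $\min$ with $\pi R$ absorbs the case where the arc wraps more than halfway. Finally I pass to the limit over the refining polylines; the inequality is preserved since all bounds are lower bounds on $\length{\gamma}$ and $\delta$ is fixed by the given point $z \in \gamma$.
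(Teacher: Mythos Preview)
Your ``correct fix'' paragraph contains a genuine gap. After folding and radially projecting $P_{L_2}(z)$ to a point $z''$ on the circle $C$, the sum of the $R$-arclengths from $a$ to $z''$ and from $z''$ to $b$ can simply equal the $R$-arclength of $[a,b]$ (this happens whenever $z''$ lands on the short arc between $a$ and $b$), so no excess is captured at that stage. The subsequent appeal to ``applying Claim~\ref{clm:epsilon_aptitude_triangle_inequality} to the arclength function'' is not justified: that claim concerns straight-line distances from a point to a line, and does not transfer to arclengths on $C$ without an additional argument you have not supplied. More structurally, every variant you try compares $\gamma$ directly to the chord $[a,b]$ or to the arc through its endpoints, and this is why you keep producing $\enorm{a-b}$ rather than $2R\arcsin\frac{\enorm{a-b}}{2R}$ as the base term.

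The paper's proof sidesteps all of this with a single idea you are missing: take the supporting hyperplane $H$ to $\strconvsegment{a}{b}$ at the metric projection $q$ of $z$, orthogonal to $z-q$. Since $a,b\in\strconvsegment{a}{b}$ lie on one side of $H$ and $z$ on the other, $\gamma$ crosses $H$ at two points $x,y$ with $z$ on the sub-arc between them. Replace that sub-arc by the straight segment $[x,y]$. Because $[x,y]\subset H$ is disjoint from the interior of $\strconvsegment{a}{b}$, the modified curve still lies outside that interior, so Corollary~\ref{cor:main_arc} gives its length at least $2R\arcsin\frac{\enorm{a-b}}{2R}$. On the other hand, $z$ is at distance $\delta$ from $H$, hence at distance $\geq\delta$ from the line through $x,y$; Claim~\ref{clm:epsilon_aptitude_triangle_inequality} then bounds the length saved by the replacement from below by $\min\bigl\{\tfrac{\delta^2}{2\enorm{x-y}},\tfrac{\delta}{2}\bigr\}\geq\min\bigl\{\tfrac{\delta^2}{2\pi R},\tfrac{\delta}{2}\bigr\}$ when $\enorm{x-y}\leq\pi R$ (otherwise $\length{\gamma}\geq\enorm{x-y}\geq\pi R$ already). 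Adding the two bounds finishes the proof --- no folding, no polyline approximation, no limiting argument.
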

\begin{proof}
We  use $q$ to denote the metric projection of $z$ onto $\strconvsegment{a}{b}.$
Let $H$ be the supporting hyperplane to $\strconvsegment{a}{b}$ orthogonal to 
$z -q.$ The hyperplane $H$ separates $z$ from $\strconvsegment{a}{b}.$ Thus,
$\gamma$ intersects $H$ at least twice, say at $x$ and $y.$ Without loss of generality, we consider that $z$ is between $x$ and $y$.
If $\enorm{x-y} \geq \pi R,$ then $\length{\gamma} \geq \pi R.$
Assume $\enorm{x-y} \leq \pi R.$
The distance from $z$ to the line  $xy$ is at least that from $z$ to $H,$ that is, $\delta.$ Substitute the part of $\gamma$ between $x$ and $y$ by the segment $[x,y].$ By  \Href{Claim}{clm:epsilon_aptitude_triangle_inequality}, the  new curve is shorter than $\gamma$ by $ \min\left\{\frac{\delta^2}{2 \enorm{x-y}}, \ \frac{\delta}{2}\right\},$ which is at least  
 $ \min\left\{\frac{\delta^2}{2\pi R }, \ \frac{\delta}{2}\right\}.$  By  \Href{Corollary}{cor:main_arc} its length is at least the $R$-arclength of $[a,b].$ The lemma follows.
\end{proof}

\section{Short curves in proximally smooth sets}
\label{sec:short_curves}
In this section, we prove several simple lemmas about curves in proximally smooth sets and strongly convex segments. Our main tool here is the folding mapping. It will allow us to ``fold'' and 
``unfold'' curves while keeping the required inequalities on their  lengths.
% \begin{lem}\label{lem:smallness}
%     Let $\gamma_0 = x_1 \dots x_n$ be a polyline inscribed in a main arc of 
%     $\strconvsegment{x_1}{x_n}$. Assume $y_1, y_n$ belong to a proximally smooth 
%     set $A$ with constant $R$ and $0 < \enorm {y_1-y_n}\leq \enorm{x_1-x_n}$. 
%     Then there is a polyline $\gamma=y_1 \dots y_n$ inscribed in $A \cap \strconvsegment{y_1}{y_n}$  such
%     that $\enorm{y_i-y_{i+1}} \leq \enorm{x_i-x_{i+1}}$ for all $i \in [n-1].$ 
% \end{lem}
% \begin{proof} We use the induction on $n$. It is nothing to prove for $n=1$. 
% Assume the statement was proven for $n=k$. Let us prove it for $n=k+1$.
%  If $\enorm{x_1-x_2}\geq \enorm {y_1-y_n}$, we take  $y_2=...=y_n$. 

%  Consider the case $\enorm{x_1-x_2} < \enorm{y_1-y_n}.$
%  Let $H$ be a hyperplane such that $H$ is orthogonal to $y_1-y_n$ and 
%  $\dist{y_1}{H}\bigcap \partial \strconvsegment{y_1}{y_n} = \enorm{x_1-x_2}$.

%  Since $A$ is proximally smooth with constant $R$, there is a point $y_2\in H\bigcap \strconvsegment{y_1}{y_n}$. By \Href{Claim}{clm:slice_projection}, $\enorm{y_1-y_2}\leq \enorm{x_1-x_2}$ and 
%  $\enorm{y_2-y_n} \leq \enorm{x_2-x_n}$. Thus, by the induction hypothesis, there exist points $y_2, \dots, y_{n-1}$ in $A$ satisfying the desired inequalities. The lemma is proven. 
%  \end{proof}

 In the following simple algorithm, we show how to construct a polyline inscribed in a proximally smooth set whose $R$-arclength is at most the $R$-arclength between the segment formed by its endpoints.  
 \medskip
 
\begin{algorithm}[H]\label{algo1}
\label{algorithm}
\SetAlgoLined 
\KwData{A polyline $\gamma_0 = x_0 \dots x_n$ with $\enorm{x_0 - x_n} < 2R$ inscribed in a main $R$-arc of     $[x_0,x_n]$.  An  $R$-proximally smooth set $A$, 
two points $y_0$ and $y_n$ in $A$ satisfying  inequality $ \enorm{y_0 -y_n} \leq \enorm{x_0 -x_n}.$
}
\KwResult{ polyline $\gamma=y_0 \dots y_n$ inscribed in $A \cap \strconvsegment{y_0}{y_n}$  such
    that $\enorm{y_{i}-y_{i-1}} \leq \enorm{x_i-x_{i-1}}$ for all $i \in [n].$} 
   Set $i =0.$ 
\begin{enumerate}
\item \label{alg1_step_1} Stop if $i = n.$
\item If the inequality $\enorm{y_i - y_n} \leq \enorm{x_i - x_{i+1}}$ holds, 
set  $y_{i+1} = \dots =y_{n-1} = y_n$ and stop. 
\item If  the identity $x_i = x_{i+1}$ holds, set $y_{i+1} = y_i$, increment $i,$ and return to step  \eqref{alg1_step_1}.
\item  Let $H$ be a hyperplane such that $H$ is orthogonal to $y_i-y_n$  and 
$\dist{y_i}{ H \cap \partial \strconvsegment{y_i}{y_n}} = \enorm{x_{i+1} - x_{i}}.$
Set $y_{i+1}$ to be a  point of the intersection 
$H \cap \strconvsegment{y_i}{y_n}\cap A.$
Increment $i$ and return to step  \eqref{alg1_step_1}.
 \end{enumerate} 
  \caption{Unfolding of a flat curve}
\end{algorithm}
\begin{lem}
\Href{Algorithm}{algo1} is correct. It returns a polyline whose $R$-arclength is at most
the $R$-arclength of the segment $[y_0,y_n].$
\end{lem}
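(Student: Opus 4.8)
The plan is to verify, step by step, that \Href{Algorithm}{algo1} never gets stuck, terminates, and produces a polyline with the two claimed properties: it is inscribed in $A \cap \strconvsegment{y_0}{y_n}$ with $\enorm{y_i - y_{i-1}} \le \enorm{x_i - x_{i-1}}$ for all $i$, and its $R$-arclength is at most that of $[y_0, y_n]$. The first task is to check that step~(4) is always executable: when we reach it we have a current point $y_i$ with $\enorm{y_i - y_n} < 2R$ (we carry this as a loop invariant, see below) and $\enorm{x_{i+1}-x_i} < \enorm{y_i - y_n}$ (because the guard in step~(2) failed). By \Href{Claim}{clm:slice_projection}, as the hyperplane $H$ orthogonal to $y_i - y_n$ sweeps from $y_i$ toward $y_n$, the distance $\dist{y_i}{H \cap \partial\strconvsegment{y_i}{y_n}}$ increases continuously from $0$ up to the maximal "waist" value, which is at least $\enorm{x_{i+1}-x_i}$ precisely because the latter is smaller than $\enorm{y_i-y_n}$; hence a hyperplane $H$ with the required property exists. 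That $H \cap \strconvsegment{y_i}{y_n} \cap A \ne \emptyset$ follows from \Href{Proposition}{prp:equivalence2} (or \Href{Lemma}{lem:projection_point_of_inscribed_segment}) applied to $y_i, y_n \in A$: $A$ meets $\strconvsegment{y_i}{y_n}$ along a curve from $y_i$ to $y_n$, and that curve must cross the separating hyperplane $H$. So step~(4) is well-defined, and by \Href{Claim}{clm:slice_projection} the chosen $y_{i+1}$ satisfies $\enorm{y_i - y_{i+1}} \le \dist{y_i}{H \cap \partial\strconvsegment{y_i}{y_n}} = \enorm{x_{i+1}-x_i}$.

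Next I would set up the loop invariant that makes everything run: at the top of each iteration, $y_i \in A$, $y_i \in \strconvsegment{y_0}{y_n}$, and $\enorm{y_i - y_n} < 2R$. The membership $y_i \in A$ is clear from the construction in steps~(3)–(4). For $y_i \in \strconvsegment{y_0}{y_n}$ I use \Href{Remark}{rem:str_convex_segment}: $y_{i+1} \in \strconvsegment{y_i}{y_n} \subseteq \strconvsegment{y_0}{y_n}$ provided $y_i \in \strconvsegment{y_0}{y_n}$, which is just the hereditary property together with the fact that $y_n$ is always the second endpoint. For the bound $\enorm{y_{i+1}-y_n} < 2R$: since $y_{i+1} \in \strconvsegment{y_i}{y_n}$ and we will have shown $\enorm{y_i-y_n}<2R$, \Href{Claim}{clm:slice_projection} gives $\enorm{y_{i+1}-y_n} \le \enorm{c - y_n}$ for the boundary point $c$, and a direct computation on a circle of radius $R$ through $y_i,y_n$ shows this stays strictly below $2R$. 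Termination is immediate: $i$ is incremented in every branch that does not stop, and the loop stops at $i = n$ at the latest.

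Finally, the $R$-arclength bound. The produced polyline $y_0\dots y_n$ has all vertices in $\strconvsegment{y_0}{y_n}$, hence outside the interior of $\strconvsegment{y_0}{y_n}$ is \emph{not} the right frame — instead I argue directly: each segment satisfies $\enorm{y_i - y_{i-1}} \le \enorm{x_i - x_{i-1}}$, and since $t \mapsto 2R\arcsin\frac{t}{2R}$ is increasing on $[0,2R]$, the $R$-arclength of each segment $[y_{i-1},y_i]$ is at most that of $[x_{i-1},x_i]$. Summing, the $R$-arclength of $\gamma$ is at most the $R$-arclength of $\gamma_0 = x_0\dots x_n$. But $\gamma_0$ is inscribed in a main $R$-arc of $[x_0,x_n]$, so by \Href{Lemma}{lem:arclength} (the identity case) its $R$-arclength equals the $R$-arclength of $[x_0,x_n]$, namely $2R\arcsin\frac{\enorm{x_0-x_n}}{2R}$. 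Since $\enorm{y_0-y_n}\le \enorm{x_0-x_n}$ and the arcsin function is monotone, this is at most $2R\arcsin\frac{\enorm{y_0-y_n}}{2R}$, the $R$-arclength of $[y_0,y_n]$, as claimed. I expect the main obstacle to be the non-emptiness of $H \cap \strconvsegment{y_i}{y_n}\cap A$ in step~(4) — one must be careful that the curve from \Href{Proposition}{prp:equivalence2} genuinely crosses the separating hyperplane and does so inside the strongly convex segment — together with pinning down the loop invariant $\enorm{y_i-y_n}<2R$ so that the slice geometry of \Href{Claim}{clm:slice_projection} actually applies at every step.
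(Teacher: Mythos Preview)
Your verification of steps (2)--(4) and of the loop invariants is essentially right (modulo the slightly garbled ``waist'' description: as $H$ sweeps from $y_i$ toward $y_n$, the distance $\dist{y_i}{H\cap\partial\strconvsegment{y_i}{y_n}}$ in fact increases monotonically from $0$ all the way to $\enorm{y_i-y_n}$, not to some intermediate maximum, so the intermediate value theorem applies directly once the guard in step~(2) has failed). The paper organizes the same content as a straight induction on $n$ rather than via loop invariants, but there is no substantive difference there.

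The real gap is in your final $R$-arclength computation. You correctly obtain that the $R$-arclength of $\gamma$ is at most that of $\gamma_0$, which equals $2R\arcsin\frac{\enorm{x_0-x_n}}{2R}$. But then you write ``since $\enorm{y_0-y_n}\le\enorm{x_0-x_n}$ and $\arcsin$ is monotone, this is at most $2R\arcsin\frac{\enorm{y_0-y_n}}{2R}$'' --- and that is exactly backwards. Monotonicity gives $2R\arcsin\frac{\enorm{y_0-y_n}}{2R}\le 2R\arcsin\frac{\enorm{x_0-x_n}}{2R}$, so your chain of inequalities only yields the bound relative to $[x_0,x_n]$, not to $[y_0,y_n]$ as the lemma asserts.

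A pure segment-by-segment comparison to $\gamma_0$ cannot close this gap. What is missing is the \emph{second} inequality of \Href{Claim}{clm:slice_projection}: when $y_{i+1}$ is chosen on the slice $H$ inside $\strconvsegment{y_i}{y_n}$, one obtains not only $\enorm{y_i-y_{i+1}}\le\enorm{y_i-c}$ but also $\enorm{y_{i+1}-y_n}\le\enorm{c-y_n}$, where $c\in H\cap\partial\strconvsegment{y_i}{y_n}$ lies on a main $R$-arc of $[y_i,y_n]$. Hence the $R$-arclength of the two-segment path $y_i\,y_{i+1}\,y_n$ is at most that of $y_i\,c\,y_n$, which equals the $R$-arclength of $[y_i,y_n]$. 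This is what the paper uses: it maintains the stronger invariant $\enorm{y_i-y_n}\le\enorm{x_i-x_n}$ at every stage, so the induction hypothesis already gives the bound for $y_1\dots y_n$ against $[y_1,y_n]$, and one step of the inequality above closes the induction against $[y_0,y_n]$ rather than $[x_0,x_n]$.
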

\begin{proof}
We use the induction on $n$. It is nothing to prove for $n=1$. 
Assume the statement was proven for $n=k$. Let us prove it for $n=k+1$.

If either $x_0=x_1$ or $\enorm{y_0 - y_n} \leq \enorm{x_0 - x_1},$  the induction hypothesis  yields the result. 

So, assume that $0 < \enorm{x_0 - x_1} < \enorm{y_0 - y_n}.$ Let $H$ be a hyperplane such that $H$ is orthogonal to $y_0-y_n$ and 
\[
\dist{y_0}{H\bigcap \partial \strconvsegment{y_0}{y_n}} = \enorm{x_0-x_1}.
\]
By \Href{Lemma}{lem:uniqueness_slice_projection}, the intersection $H \cap A \cap \strconvsegment{y_0}{y_n}$ is non-empty. 
By the hereditary property of the strongly convex segment (see \Href{Remark}{rem:str_convex_segment}), $\strconvsegment{y_1}{y_n}$ is a subset of
$\strconvsegment{y_0}{y_n}$ for any $y_1$ from the intersection $H\bigcap \partial \strconvsegment{y_0}{y_n}.$ 
By  \Href{Claim}{clm:slice_projection},   
$\enorm{y_0-y_{1}}\leq \enorm{x_0-x_{1}}$ and 
 $\enorm{y_{1}- y_n} \leq \enorm{x_{1}-x_n}$. 
 These inequalities also imply that the $R$-arclength of $y_0y_1y_n$ is at most that of the polyline $x_0x_1x_n.$ Thus,  the lemma follows from the induction hypothesis.
\end{proof}

We note that in \Href{Algorithm}{algo1}, one can choose the points $y_{i+1}$ in the fourth step greedily so that it is the slice-projection of the corresponding point of the segment $[y_i, y_n].$ In fact, it was shown in the previous paper of the first two authors \cite{ivanov2022rectifiable} that such a choice leads to a reasonably short curve in a proximally smooth subset of a uniformly convex and uniformly smooth Banach space.  Now, we can say that our intuition was right: In the Euclidean case, our algorithm returns the best possible curve in the sense of the upper bound on the length. The bound is clearly tight for $R \sphere_{\hilbert}.$

The only difference between the following algorithm and the original one from  \cite{ivanov2022rectifiable}  is the precise bound
on $\enorm{x_0 - x_1}$ and the bound on length on the curve.

\begin{algorithm}[H]
\label{algo2}
\SetAlgoLined 
\KwData{ An  $R$-proximally smooth $R$ set $A \subset \hilbert$, 
two points $x_0$ and $x_1$ in $A$ satisfying  the inequality $ \enorm{x_0 -x_1} < 2R.$}
\KwResult{ A rectifiable curve $f \colon [0,1] \to A \cap \strconvsegment{x_0}{x_1}$ with 
$f(0) = x_0$ and $f(1) = x_1$ of length at most
$2R \arcsin \frac{\enorm{x_0 - x_1}}{2R}.$}
Set $S_0 = \{0,1\}$ and $S_i = \{\frac{j}{2^i} \mid j \in [2^i]\}\} \cup \{0\}$ for $i \in \N.$
\begin{enumerate}
 \item Define $f$ at points of $S_0$ as follows: $f(0) = x_0$ and $f(1) = x_1.$
 \item  For every $i \in \N,$ we extend the domain of $f$ to the set $S_i \setminus S_{i-1}$ as follows:
 
set the value of $f$ at $\frac{2j-1}{2^i}$ to be the slice-projection of the midpoint  of the segment 

$\left[f\!\left(\frac{j-1}{2^{i-1}}\right), f\!\left(\frac{j}{2^{i-1}}\right)\right]$ for all $j \in [2^{i-1}]$ on $A$ with respect to the segment.
\item Continuously extend $f$ on $[0,1].$ 
 \end{enumerate} 
  \caption{Slice-bisection}
\end{algorithm}

\begin{lem}\label{lem:algo2_correctness}
\Href{Algorithm}{algo2} is correct. 
\end{lem}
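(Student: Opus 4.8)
The plan is to control, level by level, the dyadic polylines produced by the bisection and then to pass to the limit; throughout we may assume $x_0\neq x_1$, since for $x_0=x_1$ the constant curve trivially works. For $i\ge 0$ let $\gamma_i$ be the polyline whose vertices are the values of $f$ at the points of $S_i$ taken in increasing order of the argument, so that $\gamma_0=x_0x_1$ and $\gamma_i$ is obtained from $\gamma_{i-1}$ by inserting into the interior of each of its segments the slice-projection of that segment's midpoint. I would prove by induction on $i$ the invariant: \emph{(a)} every vertex of $\gamma_i$ lies in $A\cap\strconvsegment{x_0}{x_1}$; \emph{(b)} consecutive vertices of $\gamma_i$ are distinct and lie at distance at most $\enorm{x_0-x_1}$ (hence $<2R$, so the $R$-arclength is defined); \emph{(c)} the $R$-arclength of $\gamma_i$ does not exceed that of $\gamma_{i-1}$. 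The base case $i=0$ is immediate from the data.

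For the inductive step fix a segment $[p,q]$ of $\gamma_{i-1}$; by the hypothesis $p,q\in A\cap\strconvsegment{x_0}{x_1}$ are distinct and $\enorm{p-q}<2R$, so \Href{Lemma}{lem:uniqueness_slice_projection} (with $t=\tfrac12$) applies: the slice-projection $c$ of the midpoint of $[p,q]$ w.r.t.\ $[p,q]$ exists, is unique, lies in $A$, and belongs to $\strconvsegment{p}{q}$, which by the hereditary property (\Href{Remark}{rem:str_convex_segment}) is contained in $\strconvsegment{x_0}{x_1}$; this gives \emph{(a)}. Since $c$ lies on the hyperplane $H$ through the midpoint of $[p,q]$ orthogonal to $q-p$, we have $c\neq p$ and $c\neq q$. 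Now pick $c_\partial\in H\cap\partial\strconvsegment{p}{q}$; by rotational symmetry about the line $pq$, $c_\partial$ is the apex of a main $R$-arc of $[p,q]$, so $\enorm{p-c_\partial}=\enorm{q-c_\partial}=2R\sin\!\bigl(\tfrac12\arcsin\tfrac{\enorm{p-q}}{2R}\bigr)<\enorm{p-q}$. By \Href{Claim}{clm:slice_projection}, $\enorm{p-c}\le\enorm{p-c_\partial}$ and $\enorm{q-c}\le\enorm{q-c_\partial}$; the first half of this gives \emph{(b)}, and, using that $\arcsin$ is increasing,
\[
2R\arcsin\tfrac{\enorm{p-c}}{2R}+2R\arcsin\tfrac{\enorm{c-q}}{2R}\ \le\ 2\cdot 2R\arcsin\!\Bigl(\sin\!\bigl(\tfrac12\arcsin\tfrac{\enorm{p-q}}{2R}\bigr)\Bigr)\ =\ 2R\arcsin\tfrac{\enorm{p-q}}{2R},
\]
so summing over all segments of $\gamma_{i-1}$ yields \emph{(c)}. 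In particular the $R$-arclength of every $\gamma_i$ is at most $2R\arcsin\tfrac{\enorm{x_0-x_1}}{2R}$, and since $\arcsin t\ge t$ the same bound holds for $\length{\gamma_i}$. Rewriting the estimate above as $\enorm{p-c},\enorm{q-c}\le\tfrac{\enorm{p-q}}{2\cos(\frac12\arcsin\frac{\enorm{x_0-x_1}}{2R})}$ shows that each segment of $\gamma_i$ is at most $\rho:=\bigl(2\cos(\tfrac12\arcsin\tfrac{\enorm{x_0-x_1}}{2R})\bigr)^{-1}<1$ times as long as its parent, whence $\mesh \gamma_i\le\rho^i\enorm{x_0-x_1}\to 0$.

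It remains to build the curve and to justify Step~3. The $\gamma_i$ are inscribed in the closed set $A\cap\strconvsegment{x_0}{x_1}$, each $\gamma_{i+1}$ is a refinement of $\gamma_i$, $\mesh \gamma_i\to 0$, and $\length{\gamma_i}$ is non-decreasing (refinement) and bounded, hence tends to some $\ell\le 2R\arcsin\tfrac{\enorm{x_0-x_1}}{2R}$; so \Href{Lemma}{lem:limit_of_polylines}, applied with $A\cap\strconvsegment{x_0}{x_1}$ in place of $A$, produces a continuous curve in $A\cap\strconvsegment{x_0}{x_1}$ joining $x_0$ to $x_1$, of length $\ell$, in which all $\gamma_i$ are inscribed. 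To see that the partially defined $f$ itself extends continuously to $[0,1]$ — i.e.\ that Step~3 makes sense — I would check uniform continuity of $f$ on $\bigcup_i S_i$ via the following self-similarity remark: restricted to a dyadic interval $[\tfrac{j-1}{2^i},\tfrac{j}{2^i}]$, the construction is precisely \Href{Algorithm}{algo2} run on the endpoints $f(\tfrac{j-1}{2^i})$ and $f(\tfrac{j}{2^i})$ (a slice-projection depends only on the two endpoints and on $A$). Hence, by the invariant, every dyadic sub-polyline inside that interval has $R$-arclength — and therefore length — at most $2R\arcsin\tfrac{\enorm{f((j-1)/2^i)-f(j/2^i)}}{2R}\le 2R\arcsin\tfrac{\mesh \gamma_i}{2R}$, so $\enorm{f(s)-f(t)}\le 2R\arcsin\tfrac{\mesh \gamma_i}{2R}$ for all dyadic $s,t$ in that interval. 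Any two dyadic points with $\abs{s-t}<2^{-i}$ lie in at most two adjacent such intervals, so $\enorm{f(s)-f(t)}\le 2\cdot 2R\arcsin\tfrac{\mesh \gamma_i}{2R}\to 0$, giving uniform continuity; the extension then maps into the closed set $A\cap\strconvsegment{x_0}{x_1}$, with $f(0)=x_0$ and $f(1)=x_1$. Finally $\partvar[1]{f,S_i}=\length{\gamma_i}$ and $\mesh S_i\to 0$, so \Href{Lemma}{lem:variations_mesh_to_zero} gives $\length{f}=\lim_i\length{\gamma_i}=\ell\le 2R\arcsin\tfrac{\enorm{x_0-x_1}}{2R}$; in particular $f$ is rectifiable and meets the asserted bound.

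I expect the last step to be the main obstacle: assembling the constructed vertices into a genuinely continuous, rectifiable curve of the claimed length — rather than just a dense family of inscribed polylines — is exactly where care is needed, and the self-similarity remark (the $R$-arclength bound proved on $[0,1]$ transfers verbatim to every dyadic sub-interval) is what makes it go through. Everything preceding it — well-definedness of the slice-projections via \Href{Lemma}{lem:uniqueness_slice_projection}, stability of $\strconvsegment{x_0}{x_1}$ via \Href{Remark}{rem:str_convex_segment}, and the telescoping non-increase of the $R$-arclength via \Href{Claim}{clm:slice_projection} — is a routine induction.
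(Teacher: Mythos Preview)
Your proof is correct and follows essentially the same approach as the paper: dyadic refinement via slice-projections, containment in $\strconvsegment{x_0}{x_1}$ via \Href{Lemma}{lem:uniqueness_slice_projection} and hereditarity, non-increase of the $R$-arclength via \Href{Claim}{clm:slice_projection}, geometric decay of the mesh, and passage to the limit via \Href{Lemma}{lem:limit_of_polylines}. The paper is terser --- it gets the cruder uniform contraction factor $1/\sqrt{2}$ from \Href{Lemma}{lem:projection_point_of_inscribed_segment} and leaves the justification of Step~3 implicit in \Href{Lemma}{lem:limit_of_polylines} --- whereas your explicit uniform-continuity argument for the extension of $f$ is a welcome addition.
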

\begin{proof}
The restriction of $f$ onto the set $S_{i}$ is a polyline, say $\Gamma_{i}.$ By \Href{Lemma}{lem:uniqueness_slice_projection}, we conclude that
$ f\!\!\left(\frac{2j-1}{2^{i}}\right)  \in
\strconvsegment{f\!\! \left(\frac{j-1}{2^{i-1}}\right)}{ f\!\! \left(\frac{j}{2^{i-1}}\right)}.$ By the hereditary property of strongly convex segments (see \Href{Remark}{rem:str_convex_segment}),
$\Gamma_i$ are inscribed in $A \cap \strconvsegment{x_0}{x_1}.$
Using the  bound from \Href{Lemma}{lem:projection_point_of_inscribed_segment} with $\lambda = \frac{1}{2}$ in \Href{Lemma}{lem:uniqueness_slice_projection},  we get $\mesh{\Gamma_{i+1}} \leq \frac{\mesh \Gamma_{i}}{\sqrt{2}}.$
In particular, $\mesh \Gamma_i \to 0$ as $i \to \infty.$ Clearly,
$\Gamma_{i+1}$ is a refinement of $\Gamma_i.$ However, the $R$-arclength of $\Gamma_{i+1}$ is not greater than that of $\Gamma_i$ by \Href{Claim}{clm:slice_projection}. Thus, the monotonically increasing sequence of the lengths of $\Gamma_i$ has a finite limit. The use of \Href{Lemma}{lem:limit_of_polylines} completes the proof.
\end{proof}

\subsection{Proof of \Href{Lemma}{lem:short_curve_intro} and related results}

The following result is a generalization of  \Href{Lemma}{lem:short_curve_intro}.
 \begin{lem}\label{lem:simple_short_curve_in_prox_smooth_set}
    Let $A\subset \hilbert$ be an $R$-proximally smooth set containing two distinct points $a$ and $b$ with    
    $\enorm{a-b}<2R.$ 
    Then there is a curve $\Gamma_{a,b}$ in $A \cap \strconvsegment{a}{b}$ connecting $a$ and $b,$ and
    whose length is at most the $R$-arclength of segment $[a,b],$ i.e. $\length{\Gamma_{a,b}}\leq 2R\arcsin{\frac{\enorm{a-b}}{2R}}.$ 
    In particular, $\rho_A(a,b) \leq 2R\arcsin{\frac{\enorm{a-b}}{2R}}.$
 \end{lem}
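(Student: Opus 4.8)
The plan is to apply \Href{Algorithm}{algo2} (Slice-bisection) directly, with input points $x_0 = a$ and $x_1 = b$; this is admissible precisely because $0 < \enorm{a-b} < 2R$. By \Href{Lemma}{lem:algo2_correctness} the algorithm terminates with a rectifiable curve $\Gamma_{a,b} \colon [0,1] \to A \cap \strconvsegment{a}{b}$ satisfying $\Gamma_{a,b}(0) = a$ and $\Gamma_{a,b}(1) = b$, so the existence part and the localization of the curve inside $A \cap \strconvsegment{a}{b}$ come for free. What remains is to justify the bound $\length{\Gamma_{a,b}} \leq 2R\arcsin\frac{\enorm{a-b}}{2R}$.

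For this I would go back to the nested polylines $\Gamma_i = f|_{S_i}$ constructed inside the proof of \Href{Lemma}{lem:algo2_correctness}: they are inscribed in $\Gamma_{a,b}$, each $\Gamma_{i+1}$ refines $\Gamma_i$, $\mesh \Gamma_i \to 0$, and by \Href{Lemma}{lem:limit_of_polylines} the length of $\Gamma_{a,b}$ is the limit of the increasing sequence $\length{\Gamma_i}$. Every segment of $\Gamma_i$ has some length $t \leq \mesh \Gamma_0 = \enorm{a-b} < 2R$, and since $t \leq 2R\arcsin\frac{t}{2R}$ on $[0,2R)$, the length of $\Gamma_i$ is at most its $R$-arclength. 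By \Href{Claim}{clm:slice_projection} the $R$-arclength does not increase under slice-bisection, so the $R$-arclength of $\Gamma_i$ is at most that of $\Gamma_0 = [a,b]$, which equals $2R\arcsin\frac{\enorm{a-b}}{2R}$. Hence $\length{\Gamma_i} \leq 2R\arcsin\frac{\enorm{a-b}}{2R}$ for every $i$, and passing to the limit $i \to \infty$ gives the claimed bound on $\length{\Gamma_{a,b}}$.

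The final assertion $\rho_A(a,b) \leq 2R\arcsin\frac{\enorm{a-b}}{2R}$ is then immediate from the definition of the intrinsic metric, since $\Gamma_{a,b}$ is one admissible curve in $A$ joining $a$ to $b$. I do not anticipate any genuine difficulty here: all the real work --- the non-increase of the $R$-arclength under slice-bisection and the convergence of the bisection scheme --- has already been carried out in establishing the correctness of \Href{Algorithm}{algo2}, and the present statement is essentially a bookkeeping of that, the only mildly technical points being the elementary inequality $t \le 2R\arcsin\frac{t}{2R}$ used to pass from Euclidean lengths to $R$-arclengths of the inscribed polylines, and the routine limiting argument.
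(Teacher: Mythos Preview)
Your proposal is correct and follows essentially the same route as the paper: the paper's proof is a one-line appeal to the correctness of \Href{Algorithm}{algo2} (\Href{Lemma}{lem:algo2_correctness}), whose output specification already includes the length bound $2R\arcsin\frac{\enorm{a-b}}{2R}$. Your additional unpacking of that bound via the non-increasing $R$-arclengths of the inscribed polylines $\Gamma_i$ is exactly the reasoning implicit in the proof of \Href{Lemma}{lem:algo2_correctness}, so you are not doing anything different, just making explicit what the paper leaves to the reader.
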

\begin{proof}
    Directly follows from the correctness of \Href{Algorithm}{algo2} proven in \Href{Lemma}{lem:algo2_correctness}.
\end{proof}
\begin{rem}
\Href{Algorithm}{algo1} provides a more general approach to constructing a short curve in a proximally smooth set. For example, one may proceed as follows.
 Fix $\e\in (0,1)$.
 Consider a polyline $\gamma_1$ inscribed in a main arc of $[a,b]$ of mesh $\e$.
 Using \Href{Algorithm}{algo1}, we construct a polyline $\Gamma_1$ inscribed in $A$ such that $\Gamma_1(0)=a$, $\Gamma_1(1)=b$ and the length of the $i$-th segment of $\Gamma_1$ is less than or equal to the  length of the $i$-th segment of $\gamma_1$. In particular, this implies that 
the $R$-arclength of $\Gamma_1$ is less than that of $\gamma_1,$ which is equal to the
$R$-arclength of segment $[a,b].$

Now we briefly describe the process of how to construct a new  polyline $\Gamma_{i+1}$ from $\Gamma_i$, such that
$\mesh \Gamma_{i+1}\leq\e^{i+1}$ and  its $R$-arclength is less than or equal to  that of $\Gamma_i.$

Take a non-trivial segment $[x,y]$ of $\Gamma_i$ of length $l$. Consider a polyline $\gamma$ inscribed in a main $R$-arc of length $l$ of mesh $\e^{i+1}$.  Using  \Href{Algorithm}{algo1},  we get a polyline $\Gamma$ inscribed in $A$ such that $\Gamma(0)=x$, $\Gamma(1)=y$ and the $i$-th segment of
 $\Gamma$ is  shorter than the $i$-th segment of $\gamma.$ 
We obtain the polyline $\Gamma_{i+1}$ by concatenating all such $\Gamma$ for all consecutive non-trivial segments $[x,y]$ of $\Gamma_i.$
\end{rem}

\begin{lem}\label{lem:shorter_curve_inside_str_convex_segment}
     Let $A\subset \hilbert$ be a closed, $R$-proximally smooth set, and let $\gamma$ be a curve
  in $A$ connecting $a$ and $b$ with $0 < \enorm{a-b} \leq 2R.$  If $\gamma \not \subset \strconvsegment{a}{b},$
  there exists a strictly shorter curve $\Gamma$ in $A \cap \strconvsegment{a}{b}$ connecting $a$ and $b.$
\end{lem}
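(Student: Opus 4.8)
The plan is to split $\gamma$ into the portions that stay inside $\strconvsegment{a}{b}$ and the \emph{excursions} where it leaves, and to replace each excursion by a curve supplied by \Href{Lemma}{lem:simple_short_curve_in_prox_smooth_set}: the hereditary property of strongly convex segments (\Href{Remark}{rem:str_convex_segment}) keeps everything inside $\strconvsegment{a}{b}$, \Href{Corollary}{cor:main_arc} shows no excursion gets longer, and the quantitative \Href{Lemma}{lem:curve_separation_stongly_convex_segment} shows that one particular excursion gets strictly shorter. Concretely, let $E=\{t\in[0,1]\st \gamma(t)\notin\strconvsegment{a}{b}\}$; this open set is non-empty since $\gamma\not\subset\strconvsegment{a}{b}$, and $E\subset(0,1)$ because $\gamma(0)=a,\gamma(1)=b\in\strconvsegment{a}{b}$. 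Since $\gamma$ has compact image, $\delta:=\max_{t}\dist{\gamma(t)}{\strconvsegment{a}{b}}>0$ is attained at some $z=\gamma(t_0)$. Write $E=\bigsqcup_j(\alpha_j,\beta_j)$ for its (at most countably many) connected components and put $u_j=\gamma(\alpha_j)$, $v_j=\gamma(\beta_j)$; a one-sided limit argument gives $u_j,v_j\in\partial\strconvsegment{a}{b}$, and $u_j,v_j\in A$ since they lie on $\gamma$. Let $j_0$ be the index with $t_0\in(\alpha_{j_0},\beta_{j_0})$. As $\diam\strconvsegment{a}{b}=\enorm{a-b}<2R$ (the spindle $\strconvsegment{a}{b}$ has the same diameter as the planar lens it is a solid of revolution of), we get $\enorm{u_j-v_j}<2R$ for every $j$.

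Next I would build the replacement curve. For each $j$ with $u_j\neq v_j$, apply \Href{Lemma}{lem:simple_short_curve_in_prox_smooth_set} to $u_j,v_j\in A$ to obtain a curve $\Gamma_j\subset A\cap\strconvsegment{u_j}{v_j}$ joining $u_j$ and $v_j$ with $\length{\Gamma_j}\le 2R\arcsin\frac{\enorm{u_j-v_j}}{2R}$; by the hereditary property $\strconvsegment{u_j}{v_j}\subset\strconvsegment{a}{b}$, so in fact $\Gamma_j\subset A\cap\strconvsegment{a}{b}$. For $j$ with $u_j=v_j$ let $\Gamma_j$ be the constant curve at $u_j$. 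Let $\Gamma$ be obtained from $\gamma$ by replacing $\gamma|_{[\alpha_j,\beta_j]}$ with $\Gamma_j$ for every $j$, keeping $\gamma$ on $[0,1]\setminus E$. Then $\Gamma$ joins $a$ and $b$ and lies in $A\cap\strconvsegment{a}{b}$; it is continuous because $\Gamma_j\subset\strconvsegment{u_j}{v_j}$ has diameter $\enorm{u_j-v_j}\le\length{\gamma|_{[\alpha_j,\beta_j]}}$, and these lengths tend to $0$ along any sequence of shrinking excursions.

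It remains to compare lengths. The parts of $\gamma$ over $[0,1]\setminus E$ are untouched, so by additivity of length it suffices to show $\length{\Gamma_j}\le\length{\gamma|_{[\alpha_j,\beta_j]}}$ for every $j$, with strict inequality for $j=j_0$. If $j\neq j_0$ and $u_j\neq v_j$, then $\inter{\strconvsegment{u_j}{v_j}}\subset\inter{\strconvsegment{a}{b}}$, so $\gamma|_{[\alpha_j,\beta_j]}$ lies outside $\inter{\strconvsegment{u_j}{v_j}}$, and \Href{Corollary}{cor:main_arc} gives $\length{\gamma|_{[\alpha_j,\beta_j]}}\ge 2R\arcsin\frac{\enorm{u_j-v_j}}{2R}\ge\length{\Gamma_j}$; if $u_j=v_j$ this is trivial. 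For $j=j_0$ with $u_{j_0}\neq v_{j_0}$, the curve $\gamma|_{[\alpha_{j_0},\beta_{j_0}]}$ lies outside $\inter{\strconvsegment{u_{j_0}}{v_{j_0}}}$ and contains $z$ at distance $\ge\dist{z}{\strconvsegment{a}{b}}=\delta$ from $\strconvsegment{u_{j_0}}{v_{j_0}}$, so \Href{Lemma}{lem:curve_separation_stongly_convex_segment} yields
\[
\length{\gamma|_{[\alpha_{j_0},\beta_{j_0}]}}\ \ge\ \min\Bigl\{\,2R\arcsin\tfrac{\enorm{u_{j_0}-v_{j_0}}}{2R}+\min\bigl\{\tfrac{\delta^2}{2\pi R},\tfrac{\delta}{2}\bigr\},\ \pi R\,\Bigr\},
\]
whereas $\length{\Gamma_{j_0}}\le 2R\arcsin\frac{\enorm{u_{j_0}-v_{j_0}}}{2R}<\pi R$, so this excursion is strictly shortened. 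If $u_{j_0}=v_{j_0}$, then $\length{\gamma|_{[\alpha_{j_0},\beta_{j_0}]}}\ge 2\dist{z}{\strconvsegment{a}{b}}=2\delta>0=\length{\Gamma_{j_0}}$. In every case $\length{\Gamma}<\length{\gamma}$.

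The point I expect to be the real obstacle is precisely the one forcing this detour through \Href{Lemma}{lem:simple_short_curve_in_prox_smooth_set}: the obvious move of metric-projecting $\gamma$ onto the convex set $\strconvsegment{a}{b}$ does shorten it, but the projected curve need not lie in $A$, so one must re-enter $A$, and the only available mechanism is to join the excursion endpoints $u_j,v_j\in A\cap\partial\strconvsegment{a}{b}$ by curves inside $A$. Two further items need care: continuity of the glued curve when infinitely many excursions accumulate (covered by the diameter estimate above), and the degenerate case $\enorm{a-b}=2R$, where $\strconvsegment{a}{b}$ is a ball and an excursion may join two antipodal boundary points so that $\enorm{u_j-v_j}=2R$; this case should be treated separately, e.g.\ by applying \Href{Lemma}{lem:curve_separation} on that ball, or by approximation from the range $\enorm{a-b}<2R$.
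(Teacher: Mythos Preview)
Your proof is correct and follows essentially the same strategy as the paper: decompose $\gamma$ into its excursions outside $\strconvsegment{a}{b}$, replace each by a curve from \Href{Lemma}{lem:simple_short_curve_in_prox_smooth_set} inside $\strconvsegment{u_j}{v_j}\subset\strconvsegment{a}{b}$, and compare lengths via \Href{Corollary}{cor:main_arc}. The only noteworthy difference is in how strictness is obtained. The paper observes that \emph{every} excursion is strictly longer than the $R$-arclength of its endpoints, because the equality case of \Href{Corollary}{cor:main_arc} would force the excursion to be a main $R$-arc of $[u_j,v_j]$, hence to lie in $\strconvsegment{u_j}{v_j}\subset\strconvsegment{a}{b}$, contradicting the definition of an excursion. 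This makes your detour through the quantitative \Href{Lemma}{lem:curve_separation_stongly_convex_segment} and the choice of a distinguished excursion $j_0$ unnecessary. The paper also dispatches the case $\length\gamma>2R\arcsin\frac{\enorm{a-b}}{2R}$ first (replacing $\gamma$ wholesale by the curve from \Href{Lemma}{lem:simple_short_curve_in_prox_smooth_set}); you should do the same, since your continuity and length-comparison arguments implicitly assume $\gamma$ is rectifiable. Your extra care about continuity of the glued curve and the edge case $\enorm{a-b}=2R$ is justified; the paper's proof is terser on these points.
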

\begin{proof}
The statement follows  from \Href{Lemma}{lem:simple_short_curve_in_prox_smooth_set} in the case when the length of $\gamma$ is strictly greater than the $R$-arclength of the segment $[a,b].$ 

Assume that $\gamma$ is rectifiable. 
    Consider  points $a_1$ and $b_1$ on $\partial \strconvsegment{a}{b} \cap \gamma$ such that the part of $\gamma$
    between them lies outside of $\strconvsegment{a}{b}.$  Then this part of the curve lies outside $\strconvsegment{a_1}{b_1}$
    as well. By  \Href{Lemma}{lem:simple_short_curve_in_prox_smooth_set} and \Href{Corollary}{cor:main_arc}, this part of $\gamma$
    can be substituted by a strictly shorter  curve from $A \cap \strconvsegment{a_1}{b_1},$ which is a subset of 
    $A \cap \strconvsegment{a}{b}.$ By the continuity, there are at most countably many pairs of such points $a_1, b_1.$ The lemma follows. 
\end{proof}

% \begin{thm}\label{thm:weakly_convex_equivalent_existence_short_curve}
%     Let $A\subset \hilbert$ be a closed set. The following assertions are equivalent:
% \begin{enumerate}
%     \item $A$ is $R$-proximally smooth;
%     \item  For any $a,b\in A$ such that $0<\enorm{a-b}<2R,$ 
%     there is a curve $\Gamma$ in $A$ connecting $a$ and $b,$ and
%     whose length is at most the $R$-arclength of segment $ab,$ i.e. $\length{\Gamma}\leq 2R\arcsin{\frac{\enorm{a-b}}{2R}}.$
%     \item For any $a,b\in A$ such that $0<\enorm{a-b}<2R,$ 
%     there is a curve $\Gamma$ in $A \cap \strconvsegment{a}{b}$ connecting $a$ and $b,$ and
%     whose length is at most the $R$-arclength of segment $ab,$ i.e. $\length{\Gamma}\leq 2R\arcsin{\frac{\enorm{a-b}}{2R}}.$
% \end{enumerate}

% \end{thm}
% \begin{proof} $(2)\Rightarrow (1)$. 
% For any   $a,b\in A$ with  $0<\enorm{a-b}<2R,$ there exists a curve $\Gamma$  in $A$ connecting $a$ and $b$ such that $\length{\Gamma}\leq 2R\arcsin{\frac{\enorm{a-b}}{2R}}.$  \Href{Lemmas}{lem:curve_separation} and \ref{lem:arc} imply that $\Gamma\bigcap \strconvsegment{a}{b}\neq \emptyset$. Thus, $A$ is proximally smooth with constant $R$ (see Ivanov's Theorem). 

%  $(1)\Rightarrow (2)$ is yielded by \Href{Lemma}{lem:shorter_curve_inside_str_convex_segment}. 
%  \end{proof}

This lemma provides a huge hint about the properties of 
 the shortest path connecting two sufficiently close points of a proximally smooth  set.
 Note that the shortest path exists in a finite-dimensional case.
 \begin{cor}\label{cor:shortest_curve_in_prox_smooth}
  Let $A\subset \hilbert$ be a closed, $R$-proximally smooth set, and let $\Gamma$ be a shortest curve
  in $A$ connecting $a$ and $b$ with $ \enorm{a-b} < 2R.$ Then $\length{\Gamma}$ is at most the $R$-arclength of $[a,b],$ i.e. $2R \arcsin\frac{\enorm{a-b}}{2R},$
  $\Gamma \subset \strconvsegment{a}{b},$ and $\Gamma$ is an $R$-proximally smooth set.
 \end{cor}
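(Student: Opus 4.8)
The plan is to derive \Href{Corollary}{cor:shortest_curve_in_prox_smooth} as an essentially immediate consequence of \Href{Lemma}{lem:simple_short_curve_in_prox_smooth_set} and \Href{Lemma}{lem:shorter_curve_inside_str_convex_segment}, together with the equivalence of proximal smoothness and the curve-in-spindle property from \Href{Proposition}{prp:equivalence2}. The only genuinely new content is the last clause, that $\Gamma$ itself is an $R$-proximally smooth set; the other two clauses are bookkeeping.

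First I would fix a shortest curve $\Gamma$ in $A$ connecting $a$ and $b$ (its existence in finite dimensions is noted in the excerpt; in general we may phrase the corollary conditionally, as the text does). Since \Href{Lemma}{lem:simple_short_curve_in_prox_smooth_set} produces a curve in $A$ from $a$ to $b$ of length at most $2R\arcsin\frac{\enorm{a-b}}{2R}$, minimality of $\Gamma$ forces $\length{\Gamma}\le 2R\arcsin\frac{\enorm{a-b}}{2R}$, which is the length bound. Next, if $\Gamma\not\subset\strconvsegment{a}{b}$, then \Href{Lemma}{lem:shorter_curve_inside_str_convex_segment} yields a strictly shorter curve in $A$ from $a$ to $b$, contradicting minimality; hence $\Gamma\subset\strconvsegment{a}{b}$.

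The main point is proximal smoothness of $\Gamma$ as a set. Here I would invoke \Href{Proposition}{prp:equivalence2}: it suffices to show that for any two distinct $c,d\in\Gamma$ with $\enorm{c-d}<2R$, there is a curve in $\Gamma\cap\strconvsegment{c}{d}$ connecting $c$ and $d$. Parametrize $\Gamma$ by arc length and let $\Gamma_{c,d}$ be the sub-arc of $\Gamma$ between $c$ and $d$; it is a curve in $A$ connecting $c$ and $d$. By the (local) optimality of $\Gamma$, this sub-arc must itself be a shortest curve in $A$ between $c$ and $d$ — otherwise one could splice in a shorter connector and shorten $\Gamma$. Applying the first two clauses of the corollary (already established) to the pair $c,d$ gives $\Gamma_{c,d}\subset\strconvsegment{c}{d}$, so $\Gamma_{c,d}$ is a curve lying in $\Gamma\cap\strconvsegment{c}{d}$ joining $c$ and $d$. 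This verifies condition (3) of \Href{Proposition}{prp:equivalence2} for the set $\Gamma$ (closedness of $\Gamma$ is clear, being the continuous image of a compact interval), hence $\Gamma$ is $R$-proximally smooth.

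The step I expect to require the most care is the claim that a sub-arc of a shortest curve is again shortest: one must check that replacing $\Gamma_{c,d}$ by a strictly shorter curve $\sigma$ in $A$ from $c$ to $d$ genuinely produces a shorter curve in $A$ from $a$ to $b$, i.e. that the concatenation of the initial piece of $\Gamma$ up to $c$, then $\sigma$, then the tail of $\Gamma$ from $d$, is a legitimate rectifiable curve in $A$ of strictly smaller length — this is routine but should be stated, paying attention to the case where $c$ and $d$ are ordered along $\Gamma$ the ``wrong'' way, which we may assume away by relabeling. Everything else is a direct citation of the two lemmas and \Href{Proposition}{prp:equivalence2}.
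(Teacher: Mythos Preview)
Your proposal is correct and matches the paper's own proof almost exactly: the length bound and the inclusion $\Gamma\subset\strconvsegment{a}{b}$ are read off from \Href{Lemma}{lem:simple_short_curve_in_prox_smooth_set} and \Href{Lemma}{lem:shorter_curve_inside_str_convex_segment}, and proximal smoothness of $\Gamma$ is obtained by observing that every sub-arc of a shortest curve is again shortest, applying the inclusion to that sub-arc, and invoking \Href{Proposition}{prp:equivalence2}. The only cosmetic difference is that the paper first records (from $\Gamma\subset\strconvsegment{a}{b}$) that \emph{every} pair of points on $\Gamma$ is at distance strictly less than $2R$, so the hypothesis $\enorm{c-d}<2R$ is automatically met; your version, which only treats pairs with $\enorm{c-d}<2R$, is exactly what \Href{Proposition}{prp:equivalence2} requires and is equally valid.
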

 \begin{proof}
 The inequality follows from \Href{Lemma}{lem:shorter_curve_inside_str_convex_segment}; the inclusion directly follows from \Href{Lemma}{lem:shorter_curve_inside_str_convex_segment}. Consequently, 
 the distance between any points $a_1$ and $b_1$ of $\Gamma$ is strictly less than $2R.$
 Thus, since $\Gamma$ is the shortest curve connecting $a$ and $b,$ 
 the part of $\Gamma$ connecting any two points $a_1$ and $b_1$ is the shortest curve in $A$ connecting them. Consequently, it belongs to $\strconvsegment{a_1}{b_1}.$ That is, by \Href{Proposition}{prp:equivalence2}, $\Gamma$ is an $R$-proximally smooth set. 
 \end{proof}

Since we will obtain a more general result than \Href{Theorem}{thm:existence of the shortest_curve}, we will need the following quantitative version of  \Href{Lemma}{lem:shorter_curve_inside_str_convex_segment}.
 \begin{lem}\label{lem:quantitatively_shorter_curve_inside_str_convex_segment}
  Let $A\subset \hilbert$ be a closed, $R$-proximally smooth set containing two distinct points $a$ and $b$ with $\enorm{a-b} < 2R.$ Fix $\epsilon \in \parenth{0, {\pi R - \rho_A{(a,b)}}}.$
    Consider a curve $\gamma$ in $A$ whose length is at most $\rho_A(a,b) + \epsilon.$ Then  the distance from an arbitrary point $p$ of $\gamma$ to $\strconvsegment{a}{b}$ is at most 
    $
    \max\{ \sqrt{4 \pi R \epsilon }, 4 \epsilon \}.
    $
\end{lem}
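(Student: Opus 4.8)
The plan is to combine the quantitative separation estimate in \Href{Lemma}{lem:curve_separation_stongly_convex_segment} with the a priori length bound from \Href{Lemma}{lem:shorter_curve_inside_str_convex_segment} (more precisely, \Href{Corollary}{cor:shortest_curve_in_prox_smooth}). First I would set $\delta = \dist{p}{\strconvsegment{a}{b}}$ for the point $p$ of $\gamma$ farthest from the strongly convex segment. Since $\gamma$ connects $a$ and $b$ and $a,b \in \strconvsegment{a}{b},$ the portion of $\gamma$ running from $a$ to $p$ to $b$ is a curve joining two points of the boundary of $\strconvsegment{a}{b}$ that leaves $\strconvsegment{a}{b}$ and reaches distance $\delta$ from it. By \Href{Corollary}{cor:shortest_curve_in_prox_smooth} applied to a shortest curve in $A$ connecting $a$ and $b,$ we already know $\rho_A(a,b) \le 2R\arcsin\frac{\enorm{a-b}}{2R} < \pi R,$ so the hypothesis $\epsilon < \pi R - \rho_A(a,b)$ guarantees $\length{\gamma} < \pi R.$ This rules out the value $\pi R$ in the minimum of \Href{Lemma}{lem:curve_separation_stongly_convex_segment}, so from that lemma
\[
\rho_A(a,b) + \epsilon \ \ge\ \length{\gamma}\ \ge\ 2R\arcsin\frac{\enorm{a-b}}{2R} + \min\left\{\frac{\delta^2}{2\pi R},\ \frac{\delta}{2}\right\}.
\]

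Next I would use $2R\arcsin\frac{\enorm{a-b}}{2R} \ge \rho_A(a,b)$? — careful: the inequality goes the other way, $\rho_A(a,b) \le 2R\arcsin\frac{\enorm{a-b}}{2R},$ which is the wrong direction for this subtraction. The correct route is to observe that $\rho_A(a,b)$ is the infimum of lengths of curves in $A$ joining $a,b,$ and any curve may be compared against its own ``straightened'' version lying in $\strconvsegment{a}{b}$: by \Href{Lemma}{lem:shorter_curve_inside_str_convex_segment} (or directly by \Href{Corollary}{cor:main_arc} applied to the excursion of $\gamma$ outside the segment, rerouted inside via \Href{Lemma}{lem:simple_short_curve_in_prox_smooth_set}), one gets a curve in $A$ of length at most $\length{\gamma} - \min\{\frac{\delta^2}{2\pi R},\frac{\delta}{2}\},$ hence $\rho_A(a,b) \le \length{\gamma} - \min\{\frac{\delta^2}{2\pi R},\frac{\delta}{2}\} \le \rho_A(a,b) + \epsilon - \min\{\frac{\delta^2}{2\pi R},\frac{\delta}{2}\}.$ Therefore $\min\{\frac{\delta^2}{2\pi R},\frac{\delta}{2}\} \le \epsilon.$

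Finally I would invert this inequality. If $\frac{\delta}{2} \le \frac{\delta^2}{2\pi R},$ i.e. $\delta \ge \pi R,$ then $\frac{\delta}{2} \le \epsilon,$ so $\delta \le 2\epsilon \le 4\epsilon.$ Otherwise $\frac{\delta^2}{2\pi R} \le \epsilon,$ so $\delta \le \sqrt{2\pi R\epsilon} \le \sqrt{4\pi R\epsilon}.$ In either case $\delta \le \max\{\sqrt{4\pi R\epsilon},\,4\epsilon\},$ which is the desired bound, and since $p$ was the farthest point of $\gamma$ from $\strconvsegment{a}{b},$ the same holds for every point of $\gamma.$ The main obstacle I anticipate is the bookkeeping around which length inequality points in which direction — in particular making sure that the rerouting of the excursion of $\gamma$ outside $\strconvsegment{a}{b}$ genuinely stays inside $A$ (which it does, via the hereditary property of strongly convex segments, \Href{Remark}{rem:str_convex_segment}, since the excursion's endpoints lie on $\partial\strconvsegment{a}{b}$ and the rerouted piece lives in a smaller strongly convex segment contained in $\strconvsegment{a}{b}$) and that the quantitative gain from \Href{Claim}{clm:epsilon_aptitude_triangle_inequality} survives this rerouting; this is exactly what \Href{Lemma}{lem:curve_separation_stongly_convex_segment} packages, so the argument should reduce to a clean invocation of that lemma followed by the elementary case split above.
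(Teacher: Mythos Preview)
Your approach is correct and is essentially the paper's: isolate a maximal excursion of $\gamma$ outside $\strconvsegment{a}{b}$ containing $p$, apply \Href{Lemma}{lem:curve_separation_stongly_convex_segment} to that excursion with its own endpoints $\gamma(t_1),\gamma(t_2)$ (using that the excursion has length $<\pi R$ and that $\dist{p}{\strconvsegment{\gamma(t_1)}{\gamma(t_2)}}\ge\delta$ by the hereditary property), reroute inside $A$ via \Href{Lemma}{lem:simple_short_curve_in_prox_smooth_set}, and compare the shortened length with $\rho_A(a,b)$. The only cosmetic difference is that the paper argues by contradiction (setting $\delta$ equal to the claimed threshold so that the gain is exactly $2\epsilon$), whereas you derive $\min\{\delta^2/(2\pi R),\,\delta/2\}\le\epsilon$ directly and invert --- your inversion in fact yields the slightly sharper bound $\delta\le\max\{\sqrt{2\pi R\epsilon},\,2\epsilon\}$.
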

\begin{proof} 
By \Href{Lemma}{lem:simple_short_curve_in_prox_smooth_set}, 
$\pi R - \rho_A{(a,b)}$ is strictly positive.
Denote $\delta =   \max\{ \sqrt{4 \pi R \epsilon }, 4 \epsilon \}.$
Assume $\gamma$ contains a point $p$ at distance $\delta$ from 
$\strconvsegment{a}{b}.$ Then there are two moments $t_1$ and  $t_2$ such that
$0 \leq t_1 < t_2 \leq 1;$ $\gamma (t_1), \gamma (t_2) \in \strconvsegment{a}{b};$
the part of the curve between $\gamma (t_1)$ and 
$\gamma (t_2)$ contains $p$ and does not intersect the interior of
$\strconvsegment{a}{b}.$  Denote the part of $\gamma$ between these points by $\Gamma$.    Note that $\length\Gamma\le \length\gamma\le \varrho_A(a,b)+\epsilon < \pi R$, where the last inequality follows from
the choice of $\epsilon.$ 
By \Href{Lemma}{lem:curve_separation_stongly_convex_segment}, the length of $\Gamma$ is at least the $R$-arclength of $[\gamma(t_1),\gamma(t_2)]$ plus $2\epsilon.$ However, by \Href{Lemma}{lem:simple_short_curve_in_prox_smooth_set}, there is a curve $\tilde{\Gamma}$ in $\strconvsegment{a}{b} \cap A$ not longer than 
 the $R$-arclength of $[\gamma(t_1),\gamma(t_2)].$ 
 Substituting $\Gamma$ by $\tilde{\Gamma},$ one gets a curve in $A$
 of length at most $\rho_A(a,b) - \epsilon,$ which contradicts the definition of $\rho_A(a,b).$
\end{proof}

\section{Integration along circles and the second variation }
\label{sec:integration_along_circles}
To prove \Href{Theorem}{thm:existence of the shortest_curve}, we will need to bound the variation of the second order of the pointwise difference of two given curves with the same endpoints.
That is,  we will estimate a certain integral along a cycle. 
A discrete version of the desired inequality is the main result of this section.  In essence, 
it is a discrete version of the Wirtinger inequality, which says
that for a differentiable  function  of  $f \colon [0,1] \to \R$ with  
$f(0) = f(1) = 0$ whose derivative is $L_2$ integrable, the inequality
\begin{equation}\label{eq:wirtinger_inequality}
   \frac{ 1}{\pi^2}\int\limits_{[0,1]} {f^\prime}^2(t) \di t \geq 
   \int\limits_{[0,1]} {f}^2(t) \di t
\end{equation}
holds \cite[Inequality 257]{hardy1952inequalities}.

We also note that to obtain the tight bound in \Href{Theorem}{thm:existence of the shortest_curve}, we do need the best possible constant in the Wirtinger inequality, that is $\frac{1}{\pi^2}.$

  \begin{lem}
 \label{lem:lower_bound_on_s2_of_a_ciclyc_curve}
 Let $T= \{t_i\}_0^n$  with $0 = t_0 <  \dots < t_n = 1$ be a partition of $[0,1].$  
Consider a function $g \colon [0,1] \to X$ to be  Lipschitz with a  constant $L_g,$ and 
such that $g(0) = g(1)= 0,$ where $\parenth{X, \norm{\cdot}}$ is a normed space. 
Then 
 \[
   \parenth{\frac{\partvar[2]{g, T}}{\pi}}^2 
   \geq 
\sum\limits_{k \in [n]} \norm{g(t_k)}^2(t_k - t_{k-1}) - n  \parenth{L_g \mesh{T}}^2.
 \]
 \end{lem}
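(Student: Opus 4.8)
The plan is to reduce the discrete inequality to the classical (continuous) Wirtinger inequality \eqref{eq:wirtinger_inequality} by associating to the partition $T$ and the values $g(t_k)$ a suitable auxiliary function on $[0,1]$, applying Wirtinger to it, and then controlling the error terms that appear when one replaces integrals by the Riemann-type sums in the statement. The natural candidate is the piecewise-linear interpolant $\widetilde g$ of $g$ on the nodes $t_0,\dots,t_n$: on each subinterval $[t_{k-1},t_k]$ one has $\widetilde g(t)=g(t_{k-1})+\frac{t-t_{k-1}}{t_k-t_{k-1}}\bigl(g(t_k)-g(t_{k-1})\bigr)$, so $\widetilde g(0)=\widetilde g(1)=0$, and $\widetilde g$ is Lipschitz, hence absolutely continuous, so Wirtinger applies coordinate-wise (the inequality \eqref{eq:wirtinger_inequality} is stated for real $f$, but summing over an orthonormal basis — or just over the two coordinates of the plane spanned by the relevant values, and passing to the general normed case via the Euclidean substructure is not available, so I would instead state and use Wirtinger in the Hilbert-space-valued form, which follows from the scalar case by integrating $\langle\cdot,e\rangle$; in a general normed $X$ one embeds the finitely many values into a finite-dimensional subspace and uses any fixed Euclidean structure on it, which only changes norms by constants — but since the statement has the genuine norm $\norm{\cdot}$, the cleanest route is: the quantity $\partvar[2]{g,T}$ is exactly $\bigl(\int_0^1\norm{\widetilde g{}'(t)}^2\,dt\bigr)^{1/2}$ when $\widetilde g$ is the interpolant, because $\widetilde g{}'$ is constant equal to $\frac{g(t_k)-g(t_{k-1})}{t_k-t_{k-1}}$ on $[t_{k-1},t_k]$).

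Concretely, the key steps are: (1) observe $\int_0^1\norm{\widetilde g{}'}^2 = \sum_{k\in[n]}\frac{\norm{g(t_k)-g(t_{k-1})}^2}{t_k-t_{k-1}} = \partvar[2]{g,T}^2$; (2) apply the Wirtinger inequality to $\widetilde g$ to get $\frac{1}{\pi^2}\partvar[2]{g,T}^2 \ge \int_0^1\norm{\widetilde g(t)}^2\,dt$; (3) compare $\int_0^1\norm{\widetilde g(t)}^2\,dt$ with the Riemann sum $\sum_{k\in[n]}\norm{g(t_k)}^2(t_k-t_{k-1})$. For step (3), on each subinterval $[t_{k-1},t_k]$ one has $\norm{\widetilde g(t)-g(t_k)}\le \norm{g(t_k)-g(t_{k-1})}\le L_g\,(t_k-t_{k-1})\le L_g\,\mesh T$, and more crudely $\bigl|\norm{\widetilde g(t)}-\norm{g(t_k)}\bigr|\le L_g\mesh T$; then $\norm{g(t_k)}^2 - \norm{\widetilde g(t)}^2 \le \bigl(\norm{g(t_k)}+\norm{\widetilde g(t)}\bigr)\cdot L_g\mesh T$. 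That bound is not yet of the clean form $(L_g\mesh T)^2$, so I would instead use the sharper, elementary fact that on $[t_{k-1},t_k]$, since $\widetilde g$ is the chord, $\norm{g(t_k)-\widetilde g(t)}\le L_g\mesh T$ and write $\norm{g(t_k)}^2 \le \bigl(\norm{\widetilde g(t)}+L_g\mesh T\bigr)^2 \le \norm{\widetilde g(t)}^2 + 2L_g\mesh T\,\norm{\widetilde g(t)} + (L_g\mesh T)^2$; integrating and using $\int_0^1 \norm{\widetilde g}\le 1$ together with Wirtinger-type bounds on $\int\norm{\widetilde g}$ would give the constant $n(L_g\mesh T)^2$ after absorbing — here I expect the honest route is to bound $\norm{g(t_k)}^2-\norm{\widetilde g(t)}^2$ subinterval-by-subinterval by $(L_g\mesh T)^2$ plus a cross term that telescopes or is dominated using $t_k-t_{k-1}\le\mesh T$, summing $n$ such terms.

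The main obstacle I anticipate is precisely pinning down step (3) so that the error comes out as exactly $n(L_g\mesh T)^2$ rather than something with an extra cross term; the cleanest fix is probably to note $\int_{t_{k-1}}^{t_k}\norm{\widetilde g(t)}^2\,dt \ge (t_k-t_{k-1})\norm{g(t_k)}^2 - (t_k-t_{k-1})\bigl(2\norm{g(t_k)}L_g\mesh T\bigr)$ is the wrong sign, so instead one should keep $g(t_k)$ on the correct side: replace $g(t_k)$ inside the target sum using $\norm{g(t_k)}^2 = \norm{\widetilde g(t)}^2 + \bigl(\norm{g(t_k)}^2-\norm{\widetilde g(t)}^2\bigr)$ and bound the difference in absolute value by $2L_g^2(\mesh T)^2$ using $\norm{\widetilde g(t)}\le \norm{g(t_k)} + L_g\mesh T$ and $\norm{g(t_k)}\le\norm{\widetilde g(t)}+L_g\mesh T$ restricted to where both $\norm{g(t_k)},\norm{\widetilde g(t)}\le L_g\mesh T$ — which is false in general. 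So the genuinely safe argument is: $\bigl|\,\norm{g(t_k)}^2-\norm{\widetilde g(t)}^2\,\bigr| \le \norm{g(t_k)-\widetilde g(t)}\cdot\bigl(\norm{g(t_k)}+\norm{\widetilde g(t)}\bigr)$, and here I would instead choose to compare with the *left* endpoint value on half the interval and right endpoint on the other half, or simply accept a weaker but sufficient bound and check it still yields the stated inequality. I will organize the write-up so that after the Wirtinger step the remaining estimate is a single elementary lemma about $\sum_k\int_{t_{k-1}}^{t_k}(\norm{g(t_k)}^2-\norm{\widetilde g(t)}^2)\,dt$, which is bounded below by $-n(L_g\mesh T)^2$ by a direct termwise estimate using the Lipschitz bound and $t_k-t_{k-1}\le\mesh T$.
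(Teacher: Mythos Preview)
Your overall strategy---interpolate, apply Wirtinger, then control the Riemann-sum error---is the same as the paper's, but you have two genuine gaps, both of which the paper resolves by a single move you are missing: \emph{pass to scalars immediately}. Instead of the vector-valued interpolant $\widetilde g$, set $z_k=\norm{g(t_k)}$ and let $f\colon[0,1]\to\R$ be the piecewise-linear interpolant of the $z_k$. The triangle inequality gives $\abs{z_k-z_{k-1}}\le\norm{g(t_k)-g(t_{k-1})}$, hence $\partvar[2]{f,T}\le\partvar[2]{g,T}$, and now Wirtinger applies to the real-valued $f$ with no issues about normed spaces, sharp constants, or ad hoc embeddings. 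This disposes of your first gap (you yourself flag that putting an auxiliary Euclidean structure on a subspace of $X$ would distort the norm and lose the constant $\pi$, which would be fatal).

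Your second gap is the error estimate, where you try several routes and abandon each. The obstruction you keep hitting is the cross term $2\norm{g(t_k)}\cdot L_g\mesh T$, which is not obviously bounded by $(L_g\mesh T)^2$. The missing observation is that the boundary conditions $g(0)=g(1)=0$ together with the Lipschitz bound force $z_k\le L_g\min\{t_k,1-t_k\}\le L_g/2$; this is exactly what turns $2L_gz_k(t_k-t_{k-1})$ into $L_g^2(t_k-t_{k-1})$. With the scalar $f$, one then checks the pointwise bound $f(t)^2\ge z_k^2-L_g^2(t_k-t_{k-1})$ on $[t_{k-1},t_k]$ by splitting into the trivial case $z_k\le L_g(t_k-t_{k-1})$ (where the right-hand side is $\le 0$) and the case $z_k\ge L_g(t_k-t_{k-1})$ (where $f(t)\ge z_k-L_g(t_k-t_{k-1})\ge 0$, square, and use $z_k\le L_g/2$). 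Integrating and summing gives exactly $n(L_g\mesh T)^2$.
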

 \begin{proof}
 Denote $z_k = \norm{g(t_k)}$ for every $k \in [n],$ and $z_0 = 0.$ 
 By the triangle inequality,
 \begin{equation}\label{eq:1_bound_on_s2_of_a_ciclyc_curve}
     \abs{z_k - z_{k-1}} \leq \norm{g(t_k) - g(t_{k-1})} 
     \quad \text{for all} \quad k \in [n].
 \end{equation}
Define $f \colon [0,1] \to \R$ as
\[
f(t) = z_{k-1}+  (z_k - z_{k-1})\frac{t - t_{k-1}}{t_k - t_{k-1}} 
\quad \text{for all} \quad t \in [t_{k-1}, t_k] \quad \text{and} \quad  k \in [n] .
\]
That is, the graph of $f$ is a polyline in $\R^2$ with vertices $(t_i, z_i),$ 
$i \in [n] \cup \{0\}.$  By \eqref{eq:1_bound_on_s2_of_a_ciclyc_curve},
\begin{equation}\label{eq:v2_comparison_lower_bound_on_s2_of_a_ciclyc_curve}
    \partvar[2]{g, T} \geq \partvar[2]{f, T}.
\end{equation}
By construction,
$f(0)=f(1) =0.$ 
Hence, the  Wirtinger inequality \eqref{eq:wirtinger_inequality} yields 
\[
   \frac{ 1}{\pi^2}\int\limits_{[0,1]} {f^\prime}^2(t) \di t\geq 
   \int\limits_{[0,1]} {f}^2(t) \di t.
\]

Since $f^\prime(t) = \frac{z_k - z_{k-1}}{t_k - t_{k-1}}$ for  $ t\in (t_{k-1}, t_k),$
the left-hand side  is equal to
$   \parenth{\frac{\partvar[2]{f, T}}{\pi}}^2 .$ Using here inequality \eqref{eq:v2_comparison_lower_bound_on_s2_of_a_ciclyc_curve}, we see that  it suffices to show that
\[
\int\limits_{[0,1]} {f}^2(t) \di t\geq 
\sum\limits_{k \in [n]} \norm{g(t_k)}^2(t_k - t_{k-1}) - n  \parenth{L_g \mesh{T}}^2 = 
\sum\limits_{k \in [n]} z_k^2 (t_k - t_{k-1}) - n  \parenth{L_g \mesh{T}}^2
\]
 to complete the proof. The latter follows from the inequalities 
 \[
 \int\limits_{[t_{k-1},t_{k}]}f^2(t)\di t \geq z_k^2 (t_k - t_{k-1}) - L_g^2 (t_k - t_{k-1})^2 
 \quad \text{for every}\quad k \in [n];
 \]
which, in turn, follow from the inequalities
\[
f^2(t) \geq z_k^2 - L_g^2 (t_k - t_{k-1}) 
\quad \text{for all}\quad  t \in [t_{k-1},t_{k}]
\quad \text{for every}\quad k \in [n].
\]
Let us show that the latter inequalities hold.
If $z_k \leq L_g(t_k - t_{k-1}),$ then $z_k \leq L_g,$ and consequently,
\[
z_k^2 \leq  L_g^2  (t_k - t_{k-1})  \Leftrightarrow 
z_k^2 -  L_g^2  (t_k - t_{k-1}) \leq 0  \Rightarrow z_k^2 -  L_g^2  (t_k - t_{k-1}) \leq f^2(t).
\]
Consider the case $z_k \geq L_g(t_k - t_{k-1}).$ 
By construction and by inequality \eqref{eq:1_bound_on_s2_of_a_ciclyc_curve}, 
$f$ is a linear function on the interval $[t_{k-1},t_{k}]$ with $f(t_k) = z_k,$ whose slope is at most
$L_g.$ 
Thus,
$f(t)  \geq  z_k - L_g(t_k - t_{k-1}) \geq 0$ for all $t \in [t_{k-1},t_{k}].$ 
Hence, 
\[
 f^2(t) \geq  \parenth{z_k - L_g(t_k - t_{k-1})}^2 > z_k^2 - 2L_g z_k (t_k - t_{k-1}) 
 \quad \text{for all}\quad  t \in [t_{k-1},t_{k}].
\]
Again, by inequality \eqref{eq:1_bound_on_s2_of_a_ciclyc_curve} and since $z_0 = z_n =0$, 
$z_k \leq L_g \min\{t_k, 1 - t_k\} \leq \frac{L_g}{2}.$ That is, 
\[
 f^2(t)  \geq     z_k^2 - L_g^2  (t_k - t_{k-1}) 
\]
in this case as well.
The proof of the lemma is complete.
 \end{proof}

\begin{rem}
Using a more general inequality, namely Friedrichs's inequality, instead of the Wirtinger inequality, one can easily derive an upper bound on
$\sum\limits_{k \in [n]} \norm{g(t_k)}^p(t_k - t_{k-1})$ via 
 $  \parenth{\frac{\partvar[p]{g, T}}{C_p}}^p $ for all $p > 1,$
 where $C_p$ depends only on $p.$ 
\end{rem}

\section{Shortest path via discrete energy functional}
\label{sec:discrete_energy_functional}
Let us introduce our discrete energy functional.
\begin{dfn}\label{dfn:energy_functional}
Consider  $T= \{t_i\}_0^n$  with $0 = t_0 <  \dots < t_n = 1$  a partition of $[0,1]$ 
 and a finite sequence $X=\{x_i\}_0^n$ of points of $\hilbert$. We define 
 \[
 \mathcal{L} \! \parenth{T, X} = 
\sum_{i \in [n]} \frac{\enorm{x_{i} - x_{i-1}}^2}{t_{i} - t_{i-1}}.
 \]
\end{dfn}
The geometric meaning of  $ \mathcal{L} \! \parenth{T, X} $  is provided by the following example. Consider a function $f \colon [0, 1] \to \hilbert$ defined 
as $f(t_i) = x_i$ for $i \in [n] \cup \{0\}$ and extended continuously on every segment
$[t_{i-1}, t_i],$ $i \in [n].$ Then 
\begin{equation}
\label{eq:meaning_of_discrete_energy}
\mathcal{L} \! \parenth{T, X} = \partvarsq{f, T}.
\end{equation}

On the other hand, as an immediate consequence of the Cauchy--Schwarz inequality, we get
\begin{lem}\label{lem:discrete_energu_of_polyline}
     Let $T= \{t_i\}_0^n$  with $0 = t_0 <  \dots < t_n = 1$ be a partition of $[0,1]$ 
 and  $X=\{x_i\}_0^n$ be a finite sequence. Then $\mathcal{L} \! \parenth{T, X}$ is at least the squared length of the polyline $x_0 \dots x_n.$ In particular, $\mathcal{L} \!\parenth{T, X} \geq \enorm{x_0 - x_n}^2.$
\end{lem}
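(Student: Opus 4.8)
The plan is to prove Lemma~\ref{lem:discrete_energu_of_polyline} directly from the Cauchy--Schwarz inequality, as the statement itself advertises. First I would recall that the length of the polyline $x_0 \dots x_n$ is $\sum_{i \in [n]} \enorm{x_i - x_{i-1}}$, so the claim to establish is
\[
\sum_{i \in [n]} \frac{\enorm{x_i - x_{i-1}}^2}{t_i - t_{i-1}} \geq \parenth{\sum_{i \in [n]} \enorm{x_i - x_{i-1}}}^2.
\]
The key step is to write each summand on the right as a product: $\enorm{x_i - x_{i-1}} = \dfrac{\enorm{x_i - x_{i-1}}}{\sqrt{t_i - t_{i-1}}} \cdot \sqrt{t_i - t_{i-1}}$, which is legitimate since the partition is strictly increasing so each $t_i - t_{i-1} > 0$. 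Applying the Cauchy--Schwarz inequality to the two vectors $\bigl(\enorm{x_i - x_{i-1}}/\sqrt{t_i - t_{i-1}}\bigr)_{i \in [n]}$ and $\bigl(\sqrt{t_i - t_{i-1}}\bigr)_{i \in [n]}$ gives
\[
\parenth{\sum_{i \in [n]} \enorm{x_i - x_{i-1}}}^2 \leq \parenth{\sum_{i \in [n]} \frac{\enorm{x_i - x_{i-1}}^2}{t_i - t_{i-1}}} \parenth{\sum_{i \in [n]} (t_i - t_{i-1})}.
\]
Since $t_0 = 0$ and $t_n = 1$, the telescoping sum $\sum_{i \in [n]} (t_i - t_{i-1}) = t_n - t_0 = 1$, and the first factor on the right is exactly $\mathcal{L}(T, X)$, so the displayed inequality is precisely the assertion that $\mathcal{L}(T, X)$ is at least the squared length of the polyline.

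For the ``in particular'' clause, I would invoke the triangle inequality: the length $\sum_{i \in [n]} \enorm{x_i - x_{i-1}}$ of the polyline is at least $\enorm{x_0 - x_n}$ (the length of the segment joining its endpoints), hence $\mathcal{L}(T,X) \geq \parenth{\sum_{i \in [n]} \enorm{x_i - x_{i-1}}}^2 \geq \enorm{x_0 - x_n}^2$. Alternatively this also follows by applying the whole argument to the two-point partition $\{0,1\}$ after noting monotonicity, but the triangle inequality is the cleanest route.

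There is no real obstacle here: the only points requiring a word of care are that the partition is strictly increasing (so division is valid) and that the endpoints of $T$ are $0$ and $1$ (so the weight sum telescopes to exactly $1$, which is what makes the constant in the inequality equal to $1$ rather than $t_n - t_0$). Both are hypotheses of the lemma. I would therefore keep the proof to a few lines.
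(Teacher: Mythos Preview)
Your proof is correct and is exactly the argument the paper has in mind: the lemma is stated there as ``an immediate consequence of the Cauchy--Schwarz inequality'' with no further details, and your computation (splitting each term as $\enorm{x_i-x_{i-1}}/\sqrt{t_i-t_{i-1}}\cdot\sqrt{t_i-t_{i-1}}$, applying Cauchy--Schwarz, and using $\sum_i(t_i-t_{i-1})=1$) is precisely how one unpacks that remark. The ``in particular'' clause via the triangle inequality is also the intended reading.
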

% \begin{proof}
%     Immediately follows from the Cauchy--Schwarz inequality.
% \end{proof}
This simple lemma together with the results introduced in \Href{Subsection}{subsec:curves} show that if one starts with a reasonable curve and then minimizes $\mathcal{L} \! \parenth{T, X}$ over all finite sequence $X$ of points from the curve, one gets a relatively fine approximation of the variation of the second order of the curve. And, moreover, the minimizer ``approximates'' the standard parametrization of the curve. 
Let us formalize this vague idea. 

\begin{dfn}
 Let $T= \{t_i\}_0^n$  with $0 = t_0 <  \dots < t_n = 1$ be a partition of $[0,1].$ 
 Let $A\subset\hilbert$ be a set with points $a$ and $b.$
 We define the \emph{discrete energy functional} as 
 \[
 \mathcal{L} \! \parenth{A,a,b, T} = 
 \inf \mathcal{L} \! \parenth{T, X},
 \]
 where the infimum is taken over all finite sequences $X=\{x_i\}_0^n \subset A$ with $x_0 = a$ and $x_n = b.$
\end{dfn}
To illustrate the power of this definition, consider the standard parametrization
$\gamma \colon [0,1] \to \hilbert$ of a rectifiable curve.  Then 
\[
 \mathcal{L} \! \parenth{\gamma,a,b,  \{t_i\}_0^n} \leq \mathcal{L} \! \parenth{ \{t_i\}_0^n,  \{\gamma(t_i)\}_0^n} \leq \length^2{\gamma}.
\]

Now, we will show that the problem of approximating the shortest curve  in a proximally smooth set is somewhat equivalent to the problem of finding the approximation of a minimizer of our disrcrete energy functional. 
The following result is  to illustrate the idea rather than obtaining the right dependencies. We prefer to hide under the rug technical details for the time-being since we plan to study our discrete energy functional in a separate paper.
\begin{thm}\label{thm:length_via_energy_func}
    Let $A$ be an $R$-proximally smooth set  containing distinct points $a$ and $b$ 
with $\enorm{a-b} < 2R,$ and  let $T= \{t_i\}_0^n$  with $0 = t_0 <  \dots < t_n = 1$ be a partition of $[0,1].$
Then 
\[
 {\rho^2_A(a,b)} \geq
 \mathcal{L} \! \parenth{A, a, b, T} \geq 
 \frac{\rho^2_A(a,b)}{k_T^2}, 
 \quad \text{where} 
 \quad 
 k_T = \frac{\arcsin\left( \frac{\pi}{2} \sqrt{\mesh T}\right)}{\frac{\pi}{2} \sqrt{\mesh T}}.
 \]
\end{thm}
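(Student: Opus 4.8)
The plan is to prove the two inequalities in \Href{Theorem}{thm:length_via_energy_func} separately, as they have quite different characters.

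\medskip

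\noindent\textbf{The upper bound} $\mathcal{L}(A,a,b,T) \le \rho_A^2(a,b)$. This is the easy direction. I would fix any rectifiable curve $\gamma$ in $A$ connecting $a$ and $b$, take its standard parametrization $\gamma\colon[0,1]\to A$, and plug the sample $X = \{\gamma(t_i)\}_0^n$ into the definition of $\mathcal{L}(T,X)$. By the identity \eqref{eq:meaning_of_discrete_energy} this equals $\partvarsq{\gamma,T}$, and since $\gamma$ is Lipschitz with constant $\length{\gamma}$ (being in standard parametrization), each term $\enorm{\gamma(t_i)-\gamma(t_{i-1})}^2/(t_i-t_{i-1}) \le \length{\gamma}^2 (t_i - t_{i-1})$, so $\partvarsq{\gamma,T} \le \length{\gamma}^2$. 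Taking the infimum over all such $\gamma$ gives $\mathcal{L}(A,a,b,T) \le \rho_A^2(a,b)$. (The argument right after the definition of the discrete energy functional already records essentially this.)

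\medskip

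\noindent\textbf{The lower bound} $\mathcal{L}(A,a,b,T) \ge \rho_A^2(a,b)/k_T^2$. This is the substantive direction. I would start from any admissible sequence $X=\{x_i\}_0^n \subset A$ with $x_0 = a$, $x_n = b$, and show $\mathcal{L}(T,X) \ge \rho_A^2(a,b)/k_T^2$. The idea is to connect consecutive points $x_{i-1},x_i$ by the short curve furnished by \Href{Lemma}{lem:simple_short_curve_in_prox_smooth_set} — but first one must check $\enorm{x_i - x_{i-1}} < 2R$. Here is where $\mesh T$ enters: by \Href{Lemma}{lem:discrete_energu_of_polyline}, $\mathcal{L}(T,X)$ controls the squared length of the polyline, but to bound an \emph{individual} edge I would instead argue that if some edge had $\enorm{x_i-x_{i-1}}$ too large, then already that single term $\enorm{x_i-x_{i-1}}^2/(t_i-t_{i-1}) \ge \enorm{x_i-x_{i-1}}^2/\mesh T$ would exceed $\rho_A^2(a,b)/k_T^2$ (using $\rho_A(a,b) \le \pi R$ from \Href{Lemma}{lem:simple_short_curve_in_prox_smooth_set} and the explicit form of $k_T$), so the bound would hold trivially. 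So assume every edge satisfies $\enorm{x_i - x_{i-1}} < 2R$. Then by \Href{Lemma}{lem:simple_short_curve_in_prox_smooth_set} there is a curve in $A$ from $x_{i-1}$ to $x_i$ of length at most $2R\arcsin\frac{\enorm{x_i-x_{i-1}}}{2R}$; concatenating these over $i$ gives a curve in $A$ from $a$ to $b$, hence
\[
\rho_A(a,b) \le \sum_{i\in[n]} 2R \arcsin\frac{\enorm{x_i-x_{i-1}}}{2R}.
\]

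\medskip

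\noindent Now I want to compare $\Big(\sum_i 2R\arcsin\frac{\enorm{x_i-x_{i-1}}}{2R}\Big)^2$ with $k_T^2 \cdot \mathcal{L}(T,X) = k_T^2 \sum_i \frac{\enorm{x_i-x_{i-1}}^2}{t_i-t_{i-1}}$. Writing $d_i = \enorm{x_i - x_{i-1}}$ and $\tau_i = t_i - t_{i-1}$, I would use Cauchy--Schwarz in the form $\big(\sum_i 2R\arcsin\frac{d_i}{2R}\big)^2 \le \big(\sum_i \tau_i\big)\big(\sum_i \frac{(2R\arcsin\frac{d_i}{2R})^2}{\tau_i}\big) = \sum_i \frac{(2R\arcsin\frac{d_i}{2R})^2}{\tau_i}$, so it suffices to show, termwise, that $2R\arcsin\frac{d_i}{2R} \le k_T \cdot \frac{d_i}{\sqrt{\tau_i}}\cdot\sqrt{\tau_i} $ — wait, more precisely it suffices that $\big(2R\arcsin\frac{d_i}{2R}\big)^2 \le k_T^2 \frac{d_i^2}{\tau_i}\cdot\tau_i$, i.e. $2R\arcsin\frac{d_i}{2R} \le k_T\, d_i$. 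This is a pointwise inequality on $d_i$: since $\arcsin$ is convex on $[0,1]$, the ratio $\arcsin(s)/s$ is increasing, so $\frac{\arcsin\frac{d_i}{2R}}{d_i/(2R)} \le \frac{\arcsin(s_0)}{s_0}$ for the largest possible value $s_0$ of $\frac{d_i}{2R}$. From \Href{Lemma}{lem:discrete_energu_of_polyline} applied to the single edge, $d_i^2 \le \mathcal{L}(T,X)\tau_i \le \mathcal{L}(T,X)\mesh T$; combined with the (yet-to-be-justified) a priori bound $\mathcal{L}(T,X) \le \rho_A^2(a,b) \le \pi^2 R^2$ this gives $d_i \le \pi R \sqrt{\mesh T}$, i.e. $s_0 = \frac{\pi}{2}\sqrt{\mesh T}$, which is exactly the quantity appearing in $k_T$. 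The a priori bound $\mathcal{L}(T,X)\le \pi^2R^2$ can be assumed WLOG, since if it fails we are comparing against $\rho_A^2(a,b) \le \pi^2R^2 < \mathcal{L}(T,X)$ and may reduce to a minimizing sequence with bounded energy. Putting these together: $2R\arcsin\frac{d_i}{2R} = d_i \cdot \frac{\arcsin\frac{d_i}{2R}}{d_i/(2R)} \le d_i \cdot \frac{\arcsin s_0}{s_0} = k_T d_i$, which is the termwise bound needed; summing and taking the infimum over $X$ finishes the lower bound.

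\medskip

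\noindent\textbf{Main obstacle.} The delicate point is the bookkeeping around the edge-length bound $\enorm{x_i - x_{i-1}} < 2R$: the functional $\mathcal{L}$ only directly controls a weighted sum, not each edge, so one needs the two-case split (a "bad" edge makes the inequality trivial; otherwise \Href{Lemma}{lem:simple_short_curve_in_prox_smooth_set} applies) together with the a priori energy bound coming from the already-proven upper inequality, and these must be threaded so there is no circularity. Everything else — Cauchy--Schwarz, monotonicity of $\arcsin(s)/s$, and $\rho_A(a,b)\le\pi R$ — is routine and already available from the earlier sections.
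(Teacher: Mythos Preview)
Your proposal is correct and follows essentially the same approach as the paper: the upper bound via sampling a near-optimal curve in its standard parametrization, and the lower bound by restricting to near-minimizing sequences $X$ with $\mathcal{L}(T,X)<\pi^2R^2$ (using the already-proven upper bound and $\rho_A(a,b)<\pi R$), which forces $\enorm{x_i-x_{i-1}}\le\pi R\sqrt{\mesh T}$, then applying \Href{Lemma}{lem:simple_short_curve_in_prox_smooth_set}, the monotonicity of $s\mapsto\arcsin(s)/s$, and Cauchy--Schwarz. Your initial two-case split on ``bad edges'' is redundant once you make the WLOG reduction to bounded-energy sequences, which is exactly how the paper organizes it; otherwise the arguments coincide.
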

Before we proceed with the proof, we want to note that 
$\frac{1}{k_T^{2}} = 1 - c \mesh T + \littleo{\mesh T}$ for a universal constant $c.$ That is, it does follow from the theorem that the minimizer of the discrete energy functional is a valid substitution to the shortest curve. We claim without proving it that for a sufficiently small $\mesh T,$ the dependence of the distortion is of the form
$1 - c \mesh^2 T + \littleo{\mesh^2 T}.$ 
\begin{proof}[Proof of \Href{Theorem}{thm:length_via_energy_func}]
    The leftmost inequality is simple. Indeed, consider a curve $\gamma$ in $A$ of length $\rho_{A}(a,b) + \epsilon$ connecting $a$ and $b$ given by its standard parametrization. Then, 
$
    \mathcal{L} \! \parenth{T, \{\gamma(t_i)\}_0^n}$
is less than $ \length^2{\gamma},
$ and the leftmost inequality follows.

    Let us obtain the rightmost inequality of the theorem.
    Consider a   finite sequence $\{x_i\}_0^n \subset A$ with $x_0 = a$ 
and $x_n = b$ and such that 
$
 \mathcal{L} \! \parenth{T, \{x_i\}_0^n}  < \mathcal{L} \! \parenth{A, a, b, T}  + \parenth{  \pi^2 R^2 - \rho^2_A(a,b)}.
$
Since $\rho_A(a,b) \leq 2R \arcsin \frac{\enorm{a -b}}{2R} < \pi R$ by 
\Href{Lemma}{lem:simple_short_curve_in_prox_smooth_set} and already obtained upper bound on $ \mathcal{L} \! \parenth{A, a, b, T},$ 
we conclude $\enorm{x_i - x_{i-1}}^2 \leq  \mathcal{L} \! \parenth{T, \{x_i\}_0^n}  \abs{t_i - t_{i-1}}$ for every $i \in [n],$ that is, 
$\enorm{x_i - x_{i-1}} \leq \pi R \sqrt{\mesh{T}}$ for every $i \in [n].$
Using \Href{Lemma}{lem:simple_short_curve_in_prox_smooth_set} again, 
one sees that there is a curve, say $\gamma_i,$ in $A$, connecting $x_i$ and $x_{i-1}$, whose length is at most the $R$-arclength of $\enorm{x_i - x_{i-1}}.$
Since $t \mapsto \frac{\arcsin t}{t}$ is monotonically increasing on $[0, \pi/2],$ 
$\length{\gamma_i} \leq k_T \enorm{x_i - x_{i-1}}.$ 
Concatenating all such $\gamma_i$ for all $i \in [n],$ we obtain a curve, say $\Gamma,$ in $A$ connecting $a$ and $b$ such that
\[
\rho_A(a,b) \leq \length{\Gamma} \leq k_T \sum\limits_{i \in [n]} \enorm{x_i - x_{i-1}} \leq k_T \sqrt{ \mathcal{L} \! \parenth{T, \{x_i\}_0^n}},
\]
where the last inequality follows from \Href{Lemma}{lem:discrete_energu_of_polyline}. Passing to the infimum, we get the desired inequality.
\end{proof}
% \begin{lem}
% Let $\Gamma$ be a rectifiable curve connecting $a$ and $b.$ 
% Then
% \[
% \parenth{\length{\Gamma}}^2 \geq \mathcal{L} \! \parenth{\Gamma,a,b, T} \geq \enorm{a-b}^2.
% \]
% \end{lem}
% \begin{lem}
%  Let $T= \{t_i\}_0^n$  with $0 = t_0 <  \dots < t_n = 1$ be a partition of $[0,1].$ 
% Let $A$ be a proximally smooth set of constant $R$ containing distinct points $a$ and $b$ 
% with $\enorm{a-b} < 2R.$ Let $\omega$ be a main $R$-arc of $[a,b].$
% Then
% \[
%  \mathcal{L} \! \parenth{\omega,a,b, T} \geq 
%   \mathcal{L} \! \parenth{A,a,b, T}.
% \]
% \end{lem}
% \begin{lem}
% Let $A$ be a proximally smooth set of constant $R$ containing distinct points $a$ and $b$ 
% with $\enorm{a-b} < 2R.$ Then
% \[
%  \rho_A(a,b)^2 \geq \mathcal{L} \! \parenth{A,a,b, T} \geq \enorm{a-b}^2
% \]
% \end{lem}

\subsection{Variation of the discrete energy functional}

The following lemma is the key ingredient of the proof of \Href{Theorem}{thm:existence of the shortest_curve}. We would say that the following result is essentially a digested version of G.E. Ivanov's key lemma in his proof of  
this theorem. We will need the following purely technical observation to get the desired asymptotic.
\begin{clm}\label{clm:bound_on_roots}
Let $s_1, s_2 \in [0, \beta/2]$ for some $\beta < 1.$
Then
\[
\parenth{\frac{\sqrt{1-s_1} + \sqrt{1-s_2}}{2}}^{-2} \leq 
1 + C_1(\beta)(s_1 + s_2), 
\quad \text{where} \quad 
C_1(\beta) = \parenth{\frac{1}{2} + \frac{3 \beta}{8 \parenth{1 -\beta}^{5/2} }}.
\]
\end{clm}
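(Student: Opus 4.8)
The plan is to reduce the claimed inequality to a one–variable estimate and then to a convexity/Taylor argument. First I would introduce $h(s) = (1-s)^{-1/2}$ for $s \in [0,\beta/2]$, so that the left-hand side is $\bigl(\tfrac12 h(s_1)^{-1} + \tfrac12 h(s_2)^{-1}\bigr)^{-2}$ after rewriting $\sqrt{1-s_i} = h(s_i)^{-1}$; equivalently, with $g(s) = \sqrt{1-s}$ the left-hand side is $\bigl(\tfrac{g(s_1)+g(s_2)}{2}\bigr)^{-2}$. Since $g$ is concave on $[0,1)$, we have $\tfrac{g(s_1)+g(s_2)}{2} \le g\bigl(\tfrac{s_1+s_2}{2}\bigr) = \sqrt{1 - \tfrac{s_1+s_2}{2}}$; but this inequality goes the \emph{wrong} way for an upper bound on the reciprocal square, so instead I would use concavity from below: $g(s) \ge g(0) + g'(\tfrac\beta2)\,s$ is false too (wrong direction of the tangent-line bound). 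The cleanest route is to bound $\tfrac{g(s_1)+g(s_2)}{2}$ from below directly. Because $g$ is concave, its graph lies above the chord joining $(0, g(0))=(0,1)$ and $(\beta/2, g(\beta/2))$; hence for $s\in[0,\beta/2]$,
\[
g(s) \ge 1 - \frac{1 - \sqrt{1-\beta/2}}{\beta/2}\, s =: 1 - m\, s,
\]
and averaging gives $\tfrac{g(s_1)+g(s_2)}{2} \ge 1 - m\cdot \tfrac{s_1+s_2}{2}$.

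Next I would set $\sigma = \tfrac{s_1+s_2}{2} \in [0,\beta/2]$ and reduce the claim to showing $(1 - m\sigma)^{-2} \le 1 + 2C_1(\beta)\sigma$, i.e.\ to a single-variable inequality in $\sigma$. Writing $\phi(\sigma) = (1-m\sigma)^{-2}$, we have $\phi(0)=1$, $\phi'(0) = 2m$, and $\phi''(\sigma) = 6m^2(1-m\sigma)^{-4}$. On $[0,\beta/2]$ one has $1 - m\sigma \ge 1 - m\beta/2 = \sqrt{1-\beta/2} \ge \sqrt{1-\beta}$, so $\phi''(\sigma) \le 6m^2 (1-\beta)^{-2}$. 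By Taylor with Lagrange remainder,
\[
\phi(\sigma) \le 1 + 2m\sigma + 3m^2(1-\beta)^{-2}\sigma^2 \le 1 + \bigl(2m + 3m^2(1-\beta)^{-2}\cdot \tfrac\beta2\bigr)\sigma,
\]
using $\sigma \le \beta/2$ in the quadratic term. So it remains to check $2m + \tfrac{3\beta}{2} m^2 (1-\beta)^{-2} \le 2C_1(\beta)$, i.e.\ $m + \tfrac{3\beta}{4}m^2(1-\beta)^{-2} \le C_1(\beta) = \tfrac12 + \tfrac{3\beta}{8(1-\beta)^{5/2}}$.

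The last step is to bound the slope $m = \tfrac{1-\sqrt{1-\beta/2}}{\beta/2}$. Here I would use $1 - \sqrt{1-x} = \tfrac{x}{1+\sqrt{1-x}} \le \tfrac{x}{2\sqrt{1-x}}$ (since $1+\sqrt{1-x} \ge 2\sqrt{1-x}$), applied with $x = \beta/2$: this gives $m \le \tfrac{1}{2\sqrt{1-\beta/2}} \le \tfrac{1}{2\sqrt{1-\beta}}$. Plugging into the required inequality, $m \le \tfrac{1}{2\sqrt{1-\beta}}$ and $m^2 \le \tfrac{1}{4(1-\beta)}$ give
\[
m + \frac{3\beta}{4}\,m^2(1-\beta)^{-2} \le \frac{1}{2\sqrt{1-\beta}} + \frac{3\beta}{16(1-\beta)^3}.
\]
This is not literally $\le C_1(\beta)$ term by term, so I would instead keep the sharper bound $m \le \tfrac{1}{2\sqrt{1-\beta/2}}$ throughout and track the exponents more carefully; a cleaner alternative is to bound $m \le \tfrac12(1-\beta)^{-1/2}$ only in the linear term and estimate the quadratic contribution as $\tfrac{3\beta}{4}\cdot \tfrac{1}{4(1-\beta/2)}\cdot(1-\beta)^{-2} \le \tfrac{3\beta}{8(1-\beta)^{5/2}}$, whereupon $m + \tfrac{3\beta}{4}m^2(1-\beta)^{-2} \le \tfrac{1}{2\sqrt{1-\beta}} + \tfrac{3\beta}{8(1-\beta)^{5/2}}$, and $\tfrac{1}{2\sqrt{1-\beta}} \le \tfrac12 \cdot \tfrac{1}{1-\beta} $... which again overshoots $\tfrac12$. \textbf{The main obstacle} I anticipate is precisely this bookkeeping: matching the constant $C_1(\beta)$ exactly requires choosing at each occurrence whether to bound a factor $\sqrt{1-\beta/2}$ by $1$, by $\sqrt{1-\beta}$, or to keep it, and the stated $C_1(\beta)$ suggests the intended route keeps the linear coefficient at exactly $\tfrac12$ — which means the linear term should come not from the chord slope $m$ but from the exact derivative $g'(0) = \tfrac12$ via a \emph{second-order} Taylor expansion of $s\mapsto g(s)$ directly, rather than a first-order chord bound. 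So I would redo the first step as: $g(s) = \sqrt{1-s} \ge 1 - \tfrac{s}{2} - \tfrac{s^2}{2}\sup_{[0,\beta/2]}|g''|/2$ style estimate, average, and then invert, which should produce the coefficient $\tfrac12$ cleanly with the $\tfrac{3\beta}{8(1-\beta)^{5/2}}$ correction absorbing all second-order terms. The rest is routine algebra.
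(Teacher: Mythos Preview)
Your proposal is incomplete and, as you yourself diagnose, the chord-slope route does not deliver the stated constant. The issue is structural, not just bookkeeping: the secant slope $m = \frac{1-\sqrt{1-\beta/2}}{\beta/2}$ satisfies $m > \tfrac12$ strictly for every $\beta>0$ (since $\sqrt{1-x} < 1 - x/2$ fails---in fact $\sqrt{1-x}\ge 1-x/2$ is false, but $1-\sqrt{1-x} > x/2$ holds), so the linear coefficient you produce can never equal $\tfrac12$. You then have to hope the overshoot in the linear term can be absorbed by slack in the quadratic correction, which forces exactly the ad hoc juggling of $\sqrt{1-\beta/2}$ versus $\sqrt{1-\beta}$ that you got stuck on. Your fallback plan (Taylor on $g(s)=\sqrt{1-s}$, average, then invert) would work in principle but still leaves you with the inversion of a two-term lower bound, i.e.\ another round of Taylor with its own error term to control.

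The missing idea is a one-line reduction that eliminates both variables \emph{before} any Taylor expansion. By the AM--GM inequality,
\[
\left(\frac{\sqrt{1-s_1}+\sqrt{1-s_2}}{2}\right)^{2} \;\ge\; \sqrt{1-s_1}\,\sqrt{1-s_2}
\;=\; \sqrt{1-(s_1+s_2)+s_1s_2}\;\ge\;\sqrt{1-s},
\]
with $s=s_1+s_2\in[0,\beta]$. Hence the left-hand side of the claim is at most $(1-s)^{-1/2}$, and a \emph{single} second-order Taylor expansion of $t\mapsto(1-t)^{-1/2}$ at $0$ with Lagrange remainder gives
\[
(1-s)^{-1/2} = 1 + \tfrac{s}{2} + \tfrac{3}{8}s^2(1-s_0)^{-5/2}
\;\le\; 1 + s\!\left(\tfrac12 + \tfrac{3\beta}{8(1-\beta)^{5/2}}\right),
\]
using $s_0\le s\le\beta$. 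This is exactly $1+C_1(\beta)(s_1+s_2)$. The constant $\tfrac12$ appears for free as the derivative of $(1-t)^{-1/2}$ at $0$, with no lower bound on $g$ needed at all.
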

\begin{proof}
Using the arithmetic mean-geometric mean inequality in the first inequality of the chain, one gets
\[
    \parenth{\frac{\sqrt{1-s_1}+ \sqrt{1-s_2}}{2}}^2 
\geq 
\sqrt{1-s_1} \cdot \sqrt{1-s_2} = \sqrt{1 - \parenth{s_1 +s_2} + s_1 s_2 }
\geq 
\sqrt{1 - \parenth{s_1 +s_2}}.
\]
Denote $s= s_1 +s_2.$ So, the left-hand side of the desired inequality is at most $(1 - s)^{-1/2}$ and $s < 1.$
Using the Taylor series with Lagrange remainder, one gets
$
(1 - s)^{-1/2} = 1 + \frac{s}{2} +  \frac{3}{8} s^2 (1-s_0)^{-5/2} 
$
for some $s_0 \in [0, s].$ Particularly, $s_0 \leq \beta$ and $(1-s_0)^{-5/2} \leq 
(1-\beta)^{-5/2}.$ 
Thus, $(1 - s)^{-1/2} \leq 
1 + s \parenth{\frac{1}{2} + \frac{3 \beta}{8 \parenth{1 -\beta}^{5/2} }}.$
\end{proof}

The formulation of the following result is a bit cumbersome. However, we will justify the set of parameters just after the assertion of the result. Essentially, it shows that for two polylines inscribed in sufficiently short curves in a proximally smooth set, the metric projection of the average of their corresponding vertices is a shorter polyline inscribed in our set. 
\begin{lem}\label{lem:main_discrete_variation}
Fix $\lambda \in (0, 1/16].$
 Let $T= \{t_i\}_0^n$  with $0 = t_0 <  \dots < t_n = 1$ be a partition of $[0,1].$ 
Let $A$ be an $R$-proximally smooth  set containing distinct points $a$ and $b$ 
with $\enorm{a-b} < 2R.$ Suppose there are two sequences  $X= \{x_i\}_0^n$ and 
$Y=\{y_i\}_0^n $ of points of $A$ such that
\begin{enumerate}
    \item $x_0 = y_0 =a$ and  $x_n = y_n =b;$
    \item\label{ass1:main_thm} $\enorm{x_i - y_i} \leq 2r $ for some $r < R;$
    \item\label{ass2:main_thm} $\max\limits_{i \in [n]}\left\{\frac{\enorm{x_i - x_{i-1}}}{t_i - t_{i-1}}, \frac{\enorm{y_i - y_{i-1}}}{t_i - t_{i-1}}\right\}  \leq L.$
\end{enumerate} 
Denote $\Delta = \{\delta_i\}_0^{n},$ where 
$\delta_i= x_i - y_i,$   
%$\tilde{z}_i = \lambda x_i + (1-\lambda) y_i$
and 
$Z = \{z_i\}_0^n,$ where $z_i$ is the metric projection of $\lambda x_i + (1-\lambda) y_i$ onto $A,$  $i \in [n] \cup \{0\}.$ Then inequality 

\[
 \mathcal{L} \! \parenth{T, Z} \leq 
 \lambda  \mathcal{L} \! \parenth{T, X} + (1- \lambda)  \mathcal{L} \! \parenth{T, X} - 
 \lambda(1 -\lambda) \parenth{1 - \frac{L^2}{\pi^2 R^2} \parenth{1   + 20 \lambda}} \mathcal{L} \! \parenth{T, \Delta} + 
n  \parenth{\frac{L^2 \mesh{T}}{R}}^2
\]
holds.
\end{lem}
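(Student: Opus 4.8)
The strategy is to analyze the functional vertex-by-vertex and then sum. Fix $i\in[n]$ and write $u_i = x_i-x_{i-1}$, $v_i = y_i-y_{i-1}$, so that $\delta_i-\delta_{i-1} = u_i - v_i$. The pointwise average $w_i := \lambda x_i + (1-\lambda)y_i$ satisfies $w_i - w_{i-1} = \lambda u_i + (1-\lambda)v_i$, and by the parallelogram-type identity
\[
\enorm{\lambda u_i + (1-\lambda) v_i}^2 = \lambda\enorm{u_i}^2 + (1-\lambda)\enorm{v_i}^2 - \lambda(1-\lambda)\enorm{u_i - v_i}^2 .
\]
Summing $\frac{1}{t_i - t_{i-1}}$ times this identity over $i$ gives
\[
\sum_i \frac{\enorm{w_i - w_{i-1}}^2}{t_i - t_{i-1}} = \lambda\,\mathcal L(T,X) + (1-\lambda)\,\mathcal L(T,Y) - \lambda(1-\lambda)\,\mathcal L(T,\Delta).
\]
So if $z_i$ were equal to $w_i$ we would be done with room to spare; the whole point is to control how much the metric projection $z_i$ of $w_i$ onto $A$ can lengthen things.

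**Controlling the projection.** Here I would use \Href{Lemma}{lem:metric_projection_onto_weakly_convex_set} together with \Href{Lemma}{lem:projection_point_of_inscribed_segment}. By the latter applied to the points $x_i,y_i\in A$ with $\enorm{x_i-y_i}\le 2r<2R$, the distance from $w_i$ to $A$ is at most $R - \sqrt{R^2 - \lambda(1-\lambda)\enorm{x_i-y_i}^2} = R-\sqrt{R^2-\lambda(1-\lambda)\enorm{\delta_i}^2}$; call this $d_i$. Then \Href{Lemma}{lem:metric_projection_onto_weakly_convex_set} applied to the points $w_{i-1},w_i$ gives
\[
\enorm{z_i - z_{i-1}} \le \frac{2R}{2R - (d_{i-1}+d_i)}\,\enorm{w_i - w_{i-1}}.
\]
The factor $\frac{2R}{2R-(d_{i-1}+d_i)}$ should be rewritten, using $d_j = R - R\sqrt{1 - s_j}$ with $s_j := \lambda(1-\lambda)\enorm{\delta_j}^2/R^2 \in [0,\beta/2]$ (the bound $s_j\le\beta/2$ for a suitable $\beta<1$ coming from assumption \eqref{ass1:main_thm}, since $\lambda(1-\lambda)(2r)^2 < 4\lambda r^2 < R^2$ for $\lambda\le 1/16$), as
\[
\frac{2R}{2R-(d_{i-1}+d_i)} = \left(\frac{\sqrt{1-s_{i-1}}+\sqrt{1-s_i}}{2}\right)^{-1},
\]
which is exactly the quantity estimated in \Href{Claim}{clm:bound_on_roots}. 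Squaring and applying that claim yields
\[
\enorm{z_i-z_{i-1}}^2 \le \bigl(1 + C_1(\beta)(s_{i-1}+s_i)\bigr)\enorm{w_i-w_{i-1}}^2,
\]
and since $s_j = \lambda(1-\lambda)\enorm{\delta_j}^2/R^2 \le \lambda\enorm{\delta_j}^2/R^2$, the correction is $O(\lambda)$ times $\enorm{\delta_j}^2/R^2$.

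**Assembling the estimate.** Dividing by $t_i-t_{i-1}$ and summing, $\mathcal L(T,Z)$ is bounded by $\sum_i \frac{\enorm{w_i-w_{i-1}}^2}{t_i-t_{i-1}}$ plus an error term of the shape $\mathrm{const}\cdot\lambda\sum_i \frac{(s_{i-1}+s_i)\enorm{w_i-w_{i-1}}^2}{t_i-t_{i-1}}$. Using assumption \eqref{ass2:main_thm}, $\enorm{w_i-w_{i-1}}\le L(t_i-t_{i-1})$, so $\frac{\enorm{w_i-w_{i-1}}^2}{t_i-t_{i-1}}\le L\,\enorm{w_i-w_{i-1}}\le L^2(t_i-t_{i-1})$; this is where the $L^2$ enters. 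Bounding $s_j\le \lambda\enorm{\delta_j}^2/R^2$ and $\sum_i(s_{i-1}+s_i)(t_i-t_{i-1})$-type sums against $\mathcal L(T,\Delta)$ — this requires relating $\sum \enorm{\delta_j}^2(t_i-t_{i-1})$ to $\sum \enorm{\delta_i-\delta_{i-1}}^2/(t_i-t_{i-1})$, i.e. precisely the discrete Wirtinger inequality \Href{Lemma}{lem:lower_bound_on_s2_of_a_ciclyc_curve} applied to the piecewise-linear interpolant of $\{\delta_i\}$ (which vanishes at $0$ and $1$ and is Lipschitz with constant $\le 2L$) — produces the factor $\frac{L^2}{\pi^2 R^2}$ in front of $\mathcal L(T,\Delta)$, the stray $\lambda(1-\lambda)(1+20\lambda)$ coefficient (the constant $20$ absorbing $C_1(\beta)$ and numerical slack for $\lambda\le 1/16$), and the residual term $n(L^2\mesh T/R)^2$ coming exactly from the $-n(L_g\mesh T)^2$ defect in \Href{Lemma}{lem:lower_bound_on_s2_of_a_ciclyc_curve} with $L_g\le 2L$. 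Combining with the first identity gives the claimed inequality.

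**Main obstacle.** The bookkeeping in the last step is delicate: one must route the $O(\lambda)$ projection-error term through the discrete Wirtinger inequality so that it comes out as $\frac{L^2}{\pi^2 R^2}(1+20\lambda)$ and not something worse, while simultaneously keeping the $\mesh T$-error linear in $n$ and quadratic in $\mesh T$. Getting the numerical constant $20$ to actually work for all $\lambda\in(0,1/16]$ — verifying $C_1(\beta)$ stays bounded, that $\beta$ can indeed be taken $<1$, and that cross terms like $s_is_j$ and the $s^2$ in \Href{Claim}{clm:bound_on_roots} are dominated — is the part that demands care rather than cleverness; everything else is the identity above plus a direct application of the three cited lemmas.
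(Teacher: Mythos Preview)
Your proposal is correct and follows essentially the same route as the paper: the parallelogram identity for the averaged sequence, the projection Lipschitz bound from \Href{Lemma}{lem:metric_projection_onto_weakly_convex_set} combined with the distance estimate from \Href{Lemma}{lem:projection_point_of_inscribed_segment} and \Href{Claim}{clm:bound_on_roots}, the use of assumption~\eqref{ass2:main_thm} to extract the $L^2$ factor, and finally the discrete Wirtinger inequality \Href{Lemma}{lem:lower_bound_on_s2_of_a_ciclyc_curve} on the piecewise-linear interpolant of $\{\delta_i\}$ (Lipschitz with constant $2L$) to convert $\sum \enorm{\delta_j}^2(t_i-t_{i-1})$ into $\frac{1}{\pi^2}\mathcal L(T,\Delta)$ plus the $n(2L\,\mesh T)^2$ defect. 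The only detail you leave implicit is that the paper applies Wirtinger \emph{twice}---once directly and once to the time-reversed interpolant---to handle the two sums $\sum_i\enorm{\delta_i}^2(t_i-t_{i-1})$ and $\sum_i\enorm{\delta_{i-1}}^2(t_i-t_{i-1})$ separately; this is the small trick hiding behind your phrase ``$\sum_i(s_{i-1}+s_i)(t_i-t_{i-1})$-type sums,'' and once noted the bookkeeping closes exactly as you describe.
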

Before we proceed with the proof, let us shed some light  on the set of parameters as well. We start with the parameter $L.$ It can be set to be arbitrarily close to the length of the shortest curve by considering two curves $\gamma_1$ and $\gamma_2$ of length at most $\rho_A(a,b) + \epsilon$ and taking $x_i = \gamma_1(t_i)$ and $y_i = \gamma_2(t_i).$ By the already proven 
\Href{Lemma}{lem:short_curve_intro}, one can ensure that $L$ is strictly less than $\pi R -\epsilon.$   The second condition that
$\enorm{x_i - y_i} \leq 2r < 2R$ is required for our bound on the distance between a point of inscribed segment to its metric projection onto a proximally smooth set, that is, 
\Href{Lemma}{lem:projection_point_of_inscribed_segment}. \Href{Lemma}{lem:quantitatively_shorter_curve_inside_str_convex_segment} ensures us that for all suffieciently small $\epsilon,$ the corresponding inequality is met.   Thus, $\lambda$ can be chosen in such a way that the constant 
$ \lambda(1 -\lambda) \parenth{1 - \frac{L^2}{\pi^2 R^2} \parenth{1   + 20 \lambda}}$ is a positive constant independent of $\gamma_1$ and $\gamma_2.$ And we see that up to the last term, depending on the mesh of $T,$ our ``averaging'' of two polylines gives a shorter polyline.
\begin{proof}[Proof of \Href{Lemma}{lem:main_discrete_variation}]
%Denote $\Delta = \{\delta_i\}_0^n$, where $\delta_{i} = y_i - x_i.$
 Denoting $\tilde{z}_i = \lambda x_i + (1-\lambda) y_i, $ we get
    \[
    \enorm{\tilde{z}_i-\tilde{z}_{i-1}}^2 =
    \lambda \enorm{x_i - x_{i-1}}^2 + (1- \lambda) \enorm{y_i - y_{i-1}}^2  - 
    \lambda(1 -\lambda) \enorm{\delta_i - \delta_{i-1}}^2.
    \]
Denote $m_i =\frac{\enorm{z_i - \tilde{z}_{i}} + \enorm{z_{i-1} - \tilde{z}_{i-1}}}{2R}.$ 

By  \Href{Lemma}{lem:metric_projection_onto_weakly_convex_set}, 
$
\enorm{z_i  - z_{i-1}}^2 \leq  
\frac{1}{(1 - m_i)^2}  
\enorm{\tilde{z}_i-\tilde{z}_{i-1}}^2.
% \leq 
% \enorm{\tilde{z}_i-\tilde{z}_{i-1}}^2 \parenth{1 - \frac{\lambda(1-\lambda)}{R^2} 
% \max\{\enorm{\delta_i}^2, \enorm{\delta_{i-1}}^2\}}^{-1}.
$
Using \Href{Lemma}{lem:projection_point_of_inscribed_segment} and then \Href{Claim}{clm:bound_on_roots} (note that assumption (2) of our lemma implies the assumption $s_1, s_2\leq \beta/2$ in  \Href{Claim}{clm:bound_on_roots}), 
\[
\frac{1}{(1- m_i)^2} \leq 
\parenth{\frac{\sqrt{1-\frac{\lambda(1-\lambda)}{R^2}\enorm{\delta_i}^2}+ \sqrt{1-\frac{\lambda(1-\lambda)}{R^2}\enorm{\delta_{i-1}}^2}}{2}}^{-2}  
\leq 
1 +  \frac{\lambda(1-\lambda)}{R^2}\parenth{\enorm{\delta_i}^2 + \enorm{\delta_{i-1}}^2} \cdot C_1(\beta),
\]
where $C_1(\beta) = \parenth{\frac{1}{2} + \frac{3 \beta}{8 \parenth{1 -\beta}^{5/2} }}$ and $\beta = \frac{\lambda(1-\lambda)}{R^2}\parenth{\enorm{\delta_i}^2 + \enorm{\delta_{i-1}}^2}.$
By  assumption \eqref{ass1:main_thm} and the choice of $\lambda$, 
$\beta \leq 8 \lambda \leq 1/2.$ Thus,
$C_1 (\beta) \leq \frac{1}{2} + 20 \lambda.$ 
Denote
\[
C_2(\lambda) = 
 \frac{\lambda(1-\lambda)} {R^2}\parenth{ \frac{1}{2} + 20 \lambda}.
\]
Combining the obtained inequalities, we see that $\enorm{z_i  - z_{i-1}}^2$ is at most 
\[
\parenth{\lambda \enorm{x_i - x_{i-1}}^2 + (1- \lambda) \enorm{y_i - y_{i-1}}^2  - 
    \lambda(1 -\lambda) \enorm{\delta_i - \delta_{i-1}}^2}
    \parenth{1 +  C_2(\lambda)\parenth{\enorm{\delta_i}^2 + \enorm{\delta_{i-1}}^2} }.
\]
By assumption \eqref{ass2:main_thm}, $\lambda \enorm{x_i - x_{i-1}}^2 + (1- \lambda) \enorm{y_i - y_{i-1}}^2 
\leq L^2 (t_i - t_{i-1})^2.$ Hence, $\enorm{z_i  - z_{i-1}}^2$ is at most 
\[
\lambda \enorm{x_i - x_{i-1}}^2 + (1- \lambda) \enorm{y_i - y_{i-1}}^2  - 
    \lambda(1 -\lambda) \enorm{\delta_i - \delta_{i-1}}^2  +  
    L^{2} (t_i - t_{i-1})^2  C_2(\lambda) \parenth{\enorm{\delta_i}^2 + \enorm{\delta_{i-1}}^2}.
\]
% Thus,$ \frac{\enorm{z_i  - z_{i-1}}^2}{t_i - t_{i-1}} $ is at most
% \[
% \lambda \frac{\enorm{x_i - x_{i-1}}^2}{t_i - t_{i-1}} 
% + (1- \lambda) \frac{\enorm{y_i - y_{i-1}}^2}{t_i - t_{i-1}}  - 
%     \lambda(1 -\lambda) \enorm{\delta_i - \delta_{i-1}}^2  +  
%      C_\lambda(r) L^{2} (t_i - t_{i-1})  \enorm{\delta_i}^2 + C_3 (T,r, L)(t_i - t_{i-1}),
% \]
% where 
% \[
% C_3 (T,r, L) = L^2  C_2 (T,r, L) \parenth{\mesh{T}}^2.
% \]
Dividing by $t_i - t_{i-1}$ and summing up the inequalities, we get
\[
 \mathcal{L} \! \parenth{T, Z} \leq \lambda  \mathcal{L} \! \parenth{T, X} + (1- \lambda)  \mathcal{L} \! \parenth{T, X} - \lambda(1 -\lambda)  \mathcal{L} \! \parenth{T, \Delta} +  L^{2} C_2(\lambda)
 \sum\limits_{i \in [n]}\parenth{\enorm{\delta_i}^2 + \enorm{\delta_{i-1}}^2}(t_i - t_{i-1}).
\]

Now we will use \Href{Lemma}{lem:lower_bound_on_s2_of_a_ciclyc_curve} to deal with the sums 
$ \sum\limits_{i \in [n]}{\enorm{\delta_i}^2  }(t_i - t_{i-1})$ and $ \sum\limits_{i \in [n]}{ \enorm{\delta_{i-1}}^2}(t_i - t_{i-1}).$

Define the curve $\gamma_1 \colon [0,1] \to H$ on $T$ by 
$\gamma_1(t_i) = x_i, i\in [n] \cup \{0\}$ and extend it linearly to the whole segment $[0,1].$ Similarly, define $\gamma_2 \colon [0,1] \to H$ via the identities $\gamma_2(t_i) = y_i, i\in [n] \cup \{0\}.$
 Consider the function $g_1 \colon [0,1] \to \hilbert$ given by
\[
g_1(t) = \gamma_1 (t) - \gamma_2 (t).
\]
Clearly, it is Lipschitz with constant $2L,$ and
$g_1(0) = g_1(1) = 0.$ 
Thus, by equality (\ref{eq:meaning_of_discrete_energy}) and \Href{Lemma}{lem:lower_bound_on_s2_of_a_ciclyc_curve},
 \[\frac{ \mathcal{L} \! \parenth{T, \Delta}}{\pi^2} = \parenth{\frac{\partvar[2]{g_1, T}}{\pi}}^2 
   \geq 
\sum\limits_{k \in [n]} \enorm{g_1(t_k)}^2(t_k - t_{k-1}) - n  \parenth{2L \mesh{T}}^2 =
\]
\[
\sum\limits_{i \in [n]} \enorm{\delta_i}^2 (t_i - t_{i-1}) - n  \parenth{2L \mesh{T}}^2.
\]
Denote $T^\prime =\{t_i^\prime\}_0^n,$ where $t_i^\prime = 1 - t_{n-i}.$  
Consider the function $g_2 \colon [0,1] \to \hilbert$ given by
\[
g_2(t) = \gamma_1 (1-t) - \gamma_2 (1-t).
\]
Clearly, it is Lipschitz with constant $2L,$ and
$g_2(0) = g_2(1) = 0.$ Thus, by  \Href{Lemma}{lem:lower_bound_on_s2_of_a_ciclyc_curve},
\[\frac{ \mathcal{L} \! \parenth{T, \Delta}}{\pi^2} = \parenth{\frac{\partvar[2]{g_2, T^\prime}}{\pi}}^2 
   \geq 
\sum\limits_{k \in [n]} \enorm{g_2(t_k^\prime)}^2(t_{k}^\prime - t_{k-1}^\prime) - n  \parenth{2L \mesh{T^\prime}}^2 =
\]
\[
\sum\limits_{i \in [n]} \enorm{\delta_{i-1}}^2 (t_i - t_{i-1}) - n  \parenth{2L \mesh{T}}^2.
\]
Consequently, 
\[
 \mathcal{L} \! \parenth{T, Z} \leq 
 \lambda  \mathcal{L} \! \parenth{T, X} + (1- \lambda)  \mathcal{L} \! \parenth{T, X} - 
 \lambda(1 -\lambda) \parenth{1 - \frac{L^2}{\pi^2 R^2} \parenth{1   + 20 \lambda}} \mathcal{L} \! \parenth{T, \Delta} + 
n  \parenth{\frac{L^2 \mesh{T}}{R}}^2.
\]
\end{proof}

Let us use the just proven lemma to obtain the following result about the ``averaging of curves.''

\begin{cor}\label{cor:second_variation_bound_on_proj} Fix $\lambda \in (0, 1/16].$
    Let $A$ be an $R$-proximally smooth set containing distinct points $a$ and $b$ 
with $\enorm{a-b} < 2R.$ Let $\gamma_1, \gamma_2 \colon [0,1] \to A $ be two Lipschitz curves with constant  $L$ and such that 
$\enorm{\gamma_1(t) - \gamma_2(t)} \leq 2r $ 
for all $t \in[0,1] $
and some $r< R.$ Define $\Gamma_\lambda \colon [0,1] \to A $ as follows:
$\Gamma_\lambda(t)$ is the metric projection
of $\lambda \gamma_1(t) + (1 - \lambda)\gamma_2(t)$ onto $A.$ Then $\Gamma_\lambda$ is well-defined and
\[
\variationsq{\Gamma_\lambda} \leq 
 \lambda\variationsq{\gamma_1} + (1- \lambda) \variationsq{\gamma_2}- 
\lambda(1 -\lambda) \parenth{1 - \frac{L^2}{\pi^2 R^2} \parenth{1   + 20 \lambda}}
 \variationsq{\gamma_1 - \gamma_2}.
\]
\end{cor}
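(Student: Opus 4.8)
The statement is the "continuous" version of \Href{Lemma}{lem:main_discrete_variation}, and the natural strategy is to discretize, apply the discrete lemma, and pass to the limit along a sequence of partitions whose mesh tends to zero. First I would pass to the standard parametrizations of $\gamma_1$ and $\gamma_2$ (this does not change any of the quantities $\variationsq{\cdot}$ appearing in the statement, by \Href{Lemma}{lem:second_variation_for_standard_parametrization}), so both curves become Lipschitz with the same constant that controls their second variation; I keep calling the common Lipschitz constant $L$ (after shrinking to the standard parametrization, the Lipschitz constant is the length, which is at most $\pi R$ by \Href{Lemma}{lem:simple_short_curve_in_prox_smooth_set}, but we only need the quantitative hypothesis as stated). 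The well-definedness of $\Gamma_\lambda$ is immediate: for each $t$ the point $\lambda\gamma_1(t)+(1-\lambda)\gamma_2(t)$ lies on the segment $[\gamma_1(t),\gamma_2(t)]$, whose endpoints are at distance at most $2r<2R$, so by \Href{Lemma}{lem:projection_point_of_inscribed_segment} its distance to $A$ is strictly less than $R$, hence the metric projection is a singleton by \Href{Proposition}{prp:equivalence}; continuity of $\Gamma_\lambda$ follows from \Href{Lemma}{lem:metric_projection_onto_weakly_convex_set}.

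Next I would fix a sequence of partitions $T^{(m)}=\{t_i^{(m)}\}_0^{n_m}$ of $[0,1]$ with $\mesh T^{(m)}\to 0$, apply \Href{Lemma}{lem:main_discrete_variation} to the sequences $X^{(m)}=\{\gamma_1(t_i^{(m)})\}$ and $Y^{(m)}=\{\gamma_2(t_i^{(m)})\}$ (the hypotheses (1)--(3) of that lemma hold with the same $\lambda$, $r$, $L$), and obtain
\[
\mathcal{L}\!\parenth{T^{(m)}, Z^{(m)}} \leq
\lambda\,\mathcal{L}\!\parenth{T^{(m)}, X^{(m)}} + (1-\lambda)\,\mathcal{L}\!\parenth{T^{(m)}, Y^{(m)}}
- \lambda(1-\lambda)\parenth{1 - \tfrac{L^2}{\pi^2 R^2}(1+20\lambda)}\mathcal{L}\!\parenth{T^{(m)}, \Delta^{(m)}}
+ n_m\parenth{\tfrac{L^2 \mesh T^{(m)}}{R}}^2,
\]
where $Z^{(m)}=\{\Gamma_\lambda(t_i^{(m)})\}$ and $\Delta^{(m)}=\{\gamma_1(t_i^{(m)})-\gamma_2(t_i^{(m)})\}$. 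For the identity \eqref{eq:meaning_of_discrete_energy} to give $\mathcal{L}\!\parenth{T^{(m)},Z^{(m)}}=\partvarsq{\tilde f_m, T^{(m)}}$ where $\tilde f_m$ is the piecewise-linear interpolant of $\Gamma_\lambda$ on $T^{(m)}$; the crucial point is that $\partvar[2]{\Gamma_\lambda, T^{(m)}} = \partvar[2]{\tilde f_m, T^{(m)}}$ (the values at the partition points coincide), and then by \Href{Lemma}{lem:variations_mesh_to_zero}, $\partvar[2]{\Gamma_\lambda, T^{(m)}}\to\variation[2]{\Gamma_\lambda}$. Likewise $\mathcal{L}\!\parenth{T^{(m)},X^{(m)}}=\partvarsq{\gamma_1, T^{(m)}}\to\variationsq{\gamma_1}$, similarly for $\gamma_2$, and $\mathcal{L}\!\parenth{T^{(m)},\Delta^{(m)}}=\partvarsq{\gamma_1-\gamma_2, T^{(m)}}\to\variationsq{\gamma_1-\gamma_2}$.

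The one remaining issue — and the main obstacle — is the error term $n_m\parenth{L^2\mesh T^{(m)}/R}^2$, which need not go to zero for an arbitrary choice of partitions (e.g. a partition with $n_m$ points of almost-equal length gives $n_m(\mesh T^{(m)})^2\approx \mesh T^{(m)}$, which is fine, but irregular partitions could behave worse). To handle this cleanly I would simply take the \emph{uniform} partitions $t_i^{(m)}=i/m$, so $\mesh T^{(m)}=1/m$ and $n_m=m$, whence $n_m\parenth{L^2\mesh T^{(m)}/R}^2 = \parenth{L^2/R}^2/m \to 0$. This is legitimate because \Href{Lemma}{lem:variations_mesh_to_zero} applies to any sequence of partitions with mesh tending to zero, in particular the uniform ones. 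Passing to the limit $m\to\infty$ in the displayed inequality then yields exactly
\[
\variationsq{\Gamma_\lambda} \leq
\lambda\variationsq{\gamma_1} + (1-\lambda)\variationsq{\gamma_2}
- \lambda(1-\lambda)\parenth{1 - \tfrac{L^2}{\pi^2 R^2}(1+20\lambda)}\variationsq{\gamma_1-\gamma_2},
\]
which is the claim. I do not expect any subtlety beyond bookkeeping; the only place to be slightly careful is to confirm that replacing $\gamma_1,\gamma_2$ by their standard parametrizations does not disturb the hypothesis $\enorm{\gamma_1(t)-\gamma_2(t)}\le 2r$ — it need not, since reparametrization changes which points are compared; but this is harmless because \Href{Lemma}{lem:main_discrete_variation} is applied for each fixed partition and only uses $\enorm{x_i-y_i}\le 2r$, which we can guarantee by \emph{not} reparametrizing and instead invoking \Href{Lemma}{lem:second_variation_for_standard_parametrization} only at the very end to identify $\variation[2]{\cdot}$ with the quantities produced by the limit, or by working throughout with the original parametrizations and noting that $\partvar[2]{\gamma_j,T^{(m)}}$ is bounded by $L$ on each subinterval, hence the Lipschitz bound needed in \Href{Lemma}{lem:main_discrete_variation} holds with constant $L$ directly.
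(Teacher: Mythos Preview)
Your approach is essentially the paper's: apply \Href{Lemma}{lem:main_discrete_variation} on the uniform partitions $T_n=\{i/n\}_0^n$ (so that the error term $n\parenth{L^2\mesh T_n/R}^2=\parenth{L^2/R}^2/n\to 0$) and pass to the limit via \Href{Lemma}{lem:variations_mesh_to_zero}. The opening detour through standard parametrizations is unnecessary and, as you yourself observe, would break the pointwise hypothesis $\enorm{\gamma_1(t)-\gamma_2(t)}\le 2r$; the paper simply works with the given Lipschitz parametrizations from the start, since the Lipschitz constant $L$ is exactly what assumption~\eqref{ass2:main_thm} of \Href{Lemma}{lem:main_discrete_variation} asks for.
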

\begin{proof}
\Href{Lemma}{lem:projection_point_of_inscribed_segment} ensures that $\Gamma_\lambda$ is well-defined since by assumption $\enorm{\gamma_1(t) - \gamma_2(t)} \leq 2r < 2R$ 
for all $t \in[0,1]. $
Now take equipartition $T_n = \{\frac{i}{n}\}_{0}^{n}$  of $[0,1]$ and define $x_i = \gamma_1 (t_i)$ and $y_i = \gamma_2(t_i).$ Then the metric projection of $\lambda x_i + (1-\lambda) y_i$ onto $A$ is exactly $\Gamma_\lambda (t_i).$ By \Href{Lemma}{lem:main_discrete_variation} and by equation \eqref{eq:meaning_of_discrete_energy}, $\partvarsq{\Gamma_\lambda, T_n} $ is at most
\[
 \lambda\partvarsq{\gamma_1, T_n} + (1- \lambda) \partvarsq{\gamma_2, T_n}- 
\lambda(1 -\lambda) \parenth{1 - \frac{L^2}{\pi^2 R^2} \parenth{1   + 20 \lambda}}
\partvarsq{\gamma_1 - \gamma_2, T_n} + n  \parenth{L^2 \mesh{T_n}}^2.
\]
Using \Href{Lemma}{lem:variations_mesh_to_zero} and passing to the limit as $n \to \infty,$ we get the desired inequality.
\end{proof}

\subsection{Proof of \Href{Theorem}{thm:existence of the shortest_curve}}
We will prove a slightly stronger result than \Href{Theorem}{thm:existence of the shortest_curve}.
\begin{thm}\label{thm:existence of the shortest_curve2}
 Let $A$ be an $R$-proximally smooth  set  containing distinct points $a$ and $b$ 
with $\enorm{a-b} < 2R.$ The shortest curve in $A$ between $a$ and $b$ exists and is unique. It is an $R$-proximally smooth set   whose length is at most the $R$-arclength of $[a,b].$
Moreover, any minimizing sequence convergences to the shortest curve in the sense of second-order variation. 
\end{thm}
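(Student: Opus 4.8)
The plan is to read the whole statement off the averaging inequality \Href{Corollary}{cor:second_variation_bound_on_proj}, fed with two facts: first, $\ell:=\rho_A(a,b)<\pi R$ by \Href{Lemma}{lem:simple_short_curve_in_prox_smooth_set}; and second, $\diam\strconvsegment{a}{b}=:2\rho_0<2R$ whenever $\enorm{a-b}<2R$. The latter is elementary: the set of admissible centres $B_R(a)\cap B_R(b)$ has nonempty interior because $\enorm{a-b}<2R$, while two points $p,q\in\strconvsegment{a}{b}$ with $\enorm{p-q}=2R$ would lie in $B_R(o)$ for every admissible $o$ and hence be antipodal there, forcing every admissible $o$ to equal $\frac{p+q}{2}$ — a contradiction. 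Fix a number $L_0\in(\ell,\pi R)$ and then $\lambda\in(0,1/16]$ small enough that $c_0:=\lambda(1-\lambda)\bigl(1-\frac{L_0^2}{\pi^2R^2}(1+20\lambda)\bigr)>0$; this is possible precisely because $\ell<\pi R$. From now on all curves carry their standard parametrization, so each is Lipschitz with constant equal to its length and $\variationsq{\sigma}=(\length{\sigma})^2$ by \Href{Lemma}{lem:second_variation_for_standard_parametrization}; moreover, for any curve $\Gamma$ in $A$ joining $a$ and $b$ we have $\variationsq{\Gamma}\ge(\length{\Gamma})^2\ge\ell^2$.

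\textbf{Existence.} Pick a minimizing sequence $\{\gamma_i\}$, $\ell_i:=\length{\gamma_i}\to\ell$. By \Href{Lemma}{lem:quantitatively_shorter_curve_inside_str_convex_segment}, $\delta_i:=\sup_{t}\dist{\gamma_i(t)}{\strconvsegment{a}{b}}\to 0$; hence for all large $i,j$ one has $\sup_t\enorm{\gamma_i(t)-\gamma_j(t)}\le\delta_i+\delta_j+2\rho_0<2R$ and $\max\{\ell_i,\ell_j\}<L_0$, so \Href{Corollary}{cor:second_variation_bound_on_proj} applies to $\gamma_i,\gamma_j$ and gives, with $\Gamma$ the projected average,
\[
\ell^2\le\variationsq{\Gamma}\le\lambda\,\ell_i^2+(1-\lambda)\,\ell_j^2-c_0\,\variationsq{\gamma_i-\gamma_j}.
\]
Thus $c_0\,\variationsq{\gamma_i-\gamma_j}\le\lambda\,\ell_i^2+(1-\lambda)\,\ell_j^2-\ell^2$, which tends to $0$ as $i,j\to\infty$; so $\{\gamma_i\}$ is Cauchy for $\variation[2]{\cdot}$ and, since $\variation{\gamma_i-\gamma_j}\le\variation[2]{\gamma_i-\gamma_j}$, also for $\operatorname{curve-dist}$. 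By \Href{Lemma}{lem:convergences_in_variation} there is a rectifiable curve $\gamma$ starting at $a$ with $\gamma_i\to\gamma$ in $\operatorname{curve-dist}$ and $\length{\gamma}=\ell$; since $\gamma_i(1)=b$, $A$ is closed and $\gamma(t)=\lim_i\gamma_i(t)$, this $\gamma$ joins $a$ and $b$ inside $A$ and is therefore a shortest curve.

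\textbf{Uniqueness, convergence of minimizers, regularity.} If $\gamma,\gamma'$ are shortest curves then both lie in $\strconvsegment{a}{b}$ by \Href{Corollary}{cor:shortest_curve_in_prox_smooth}, so $\enorm{\gamma(t)-\gamma'(t)}\le 2\rho_0<2R$ and both have Lipschitz constant $\ell<L_0$; \Href{Corollary}{cor:second_variation_bound_on_proj} yields $\ell^2\le\variationsq{\Gamma}\le\ell^2-c_0\,\variationsq{\gamma-\gamma'}$, whence $\variation[2]{\gamma-\gamma'}=0$, and since $\variation{\gamma-\gamma'}\le\variation[2]{\gamma-\gamma'}$ and $(\gamma-\gamma')(0)=0$ we get $\gamma\equiv\gamma'$. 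Running the same computation with $\gamma'$ replaced by a member $\gamma_i$ of an arbitrary minimizing sequence — admissible for large $i$ by the uniform bound of \Href{Lemma}{lem:quantitatively_shorter_curve_inside_str_convex_segment}, exactly as in the existence step — gives $c_0\,\variationsq{\gamma_i-\gamma}\le\lambda(\ell_i^2-\ell^2)\to 0$, i.e. every minimizing sequence converges to $\gamma$ in second-order variation. Finally, that $\gamma$ is an $R$-proximally smooth set and $\length{\gamma}\le 2R\arcsin\frac{\enorm{a-b}}{2R}$ is exactly \Href{Corollary}{cor:shortest_curve_in_prox_smooth} applied to $\gamma$.

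\textbf{Where the difficulty sits.} All the real content is in checking the hypotheses of \Href{Corollary}{cor:second_variation_bound_on_proj}: the pointwise separation $\enorm{\gamma_1(t)-\gamma_2(t)}\le 2r<2R$ and the Lipschitz bound $L<\pi R$ are precisely what make the coefficient $\lambda(1-\lambda)\bigl(1-\frac{L^2}{\pi^2R^2}(1+20\lambda)\bigr)$ strictly positive, i.e. what makes the ``metric projection of the average'' operation strictly shrink the gap between two competitors. Securing these hypotheses is the role of the confinement results \Href{Corollary}{cor:shortest_curve_in_prox_smooth} and \Href{Lemma}{lem:quantitatively_shorter_curve_inside_str_convex_segment} together with $\diam\strconvsegment{a}{b}<2R$; the remaining steps are routine passages between $\variation{\cdot}$ and $\variation[2]{\cdot}$ already prepared in \Href{Section}{sec:definitions}.
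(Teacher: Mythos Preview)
Your proof is correct and follows essentially the same route as the paper: confine near-minimizers to a neighbourhood of $\strconvsegment{a}{b}$ via \Href{Lemma}{lem:quantitatively_shorter_curve_inside_str_convex_segment}, feed two of them into the averaging inequality \Href{Corollary}{cor:second_variation_bound_on_proj} with $L<\pi R$ to get a Cauchy estimate in $\variation[2]{\cdot}$, and close up with \Href{Lemma}{lem:convergences_in_variation} and \Href{Corollary}{cor:shortest_curve_in_prox_smooth}. The only cosmetic differences are that you split existence, uniqueness, and convergence into three explicit applications of the same inequality (the paper bundles them into a single Cauchy argument), and you argue $\diam\strconvsegment{a}{b}<2R$ abstractly where the paper simply uses $\diam\strconvsegment{a}{b}=\enorm{a-b}$.
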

\begin{proof}
It follows from \Href{Corollary}{cor:shortest_curve_in_prox_smooth} and \Href{Lemma}{lem:second_variation_for_standard_parametrization} that it suffices to show that any minimizing sequence of curves is convergent to a curve in $A$ in the sense of second-order variation. 

 By \Href{Lemma}{lem:simple_short_curve_in_prox_smooth_set}, 
$ \rho_A (a,b) \leq 2R \arcsin{\frac{\enorm{a-b}}{2R}} < \pi R.$
Consider a minimizing sequence $\{{\gamma}_i\}_{i \in \N}$ of curves in $A.$ Consider $\epsilon$ such that
\[
\epsilon \in
\parenth{0, \min\left\{{\pi R} -  2R \arcsin \frac{\enorm{a-b}}{2R}, \frac{\pi R}{4} , \frac{1}{4\pi R}\left( R - \frac{\enorm{a-b}}{2}\right)^2\right\}}.
\]
Consider two curves $\gamma_i$ and $\gamma_j$ represented by their standard parametrization, and assume that they are not longer than 
$ \rho_A (a,b) + \epsilon.$ 

By the choice of $\epsilon,$ 
$\epsilon < {\pi R} -  2R \arcsin \frac{\enorm{a-b}}{2R}.$ 
Hence,  
 $ \rho_A (a,b) + \epsilon  \leq 2R \arcsin{\frac{\enorm{a-b}}{2R}} + \epsilon < \pi R.$
By \Href{Lemma}{lem:quantitatively_shorter_curve_inside_str_convex_segment}, 
the curves  $\gamma_i$ and $\gamma_j$ belong to the $\tilde{\epsilon}$-neighborhood of $\strconvsegment{a}{b},$ where $\tilde{\epsilon}=\max\left\{\sqrt{4\pi R\epsilon}, 4\epsilon\right\}$. 
Since $\epsilon < \frac{\pi R}{4},$ $\tilde{\epsilon} = \sqrt{4\pi R\epsilon}.$
Again, by the choice of $\epsilon,$ $\epsilon < \frac{1}{4\pi R}\left( R - \frac{\enorm{a-b}}{2}\right)^2;$ hence, $\tilde{\epsilon} < R - \frac{\enorm{a-b}}{2}.$
In particular, the distance between any two points of  $\gamma_i$ and 
$\gamma_j$ is not greater than  $\enorm{a-b} + 2\tilde{\epsilon} < 2R.$ Fix any $\lambda \in (0, 1/16]$. We may now use 
\Href{Corollary}{cor:second_variation_bound_on_proj}, which implies that
\[
\lambda(1 -\lambda) \parenth{1 - \frac{L^2}{\pi^2 R^2} \parenth{1   + 20 \lambda}}
 \variationsq{\gamma_i - \gamma_j}  \leq 
 \lambda\variationsq{\gamma_i} + (1- \lambda) \variationsq{\gamma_j}
 -\variationsq{\Gamma_\lambda},
\]
where $L$ can be chosen to be $\rho_A (a,b) + \epsilon$ and 
$\Gamma_\lambda$ is a curve in $A$ connecting $a$ and $b.$
Thus, the length of  $\Gamma_\lambda$ is at least $\rho_A (a,b).$ Consequently,
$ \variationsq{\Gamma_\lambda} \geq \rho_A^2 (a,b)$ by \Href{Lemma}{lem:second_variation_for_standard_parametrization}.
By the same lemma, $\lambda \variationsq{\gamma_i} + (1- \lambda) \variationsq{\gamma_j} \leq \parenth{\rho_A(a,b) + \epsilon}^2.$
So we conclude that
\[
\lambda(1 -\lambda) \parenth{1 - \frac{\parenth{\pi R + \rho_A(a,b)}^2}{4\pi^2 R^2} \parenth{1   + 20 \lambda}}
 \variationsq{\gamma_i - \gamma_j} \leq \parenth{\rho_A(a,b) + \epsilon}^2 - \rho_A^2(a,b).
\]
Thus, fixing any $\lambda \in (0, 1/16]$ such that the constant
\[
\frac{\parenth{\pi R + \rho_A(a,b)}^2}{4\pi^2 R^2} \parenth{1   + 20 \lambda}
\]
is strictly less than one 
(for example, $\lambda = \frac{1}{30} \min\{1, \frac{4\pi^2 R^2}{\parenth{\pi R + \rho_A(a,b)}^2} -1 \}$), we get that
\[
\variationsq{\gamma_i - \gamma_j} \leq C \epsilon
\]for a universal constant $C$ independent of  $\epsilon$ for all sufficiently small $\epsilon.$
That is, the sequence of curves  $\{\gamma_i\}_{i \in \N}$ is fundamental in the sense of  second order variation. By \Href{Lemma}{lem:second_variation_for_standard_parametrization}, it implies that this sequence is fundamental in $\operatorname{curve-dist}.$ Hence, by \Href{Lemma}{lem:convergences_in_variation}, it is convergent to a curve of the desired length. 
Since $A$ is closed the limit curve belongs to it. The theorem is proven.
\end{proof}
\subsection{Discussions}
We want to elaborate on the computation of $ \mathcal{L} \! \parenth{A,a,b, T}$ for a proximally smooth set $A.$ As a direct corollary of the construction provided by  \Href{Algorithm}{algo1}, one has
\begin{lem}
Let $A$ be an $R$-proximally smooth set containing two distinct points $a$ and $b$  with $\enorm{a-b} < 2R.$  Then for any partition $T$ of  $[0,1],$
the inequality $ \mathcal{L} \! \parenth{A,a,b, T} \leq \mathcal{L} \! \parenth{\omega,a,b, T}$ holds, where $\omega$ is  a main $R$-arc of $[a,b].$
\end{lem}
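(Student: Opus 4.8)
The plan is to produce, for an arbitrary finite sequence of points on a main $R$-arc $\omega$, a finite sequence of points in $A$ whose consecutive pairwise distances do not exceed those of the original sequence, and with the same endpoints $a$ and $b$. Given such a distance-nonincreasing replacement, the quantity $\sum_{i\in[n]}\frac{\enorm{\cdot-\cdot}^2}{t_i-t_{i-1}}$ can only decrease term by term, so passing to the infimum over sequences in $A$ yields $\mathcal{L}\!\parenth{A,a,b,T}\le \mathcal{L}\!\parenth{\omega,a,b,T}$.

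First I would fix a partition $T=\{t_i\}_0^n$ and an arbitrary admissible sequence $\{w_i\}_0^n\subset\omega$ with $w_0=a$, $w_n=b$; it suffices to bound $\mathcal{L}\!\parenth{T,\{w_i\}}$ from below by $\mathcal{L}\!\parenth{A,a,b,T}$. The polyline $w_0\dots w_n$ is inscribed in a main $R$-arc of $[a,b]$, hence in particular in a main $R$-arc of $[w_0,w_n]=[a,b]$, so it is exactly the input data required by \Href{Algorithm}{algo1} (taking the proximally smooth set to be $A$ itself and $y_0=a$, $y_n=b$, which trivially satisfies $\enorm{y_0-y_n}\le\enorm{w_0-w_n}$). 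Applying the algorithm produces a polyline $z_0\dots z_n$ inscribed in $A\cap\strconvsegment{a}{b}$ with $z_0=a$, $z_n=b$, and, by its correctness (the lemma following \Href{Algorithm}{algo1}), $\enorm{z_i-z_{i-1}}\le\enorm{w_i-w_{i-1}}$ for every $i\in[n]$.

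Then I would simply compare the two discrete energies term by term: for each $i\in[n]$,
\[
\frac{\enorm{z_i-z_{i-1}}^2}{t_i-t_{i-1}}\le\frac{\enorm{w_i-w_{i-1}}^2}{t_i-t_{i-1}},
\]
so $\mathcal{L}\!\parenth{T,\{z_i\}}\le\mathcal{L}\!\parenth{T,\{w_i\}}$. Since $\{z_i\}_0^n$ is an admissible competitor in the definition of $\mathcal{L}\!\parenth{A,a,b,T}$, we get $\mathcal{L}\!\parenth{A,a,b,T}\le\mathcal{L}\!\parenth{T,\{w_i\}}$. Taking the infimum over all admissible $\{w_i\}\subset\omega$ gives $\mathcal{L}\!\parenth{A,a,b,T}\le\mathcal{L}\!\parenth{\omega,a,b,T}$, as claimed.

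There is essentially no obstacle here once \Href{Algorithm}{algo1} is available: the only point requiring a moment of care is checking that the hypotheses of the algorithm are met, namely that a finite sequence on the main $R$-arc of $[a,b]$ is genuinely "inscribed in a main $R$-arc of $[x_0,x_n]$" in the sense the algorithm demands, and that the mesh/ordering conventions match. This is immediate. The statement is therefore a direct corollary of the algorithm's correctness, exactly as the paper advertises.
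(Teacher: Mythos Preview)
Your proof is correct and follows exactly the approach the paper indicates: the lemma is stated there as a direct corollary of the construction in \Href{Algorithm}{algo1}, and you have spelled out precisely that derivation. The one caveat you flag about ordering is indeed harmless—the per-segment bound $\enorm{y_i-y_{i-1}}\le\enorm{w_i-w_{i-1}}$ in the algorithm's output is enforced step by step (via \Href{Claim}{clm:slice_projection} in step~4 and trivially in step~2), so it holds whether or not the $w_i$ are monotone along $\omega$.
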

Note that $\mathcal{L} \! \parenth{\omega,a,b, T}$ is attained (by compactness) and is easy to compute. It follows, for example, from the following observation.

\begin{lem}\label{lem:three_point_energy_func}
Let $A$ be an $R$-proximally smooth set containing two distinct points $a$ and $b$  with $\enorm{a-b} < 2R.$ 
Fix $\lambda \in (0,1);$ denote $y_\lambda = (1-\lambda) a + \lambda b$ and $T_\lambda = \{0, \lambda, 1\}.$    The infimum  
$ \mathcal{L} \! \parenth{A,a,b, T_\lambda}$ is attained  on 
the unique sequence $\{a,x_\lambda, b\},$ where $x_\lambda$ is the metric projection of 
the point $y_\lambda$ onto $A.$
\end{lem}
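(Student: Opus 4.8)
The plan is to reduce the minimization defining $\mathcal{L}(A,a,b,T_\lambda)$ to a single metric projection by completing the square. First I would spell out the functional: for a sequence $X=\{a,x,b\}$ and $T_\lambda=\{0,\lambda,1\}$, \Href{Definition}{dfn:energy_functional} gives
\[
\mathcal{L}(T_\lambda, X) = \frac{\enorm{x-a}^2}{\lambda} + \frac{\enorm{b-x}^2}{1-\lambda}.
\]
Writing $x = y_\lambda + v$ with $v = x-y_\lambda$ and using $y_\lambda - a = \lambda(b-a)$ and $y_\lambda - b = -(1-\lambda)(b-a)$, I would expand the two squared norms; the cross terms are $+2\iprod{b-a}{v}$ and $-2\iprod{b-a}{v}$ and cancel, while $\tfrac{1}{\lambda}+\tfrac{1}{1-\lambda}=\tfrac{1}{\lambda(1-\lambda)}$, leaving the clean identity
\[
\mathcal{L}(T_\lambda, \{a,x,b\}) = \enorm{a-b}^2 + \frac{\enorm{x-y_\lambda}^2}{\lambda(1-\lambda)}.
\]

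Once this identity is established, everything follows. Minimizing $\mathcal{L}(T_\lambda,\{a,x,b\})$ over $x \in A$ is the same, up to the additive constant $\enorm{a-b}^2$ and the positive factor $1/(\lambda(1-\lambda))$, as minimizing $\enorm{x-y_\lambda}^2$ over $x \in A$ --- that is, it is exactly the problem of projecting $y_\lambda$ onto $A$. Hence $\mathcal{L}(A,a,b,T_\lambda) = \enorm{a-b}^2 + \frac{\dist{y_\lambda}{A}^2}{\lambda(1-\lambda)}$, and a sequence $\{a,x,b\}$ attains this infimum precisely when $x$ is a metric projection of $y_\lambda$ onto $A$.

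It remains to see that the projection exists and is unique. By \Href{Lemma}{lem:projection_point_of_inscribed_segment}, applied with $1-\lambda$ in place of $\lambda$ (which changes nothing since $\lambda(1-\lambda)$ is symmetric),
\[
\dist{y_\lambda}{A} \le R - \sqrt{R^2 - \lambda(1-\lambda)\enorm{a-b}^2} < R,
\]
where the strict inequality uses $0 < \lambda(1-\lambda)\enorm{a-b}^2 \le \tfrac14\enorm{a-b}^2 < R^2$. Therefore either $y_\lambda \in A$, in which case its metric projection is the singleton $\{y_\lambda\}$, or $y_\lambda$ lies in the open $R$-neighborhood of $A$ and \Href{Proposition}{prp:equivalence} guarantees the metric projection is a single point $x_\lambda$; either way it is unique, which yields the claim.

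There is no genuine obstacle here: the only step of substance is the completion of the square, and uniqueness is a direct appeal to the already-established properties of proximally smooth sets. The one point to watch is the degenerate possibility $y_\lambda \in A$, which the cited proposition still covers, so no case analysis beyond the single sentence above is needed.
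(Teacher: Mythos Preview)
Your proof is correct and follows essentially the same approach as the paper: both complete the square to rewrite $\mathcal{L}(T_\lambda,\{a,x,b\})$ as $\enorm{x-y_\lambda}^2/(\lambda(1-\lambda))$ plus a constant, then reduce to uniqueness of the metric projection. Your justification of existence and uniqueness of $x_\lambda$ via \Href{Lemma}{lem:projection_point_of_inscribed_segment} and \Href{Proposition}{prp:equivalence}, including the degenerate case $y_\lambda\in A$, is in fact more careful than the paper's one-line remark.
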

\begin{proof}
    For any point $x,$ we have that
    \[
 \mathcal{L} \! \parenth{ T_\lambda, \{a,x, b\}}   = 
 \frac{(1-\lambda){\enorm{a-x}^2} + \lambda\enorm{x-b}^2}{\lambda(1-\lambda)}
  =
     \]
     \[
 \frac{\enorm{x}^2 - 2 \iprod{(1-\lambda)a + \lambda b}{x} + (1-\lambda)\enorm{a}^2 + \lambda \enorm{b}^2 }{\lambda(1-\lambda)} =     \frac{\enorm{x - y_\lambda}^2 - \enorm{y_\lambda}^2 + (1-\lambda)\enorm{a}^2 + \lambda \enorm{b}^2 }{\lambda(1-\lambda)}.
     \]
Note that since $\enorm{a-b} < 2R,$ the metric projection of 
the point $y_\lambda$ onto $A$ exists and is unique.
 The lemma follows.  
\end{proof}
If in \Href{Definition}{dfn:energy_functional} we consider that all of the points $x_i$, except for $x_1$, are fixed, then \Href{Lemma}{lem:three_point_energy_func} implies that $x_1$ is the metric projection of  $(1-\lambda)x_0 + \lambda x_2,$ where $\lambda=(t_1-t_0)/(t_2-t_0)$. So, \Href{Lemma}{lem:three_point_energy_func} completely defines the geometry of a minimizer of 
$\mathcal{L} \! \parenth{\omega,a,b, T}$ for any partition $T$ of $[0,1].$  For example, for the equipartition, we have the only  minimizer   of 
$\mathcal{L} \! \parenth{\omega,a,b, T}$ is the equipartition of the arc $\omega.$

Another useful trick in our pocket is \Href{Lemma}{lem:arclength}. It allows us to show that one can always choose a good approximation of the minimizer of  $ \mathcal{L} \! \parenth{A,a,b, T}$ from the strongly convex segment $\strconvsegment{a}{b}.$

\begin{lem}\label{lem:discrete_energy_radial_proj}
Let $A$ be a $R$-proximally smooth  set containing two distinct points $a$ and $b$  with $\enorm{a-b} < 2R;$ let $T=\{t_i\}_0^n$ be a partition of $[0,1].$ Then  for any $X=\{x_i\}_0^n \subset A$ with $x_0 = a$ and $x_n = b$ containing a point outside of 
$\strconvsegment{a}{b},$  there  is $Y=\{y_i\}_0^n \subset A \cap \strconvsegment{a}{b}$ with $y_0 = a$ and $y_n = b$ such that 
$ \mathcal{L} \! \parenth{Y, T} \leq \mathcal{L} \! \parenth{X, T}.$ 
\end{lem}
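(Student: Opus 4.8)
The plan is to construct $Y\subset A\cap\strconvsegment{a}{b}$ with $\enorm{y_i-y_{i-1}}\le\enorm{x_i-x_{i-1}}$ for every $i\in[n]$; granting this, the inequality $\mathcal{L}(Y,T)\le\mathcal{L}(X,T)$ follows term by term from \Href{Definition}{dfn:energy_functional}, so the partition $T$ plays no further role and the statement becomes purely geometric.

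First I would localize to a single ``excursion''. Let $I=\{i:x_i\in\strconvsegment{a}{b}\}$; it contains $0$ and $n$, so write $I=\{i_0=0<i_1<\dots<i_m=n\}$. If $i_{\ell+1}=i_\ell+1$ I simply keep $y_{i_{\ell+1}}=x_{i_{\ell+1}}$. Otherwise I treat the block $x_p=\alpha,x_{p+1},\dots,x_q=\beta$ with $p=i_\ell$, $q=i_{\ell+1}\ge p+2$, where $\alpha,\beta\in\strconvsegment{a}{b}$ while $x_{p+1},\dots,x_{q-1}\notin\strconvsegment{a}{b}$. By the hereditary property (\Href{Remark}{rem:str_convex_segment}) $\strconvsegment{\alpha}{\beta}\subseteq\strconvsegment{a}{b}$, so it suffices to place $y_p=\alpha,\dots,y_q=\beta$ in $A\cap\strconvsegment{\alpha}{\beta}$ with the length bounds on this block; the blocks then glue consistently at the shared endpoints $x_{i_\ell}$, and $y_0=a$, $y_n=b$. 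Two kinds of block are disposed of at once: if $\alpha=\beta$ put $y_i\equiv\alpha$ on the block; and if some segment satisfies $\enorm{x_j-x_{j-1}}\ge 2R$, put $y_i=\alpha$ for $i<j$ and $y_i=\beta$ for $i\ge j$, the only nonzero $Y$-segment being $[\alpha,\beta]$, whose length is $\le 2R\le\enorm{x_j-x_{j-1}}$ since $\alpha,\beta$ lie in a common ball of radius $R$. Hence we may assume $\alpha\neq\beta$ and that every segment of the block is shorter than $2R$; a short argument using two distinct admissible ball centres also gives $\enorm{\alpha-\beta}<2R$, i.e. $\diam\strconvsegment{a}{b}<2R$.

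The core step handles such a block via \Href{Lemma}{lem:arclength} and \Href{Algorithm}{algo1}. Since $x_{p+1},\dots,x_{q-1}$ lie outside the interior of $\strconvsegment{\alpha}{\beta}$, \Href{Lemma}{lem:arclength} yields $\sigma:=\sum_{i=p+1}^q 2R\arcsin\frac{\enorm{x_i-x_{i-1}}}{2R}\ \ge\ 2R\arcsin\frac{\enorm{\alpha-\beta}}{2R}=:\sigma_{\alpha\beta}$. I then place consecutive points $P_0,\dots,P_{q-p}$ on a circle of radius $R$ so that $[P_{k-1},P_k]$ subtends an arc of length $\frac{\sigma_{\alpha\beta}}{\sigma}\cdot 2R\arcsin\frac{\enorm{x_{p+k}-x_{p+k-1}}}{2R}$; these lengths are at most the individual $R$-arclengths and sum to $\sigma_{\alpha\beta}<\pi R$, so $P_0\dots P_{q-p}$ is inscribed in a main $R$-arc of $[P_0,P_{q-p}]$, $\enorm{P_0-P_{q-p}}=\enorm{\alpha-\beta}$, and $\enorm{P_k-P_{k-1}}\le\enorm{x_{p+k}-x_{p+k-1}}$ because $\sin$ is increasing on $[0,\pi/2]$. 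Feeding this flat polyline, the set $A$, and the points $\alpha,\beta$ (which satisfy $\enorm{\alpha-\beta}\le\enorm{P_0-P_{q-p}}$) into \Href{Algorithm}{algo1}, whose correctness is already established, returns $y_p=\alpha,\dots,y_q=\beta$ inscribed in $A\cap\strconvsegment{\alpha}{\beta}$ with $\enorm{y_{p+k}-y_{p+k-1}}\le\enorm{P_k-P_{k-1}}\le\enorm{x_{p+k}-x_{p+k-1}}$, which is exactly what the block requires.

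I expect the only fussy points to be the scaling of the subtended arcs — chosen precisely so the flat polyline lands on a \emph{main} $R$-arc, i.e. the total stays below $\pi R$ — the verification that $\diam\strconvsegment{a}{b}<2R$, and the checking of \Href{Algorithm}{algo1}'s input hypotheses (flatness, $\enorm{P_0-P_{q-p}}<2R$, $\enorm{\alpha-\beta}\le\enorm{P_0-P_{q-p}}$). None of these is deep, but they are exactly where a careless argument would fail; everything else is immediate from the material of the earlier sections.
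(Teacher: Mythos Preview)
Your proposal is correct and follows essentially the same approach as the paper: isolate maximal excursions of the polyline outside $\strconvsegment{a}{b}$, invoke \Href{Lemma}{lem:arclength} on each excursion, and then use \Href{Algorithm}{algo1} to replace the intermediate vertices by points of $A\cap\strconvsegment{\alpha}{\beta}\subset A\cap\strconvsegment{a}{b}$ without increasing any segment length. The paper's proof is very terse and simply says ``Using \Href{Algorithm}{algo1}, we can substitute all the intermediate points \dots\ non-increasing the length of each segment''; your construction of the auxiliary polyline on a main $R$-arc via the scaling factor $\sigma_{\alpha\beta}/\sigma$ is precisely the step the paper leaves implicit, and your handling of the degenerate cases ($\alpha=\beta$, or a segment of length $\ge 2R$) fills in details the paper does not mention.
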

\begin{proof}
One can consequently substitute  points of $X$  from the outside of  
$\strconvsegment{a}{b}.$ Indeed, consider a  polyline $x_i \dots x_{i +m}$
such that $x_i$ and $x_{i+m}$ belong to $\strconvsegment{a}{b}$ but all intermediate points do not. Then by \Href{Lemma}{lem:arclength}, the $R$-arclength of this polyline is strictly longer than  the $R$-arclength of  $[x_i, x_{i+m}].$ Using \Href{Algorithm}{algo1}, we can substitute all the intermediate points by points inside $\strconvsegment{a}{b}$ non-increasing the length of each segment. The lemma follows.
\end{proof}

\subsection{Characterization of proximally smooth sets via short curves}
We want to note that the existence of a short curve inside a closed set connecting two  relatively close points is equivalent to the proximal smoothness of the set. 
\begin{cor}\label{cor:equivalence}
Let $A$ be a closed set in $\hilbert.$ 
The following conditions are equivalent:
\begin{enumerate}
\item The set  $A$ is $R$-proximally smooth. 
\item For all  $a$ and $b$ in $A$ with
$0 < \enorm{a-b} < 2R,$ the intersection of $A$ and $\strconvsegment{a}{b} \setminus\{a,b\}$ is non-empty. 
\item  For all  $a$ and $b$ in $A$ with
$0 < \enorm{a-b} < 2R,$ there  is a curve of length at most the $R$-arclength of segment $[a,b],$
that is $2R \arcsin \frac{\enorm{a-b}}{2R},$  connecting $a$ and $b$ in $A.$
\item  For all  $a$ and $b$ in $A$ with
$0 < \enorm{a-b} < 2R,$ there  exists and is unique the shortest curve in $A$  connecting $a$ and $b;$ and  the shortest curve is not longer than $2R \arcsin \frac{\enorm{a-b}}{2R}.$
\end{enumerate}
\end{cor}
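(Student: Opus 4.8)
The plan is to establish the chain of implications $(1)\Rightarrow(4)\Rightarrow(3)\Rightarrow(2)\Rightarrow(1)$, since almost every link is already available from the earlier sections. For $(1)\Rightarrow(4)$ I would simply invoke Theorem~\ref{thm:existence of the shortest_curve2} (equivalently Theorem~\ref{thm:existence of the shortest_curve}): if $A$ is $R$-proximally smooth and $0<\enorm{a-b}<2R$, the shortest curve between $a$ and $b$ exists, is unique, and has length at most the $R$-arclength $2R\arcsin\frac{\enorm{a-b}}{2R}$. The implication $(4)\Rightarrow(3)$ is immediate: the shortest curve is in particular \emph{a} curve of length at most $2R\arcsin\frac{\enorm{a-b}}{2R}$ connecting $a$ and $b$ in $A$. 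For $(3)\Rightarrow(2)$ I would argue that a curve $\gamma$ in $A$ of length at most $2R\arcsin\frac{\enorm{a-b}}{2R}<\pi R$ connecting $a$ and $b$ must meet $\strconvsegment{a}{b}\setminus\{a,b\}$: if it did not, then $\gamma$ would lie entirely outside the \emph{interior} of $\strconvsegment{a}{b}$ (its only possible contact with $\strconvsegment{a}{b}$ being the endpoints $a,b$), and moreover, being connected and joining $a$ to $b$ without entering $\strconvsegment{a}{b}\setminus\{a,b\}$, it would have to contain a point at positive distance from $\strconvsegment{a}{b}$ — so Lemma~\ref{lem:curve_separation_stongly_convex_segment} (or already Corollary~\ref{cor:main_arc} together with the observation that a curve avoiding the relative interior but touching only the endpoints is strictly longer than a main $R$-arc unless it \emph{is} a main $R$-arc, and a main $R$-arc certainly meets $\strconvsegment{a}{b}\setminus\{a,b\}$) forces $\length{\gamma}> 2R\arcsin\frac{\enorm{a-b}}{2R}$, a contradiction. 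Hence $A\cap(\strconvsegment{a}{b}\setminus\{a,b\})\neq\emptyset$.

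The implication $(2)\Rightarrow(1)$ is not new at all: it is precisely the nontrivial direction of Proposition~\ref{prp:equivalence2}, which was quoted from \cite[Theorem 2.2]{IvBal2009}. So I would just cite that proposition. Assembling the four implications closes the cycle and proves the equivalence.

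The only step requiring a little care is $(3)\Rightarrow(2)$, and even there the work is essentially a repackaging of Lemma~\ref{lem:curve_separation_stongly_convex_segment}: the subtlety is to rule out the degenerate possibility that $\gamma$ touches $\strconvsegment{a}{b}$ only at $a$ and $b$ yet stays on the boundary sphere near the endpoints. This is handled by noting that such a $\gamma$, if it does not coincide with a main $R$-arc, is strictly longer than the main $R$-arc by Corollary~\ref{cor:main_arc}, contradicting the length bound in $(3)$; and if it does coincide with a main $R$-arc then it trivially passes through points of $\strconvsegment{a}{b}\setminus\{a,b\}$. Everything else is a direct appeal to results proved above, so the corollary follows without further calculation.
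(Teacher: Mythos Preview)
Your proposal is correct and follows essentially the same approach as the paper: the paper establishes $(1)\Leftrightarrow(2)$ via Proposition~\ref{prp:equivalence2}, $(1)\Rightarrow(3)$ via Lemma~\ref{lem:short_curve_intro}, $(3)\Rightarrow(2)$ via Corollary~\ref{cor:main_arc}, $(4)\Rightarrow(3)$ trivially, and $(1)\Rightarrow(4)$ via Theorem~\ref{thm:existence of the shortest_curve2}, which is the same set of citations you use, just arranged as a single cycle $(1)\Rightarrow(4)\Rightarrow(3)\Rightarrow(2)\Rightarrow(1)$ rather than a web. Your extra discussion of the $(3)\Rightarrow(2)$ step (ruling out the degenerate case via the equality clause of Corollary~\ref{cor:main_arc}) is a helpful elaboration of what the paper states in one line, but not a different argument.
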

\begin{proof}
The first two conditions are equivalent by  \Href{Proposition}{prp:equivalence2}.
\Href{Lemma}{lem:short_curve_intro} yields the implication $(1) \Rightarrow (3),$
 the implication $(3) \Rightarrow (2)$ directly follows from  \Href{Corollary}{cor:main_arc}. The implication $(4) \Rightarrow (3)$ is trivial, the implication 
 $(1) \Rightarrow (4)$ follows from \Href{Theorem}{thm:existence of the shortest_curve2}.
\end{proof}

\section{Characterization of proximal smooth curves}
\label{sec:prox_smooth_curve_characterization}
The goal of the current section is to prove \Href{Theorem}{thm:characterization_of_prox_reg_curves}.
\subsection{Properties of the Euclidean spheres and circles}

We will need the following technical trivial claims. 
\begin{clm}\label{claim:equivalnce_of_metrics_on_sphere}
A function  from $[a,b]$ to $\sphere_{\hilbert}$ is Lipschitz with a constant $L$ on $[a,b]$ 
if and only if it is Lipschitz with constant $L$ on $[a,b]$ in the intrinsic metric of the unit sphere 
$\sphere_{\hilbert}.$
\end{clm}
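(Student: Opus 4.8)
The plan is to pass through the elementary relation between the chord and the geodesic distance on the unit sphere. For $x,y\in\sphere_{\hilbert}$ one has $\enorm{x-y}=2\sin\!\bigl(\anglef{x}{y}/2\bigr)$, equivalently $\anglef{x}{y}=2\arcsin\!\bigl(\enorm{x-y}/2\bigr)$, and, as recalled just before \Href{Subsection}{subsec:proximally_smooth_set}, $\anglef{x}{y}$ is exactly the intrinsic distance between $x$ and $y$ on $\sphere_{\hilbert}$; in particular it obeys the triangle inequality for nonzero vectors. Thus ``Lipschitz with constant $L$ in the intrinsic metric of $\sphere_{\hilbert}$'' means precisely $\anglef{f(s)}{f(t)}\le L\abs{s-t}$ for all $s,t\in[a,b]$, and the claim is a statement about these two concrete quantities.

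One implication is immediate: since $\sin u\le u$ for $u\ge 0$, we get $\enorm{x-y}=2\sin\!\bigl(\anglef{x}{y}/2\bigr)\le\anglef{x}{y}$ for all $x,y\in\sphere_{\hilbert}$, so every intrinsically $L$-Lipschitz map is automatically $L$-Lipschitz for the ambient norm. The only real content is the converse, and here the subtlety is that the pointwise comparison gives merely $\anglef{x}{y}\le\frac{\pi}{2}\enorm{x-y}$, which would produce the constant $\tfrac{\pi}{2}L$ instead of $L$; the point is that the two metrics are \emph{asymptotically equal along the diagonal}, since $\frac{2\arcsin(r/2)}{r}\to 1$ as $r\to 0^{+}$.

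Concretely, for the converse I would argue as follows. Assume $f$ is $L$-Lipschitz for the norm (hence continuous, hence uniformly continuous on the compact segment $[a,b]$), fix $s<t$ in $[a,b]$ and $\epsilon>0$. Choose $r_0>0$ with $2\arcsin(r/2)\le(1+\epsilon)r$ for all $r\in(0,r_0]$, then choose $\delta>0$ so that $\abs{u-u'}<\delta$ forces $\enorm{f(u)-f(u')}\le r_0$, and therefore $\anglef{f(u)}{f(u')}\le(1+\epsilon)\enorm{f(u)-f(u')}$ (trivially true also when $f(u)=f(u')$). Taking any partition $s=u_0<\dots<u_m=t$ with $\mesh<\delta$ and using the triangle inequality for the angle,
\[
\anglef{f(s)}{f(t)}\le\sum_{i=1}^{m}\anglef{f(u_{i-1})}{f(u_i)}\le(1+\epsilon)\sum_{i=1}^{m}\enorm{f(u_{i-1})-f(u_i)}\le(1+\epsilon)L(t-s).
\]
Letting $\epsilon\to 0$ yields $\anglef{f(s)}{f(t)}\le L(t-s)$, which is the desired intrinsic Lipschitz bound. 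I do not expect any genuine obstacle: the whole argument is a translation between the two metrics, and the only step that needs care is recognising that a naive pointwise estimate loses the constant, which is repaired by the partition-and-limit step above (equivalently, by the fact that the length of a curve on $\sphere_{\hilbert}$ is the same whether measured with the norm or with the intrinsic metric).
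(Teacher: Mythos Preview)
Your argument is correct. The paper itself states this claim without proof, labeling it a ``technical trivial claim,'' so there is no approach to compare against; your partition-and-limit argument exploiting $\frac{2\arcsin(r/2)}{r}\to 1$ together with the triangle inequality for the angle is exactly the standard justification one would supply.
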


% As we will see later,  the curves in a Hilbert space whose natural parametrization is Lipschitz differentiable are exactly proximally smooth curves. We will need some properties of them later.  However, the results of the current sections do not use any knowledge of proximally smooth sets at all.  

\begin{clm}
Let $u$ be a unit vector. 
If a function  $f \colon [a,b] \to \sphere_{\hilbert}$ is Lipschitz with constant $L$ on $[a,b], $ then
the function $g \colon [a,b] \to \R$ given  by $g(t)=\anglef{u}{f(t)}$ is Lipschitz with constant $L$ on 
$[a, b].$ 
\end{clm}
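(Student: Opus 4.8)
The plan is to recognize $g$ as the composition of $f$ with the intrinsic-distance-to-$u$ function on the unit sphere, and then invoke the reverse triangle inequality. First I would recall that for non-zero vectors the quantity $\anglef{\cdot}{\cdot}$ is precisely the intrinsic metric of $\sphere_{\hilbert}$ (this is exactly the identity $\anglef{a}{b} = 2\arcsin\frac{\enorm{a-b}}{2}$ noted right after \Href{Lemma}{lem:curve_separation}, together with the remark there that the angle therefore satisfies the triangle inequality on non-zero vectors). Hence $g(t) = \anglef{u}{f(t)}$ is the distance, in the intrinsic metric of $\sphere_{\hilbert}$, from the fixed point $u$ to the moving point $f(t)$.

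Next I would apply \Href{Claim}{claim:equivalnce_of_metrics_on_sphere}: since $f\colon[a,b]\to\sphere_{\hilbert}$ is Lipschitz with constant $L$ in the Euclidean norm, it is also Lipschitz with constant $L$ with respect to the intrinsic metric of $\sphere_{\hilbert}$. Finally, for any $s,t\in[a,b]$, the triangle inequality for the angle gives
\[
\abs{g(t) - g(s)} = \abs{\anglef{u}{f(t)} - \anglef{u}{f(s)}} \leq \anglef{f(t)}{f(s)} \leq L\,\abs{t-s},
\]
where the first inequality is the reverse triangle inequality applied to the semimetric $\anglef{\cdot}{\cdot}$ on the pair $f(t),f(s)$ (both non-zero, being unit vectors), and the second is the intrinsic Lipschitz bound on $f$. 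This is precisely the assertion that $g$ is Lipschitz with constant $L$ on $[a,b]$.

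I do not anticipate a genuine obstacle here: the only point requiring care is that the triangle inequality for $\anglef{\cdot}{\cdot}$ is used for vectors that are guaranteed non-zero, which holds since $f$ takes values on $\sphere_{\hilbert}$ and $u$ is a unit vector, so the degenerate case in the definition of the angle never arises.
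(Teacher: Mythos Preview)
Your proposal is correct and mirrors the paper's proof exactly: both invoke \Href{Claim}{claim:equivalnce_of_metrics_on_sphere} to obtain $\anglef{f(s)}{f(t)} \leq L\abs{s-t}$, then apply the triangle inequality for angles to conclude $\abs{\anglef{u}{f(s)} - \anglef{u}{f(t)}} \leq \anglef{f(s)}{f(t)}$.
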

\begin{proof}
By \Href{Claim}{claim:equivalnce_of_metrics_on_sphere}, 
\[
 \anglef{f(t_0)}{f(t_1)}  \leq L \abs{t_0 - t_1}.
\]
% Since $t - \frac{t^3}{6} \leq \sin t$ on $t \in \left[0, \frac{\pi}{2}\right]$ and 
% $\frac{\anglef{\gamma^\prime(t_0)}{\gamma^\prime(t_1)}}{2} \leq \frac{\pi}{2},$
% we get that
By the triangle inequality for angles, 
$
\abs{\anglef{u}{f(t_0)} - \anglef{u}{f(t_1)}} \leq \anglef{f(t_0)}{f(t_1)}.
$
The claim follows. 
\end{proof}
\begin{clm}\label{claim:cos_bound_lipschitz_map+to+sphere}
If a function  $f \colon [a,b] \to \sphere_{\hilbert}$ is Lipschitz with constant $L$ on $[a,b], $
then 
\[
\iprod{f(t_1)}{f(t_2)} \geq \cos L(t_2 - t_1)
\]
for all $t_1, t_2 \in [a,b] $ such that
$L \abs{t_2 - t_1} \leq  \pi.$
\end{clm}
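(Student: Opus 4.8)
The plan is to reduce the inequality to a one-dimensional statement about the angular distance on the unit sphere and then apply the already-established relation between the chord metric and the intrinsic (angular) metric. First I would invoke Claim~\ref{claim:equivalnce_of_metrics_on_sphere}: since $f$ is Lipschitz with constant $L$ on $[a,b]$, it is also Lipschitz with constant $L$ in the intrinsic metric of $\sphere_{\hilbert}$, which means
\[
\anglef{f(t_1)}{f(t_2)} \leq L\abs{t_2 - t_1}
\]
for all $t_1, t_2 \in [a,b]$. Under the hypothesis $L\abs{t_2 - t_1} \leq \pi$, the right-hand side lies in $[0,\pi]$, so this bounds the angle between the two unit vectors $f(t_1)$ and $f(t_2)$ by a quantity in $[0,\pi]$.

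Next I would translate the angle bound into an inner-product bound. For two unit vectors $x, y$ the angle $\anglef{x}{y}$ is, by definition, the length of the shortest great-circle arc between them, and for vectors in a Hilbert space this equals $\arccos\iprod{x}{y}$ (equivalently, $\anglef{x}{y} = 2\arcsin\frac{\enorm{x-y}}{2}$ as already noted in the text). Since $\cos$ is monotonically decreasing on $[0,\pi]$ and both $\anglef{f(t_1)}{f(t_2)}$ and $L\abs{t_2-t_1}$ lie in $[0,\pi]$, the inequality $\anglef{f(t_1)}{f(t_2)} \leq L\abs{t_2-t_1}$ yields
\[
\iprod{f(t_1)}{f(t_2)} = \cos\anglef{f(t_1)}{f(t_2)} \geq \cos\bigl(L\abs{t_2-t_1}\bigr) = \cos L(t_2 - t_1),
\]
where the last equality uses that $\cos$ is even. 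This completes the argument.

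There is essentially no real obstacle here: the only point requiring a moment's care is making sure both angles lie in the range $[0,\pi]$ where $\cos$ is monotone, which is exactly what the hypothesis $L\abs{t_2-t_1}\leq\pi$ guarantees (and $\anglef{\cdot}{\cdot}\in[0,\pi]$ always, by the definition of the angle as the length of the \emph{shortest} great-circle arc). I would keep the write-up to two or three lines, citing Claim~\ref{claim:equivalnce_of_metrics_on_sphere} and the identity $\anglef{x}{y} = \arccos\iprod{x}{y}$ for unit vectors.
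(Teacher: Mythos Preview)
Your proposal is correct and matches the paper's argument: both obtain the angle bound $\anglef{f(t_1)}{f(t_2)} \leq L\abs{t_2-t_1}$ from the Lipschitz-in-intrinsic-metric property and then apply the monotonicity of $\cos$ on $[0,\pi]$. The only cosmetic difference is that the paper routes the angle bound through the immediately preceding claim (with $u=f(t_1)$) rather than citing Claim~\ref{claim:equivalnce_of_metrics_on_sphere} directly as you do; your version is slightly more direct.
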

\begin{proof}
    Take $u = f(t_1)$ in the previous claim. 
    We get that  $\anglef{f(t_1)}{f(t_2)} \leq  L\abs{t_2 - t_1} \leq \pi.$
    Hence, $\iprod{f(t_1)}{f(t_2)} = \cos \anglef{f(t_1)}{f(t_2)}  \geq \cos L(t_2 - t_1).$
\end{proof}
\begin{clm}\label{clm:integral_on_arc}
    Consider a function $f \colon [0, 2 \pi R] \to \sphere_{\R^2}$   defined by
    $f(t) = (\sin \frac{t}{R}, \cos \frac{t}{R}).$ Denote $a = (0,0),$ $c = (R,0),$  
    $b = \int\limits_{0}^{\tau} f(t) \di t$ for some $\tau \in [0, 2\pi].$
    Then $b$  is the point on the circle of radius $R$ centered at $c$ and such that the angle between $a - c$ and $b-c$  in the clock-wise direction is equal to $\frac{\tau}{R}.$
\end{clm}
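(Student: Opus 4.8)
The plan is to verify the claim by a direct evaluation of the integral, since the statement is purely computational. First I would integrate componentwise: from $\int_0^{\tau}\sin\frac{t}{R}\di t = R\parenth{1-\cos\tfrac{\tau}{R}}$ and $\int_0^{\tau}\cos\frac{t}{R}\di t = R\sin\tfrac{\tau}{R}$ one obtains
\[
b = \parenth{\,R\parenth{1-\cos\tfrac{\tau}{R}},\ R\sin\tfrac{\tau}{R}\,}.
\]
Subtracting the centre $c=(R,0)$ gives $b-c = \parenth{-R\cos\tfrac{\tau}{R},\ R\sin\tfrac{\tau}{R}}$, hence $\enorm{b-c}=R$; this already shows that $b$ lies on the circle of radius $R$ centred at $c$, which is the first part of the assertion.

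It then remains to identify the oriented angle. Writing $a-c=(-R,0)=R(\cos\pi,\sin\pi)$ and $b-c = R\bigl(\cos(\pi-\tfrac{\tau}{R}),\,\sin(\pi-\tfrac{\tau}{R})\bigr)$, we see that the polar angle of $b-c$ is the polar angle of $a-c$ decreased by $\tfrac{\tau}{R}$; equivalently, $b-c$ is obtained from $a-c$ by a clockwise rotation through $\tfrac{\tau}{R}$, which is exactly what is claimed. I do not anticipate any genuine difficulty here — the computation is routine — and the only point deserving a moment's care is matching the clockwise-orientation convention with the sign of the rotation, which the explicit polar-angle comparison above settles.
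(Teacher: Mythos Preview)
Your proposal is correct and follows the same approach as the paper: a direct componentwise evaluation of the integral to obtain $b = R\parenth{1-\cos\tfrac{\tau}{R},\ \sin\tfrac{\tau}{R}}$, from which the claim is immediate. You have in fact spelled out the verification of the clockwise angle more carefully than the paper does, which simply writes ``The claim follows'' after the integral computation.
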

\begin{proof}
By direct calculations,
    \[
    \int\limits_{0}^{\tau} f(t) \di t = 
    \parenth{
    \int\limits_{0}^{\tau} \sin \frac{t}{R} \di t, \int\limits_{0}^{\tau} \cos \frac{t}{R} \di t =
       }=
      R \! \parenth{
      1 -\cos \frac{\tau}{R}  ,    \sin \frac{\tau}{R}
       }.
    \]
    The claim follows.
\end{proof}
\subsection{Integral curves}
% \begin{lem}\label{lem:def_of_integral_curve}
% Let $f$ be a Lipschitz function from a non-trivial segment $[t_0,t_1]$ to the unit sphere 
%  $(\sphere_{\hilbert})$ of $\hilbert.$
% Then there exists a unique curve $g \colon [t_0,t_1] \to \hilbert$ such that $g(t_0) = 0$ and 
% $g^\prime(t) = f(t)$ for all $t \in [t_0, t_1].$
% \end{lem}

   Let $f:[t_0,t_1]\to\hilbert$ be a Lipschitz function. We call {\it the integral curve of a Lipschitz function} the curve $g:[t_0, t_1]\to\hilbert$ such that $g=\int\limits_{[0,\tau]}f(t)\di t,$ where we mean the Bochner integral. We refer the reader interested in the theory of the Bochner integral to \cite{hille1996functional} and Theorem 3.4.1 therein. 

\begin{lem} \label{lem:natural_parametrization_Lipschitz_bound}
Fix $T \in (0,  \pi R].$
Assume $f \colon [0, T] \to \sphere_{\hilbert}$ is  a Lipschitz function with a constant $\frac{1}{R}$ on $[0,T].$ 
Define $g(\tau) = \int\limits_{[0, \tau]} f(t) \di t .$ Then 
\[
\enorm{g(0) - g(\tau)} \geq  2R \ \sin \frac{\tau}{2R}
\]
for all $\tau \in [0, T].$ Moreover, if the identity holds for some $\tau,$ then the part of 
$g$ between $g(0)$ and $g(\tau)$ is the arc of a circle of radius $R$ passing through these points. 
\end{lem}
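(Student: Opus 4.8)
The statement to prove is Lemma~\ref{lem:natural_parametrization_Lipschitz_bound}: for a Lipschitz unit-vector-valued function $f\colon[0,T]\to\sphere_{\hilbert}$ with constant $\frac1R$, the integral curve $g(\tau)=\int_{[0,\tau]}f(t)\di t$ satisfies $\enorm{g(0)-g(\tau)}\ge 2R\sin\frac{\tau}{2R}$, with equality forcing $g$ to be a circular arc of radius $R$. The natural model here is precisely the circle: if $f$ runs along a great circle at unit speed (in the $\frac1R$-Lipschitz sense this means $f(t)=(\sin\frac{t}{R},\cos\frac{t}{R})$ in a $2$-plane), then by Claim~\ref{clm:integral_on_arc} the curve $g$ is an arc of a circle of radius $R$ and the chord length is exactly $2R\sin\frac{\tau}{2R}$. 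So the strategy is to show that any other $f$ produces a \emph{longer} chord, i.e. the circle is extremal in the opposite direction to the usual isoperimetric-type inequalities.

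**Key steps.** First I would fix $\tau\in[0,T]$ and reduce to estimating $\enorm{g(\tau)-g(0)}=\enorm{\int_0^\tau f(t)\di t}$ from below. The clean way: let $e$ be the unit vector in the direction of $g(\tau)-g(0)$, so $\enorm{g(\tau)-g(0)}=\int_0^\tau \iprod{f(t)}{e}\di t$. Now I want a pointwise lower bound on $\iprod{f(t)}{e}$. The idea is a symmetrization/folding argument exactly in the spirit of the folding lemmas (Lemma~\ref{lem:arc}, Claim~\ref{claim:folding_mapping_is_not_extending}) already in the paper: project $f(t)$ onto the $2$-plane spanned by $e$ and a fixed reference, collapsing the orthogonal component; this does not change $\iprod{f(t)}{e}$, and by Claim~\ref{claim:folding_mapping_is_not_extending} it does not increase the Lipschitz constant of $f$. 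So without loss of generality $f$ takes values in $\sphere_{\R^2}$, and I may write $f(t)=(\sin\theta(t),\cos\theta(t))$ where, by Claim~\ref{claim:cos_bound_lipschitz_map+to+sphere} / the angle triangle inequality, $\theta$ is $\frac1R$-Lipschitz on $[0,\tau]$ (and $\tau\le\pi R$ keeps us inside one period so the angle is honestly recovered). Then $\enorm{g(\tau)-g(0)}^2=(\int_0^\tau\sin\theta)^2+(\int_0^\tau\cos\theta)^2$, and I must show this is $\ge 4R^2\sin^2\frac{\tau}{2R}$ for every $\frac1R$-Lipschitz $\theta$. After a rescaling $s=t/R$, this is the elementary inequality: for $\varphi\colon[0,L]\to\R$ with $1$-Lipschitz and $L\le\pi$, $\bigl|\int_0^L e^{i\varphi}\bigr|\ge 2\sin\frac L2$. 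This I would prove by a rearrangement: among all such $\varphi$ with a fixed chord direction, shifting the slope toward $\pm1$ everywhere (a ``bang-bang'' argument) only shrinks $|\int e^{i\varphi}|$, and the extremal $\varphi$ is affine with slope $\pm1$, which is exactly the circular arc $e^{i(\pm s+c)}$ giving value $2\sin\frac L2$.

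**The main obstacle and the equality case.** The technical heart is the one-dimensional variational inequality $\bigl|\int_0^L e^{i\varphi(s)}\di s\bigr|\ge 2\sin\frac L2$ and its rigidity. I would handle it as follows: by a rotation assume $\int_0^L e^{i\varphi}$ is real and positive, so the quantity equals $\int_0^L\cos\varphi(s)\di s$; by reflecting $\varphi\mapsto-\varphi$ if needed and exploiting that $\cos$ is even, one shows the minimum over $1$-Lipschitz $\varphi$ with this normalization is attained when $\varphi(s)=s-\tfrac L2$ (the symmetric slope-$1$ line through zero at the midpoint), because any deviation from slope exactly $1$ on a set of positive measure, combined with the endpoint/normalization constraints, can be perturbed to strictly decrease $\int\cos\varphi$ — this is where I expect to spend real effort, keeping track of the constraint that $\int\sin\varphi=0$. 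Computing the extremal value gives $\int_{-L/2}^{L/2}\cos s\,\di s=2\sin\frac L2$, as desired. For the equality clause: equality in the folding step (Claim~\ref{claim:folding_mapping_is_not_extending}) forces $f$ to lie in a fixed $2$-plane, equality in the variational step forces $\theta$ (equivalently $\varphi$) to be affine with slope $\pm\frac1R$, and then Claim~\ref{clm:integral_on_arc} identifies $g$ restricted to $[0,\tau]$ as an arc of a circle of radius $R$ through $g(0)$ and $g(\tau)$. One should double-check the degenerate edge case $\tau=0$ (trivial) and confirm that the hypothesis $\tau\le T\le\pi R$ is exactly what prevents the chord from ``wrapping'', which is why $2R\sin\frac{\tau}{2R}$ is increasing on the whole range and the bound is never vacuous.
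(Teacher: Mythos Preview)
Your reduction to two dimensions via the folding map has a genuine gap. After folding $f$ to $\tilde f = P_{L_2}\circ f$ with respect to the line $\R e$, you do preserve $\iprod{\tilde f(t)}{e}=\iprod{f(t)}{e}$ pointwise and keep the $\frac1R$-Lipschitz constant; hence $\enorm{g(\tau)-g(0)}=\int_0^\tau\iprod{\tilde f(t)}{e}\di t$. But this is only the $e$-component of $\int_0^\tau\tilde f$: the $u$-component equals $\int_0^\tau\enorm{f(t)-\iprod{f(t)}{e}e}\di t\ge 0$, so in fact $\bigl\lvert\int_0^\tau\tilde f\bigr\rvert\ge\enorm{g(\tau)-g(0)}$, the reverse of what you need. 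Proving the planar inequality $\bigl\lvert\int_0^L e^{i\varphi}\bigr\rvert\ge 2\sin\tfrac L2$ therefore says nothing about the original $f$, and the same issue wrecks your equality analysis. On top of this, the variational step itself (the bang-bang reduction to affine $\varphi$ under the constraint $\int\sin\varphi=0$) is only sketched, and you correctly flag it as the place where real effort is needed.

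The paper bypasses both difficulties with a one-line computation you already have the ingredients for. Expand
\[
\enorm{g(\tau)-g(0)}^2=\int_0^\tau\!\int_0^\tau\iprod{f(t_1)}{f(t_2)}\di t_1\di t_2
\]
and apply directly the pointwise bound $\iprod{f(t_1)}{f(t_2)}\ge\cos\frac{t_2-t_1}{R}$ from Claim~\ref{claim:cos_bound_lipschitz_map+to+sphere}, which holds in arbitrary dimension and which you cite. The resulting double integral of $\cos\frac{t_2-t_1}{R}$ is exactly the extremal-circle value and evaluates to $4R^2\sin^2\frac{\tau}{2R}$. Equality forces $\anglef{f(t_1)}{f(t_2)}=\frac{\lvert t_1-t_2\rvert}{R}$ for all pairs; the triangle inequality for angles then pins $f$ to a great-circle arc traversed at constant speed, and Claim~\ref{clm:integral_on_arc} identifies $g$ as an arc of a circle of radius $R$. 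No dimension reduction and no variational problem are needed.
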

\begin{proof}
By assumptions, $\frac{\abs{t_1 - t_2}}{R} \leq \pi$ for any $t_1, t_2 \in [0,T].$
Thus, by \Href{Claim}{claim:cos_bound_lipschitz_map+to+sphere}, 
\[
\enorm{g(0) - g(\tau)}^2 = \iprod{\int\limits_{0}^{\tau} f(t_1) \di t_1 }{\int\limits_{0}^{\tau} f(t_2) \di t_2}
= \int\limits_{0}^{\tau}  \di t_1 \int\limits_{0}^{\tau}  \iprod{f(t_1)}{f(t_2)} \di t_2  \geq
\]
\[
\int\limits_{0}^{\tau}  \di t_1 \int\limits_{0}^{\tau} \cos \frac{t_2 - t_1}{R} \di t_2 = 
R \int\limits_{0}^{\tau} \parenth{ \sin \frac{\tau - t_1}{R} + \sin \frac{ t_1}{R}}\di t_1 =
2 R^2 \parenth{1 - \cos \frac{\tau}{R}} =4 R^2 \sin^2 \frac{\tau}{2R}.
\]
The inequality follows. 
The identity is attained if and only if $\anglef{f(t_1)}{f(t_2)} = \frac{\abs{t_2 - t_1}}{R}$ for all
$t_2, t_1 \in [0, \tau].$ By the triangle inequality for angles, we see that the part of $f$
between $0$ and $\tau$ is an arc of unit circle on which $f$ goes with the constant speed. The identity case follows from \Href{Claim}{clm:integral_on_arc}.
 \end{proof}
 \subsection{Proof of \Href{Theorem}{thm:characterization_of_prox_reg_curves}}

The proof of the theorem consists of several steps. 
We start by showing the implication \ref{i1:thm_curves_char} $\Rightarrow$  \ref{i2:thm_curves_char}. 

\begin{clm}\label{clm:curve_proximal_smoothness}
Let $a$ and $b$ be points of $\hilbert$ such that  $0 < \enorm{a-b} \leq 2R.$
Let $\Gamma$ be a curve without self-intersection connecting points $a$ and $b.$  If $\Gamma$ is an $R$-proximally smooth set, then for any two points $a_1$ and $b_1$ on $\Gamma,$ the part of 
$\Gamma$ between $a_1$ and $b_1$ belongs to 
$\strconvsegment{a_1}{b_1}$ and the length of this part is at most the $R$-arclength between $a_1$ and $b_1.$
\end{clm}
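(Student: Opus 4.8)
The plan is to deduce the Claim from \Href{Theorem}{thm:existence of the shortest_curve2} and \Href{Corollary}{cor:shortest_curve_in_prox_smooth} applied to the set $\Gamma$ itself. Since $\Gamma$ has no self-intersections, it is the continuous injective image of a compact interval, hence homeomorphic to a segment and, being compact, closed; together with the hypothesis, $\Gamma$ is therefore a \emph{closed} $R$-proximally smooth set, to which the results obtained above apply directly. The key structural observation is that, precisely because $\Gamma$ is a simple arc, for any $a_1,b_1\in\Gamma$ the part $\Gamma_{a_1b_1}$ of $\Gamma$ between $a_1$ and $b_1$ is the shortest curve in $\Gamma$ joining $a_1$ to $b_1$: any curve in $\Gamma$ with endpoints $a_1,b_1$ has connected image, which under the homeomorphism $\Gamma\to[0,\length{\Gamma}]$ is a subsegment that must contain (the image of) $\Gamma_{a_1b_1}$; such a curve thus traverses all of $\Gamma_{a_1b_1}$, and since a continuous reparametrization does not decrease the length of a curve, and covering a strictly larger sub-arc strictly increases it, it is at least as long as $\Gamma_{a_1b_1}$.

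First I would settle the main case $0<\enorm{a_1-b_1}<2R$. By \Href{Theorem}{thm:existence of the shortest_curve2} applied to the closed $R$-proximally smooth set $\Gamma$ with endpoints $a_1,b_1$, there is a unique shortest curve in $\Gamma$ joining them; by the observation above it is exactly $\Gamma_{a_1b_1}$. Then \Href{Corollary}{cor:shortest_curve_in_prox_smooth} tells us that this shortest curve lies in $\strconvsegment{a_1}{b_1}$ and has length at most $2R\arcsin\frac{\enorm{a_1-b_1}}{2R}$ — which are precisely the two assertions of the Claim (and, as a by-product, $\Gamma_{a_1b_1}$ is itself $R$-proximally smooth, which is not needed here). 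For the inclusion alone one can even bypass the ``shortest curve'' bookkeeping: \Href{Proposition}{prp:equivalence2} already produces \emph{some} curve in $\Gamma\cap\strconvsegment{a_1}{b_1}$ from $a_1$ to $b_1$, whose connected image must contain $\Gamma_{a_1b_1}$, so that $\Gamma_{a_1b_1}\subset\strconvsegment{a_1}{b_1}$ at once; it is the length bound where the minimality of $\Gamma_{a_1b_1}$ (together with \Href{Lemma}{lem:simple_short_curve_in_prox_smooth_set} or \Href{Corollary}{cor:shortest_curve_in_prox_smooth}) is essential.

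It remains to treat the boundary case $\enorm{a_1-b_1}=2R$, and this is where I expect the only genuine friction. In the ambient setting of \Href{Theorem}{thm:characterization_of_prox_reg_curves}, where $\length{\Gamma}\le\pi R$, the length bound is free, since then $\length{\Gamma_{a_1b_1}}\le\length{\Gamma}\le\pi R=2R\arcsin 1$; the content is the inclusion $\Gamma_{a_1b_1}\subset\strconvsegment{a_1}{b_1}$, i.e.\ $\Gamma_{a_1b_1}$ lies in the closed ball of radius $R$ centred at $\frac{a_1+b_1}{2}$. I would obtain it by approximation: choose points $a_1',b_1'$ on $\Gamma_{a_1b_1}$ tending to $a_1$ and $b_1$ along $\Gamma$ with $\enorm{a_1'-b_1'}<2R$, apply the already settled case to each pair, and pass to the limit, using that $\strconvsegment{a_1'}{b_1'}\to\strconvsegment{a_1}{b_1}$ in the Hausdorff metric, that $\strconvsegment{a_1}{b_1}$ is closed, and that $\length{\Gamma_{a_1'b_1'}}\to\length{\Gamma_{a_1b_1}}$. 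The delicate point — and the main obstacle — is guaranteeing such approximating pairs: since the arc-length parametrization of $\Gamma_{a_1b_1}$ is $1$-Lipschitz, any two of its points at arc-distance $<2R$ lie at Euclidean distance $<2R$, so it suffices to rule out $\Gamma_{a_1b_1}$ ``clinging'' to the sphere $\{\,w:\enorm{w-a_1}=2R\,\}$ near $a_1$ (or, symmetrically, near $b_1$); handling this rigorously is the one step that requires care, and for every use of the Claim in the remainder of the section one in fact only needs the case $\enorm{a_1-b_1}<2R$ already disposed of above.
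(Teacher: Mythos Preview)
Your argument is correct for the case $\enorm{a_1-b_1}<2R$, which is indeed the only case ever used downstream, but you bring in much heavier machinery than the paper does. The paper's proof is a two-line application of \Href{Lemma}{lem:simple_short_curve_in_prox_smooth_set}: that lemma already produces a curve $\Gamma_1\subset\Gamma\cap\strconvsegment{a_1}{b_1}$ from $a_1$ to $b_1$ whose length is at most the $R$-arclength; since $\Gamma$ is a simple arc, the image of $\Gamma_1$ contains $\Gamma_{a_1b_1}$, and both the inclusion and the length bound drop out at once. You instead invoke \Href{Theorem}{thm:existence of the shortest_curve2} and \Href{Corollary}{cor:shortest_curve_in_prox_smooth}, i.e.\ the full shortest-curve theory of Sections~5--6. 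This works, but it defeats the paper's stated design that \Href{Theorem}{thm:characterization_of_prox_reg_curves} should rely only on the first four sections.

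Curiously, your ``alternative'' paragraph is essentially the paper's proof, yet you stop halfway: you use the no-self-intersection argument to get $\Gamma_{a_1b_1}\subset\strconvsegment{a_1}{b_1}$ from \Href{Proposition}{prp:equivalence2}, and then say the length bound still needs minimality of $\Gamma_{a_1b_1}$. It does not. If instead of \Href{Proposition}{prp:equivalence2} you invoke \Href{Lemma}{lem:simple_short_curve_in_prox_smooth_set}, the auxiliary curve comes with the length bound built in, and exactly the same ``connected image in a simple arc contains $\Gamma_{a_1b_1}$'' observation gives $\length{\Gamma_{a_1b_1}}\le\length{\Gamma_1}\le 2R\arcsin\frac{\enorm{a_1-b_1}}{2R}$. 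So the detour through the shortest-curve theorem is avoidable, and your own structural observation already suffices.
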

\begin{proof}
By  \Href{Lemma}{lem:simple_short_curve_in_prox_smooth_set}, 
$\Gamma$ as a set contains a curve $\Gamma_1$ connecting $a_1$ and $b_1$ such that $\Gamma_1 \subset \strconvsegment{a_1}{b_1}$ and whose length is at most the $R$-arclength between 
$a_1$ and $b_1.$
Since $\Gamma$ has no self-intersection, we conclude that the part of 
$\Gamma$ between $a_1$ and $b_1$  coincides with $\Gamma_1.$
\end{proof}

\begin{lem}\label{lem:curve_length_to_Lipschitz}
Let $a$ and $b$ be points of $\hilbert$ such that  $0 < \enorm{a-b} \leq 2R.$
Let $\Gamma$ be a curve without self-intersection connecting points $a$ and $b.$  
 If $\Gamma$ is an $R$-proximally smooth set, then 
 the natural parametrization of $\Gamma$ is differentiable, and its derivative is  Lipschitz with constant $\frac{1}{R}.$
\end{lem}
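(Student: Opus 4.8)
The plan is to reparametrize $\Gamma$ by arc length as $\Gamma\colon[0,\ell]\to\hilbert$, $\ell=\length{\Gamma}$; applying \Href{Claim}{clm:curve_proximal_smoothness} to the endpoints $a,b$ gives $\ell\le 2R\arcsin\frac{\enorm{a-b}}{2R}\le\pi R$. The whole argument rests on two consequences of \Href{Claim}{clm:curve_proximal_smoothness}, valid for $0\le s_1<s_2\le\ell$ with $\enorm{\Gamma(s_2)-\Gamma(s_1)}<2R$. The first: the sub-curve of $\Gamma$ between $\Gamma(s_1)$ and $\Gamma(s_2)$ has length exactly $s_2-s_1$, and the claim bounds it by $2R\arcsin\frac{\enorm{\Gamma(s_2)-\Gamma(s_1)}}{2R}$, whence
\[
2R\sin\frac{s_2-s_1}{2R}\le\enorm{\Gamma(s_2)-\Gamma(s_1)}\le s_2-s_1 .
\]
The second: for $s_1<s<s_2$ the point $\Gamma(s)$ lies in $\strconvsegment{\Gamma(s_1)}{\Gamma(s_2)}$, so by \Href{Lemma}{lem:angle_inside_strongly_convex_segment} the ray from $\Gamma(s_1)$ through $\Gamma(s)$ satisfies $\anglef{\Gamma(s)-\Gamma(s_1)}{\Gamma(s_2)-\Gamma(s_1)}\le\arcsin\frac{\enorm{\Gamma(s_2)-\Gamma(s_1)}}{2R}$, and symmetrically for rays emanating from $\Gamma(s_2)$.

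Next I would fix $s_0\in[0,\ell]$ and use the second ingredient to show that the secant directions $\frac{\Gamma(s)-\Gamma(s_0)}{\enorm{\Gamma(s)-\Gamma(s_0)}}$ form a Cauchy net as $s\downarrow s_0$: for $s_0<s<s'$ close to $s_0$, taking $s_1=s_0$ and $s_2=s'$ above shows that this direction at $s$ makes angle at most $\arcsin\frac{\enorm{\Gamma(s')-\Gamma(s_0)}}{2R}$ with the direction at $s'$, and this bound tends to $0$ as $s'\downarrow s_0$. Hence these directions converge to a unit vector $e^+(s_0)$, and likewise $\frac{\Gamma(s_0)-\Gamma(s)}{\enorm{\Gamma(s_0)-\Gamma(s)}}\to e^-(s_0)$ as $s\uparrow s_0$ for $s_0>0$. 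Since the first ingredient forces $\enorm{\Gamma(s)-\Gamma(s_0)}/\abs{s-s_0}\to 1$, it follows that the one-sided derivatives $\Gamma'_+(s_0)=e^+(s_0)$ and $\Gamma'_-(s_0)=e^-(s_0)$ exist and have unit length.

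The heart of the proof is the identification $e^+(s_0)=e^-(s_0)$ for interior $s_0$. As $h\downarrow 0$ we have $\Gamma(s_0+h)-\Gamma(s_0)=h\,e^+(s_0)+o(h)$ and $\Gamma(s_0-h)-\Gamma(s_0)=-h\,e^-(s_0)+o(h)$, so $\enorm{\Gamma(s_0+h)-\Gamma(s_0-h)}=h\,\enorm{e^+(s_0)+e^-(s_0)}+o(h)$; on the other hand the first ingredient gives $\enorm{\Gamma(s_0+h)-\Gamma(s_0-h)}\ge 2R\sin\frac{h}{R}=2h+o(h)$. Dividing by $h$ and letting $h\to 0$ yields $\enorm{e^+(s_0)+e^-(s_0)}\ge 2$, which for unit vectors forces $e^+(s_0)=e^-(s_0)$. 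Therefore the natural parametrization is differentiable on $(0,\ell)$ (and one-sided differentiable at the endpoints), with $\enorm{\Gamma'(s)}\equiv 1$.

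For the Lipschitz estimate, fix $s_1<s_2$ with $\enorm{\Gamma(s_1)-\Gamma(s_2)}<2R$ and pass to the limit in the angular bounds of the first paragraph: the forward secants from $\Gamma(s_1)$ converge to $\Gamma'(s_1)$ and stay within angle $\arcsin\frac{\enorm{\Gamma(s_2)-\Gamma(s_1)}}{2R}$ of $\Gamma(s_2)-\Gamma(s_1)$, while the backward secants from $\Gamma(s_2)$ converge to $-\Gamma'(s_2)$ with the same bound; by the triangle inequality for angles, $\anglef{\Gamma'(s_1)}{\Gamma'(s_2)}\le 2\arcsin\frac{\enorm{\Gamma(s_2)-\Gamma(s_1)}}{2R}$. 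Using $\enorm{u-v}=2\sin\frac{\anglef{u}{v}}{2}$ for unit vectors, the monotonicity of $\sin$ on $[0,\pi/2]$, and the fact that all angles in play are at most $\pi$, this gives $\enorm{\Gamma'(s_1)-\Gamma'(s_2)}\le\frac{\enorm{\Gamma(s_2)-\Gamma(s_1)}}{R}\le\frac{s_2-s_1}{R}$. This is a local $\tfrac1R$-Lipschitz estimate for $\Gamma'$; since a map that is locally Lipschitz with a fixed constant on an interval is globally Lipschitz with that constant, $\Gamma'$ is $\tfrac1R$-Lipschitz on all of $[0,\ell]$, in particular continuous, so the natural parametrization is continuously differentiable. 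The step I expect to be the main obstacle is establishing genuine differentiability, i.e. $e^+=e^-$ at interior points; the second-order cancellation above resolves it, but some care is needed with the boundary cases of \Href{Lemma}{lem:angle_inside_strongly_convex_segment} and with keeping all quantities in the ranges where $\sin$ and $\arcsin$ are monotone.
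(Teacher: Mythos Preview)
Your proof is correct and follows essentially the same route as the paper's: both extract from \Href{Claim}{clm:curve_proximal_smoothness} the two-sided chord estimate $2R\sin\frac{s_2-s_1}{2R}\le\enorm{\Gamma(s_2)-\Gamma(s_1)}\le s_2-s_1$ and from \Href{Lemma}{lem:angle_inside_strongly_convex_segment} the angle bound on secants, then pass to the limit to get one-sided derivatives and the final $\tfrac1R$-Lipschitz bound via the triangle inequality for angles. Your explicit verification that $e^+(s_0)=e^-(s_0)$, using $\enorm{\Gamma(s_0+h)-\Gamma(s_0-h)}\ge 2R\sin(h/R)=2h+o(h)$ to force $\enorm{e^++e^-}\ge 2$, is a welcome addition---the paper establishes the one-sided derivatives and then writes $x'$ without further comment, leaving this coincidence implicit.
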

\begin{proof}
Clearly, the assumptions together with the previous claim imply that $\Gamma$ is of finite length. Let
$x \colon [0, \length{\Gamma}]\to \Gamma$ be the natural parametrization of $\Gamma.$
By \Href{Claim}{clm:curve_proximal_smoothness}, for any $0 \leq t_1 < t_2 \leq \length{\Gamma},$ one has
\[
\enorm{x({t_2}) - x({t_1})} \leq t_2 - t_1 \leq 2R \arcsin \frac{\enorm{x({t_2}) - x({t_1})}}{2R}.
\]
Hence,
\[
1 \geq \frac{\enorm{x({t_2}) - x({t_1})}}{t_2 - t_1} \geq \frac{2R}{{t_2 - t_1}}
\sin \frac{{t_2 - t_1}}{2R}.
\]
% \[\text{if we will need}
% \geq  1 - \frac{1}{6}\parenth{\frac{\enorm{x({t_1}) - x({t_2})}}{2R}}^2.
% \]
Consequently, 
\begin{equation}\label{eq:norm_convergence_lem:curve_length_to_Lipschitz}
\frac{\enorm{x({t_1}) - x({t_2})}}{t_1 - t_2} \rightrightarrows 1  
\quad \text{as} \quad t_2 \to t_1 \quad \text{uniformly on} \quad  [0, \length{\Gamma}].
\end{equation}

 Now fix $t_1, t_2, \tau$ such that $0 \leq t_1 < \tau <t_2 \leq \length{\Gamma}.$ 
 By  the assumptions of the lemma and using \Href{Lemma}{lem:angle_inside_strongly_convex_segment},
 we have that 
 \begin{equation}\label{eq:angle_convergence_lem:curve_length_to_Lipschitz}
 \anglef{x(\tau) - x(t_1)}{x(t_2) - x(t_1)} \leq  \arcsin \frac{\enorm{x(t_2) - x(t_1)}}{2R}\leq  \arcsin \frac{t_2- t_1}{2R}.
 \end{equation}
 
 Fix any $t_0\in [0,\length\Gamma)$ and a sequence $t_k\to t_0$, $t_k>t_0$. It follows by \eqref{eq:angle_convergence_lem:curve_length_to_Lipschitz} that for any $i,k\in\N$
\[
 \anglef{x(t_i) - x(t_0)}{x(t_k) - x(t_0)}\leq \arcsin \frac{\max\{t_i- t_0,t_k-t_0\}}{2R}.
\] 
 So, $\left\{ \frac{x(t_k) - x(t_0)}{|x(t_k) - x(t_0)|}\right\}$ is a Cauchy sequence and, consequently, converges.

This assertion  together with \eqref{eq:norm_convergence_lem:curve_length_to_Lipschitz} imply that the right hand derivative of  $x$ is well-defined on $[0, \length{\Gamma})$. Similarly, the left hand derivative of  $x$ is well-defined on $(0, \length{\Gamma}]$.

It follows from \eqref{eq:angle_convergence_lem:curve_length_to_Lipschitz} that
\[
 \anglef{x^\prime(t_1)}{x(t_2) - x(t_1)}\leq \arcsin \frac{t_2-t_1}{2R}.
\] 
Using the triangle inequality for angles, we get
\[
\anglef{x^\prime(t_1)}{x^\prime(t_2)} \leq 2 \arcsin \frac{\abs{t_2 - t_1}}{2R},
\]
and since $\enorm{x^\prime(t_2)}=\enorm{x^\prime(t_2)} =1,$
we conclude that $\enorm{x^\prime(t_2) - x^\prime(t_1)} \leq  \frac{\abs{t_2 - t_1}}{R}.$
The lemma is proven.
\end{proof}

Now we show the implication \ref{i2:thm_curves_char} $\Rightarrow$  \ref{i1:thm_curves_char} of \Href{Theorem}{thm:characterization_of_prox_reg_curves}
\begin{lem}
   Let $\Gamma\colon [0,\length{\Gamma}] \to \hilbert$ be a rectifiable curve given by its natural parametrization such that $\length{\Gamma} \leq \pi R.$ 
   If the natural parametrization of $\Gamma$ is differentiable, and its derivative is  Lipschitz  with constant  $\frac{1}{R},$ 
then 
$\Gamma$ is an $R$-proximally smooth set.
\end{lem}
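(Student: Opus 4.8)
The plan is to verify condition~(3) of \Href{Proposition}{prp:equivalence2} for the (compact, hence closed) set $A=\Gamma$: for any two distinct points $a,b\in\Gamma$ with $\enorm{a-b}<2R$ we exhibit a curve in $\Gamma\cap\strconvsegment{a}{b}$ joining them, namely the subarc of $\Gamma$ between $a$ and $b$. Write $x\colon[0,L]\to\hilbert$ for the natural parametrization, $L=\length\Gamma\le\pi R$; being an arc-length parametrization with continuous derivative, $x$ satisfies $\enorm{x'(t)}\equiv1$, and $x'$ is Lipschitz with constant $1/R$. Hence for every $[s_1,s_2]\subseteq[0,L]$ the function $t\mapsto x'(s_1+t)$ satisfies the hypotheses of \Href{Lemma}{lem:natural_parametrization_Lipschitz_bound} (using $x(s_1+\tau)-x(s_1)=\int_{s_1}^{s_1+\tau}x'(t)\di t$ and $s_2-s_1\le L\le\pi R$), and we obtain the chord lower bound
\[
\enorm{x(s_1)-x(s_2)}\ \ge\ 2R\sin\frac{s_2-s_1}{2R}.
\]
Since the right-hand side is positive for $0<s_2-s_1\le\pi R$, the map $x$ is injective. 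More importantly, whenever $\enorm{x(s_1)-x(s_2)}<2R$, monotonicity of $\sin$ and $\arcsin$ on $[0,\pi/2]$ (together with $\tfrac{s_2-s_1}{2R}\le\pi/2$) rearranges the chord bound into
\[
s_2-s_1\ \le\ 2R\arcsin\frac{\enorm{x(s_1)-x(s_2)}}{2R},
\]
that is, \emph{the length of any subarc of $\Gamma$ is at most the $R$-arclength of the chord joining its endpoints}. This is the one new observation; everything else is bookkeeping.

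Fix distinct $a=x(t_1)$, $b=x(t_2)$ with $t_1<t_2$, $\enorm{a-b}<2R$, and suppose for contradiction that $x(\tau)\notin\strconvsegment{a}{b}$ for some $\tau\in(t_1,t_2)$; set $d=\dist{x(\tau)}{\strconvsegment{a}{b}}>0$. As $x(t_1),x(t_2)\in\strconvsegment{a}{b}$ and $\strconvsegment{a}{b}$ is closed, let $s_1=\sup\{t\in[t_1,\tau]\st x(t)\in\strconvsegment{a}{b}\}$ and $s_2=\inf\{t\in[\tau,t_2]\st x(t)\in\strconvsegment{a}{b}\}$; then $t_1\le s_1<\tau<s_2\le t_2$, the points $x(s_1),x(s_2)$ belong to $\strconvsegment{a}{b}$, and $x(t)\notin\strconvsegment{a}{b}$ for all $t\in(s_1,s_2)$. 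A one-line computation using the parallelogram identity for the two extreme radius-$R$ balls through $a$ and $b$ gives $\strconvsegment{a}{b}\subseteq B_{\enorm{a-b}/2}\!\left(\tfrac{a+b}{2}\right)$, hence $0<\enorm{x(s_1)-x(s_2)}\le\enorm{a-b}<2R$; and by the hereditary property (\Href{Remark}{rem:str_convex_segment}) $\strconvsegment{x(s_1)}{x(s_2)}\subseteq\strconvsegment{a}{b}$. Therefore the subarc $x|_{[s_1,s_2]}$ joins $x(s_1)$ to $x(s_2)$, lies outside $\inter{\strconvsegment{x(s_1)}{x(s_2)}}$ (on $(s_1,s_2)$ it avoids $\strconvsegment{a}{b}$, which contains that interior, and $x(s_1),x(s_2)$ lie on the boundary of $\strconvsegment{x(s_1)}{x(s_2)}$), and contains $x(\tau)$ at distance at least $d$ from $\strconvsegment{x(s_1)}{x(s_2)}$.

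Now \Href{Corollary}{cor:main_arc} yields $\length{x|_{[s_1,s_2]}}\ge 2R\arcsin\tfrac{\enorm{x(s_1)-x(s_2)}}{2R}$, while the subarc bound gives $\length{x|_{[s_1,s_2]}}=s_2-s_1\le 2R\arcsin\tfrac{\enorm{x(s_1)-x(s_2)}}{2R}$; so equality holds, and the equality case of \Href{Corollary}{cor:main_arc} forces $x|_{[s_1,s_2]}$ to be a main $R$-arc of $[x(s_1),x(s_2)]$, hence to lie on $\partial\strconvsegment{x(s_1)}{x(s_2)}\subseteq\strconvsegment{a}{b}$ — contradicting $x(\tau)\notin\strconvsegment{a}{b}$. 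Thus $x([t_1,t_2])\subseteq\strconvsegment{a}{b}$, so $x|_{[t_1,t_2]}$ is the desired curve, and by \Href{Proposition}{prp:equivalence2} the set $\Gamma$ is $R$-proximally smooth.

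The delicate part is the exit/re-entry bookkeeping of the third paragraph: one must isolate a maximal subarc leaving $\strconvsegment{a}{b}$ and then certify that its endpoints are already close enough that $\strconvsegment{x(s_1)}{x(s_2)}$ is a nondegenerate spindle sitting inside $\strconvsegment{a}{b}$ — precisely where the diameter estimate $\strconvsegment{a}{b}\subseteq B_{\enorm{a-b}/2}(\tfrac{a+b}{2})$ and the hereditary property are used. If one prefers not to invoke the equality case of \Href{Corollary}{cor:main_arc}, one can instead feed $\delta\ge d>0$ into the quantitative \Href{Lemma}{lem:curve_separation_stongly_convex_segment} to get a strictly shorter comparison curve and hence an outright contradiction; here $s_2-s_1<\pi R$ (because $s_2-s_1=\pi R$ would force $\enorm{a-b}\ge 2R$ by the chord bound) eliminates the capped branch of that lemma.
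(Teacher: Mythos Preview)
Your proof is correct and shares the paper's core: derive the chord lower bound from \Href{Lemma}{lem:natural_parametrization_Lipschitz_bound}, invert it to bound the subarc length by the $R$-arclength of its chord, and then invoke \Href{Corollary}{cor:main_arc} together with \Href{Proposition}{prp:equivalence2}.

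The difference is which condition of \Href{Proposition}{prp:equivalence2} you target. You prove condition~(3): the \emph{entire} subarc $x|_{[t_1,t_2]}$ lies in $\strconvsegment{a}{b}$. The paper instead uses condition~(2), which only asks for a single point of $\Gamma$ in $\strconvsegment{\Gamma(t_1)}{\Gamma(t_2)}\setminus\{\Gamma(t_1),\Gamma(t_2)\}$. Once the subarc has length at most the $R$-arclength, \Href{Corollary}{cor:main_arc} gives that point immediately: if the subarc avoided the interior of the spindle, its length would have to be at least the $R$-arclength, with equality forcing it to be a main $R$-arc (which lies in the spindle anyway). So the paper's proof is four lines, and your exit/re-entry argument --- the sup/inf construction of $s_1,s_2$, the diameter estimate $\strconvsegment{a}{b}\subseteq B_{\enorm{a-b}/2}\!\bigl(\tfrac{a+b}{2}\bigr)$, the hereditary property, and the equality-case analysis --- is extra work that buys you the stronger conclusion (3) but is not needed for $R$-proximal smoothness. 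Your version is a fine proof of a slightly stronger statement; the paper's is the most direct route to the lemma as stated.
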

\begin{proof}
% If $\operatorname{length} = \pi R, $ then $\enorm{a-b} = 2R$ and $Gamma$ is a main 
% $R$-arc of segment $ab$ by \Href{Lemma}{lem:natural_parametrization_Lipschitz_bound} and the equality case therein. Thus, the lemma follows in this case.

% Consider the case
Take any two points $\Gamma(t_1), \Gamma(t_2)$ such that $\enorm{\Gamma(t_1)- \Gamma(t_2)} \leq 2R.$  
By \Href{Lemma}{lem:natural_parametrization_Lipschitz_bound}, the length of the part of $\Gamma$ between these points
is at most the $R$-arclength between these points, i.e.
$
2R \arcsin \frac{\enorm{\Gamma(t_1)-\Gamma(t_2)}}{2R}.
$
By \Href{Corollary}{cor:main_arc}, this part of $\Gamma$ intersects $\strconvsegment{\Gamma(t_1)}{\Gamma(t_2)}.$ Thus, $\Gamma$ is proximally smooth by \Href{Proposition}{prp:equivalence2}.
\end{proof}
% \begin{lem}
% Let $A \subset \hilbert$ be a closed, $R$-proximally smooth set that can be covered by a ball of radius 
%  $r \leq R.$ Then $A$ is contractible.
% \end{lem}
% \begin{proof}
% Trivial --- contained in one of my uvo-modul papers.
% \end{proof}

\bibliographystyle{alpha}
\bibliography{uvolit_text} 

\end{document}